\def\xyellowspace{%
  \sbox0{\colorbox{yellow}{\strut\ }}%
  \dimen0=\wd0\relax
  \hskip0pt\cleaders\box0\hskip\dimen0\hskip0pt}
\gdef\makeyellowspace{\let \xyellowspace\catcode`\ =\active}%
\def\?#1{\colorbox{yellow}{\strut#1}}
\def\urlfont{\DeclareFontFamily{OT1}{cmtt}{\hyphenchar\font='057}
              \normalfont\ttfamily \hyphenpenalty=10000}
\DeclareFontFamily{OT1}{rsfs10}{}
\DeclareFontShape{OT1}{rsfs10}{m}{n}{ <-> rsfs10 }{}
\DeclareMathAlphabet{\mathscript}{OT1}{rsfs10}{m}{n}
\DeclareMathOperator{\im}{Im}       
\DeclareMathOperator{\Proj}{Proj}   
\DeclareMathOperator{\Hom}{Hom}     
\DeclareMathOperator{\Tors}{Tors}    
\DeclareMathOperator{\Pic}{Pic}     
\DeclareMathOperator{\Cl}{Cl}       
\DeclareMathOperator{\rk}{rk}       
\DeclareMathOperator{\Mov}{Mov}     
\DeclareMathOperator{\Nef}{Nef}     
\DeclareMathOperator{\Eff}{Eff}     
\DeclareMathOperator{\NE}{NE}       
\DeclareMathOperator{\Relint}{Relint}  
\DeclareMathOperator{\codim}{codim} 
\DeclareMathOperator{\diag}{diag}   
\DeclareMathOperator{\lcm}{lcm}     
\DeclareMathOperator{\HNF}{HNF}     
\DeclareMathOperator{\SNF}{SNF}     
\DeclareMathOperator{\REF}{REF}     
\DeclareMathOperator{\PC}{PC}       
\title[Classification of $\Q$--factorial projective toric varieties]{A Batyrev type classification of \\$\Q$--factorial projective toric varieties}
\author[M. Rossi and L.Terracini]{Michele Rossi and Lea Terracini}
\date{\today}
\address{Dipartimento di Matematica, Universit\`a di Torino,
via Carlo Alberto 10, 10123 Torino} \email{michele.rossi@unito.it,
lea.terracini@unito.it}
\thanks{The authors were partially supported by the MIUR-PRIN 2010-11 Research Funds ``Geometria delle Variet\`{a} Algebriche''. The first author is also supported by the I.N.D.A.M. as a member of the G.N.S.A.G.A.}
\subjclass{14M25; (52B20; 52B35)}
\keywords{ $\Q$--factorial complete toric variety, projective toric bundle, secondary fan, Gale duality, fan and weight matrices, toric cover, splitting fan, primitive collection and relation}
\def \a{\alpha }
\def \b{\beta }
\def \d{\delta }
\def \l{\lambda }
\def \s{\sigma }
\def \k{\kappa}
\def \Ga{\Gamma }
\def \Si{\Sigma }
\def \g{\gamma}
\def \e{\mathbf{e}}
\def \q{\mathbf{q}}
\def \pp{\mathbf{p}}
\def \v{\mathbf{v}}
\def \n{\mathbf{n}}
\def \w{\mathbf{w}}
\def \1{\mathbf{1}}
\def \0{\mathbf{0}}
\def\P{{\mathbb{P}}}
\def\p2{\mathbb{P}^2}
\def\p3{\mathbb{P}^3}
\def\p4{\mathbb{P}^4}
\def\rk{\operatorname{rk}}
\def\GL{\operatorname{GL}}
\def\NE{\operatorname{NE}}
\def\Z{\mathbb{Z}}
\def\C{\mathbb{C}}
\def\R{\mathbb{R}}
\def\Q{\mathbb{Q}}
\def\N{\mathbb{N}}
\def\CQ{\mathcal{Q}}
\def\SF{\mathcal{SF}}
\def\pc{\mathcal{P}}
\def\gkz{\mathcal{Q}}
\def\G{\mathcal{G}}
\def\Ga{\Gamma}
\def\Weil{\mathcal{W}_T}
\theoremstyle{plain}
\newtheorem{theorem}{Theorem}[section]
\newtheorem{proposition}[theorem]{Proposition}
\newtheorem{thm-def}[theorem]{Theorem--Definition}
\newtheorem{corollary}[theorem]{Corollary}
\newtheorem{lemma}[theorem]{Lemma}
\newtheorem*{claim}{Claim}
\newtheorem*{a-proposition}{Proposition}
\theoremstyle{remark}
\newtheorem{remark}[theorem]{Remark}
\newtheorem{example}[theorem]{Example}
\theoremstyle{definition}
\newtheorem{definition}[theorem]{Definition}
\newtheorem*{step I}{Step I}
\newtheorem*{step II}{Step II}
\newtheorem*{step III}{Step III}
\newtheorem*{step IV}{Step IV}
\newtheorem*{acknowledgements}{Acknowledgements}
\newcommand{\oneline}{\vskip12pt}
\newcommand{\halfline}{\vskip6pt}
\newcommand{\ka}{K\"{a}hler }
\newcommand{\longmapsfrom}{\mathrel{\reflectbox{\ensuremath{\longmapsto}}}}
\begin{document}


\begin{abstract} The present paper is devoted to generalizing, inside the class of projective toric varieties, the classification \cite{Batyrev91}, performed by Batyrev in 1991 for smooth complete toric varieties, to the singular $\Q$--factorial case.
\end{abstract}

\maketitle

\tableofcontents

\section*{Introduction}
The present paper is the third part of a longstanding tripartite study aimed at realizing, for $\Q$--factorial projective toric varieties, a classification inspired by what V.~Batyrev did in 1991 for smooth complete toric varieties \cite{Batyrev91}. The first part of this study is given by \cite{RT-LA&GD}, in which we studied Gale duality from the $\Z$--linear point of view and defined poly weighted spaces (PWS, for short: see the following Definition \ref{def:PWS}) as $\Q$-factorial complete toric varieties whose classes group is free. The second part is given by \cite{RT-QUOT}, in which we exhibited a canonic covering PWS $Y$ for every $\Q$--factorial complete toric variety $X$, such that the covering map $Y\to X$ is a torus--equivariant Galois covering, induced by the multiplicative action of the finite group $\mu(X):=\Hom(\Tors(\Cl(X)),\C^*)$ on $Y$ and ramified in codimension greater than or equal to 2.
The reader will often be referred to these papers for notation, preliminaries and results.

Considerably simplifying the situation, the main results of the present paper could be summarized as follows:
\begin{theorem}\label{thm:intro}

Given a $\Q$--factorial projective toric variety $X$ satisfying \emph{some good conditions} on an associated \emph{weight matrix} (see \cite[Def.~3.9]{RT-LA&GD} and Definition \ref{def:Wmatrix} below) then $X$ is birational and isomorphic in codimension 1 to a finite abelian quotient of a PWS which is a \emph{toric cover} (see Definition~\ref{def:toricover}) of a \emph{weighted projective toric bundle (WPTB)} (see \S~\ref{sssez:WPTB}).

Moreover $X$ is isomorphic to a finite abelian quotient of a PWS which is a toric cover of a WPTB if and only if its fan is associated with a chamber of the secondary fan which is \emph{maximally bordering (maxbord)} (see Definition~\ref{def:bordering}) inside the Gale dual (or GKZ) cone $\gkz$.

Finally $X$ is isomorphic to a finite abelian quotient of a PWS produced from a toric cover of a weighted projective space (WPS) by a sequence of toric covers of weighted projective toric bundles if and only if its fan chamber is \emph{recursively maxbord} (see Definition~\ref{def:totally maxbord}) inside the Gale dual cone $\gkz$.

In any case the finite abelian quotient is trivial if and only if $\Cl(X)$ is a free abelian group meaning that $X$ is a PWS (recall \cite[Thm.~2.1]{RT-QUOT}).
\end{theorem}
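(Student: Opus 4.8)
The plan is to reduce everything to the PWS case via the canonical covering of \cite{RT-QUOT} and then to establish a combinatorial dictionary between fan structures on a fixed ray set — i.e.\ chambers of the secondary fan inside the Gale dual cone $\gkz$ — and fibration structures over a toric base. \emph{Stage 1 (reduction, and the last assertion).} By \cite[Thm.~2.1]{RT-QUOT} every $\Q$--factorial complete toric variety $X$ admits a canonical covering PWS $Y$ with $X=Y/\mu(X)$, $\mu(X)=\Hom(\Tors(\Cl(X)),\C^*)$, the quotient map being torus--equivariant and ramified in codimension $\ge 2$; moreover $Y$ carries a fan matrix equal to that of $X$ up to its torsion part, so $X$ and $Y$ share the same secondary fan, the same cone $\gkz$ and the same chamber. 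Hence the "finite abelian quotient" occurring in every preceding assertion is precisely $\mu(X)$, and since $Y\to X$ is an isomorphism in codimension $1$ it suffices to prove the structural statements for the PWS $Y$. The last assertion is then immediate: $\mu(X)$ is trivial $\iff \Tors(\Cl(X))=0 \iff \Cl(X)$ is free, which by the same theorem is equivalent to $X$ being a PWS.

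\emph{Stage 2 (the fibration dictionary).} For a PWS $Y$ with weight matrix $Q$ (an $r\times(n+r)$ matrix whose columns generate $\gkz\subset\Cl(Y)\otimes\R\cong\R^r$) and fan matrix $V$ a Gale dual of $Q$, I would prove: $Y$ is a toric cover of a weighted projective toric bundle $W\to B$, with $W$ $\Q$--factorial projective and fibre a WPS, if and only if, after permuting columns, $V$ acquires a block form whose lower--right block is the fan matrix of the fibre WPS and whose remaining part is the fan matrix of a $\Q$--factorial projective toric base $B$ together with a bundle (twist) datum. Dually this says $Q$ has a block form in which a subset of columns spans a face $F$ of $\gkz$ which is itself the GKZ cone of the fibre WPS and the projection $\gkz\to\gkz/\langle F\rangle$ carries the chamber $\gamma$ of $Y$ to the chamber of $B$; by Definition~\ref{def:bordering} this last condition is exactly that $\gamma$ be \emph{maxbord} in $\gkz$. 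The revised Kleinschmidt description of Picard number $2$ from the first part of the paper is the model case of this dictionary, and serves as the induction anchor for Stage 4.

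\emph{Stage 3 (the first and second assertions).} The second assertion is Stage 2 read as an equivalence at the level of the given fan: the honest isomorphism holds exactly when $\gamma$ is maxbord. Forward implication: the fan of a toric cover of a WPTB is literally in the block form of Stage 2 — passing to a toric cover changes $Q$ only by a finite-index sublattice of $\Cl$, which preserves both the block structure and the maxbord property; the converse runs Stage 2 backwards, checking that the base chamber defines a $\Q$--factorial projective $B$. For the first assertion I would show that a weight matrix satisfying the good conditions of \cite[Def.~3.2]{RT-LA&GD} and Definition~\ref{def:Wmatrix} always admits a maxbord chamber: using the Hermite/row--echelon normal-form reduction of \cite{RT-LA&GD} together with positivity of the rows and the no--zero--column hypothesis, one isolates a subset of columns spanning a face of $\gkz$ dual to a WPS fan matrix and erects a simplicial chamber $\gamma_0$ on it, maxbord by construction. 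Since any two chambers of one secondary fan correspond to $\Q$--factorial projective toric varieties with identical rays, hence birational and isomorphic in codimension $1$, the PWS $Y$, and therefore $X=Y/\mu(X)$, is isomorphic in codimension $1$ to the variety sitting on $\gamma_0$, a toric cover of a WPTB by Stage 2.

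\emph{Stage 4 (the third assertion, and the main obstacle).} The third assertion follows by induction on the Picard number $r$, applying the second assertion at each step: "recursively maxbord" (Definition~\ref{def:totally maxbord}) means $\gamma$ is maxbord with the induced base chamber in $\gkz/\langle F\rangle$ again recursively maxbord, the induction terminating at $r=1$, where the unique chamber of a one-dimensional GKZ cone produces a WPS (resp.\ a toric cover of one); stringing the second assertion along the resulting tower exhibits $Y$ as obtained from a WPS cover by a sequence of WPTB toric covers, and conversely. I expect the principal difficulty to lie in Stages 2 and 3: identifying exactly which subset of columns of a "good" weight matrix cuts out a face of $\gkz$ that is the GKZ cone of a WPS, and verifying that the chamber built on it is maxbord, is a delicate piece of integral linear algebra; and in Stage 2 the subtle point is the torsion bookkeeping — proving that what genuinely appears is a toric cover of a WPTB rather than a WPTB on the nose, and pinning down the direction of the cover — which is exactly where non-freeness of $\Cl(X)$ re-enters and closes the loop with the last assertion.
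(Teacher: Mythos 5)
Your overall architecture does match the paper's: reduce to the canonical $1$--covering PWS $Y$ (which shares $Q$, $\gkz$ and the fan chamber with $X$, so the last assertion and the ``isomorphic in codimension $1$'' bookkeeping work as you say), prove a dictionary between maxbord chambers and toric covers of WPTB's at the PWS level, and iterate for the recursively maxbord case. But your central dictionary in Stage 2 is stated the wrong way round, and this propagates into Stage 4. For a maxbord chamber $\gamma$ w.r.t.\ a hyperplane $H$, the columns of $Q$ lying on $H$ span the facet $\gkz\cap H$, and this facet is the Gale dual (pseudo--effective) cone of the \emph{base} $X'$ -- its weight matrix is the truncated block $Q'$ -- while the base chamber is the \emph{intersection} $\gamma'=\gamma\cap H$, exactly as in part (a) of the proof of Theorem~\ref{thm:maxbord}; the fibre WPS is read off in the complementary direction, namely from the bottom row $(w_0,\ldots,w_s)$ of $Q$, i.e.\ in the rank--one quotient $\Cl(Y)/\Cl(X')$. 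You assert instead that the distinguished column subset spans the GKZ cone of the \emph{fibre} WPS and that the base chamber is the image of $\gamma$ under $\gkz\to\gkz/\langle F\rangle$. With $F$ a facet that quotient is $1$--dimensional and cannot carry the rank-$(r-1)$ base chamber; with $F$ a ray the condition has nothing to do with Definition~\ref{def:bordering}. Consequently your Stage 4 recursion (``induced base chamber in $\gkz/\langle F\rangle$'') does not reproduce Definition~\ref{def:totally maxbord}, where one intersects $\gamma$ with the successive bordering hyperplanes rather than projecting.

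The second, more substantive, gap is that the step you explicitly defer is the actual content of the theorem. The equivalence ``maxbord $\Longleftrightarrow$ toric cover of a WPTB'' hinges on the case analysis of the truncated matrix $Q'$ in the proof of Theorem~\ref{thm:maxbord}: (a) $Q'$ a reduced $W$--matrix, where one still only gets a WPTwB because the twisting divisors $E_k$ may be Weil non--Cartier, and Proposition~\ref{prop:WPTwB} (Cartier indices, the matrices $\Delta,\Lambda,\Phi$ and diagram (\ref{covering-diagram-1})) is what turns this into a toric cover of an honest WPTB; (b) $Q'$ non--reduced and (c) $\mathcal{L}_r(Q')$ with cotorsion, where the extra finite toric cover is built from the explicit matrices $A,B$ and diagram (\ref{covering-diagram}), together with the verification (conditions (e),(f) of Definition~\ref{def:Wmatrix} via the maxbord hypothesis) that $Q'$ can fail to be a $W$--matrix only in these controlled ways. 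You name this ``torsion bookkeeping'' as the principal difficulty but give no construction, so the second and third assertions are not actually proved. Similarly, for the first assertion the ``good conditions'' amount precisely to $\Mov(V)$ (not just $\gkz$) being maxbord w.r.t.\ some hyperplane, as in Theorem~\ref{thm:birWPTB}: the maxbord chamber you want to ``erect'' on the boundary face must be produced inside $\Mov(V)$ as a genuine chamber of the secondary fan, which is what that hypothesis guarantees; a simplicial cone built ad hoc on a face of $\gkz$ need not belong to $\mathcal{A}_{\Ga}(V)$.
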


This statement is the patching of theorems \ref{thm:quot-maxbord}, \ref{thm:quot-birWPTB} and \ref{thm:quot-recmaxbord} which are immediate consequences, by \cite[Thm.~2.2]{RT-QUOT}, of theorems \ref{thm:maxbord}, \ref{thm:birWPTB} and \ref{thm:recmaxbord}, respectively.

Before clarifying the meaning of emphasized terms in the statement above, let us recall that this kind of results are mostly well known in the context of smooth complete toric varieties. By this point of view, the first important result is pro\-ba\-bly given the Kleinschmidt classification of smooth projective toric varieties with Picard number (in the following called \emph{rank}) $r\leq 2$ as suitable projective toric bundles (PTB, for short) over a projective space of smaller dimension \cite{Kleinschmidt}. Later P.~Kleinschmidt and B.~Sturmfels proved that every smooth complete toric variety of rank $r\leq 3$ is necessarily projective \cite{Kleinschmidt-Sturmfels}, so extending the Kleinschmidt classification to the range of smooth complete toric variety of rank $r\leq2$. In 1991 V.~Batyrev generalized the Kleinschmidt classification by introducing the concepts of \emph{pri\-mi\-ti\-ve collection} and of associated \emph{primitive relation} \cite[Definitions~2.6,7,8]{Batyrev91} (see also the following \S~\ref{sssez:primitive2}): namely he proved that a smooth complete toric variety $X(\Si)$ is a PTB over a toric variety of smaller dimension if and only if the fan $\Si$ admits a primitive collection with focus 0 (in the following also called \emph{nef}: see \ref{sssez:primitive2}) which is disjoint from any other primitive collection of $\Si$ \cite[Prop.~4.1]{Batyrev91}. Consequently a smooth complete toric variety $X(\Si)$ is produced from a projective space by a sequence of PTB if and only if $\Si$ is a \emph{splitting fan} \cite[Def.~4.2, Thm.~4.3, Cor.~4.4]{Batyrev91}.

Let us first of all underline that Batyrev's techniques are deeply connected with the smoothness hypothesis. In fact the starting step of the induction proving \cite[Thm.~4.3]{Batyrev91} does not more hold in the singular set up, even for projective varieties: there exist projective $\Q$--factorial toric varieties, of rank $r\geq 2$, do not admitting any numerically effective primitive collection although all their primitive collections are disjoint pair by pair. Example~\ref{ex:noconverse} gives an account of this situation. Even for rank $r=1$ the singular case looks to be significantly more intricate than the smooth one, since the former necessarily involves some finite covering: in  fact on the one hand the unique smooth complete toric variety with $r=1$ is given by the projective space and on the other hand a $\Q$--factorial complete toric variety with $r=1$ is a quotient of a weighted projective space (WPS, for short), as proved by V.~Batyrev and D.~Cox \cite{BC} and by H.~Conrads \cite{Conrads}.

As already recalled, the latter result has been extended to every rank $r$ by \cite[Thm.~2.2]{RT-QUOT}, here recalled by Theorem~\ref{thm:covering&quotient}, allowing us to reducing the classification of $\Q$--factorial complete toric varieties to the problem of classifying their covering PWS i.e. to classifying $\Q$--factorial complete toric varieties with free classes group.

Bypassing counterexample \ref{ex:noconverse} means characterizing those PWS admitting a nef primitive collection. This is done by stressing remarks of C.~Casagrande \cite{Casagrande} and of D.~Cox and C.~von~Renesse \cite{Cox-vRenesse}, revising the original Batyrev definition of primitive relation: \S~\ref{sssez:primitive2} is largely devoted to this purpose. The idea is that of dually thinking of the numerical class of a primitive relation as a hyperplane in $\Cl(X)\otimes\R$, which we called the \emph{supporting hyperplane} of the primitive collection (see Definition~\ref{def:support}). By applying $\Z$--linear Gale duality de\-ve\-lo\-ped in \cite{RT-LA&GD}, in \S~\ref{ssez:GKZ} a linear algebraic interpretation of the secondary (or GKZ) fan is proposed. More precisely, given a $F$--matrix $V$ (see Definition~\ref{def:Fmatrice}) we can choose a Gale dual $W$--matrix $Q=\G(V)$ (see Definition~\ref{def:Wmatrix} and \cite[\S~3.1]{RT-LA&GD})  such that $Q$ is a positive and in row echelon form (REF) matrix (see \cite[Thm.~3.18]{RT-LA&GD} and the following Proposition~\ref{prop:positivit?}). The secondary fan can then be thought of as a suitable fan whose support is given by the strongly convex cone $\gkz=\langle Q\rangle$, called the \emph{Gale dual} cone and generated by the columns of the weight matrix $Q$. This gives a $\Z$--linear algebraic interpretation of the duality linking simplicial fans ge\-ne\-ra\-ted by the columns of the fan matrix $V$ and \emph{bunches of cones}, in the sense of \cite{Berchtold-Hausen}, inside the Gale dual cone $\gkz$, in terms of the $\Z$--linear Gale duality linking submatrices of $V$ and $Q$ exhibited by \cite[Thm.~3.2]{RT-LA&GD}: in particular this gives a bijection between simplicial fans $\Si$ giving $\Q$--factorial projective toric varieties $X$ whose fan matrix is $V$ and $r$--dimensional subcones $\g$ of $\gkz$ (called \emph{chambers}) obtained as intersection of the cones in the corresponding bunch of cones. In particular $\gkz$ turns out to be contained in the positive orthant $F^r_+$ of $\Cl(X)\otimes\R$ and the properties of a primitive collection $\pc$ for $\Si$ can be thought of in terms of mutual position of the corresponding support hyperplane $H_{\pc}$ with respect to the fan chamber $\g$ (see Propositions~\ref{prop:primitive} and \ref{prop:normal-inward}). E.g. $\pc$ is numerically effective if and only if $H_{\pc}$ cut out a facet of the Gale dual cone $\gkz$ i.e. $\pc$ is a \emph{bordering} primitive collection, in the sense of Definition~\ref{def:bordering}. Moreover a chamber $\g\subseteq\gkz$ is called \emph{(maximally) bordering} if it admits a (facet) face lying on the boun\-da\-ry $\partial\gkz$. Theorem~\ref{thm:intbord-pc} exhibits precise relation between bordering chambers and bordering primitive collections so giving the characterization of those PWS admitting a nef primitive collection we are looking for: it is the generalization of \cite[Prop.~3.2]{Batyrev91} to the singular $\Q$--factorial set up. Then extension of the Batyrev classification to the singular $\Q$--factorial case is given by Theorem~\ref{thm:maxbord}: in particular the latter together Proposition~\ref{prop:maxbord vs primitive} generalizes \cite[Prop.~4.1]{Batyrev91}, together Proposition~\ref{prop:bimaxbord} generalizes \cite[Thm.~4.3]{Batyrev91} and together Theorem~\ref{thm:recmaxbord} generalizes \cite[Cor.~4.4]{Batyrev91}.

\halfline
Let us then come back to explain the obscure hypothesis in the statement of Theorem \ref{thm:intro} above: \emph{good conditions on the associated weight matrix} means that $Q$ can be set in a positive REF such that, by deleting the bottom row and the last $s+1$ columns on the right, one still get an (almost) $W$--matrix $Q'$ which will give a weight matrix of the $s$--fibration basis. In other words this condition means that the Gale dual cone $\gkz$ may admit a maximally bordering fan chamber, which is the generalization of Batyrev's condition requiring the existence of a nef primitive collection disjoint from any further primitive collection of the same fan.

From the geometric point of view, a maximally bordering chamber corresponds to giving a fibering morphism whose fibers are a suitable abelian quotient of a weighted projective space (called a \emph{fake WPS}): this is a well known fact which is essentially rooted in Reid's work \cite{Reid83}. See also \cite[Prop.\,1.11]{Hu-Keel} and \cite[\S\,2]{Casagrande13} for more recent results suggesting possible interesting applications, of techniques here presented, in the more general setup of Mori Dream Spaces. As explained in \S~\ref{ssez:geometrico}, Remark~\ref{rem:geometrico} and Remark~\ref{rem:geometrico_quot}, this fibering morphism gives the Stein factorization of the toric cover of a WPTB exhibited by Theorem~\ref{thm:maxbord}, and more generally by the previous Theorem~\ref{thm:intro}, so obtaining a commutative diagram
\begin{equation*}
  \xymatrix{X(\Si)\ar[d]_-{\begin{array}{c}
                             _{\text{fake WPS}} \\
                             ^{\text{fibering}}
                           \end{array}
}^-{\phi}\ar[r]_f^-{\text{finite}}&\P^W(\mathcal{E})\ar[d]_-{\varphi}^-{\text{WPTB}}\\
            X_0(\Si_0)\ar[r]^-{f_0}_-{\text{finite}}&X'(\Si')}
\end{equation*}
whose vertical morphisms have connected fibers and whose horizontal ones are finite morphisms of toric varieties. Notice that if $X$ is smooth then both the finite toric morphisms $f$ and $f_0$ are trivial giving that $\phi=\varphi$ is precisely the Batyrev's projective toric bundle. The right hand side factorization $\varphi\circ f$ presents the great advantage of being constructively described, giving a procedural approach to an ef\-fec\-ti\-ve determination of all the morphisms and varieties involved, as examples in \S~\ref{ssez:esempi} show.
The last procedure can be easily implemented in any computer algebra package (we used Maple to perform all the necessary computations).

\halfline
Let us now describe the structure of this paper and quickly summarize the further obtained results.

\S~\ref{sez:preliminari} is firstly devoted to introduce notation and preliminaries: the long list of notation in \S~\ref{ssez:lista} recalls lot of symbols defined in \cite{RT-LA&GD} and \cite{RT-QUOT}. Then \S~\ref{ssez:GKZ} is dedicated to introduce the above mentioned $\Z$--linear algebraic interpretation of the secondary fan. Theorem~\ref{thm:GKZ} bridges between the linear algebraic secondary fan defined in Definition~\ref{def:GKZ&Mov} and the usual secondary fan of a $\Q$--factorial complete toric variety. The bijection and $\Z$--linear Gale duality between projective fans and GKZ chambers are established by Theorem~\ref{thm:Gale sui coni}.

The long \S~\ref{sez:Batyrev-rivisto} is the main part of the present paper, in which the above described Batyrev--type classification of PWS is performed. \S~\ref{sssez:primitive2} is devoted to revising the concept of a primitive collection and to introducing all the bordering notion for collections and chambers with respect to the Gale dual cone $\gkz$. In \S~\ref{ssez:t-bundles&covers} we introduce the main ingredients useful for the classification. \S~\ref{sssez:WPTB} is dedicated to define a \emph{weighted projective toric bundle} (WPTB) $\P^W(\mathcal{E})$ as the $\Proj$ of the $W$--weighted symmetric algebra $S^W(\mathcal{E})$ over a locally free sheaf $\mathcal{E}$. In Proposition~\ref{prop:fan fibrato} we describe the fan of a WPTB, as a $\Q$--factorial toric variety, along the lines of what is done in \cite[Prop.~7.3.3]{CLS} for a projective toric bundle (PTB). \S~\ref{sssez:Tcovers} is devoted to recall the concept of a \emph{toric cover}, as defined in \cite{AP}. \S~\ref{ssez:maxbord&WPTB} is the core of the present paper with Theorem~\ref{thm:maxbord} and, from the birational point of view (i.e. up to toric flips as defined in \S~\ref{ssez:toricflip}), Theorem~\ref{thm:birWPTB}. The geometric meaning of a maxbord chamber is explained in \S~\ref{ssez:geometrico}, as already described above.  In \S~\ref{ssez:maxbord&splitting} we give the generalization, in the singular $\Q$--factorial setting, of Batyrev's concept of a splitting fan, giving rise to Theorem~\ref{thm:recmaxbord}. In particular, when $r\leq 3$, Theorem~\ref{thm:3pc} and Remark \ref{rem:r-2} give a partial extension to the singular $\Q$--factorial case of Batyrev's results on the number of primitive relations (see \cite[\S~5 and 6]{Batyrev91}). \S~\ref{ssez:contraibile} is devoted to giving some partial generalization, to the $\Q$--factorial set up, of results about contractible classes on smooth projective toric varieties due to C.~Casagrande \cite{Casagrande} and H.~Sato \cite{Sato}: our study is limited to the case of numerically effective classes (see Proposition~\ref{prop:contraibile} and Theorem~\ref{thm:contraibile}). The last subsection \S~\ref{ssez:esempi} is devoted to give an extensive treatment of applications of all the techniques described, by means of five examples:  here it is rather important for the reader to be equipped with some computer algebra package which has the ability to produce Hermite and Smith normal forms of matrices and their switching matrices. For example, using Maple, similar procedures are given by \texttt{HermiteForm} and \texttt{SmithForm}  with their output options.

\noindent Let us notice that the last Example \ref{ex:WPTB(c)} exhibits the case of a (4--dimensional) $\Q$--factorial complete toric variety of Picard number $r=3$ whose $\Nef$ cone is $0$, i.e. which does not admit any non--trivial numerical effective divisor: we think this is a significant and new example since in the smooth case O.~Fujino and S.~Payne proved that this is not possible for $r\leq 4$, at least for dimension $\leq 3$ \cite{FP}. For further considerations about this subject see Remark \ref{rem:FP}.

Finally the conclusive \S~\ref{sez:Qfproj} is devoted to apply results obtained in the previous \S~\ref{sez:Batyrev-rivisto} for PWS to the case of a general $\Q$--factorial projective toric variety. The above Theorem~\ref{thm:intro} is the patching of results here stated. This section ends up with a further example aimed to classifying a $\Q$--factorial projective variety which is not a PWS.

\begin{acknowledgements}
  We would like to thank Cinzia Casagrande for helpful conversations and suggestions. We are also indebt with Brian Lehmann who pointed out to us the reference \cite{FS}. Last but not least we thank Daniela Caldini for her invaluable contribution in making the figures of the present paper.
\end{acknowledgements}

\section{Preliminaries and notation}\label{sez:preliminari}

The present paper is the third episode of a long study on $\Q$--factorial complete toric varieties. On the one hand, this is a further application of the $\Z$--linear Gale Duality de\-ve\-lo\-ped in the first paper \cite{RT-LA&GD}, to which the reader is referred for notation and all the necessary preliminary results. In particular for what concerning notation on toric varieties, cones and fans, the reader is referred to \cite[\S~1.1]{RT-LA&GD}, for linear algebraic preliminaries about normal forms of matrices (Hermite and Smith normal forms - HNF and SNF for short) to \cite[\S~1.2]{RT-LA&GD}. $\Z$--linear Gale Duality and what is concerning fan matrices ($F$--matrices) and weight matrices ($W$-matrices) is developed in \cite[\S~3]{RT-LA&GD}. On the other hand, the results here presented are a consequence of the fact that a $\Q$--factorial complete toric variety $X$ is always a finite geometric quotient of a poly weighted space (PWS) $Y$, which turns out to be the \emph{universal $1$--connected in codimension 1 covering} (\emph{$1$--covering}) of $X$ \cite[Def.~1.5, Thm.~2.2]{RT-QUOT}.

Every time the needed nomenclature will be recalled either directly by giving the necessary definition or by reporting the precise reference. Here is a list of main notation and relative references:

\subsection{List of notation}\label{ssez:lista}\hfill\\
Let $X(\Si)$ be a $n$--dimensional toric variety and $T\cong(\C^*)^n$ the acting torus, then
\begin{eqnarray*}
  &M,N,M_{\R},N_{\R}& \text{denote the \emph{group of characters} of $T$, its dual group and}\\
  && \text{their tensor products with $\R$, respectively;} \\
  &\Si\subseteq N_{\R}& \text{is the fan defining $X$;} \\
  &\Si(i)& \text{is the \emph{$i$--skeleton} of $\Si$, which is the collection of all the}\\
  && \text{$i$--dimensional cones in $\Si$;} \\
  &|\Si|& \text{is the \emph{support} of the fan $\Si$, i.e.}\ |\Si|=\bigcup_{\s\in\Si}\s \subseteq N_{\R};\\
  &\det(\s)& :=|\det(V_{\s})|\ \text{for a simplicial cone $\s\in\Si(n)$ whose primitive}\\
  && \text{generators give the columns of $V_{\s}$};\\
  &\text{\emph{unimodular} $\s$}& \text{if $\det(\s)=1$};\\
  &r=\rk(X)&\text{is the Picard number of $X$, also called the \emph{rank} of $X$};\\
  &\mathfrak{P}=\mathfrak{P}(1,\ldots,n+r)&\text{is the power set of the set of indexes $\{1,\ldots,n+r\}$};\\
  &F^r_{\R}& \cong\R^r,\ \text{is the $\R$--linear span of the free part of $\Cl(X(\Si))$};\\
  &F^r_+& \text{is the positive orthant of}\ F^r_{\R}\cong\R^r;\\
  &\langle\v_1,\ldots,\v_s\rangle\subseteq\N_{\R}& \text{denotes the cone generated by the vectors $\v_1,\ldots,\v_s\in N_{\R}$;}\\
  && \text{if $s=1$ then this cone is also called the \emph{ray} generated by $\v_1$;} \\
  &\mathcal{L}(\v_1,\ldots,\v_s)\subseteq N& \text{denotes the sublattice spanned by $\v_1,\ldots,\v_s\in N$\,.}
\end{eqnarray*}
Let $A\in\mathbf{M}(d,m;\Z)$ be a $d\times m$ integer matrix, then
\begin{eqnarray*}
  &\mathcal{L}_r(A)\subseteq\Z^m& \text{denotes the sublattice spanned by the rows of $A$;} \\
  &\mathcal{L}_c(A)\subseteq\Z^d& \text{denotes the sublattice spanned by the columns of $A$;} \\
  &A_I\,,\,A^I& \text{$\forall\,I\subseteq\{1,\ldots,m\}$ the former is the submatrix of $A$ given by}\\
  && \text{the columns indexed by $I$ and the latter is the submatrix of}\\
  && \text{$A$ whose columns are indexed by the complementary }\\
  && \text{subset $\{1,\ldots,m\}\backslash I$;} \\
  &_sA\,,\,^sA& \text{$\forall\,1\leq s\leq d$\ the former is the submatrix of $A$ given by the}\\
  && \text{lower $s$ rows and the latter is the submatrix of $A$ given by}\\
  && \text{the upper $s$ rows of $A$;} \\
  &\HNF(A)\,,\,\SNF(A)& \text{denote the Hermite and the Smith normal forms of $A$,}\\
  && \text{respectively;}\\
  &\REF& \text{Row Echelon Form of a matrix;}\\
  &\text{\emph{positive}}\ (\geq 0)& \text{a matrix (vector) whose entries are non-negative.}\\
  &\text{\emph{strictly positive}}\ (> 0)& \text{a matrix (vector) whose entries are strictly positive.}
\end{eqnarray*}
Given a $F$--matrix $V=(\v_1,\ldots,\v_{n+r})\in\mathbf{M}(n,n+r;\Z)$ (see Definition \ref{def:Fmatrice} below), then
\begin{eqnarray*}
  &\langle V\rangle& =\langle\v_1,\ldots,\v_{n+r}\rangle\subseteq N_{\R}\ \text{denotes the cone generated by the columns of $V$;} \\
  &\SF(V)& =\SF(\v_1,\ldots,\v_{n+r})\ \text{is the set of all rational simplicial fans $\Si$ such that}\\
  && \text{$\Sigma(1)=\{\langle\v_1\rangle,\ldots,\langle\v_{n+r}\rangle\}\subset N_{\R}$ \cite[Def.~1.3]{RT-LA&GD};}\\
  &\P\SF(V)&:=\{\Si\in\SF(V)\ |\ X(\Si)\ \text{is projective}\};\\
  &\G(V)&=Q\ \text{is a \emph{Gale dual} matrix of $V$ \cite[\S~3.1]{RT-LA&GD};} \\
  &\CQ&=\langle\G(V)\rangle \subseteq F^r_+\ \text{is a \emph{Gale dual cone} of}\ \langle V\rangle:\ \text{it is always assumed to be}\\
  && \text{generated in $F^r_{\R}$ by the columns of a positive $\REF$ matrix $Q=\G(V)$}\\
  && \text{(see Proposition \ref{prop:positivit?} below)}.\\
  &V^{\text{red}}&\text{is the \emph{reduced} matrix of $V$ \cite[Def.~3.13]{RT-LA&GD} whose columns are given by}\\
  && \text{the primitive generators of $\langle\v_i\rangle$, with $1\leq i\leq n+r$.}\\
  &Q^{\text{red}}&=\G\left(V^{\text{red}}\right)\ \text{is the \emph{reduced} matrix of $Q=\G(V)$ \cite[Def.~3.14]{RT-LA&GD}.}
\end{eqnarray*}
\oneline
\noindent Let us start by recalling four fundamental definitions:

\begin{definition} A $n$--dimensional $\Q$--factorial complete toric variety $X=X(\Si)$ of rank $r$ is the toric variety defined by a $n$--dimensional \emph{simplicial} and \emph{complete} fan $\Si$ such that $|\Si(1)|=n+r$ \cite[\S~1.1.2]{RT-LA&GD}. In particular the rank $r$ coincides with the Picard number i.e. $r=\rk(\Pic(X))$.
\end{definition}

\begin{definition}[\cite{RT-LA&GD}, Def.~3.10]\label{def:Fmatrice} An \emph{$F$--matrix} is a $n\times (n+r)$ matrix  $V$ with integer entries, satisfying the conditions:
\begin{itemize}
\item[a)] $\rk(V)=n$;
\item[b)] $V$ is \emph{$F$--complete} i.e. $\langle V\rangle=N_{\R}\cong\R^n$ \cite[Def.~3.4]{RT-LA&GD};
\item[c)] all the columns of $V$ are non zero;
\item[d)] if ${\bf  v}$ is a column of $V$, then $V$ does not contain another column of the form $\lambda  {\bf  v}$ where $\lambda>0$ is real number.
\end{itemize}
A \emph{$CF$--matrix} is a $F$-matrix satisfying the further requirement
\begin{itemize}
\item[e)] the sublattice ${\mathcal L}_c(V)\subset\Z^n$ is cotorsion free, which is ${\mathcal L}_c(V)=\Z^n$ or, equivalently, ${\mathcal L}_r(V)\subset\Z^{n+r}$ is cotorsion free.
\end{itemize}
A $F$--matrix $V$ is called \emph{reduced} if every column of $V$ is composed by coprime entries \cite[Def.~3.13]{RT-LA&GD}.
\end{definition}
 The most significant example of $F$-matrix is given by a matrix $V$ whose columns are    integral vectors generating the rays of the $1$-skeleton $\Sigma(1)$ of a rational fan $\Sigma$. In the following a similar matrix $V$ will be called a \emph{fan matrix} of $\Sigma$; when every column of $V$ is composed by coprime entries, it will be called a \emph{reduced fan matrix}.

\begin{definition}\cite[Def.~3.9]{RT-LA&GD}\label{def:Wmatrix} A \emph{$W$--matrix} is an $r\times (n+r)$ matrix $Q$  with integer entries, satisfying the following conditions:
\begin{itemize}
\item[a)] $\rk(Q)=r$;
\item[b)] ${\mathcal L}_r(Q)$ has not cotorsion in $\Z^{n+r}$;
\item[c)] $Q$ is \emph{$W$--positive}, which is $\mathcal{L}_r(Q)$ admits a basis consisting of positive vectors (see list \ref{ssez:lista} and \cite[Def.~3.4]{RT-LA&GD}).
\item[d)] Every column of $Q$ is non-zero.
\item[e)] ${\mathcal L}_r(Q)$   does not contain vectors of the form $(0,\ldots,0,1,0,\ldots,0)$.
\item[f)]  ${\mathcal L}_r(Q)$ does not contain vectors of the form $(0,a,0,\ldots,0,b,0,\ldots,0)$, with $ab<0$.
\end{itemize}
A $W$--matrix is called \emph{reduced} if $V=\G(Q)$ is a reduced $F$--matrix \cite[Def.~3.13, Thm.~3.15]{RT-LA&GD}.
\end{definition}
In the following, if $V$ is a fan matrix of a rational fan $\Si$ then $Q=\G(V)$ is called a \emph{weight matrix} of $\Si$. If $V$ is reduced then $Q$ is a called a \emph{reduced weight matrix}.

\begin{definition}[\cite{RT-LA&GD} \S 2.2]\label{def:PWS} A \emph{poly weighted space} (PWS) is a $n$--dimensional $\Q$--factorial complete toric variety $Y(\widehat{\Si})$ of rank $r$, whose \emph{reduced fan matrix} $\widehat{V}$ (see \cite[Def.~3.13]{RT-LA&GD}) is a $CF$--matrix i.e. if
\begin{itemize}
  \item $\widehat{V}$ is a $n\times(n+r)$ $CF$--matrix,
  \item $\widehat{\Si}\in\SF(\widehat{V})$.
\end{itemize}
Let us recall that a $\Q$--factorial complete toric variety $Y$ is a PWS if and only if it is \emph{1-connected in codimension 1} (or simply \emph{1-connected}): since $Y$ is normal it is equivalent to ask for $\pi_1(Y_\text{reg})\cong\Tors(\Cl(Y))=0$ \cite[Cor.~1.8, Thm.~2.1]{RT-QUOT}, where $Y_\text{reg}\subseteq Y$ is the Zariski open subset of regular points.
\end{definition}

\begin{example}\label{ex:1} In order to explain the introduced notation, consider a smooth and complete toric variety $X(\Si)$, of dimension and rank equal to 3, with reduced fan matrix $V$ given by
\begin{equation*}
    V=\left(
        \begin{array}{cccccc}
          1 & 0 & 0 & 0 & -1 & 1 \\
          0 & 1 & 0 & 0 & -1 & 1 \\
          0 & 0 & 1 & -1 & -1 & 1 \\
        \end{array}
      \right)\
\end{equation*}
i.e. such that $\Si\in\SF(V)$.
One can check that $V$ supports only two complete and simplicial rational fans admitting every column of $V$ as a ray generator; that is $\SF(V)=\{\Si_1,\Si_2\}$, where $\Si_1$ and $\Si_2$ are the fans of cones obtained as all possible faces of the following lists of maximal cones:
\begin{eqnarray*}
  &\Si_1(3) = \{\{1, 4, 5\}, \{1, 3, 5\}, \{2, 4, 5\}, \{2, 3, 5\}, \{2, 4, 6\}, \{2, 3, 6\}, \{1, 4, 6\}, \{1, 3, 6\}\}& \\
  &\Si_2(3) = \{\{1, 4, 5\}, \{1, 3, 5\}, \{2, 4, 5\}, \{2, 3, 5\}, \{1, 2, 4\}, \{2, 3, 6\}, \{1, 3, 6\}, \{1, 2, 6\}\}&
\end{eqnarray*}
(here a maximal simplicial cone $\langle V_I\rangle$ is identified to the subset of column indexes $I\subseteq\{1,\ldots, n+r\}$).

Both $\Si_1$ and $\Si_2$ are smooth, so giving two possible choices for $X(\Si)$. Moreover \cite{Kleinschmidt-Sturmfels} guarantees that those fans are both projective, that is $\P\SF(V)=\SF(V)$.
\noindent A weight matrix of $X$ is given by the choice of a Gale dual matrix of $V$
\begin{equation*}
       Q=\left(
                             \begin{array}{cccccc}
                               1 & 1 & 1 & 0 & 1 & 0 \\
                               0 & 0 & 1 & 1 & 0 & 0 \\
                               0 & 0 & 0 & 0 & 1 & 1 \\
                             \end{array}
                           \right)=\G(V)\,.
\end{equation*}
Both $V$ and $Q$ are reduced and $V$ is even a $CF$-matrix, so giving that $X$ is a PWS.
\end{example}

\subsection{The secondary fan}\label{ssez:GKZ}
Let us here introduce a linear algebraic interpretation of the \emph{se\-con\-da\-ry} (or \emph{GKZ}) fan of a toric variety $X$.
For any further detail about the secondary fan of a toric variety $X(\Si)$, we refer the interested reader to the comprehensive monograph \cite{CLS} and references therein: among them let us recall the original sources (\cite{GKZ}, \cite{GKZ2} and \cite{Oda-Park}).

Let $V=(\v_1,\ldots,\v_{n+r})$ be a reduced $F$--matrix and $Q:=\G(V)=(\q_1,\ldots,\q_{n+r})$ an associated $W$--matrix. Consider the cone generated by the columns of $Q$
\begin{equation*}
    \CQ=\langle Q\rangle:=\langle\q_1,\ldots,\q_{n+r}\rangle.
\end{equation*}
For every $\Si\in\SF(V)$, one gets $|\Si|=\langle V\rangle=N_{\R}$. Then $\CQ$ turns out to be a strongly convex cone in $F^r_{\R}:=F^r\otimes\R$, where $F^r=\text{Free}(\Cl(X(\Si)))\cong\Z^r$ \cite[Lemma~14.3.2]{CLS}. Recalling \cite[Theorems~3.8,~3.18]{RT-LA&GD} we are able to improve this fact:

\begin{proposition}\label{prop:positivit?} Let $F^r_+$ denote the positive orthant of $F^r_{\R}$. Then $\langle V\rangle=N_{\R}$ if and only if there exists a positive $\REF$--matrix $Q$ such that $Q=\G(V)$ and $\CQ=\langle Q\rangle\subset F^r_+$. In particular, for every $\Si\in\SF(V)$, $X=X(\Si)$ is complete if and only if there exists a positive $\REF$--matrix $Q$ such that $Q=\G(V)$ and $\CQ=\langle Q\rangle\subset F^r_+$.
\end{proposition}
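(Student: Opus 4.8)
The plan is to derive the statement from the two cited results of \cite{RT-LA&GD}, namely \cite[Thm.~2.8]{RT-LA&GD} (which characterizes $F$--completeness of $V$, i.e.\ $\langle V\rangle = N_\R$, in terms of the Gale dual) and \cite[Thm.~2.19]{RT-LA&GD} (which produces a positive $\REF$ representative of the Gale dual class), and then to reduce the completeness statement for $X(\Si)$ to the $F$--completeness statement for $V$ by recalling that any $\Si\in\SF(V)$ has support $|\Si|=\langle V\rangle$.

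First I would treat the equivalence ``$\langle V\rangle = N_\R \iff$ there exists a positive $\REF$ matrix $Q$ with $Q=\G(V)$ and $\langle Q\rangle\subset F^r_+$''. For the forward direction: assuming $\langle V\rangle = N_\R$, one already knows (as recalled just before the proposition, using \cite[Lemma~14.3.2]{CLS}) that $\langle\G(V)\rangle$ is a strongly convex cone in $F^r_\R$; I would then invoke \cite[Thm.~2.19]{RT-LA&GD} to choose within the Gale equivalence class a representative matrix $Q$ that is simultaneously positive and in row echelon form, and argue that $W$--positivity of this representative together with the $F$--completeness forces $\langle Q\rangle$ to actually lie in the positive orthant $F^r_+$ (a positive $\REF$ matrix has all columns in $F^r_+$, so the cone they generate is in $F^r_+$). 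For the reverse direction: if such a positive $\REF$ matrix $Q=\G(V)$ exists with $\langle Q\rangle\subset F^r_+$ strongly convex, then by Gale duality (\cite[Thm.~2.8]{RT-LA&GD}) the strong convexity of $\langle Q\rangle$, equivalently the fact that $Q$ is $W$--positive with no coordinate functional vanishing identically on $\mathcal L_r(Q)$, translates back into $F$--completeness $\langle V\rangle = N_\R$. The key point is the dictionary of \cite[Thm.~2.8]{RT-LA&GD}: $\langle V\rangle=N_\R$ on the $F$ side corresponds exactly to $\langle\G(V)\rangle$ being full of ``positivity'' (strongly convex, inside an orthant) on the $W$ side.

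For the ``in particular'' clause, I would simply note that for any $\Si\in\SF(V)$ one has, by definition of $\SF(V)$, that $\Si(1)=\{\langle\v_1\rangle,\ldots,\langle\v_{n+r}\rangle\}$ and hence $|\Si|=\langle V\rangle$; therefore $X(\Si)$ is complete, i.e.\ $|\Si|=N_\R$, if and only if $\langle V\rangle = N_\R$, and the first part of the proposition then applies verbatim.

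The step I expect to be the main obstacle is pinning down precisely \emph{why} a positive $\REF$ representative $Q$ of the Gale class, when $V$ is $F$--complete, must generate a cone contained in $F^r_+$ rather than merely being representable by positive vectors in its row lattice; this is where one must carefully combine condition (c) (resp.\ (f)) in Definition~\ref{def:Wmatrix} with the echelon structure, and check that no column of $Q$ can escape the closed positive orthant. Everything else is bookkeeping with the Gale duality correspondence of \cite{RT-LA&GD}.
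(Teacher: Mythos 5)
Your proposal is correct and follows essentially the same route as the paper, which gives no written proof but simply derives the proposition from \cite[Thms.~2.8, 2.19]{RT-LA&GD} (Gale duality between $F$--completeness of $V$ and $W$--positivity of $Q$, plus the existence of a positive $\REF$ representative) together with the observation that $|\Si|=\langle V\rangle$ for every $\Si\in\SF(V)$. The step you flag as a possible obstacle is in fact immediate, as you yourself note earlier: once $Q$ is entrywise non-negative, its columns lie in $F^r_+$ and hence so does the cone they generate, so no further argument is needed there.
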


In the following, given a reduced $F$--matrix $V$, we will always assume the cone $\CQ\subseteq F^r_+$ and generated by the columns of a positive $\REF$ matrix $Q=\G(V)$.

\begin{definition}\label{def:GKZ&Mov} Let $\mathcal{S}_r$ be the family of all the $r$--dimensional subcones of $\CQ$ obtained as intersection of simplicial subcones of $\CQ$. Then define the \emph{secondary fan} (or \emph{GKZ decomposition}) of $V$ to be the set $\Ga=\Ga(V)$ of cones in $F^r_+$ such that
\begin{itemize}
  \item its subset of $r$--dimensional cones (the \emph{$r$--skeleton}) $\Ga(r)$ is composed by the minimal elements, with respect to the inclusion, of the family $\mathcal{S}_r$,
  \item its subset of $i$--dimensional cones (the \emph{$i$--skeleton}) $\Ga(i)$ is composed by all the $i$--dimensional faces of cones in $\Ga(r)$, for every $1\leq i\leq r-1$.
\end{itemize}
 A maximal cone $\g\in\Ga(r)$ is called a \emph{chamber} of the secondary fan $\Ga$. Finally define
\begin{equation}\label{Mov}
    \Mov(V):= \bigcap_{i=1}^{n+r}\left\langle Q^{\{i\}}\right\rangle\ ,
\end{equation}
where $\left\langle Q^{\{i\}}\right\rangle$ is the cone generated in $F^r_+$ by the columns of the submatrix $Q^{\{i\}}$ of $Q$ (see the list of notation \ref{ssez:lista}).
\end{definition}

\begin{theorem}\label{thm:GKZ}
If $V$ is a $F$--matrix then, for every $\Si\in\SF(V)$,
\begin{enumerate}
  \item $\CQ=\overline{\Eff}(X(\Si))$, the \emph{pseudo--effective cone of $X$}, which is the closure of the cone generated by effective Cartier divisor classes of $X$ \cite[Lemma~15.1.8]{CLS},
  \item $\Mov(V)=\overline{\Mov}(X(\Si))$, the closure of the cone generated by movable Cartier divisor classes of $X$ \cite[(15.1.5), (15.1.7), Thm.~15.1.10, Prop.~15.2.4]{CLS}.
  \item $\Ga(V)$ is the secondary fan (or GKZ decomposition) of $X(\Si)$ \cite[\S~15.2]{CLS}. In particular $\Ga$ is a fan and $|\Ga|=\CQ\subset F^r_+$.
\end{enumerate}
\end{theorem}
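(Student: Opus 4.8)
\emph{Proof plan.} All three assertions amount to identifying the purely linear--algebraic cones and the fan $\Ga(V)$ of Definition~\ref{def:GKZ&Mov} with their classical toric counterparts, and the unifying device is the $\Z$--linear Gale duality of \cite[\S~2]{RT-LA&GD}: for every $\Si\in\SF(V)$ the columns $\q_1,\dots,\q_{n+r}$ of $Q=\G(V)$ are precisely the classes $[D_1],\dots,[D_{n+r}]\in\Cl(X(\Si))$ of the torus--invariant prime divisors $D_i$ supported on the rays $\langle\v_i\rangle$, read inside the free part $F^r_{\R}$ of $\Cl(X(\Si))_{\R}$; since $X(\Si)$ is $\Q$--factorial, $F^r_{\R}$ is also $\Pic(X(\Si))_{\R}$, so passing to $\R$--coefficients loses nothing. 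Strong convexity of $\CQ=\langle Q\rangle$ in $F^r_{\R}$ is already granted by Proposition~\ref{prop:positivit?} (cf. \cite[Lemma~14.3.2]{CLS}).

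Parts (1) and (2) are then translations. For (1) I would invoke the fact that on a complete toric variety every effective Weil divisor class is represented by an effective $T$--invariant divisor $\sum a_iD_i$ with $a_i\ge 0$; hence $\Eff(X(\Si))$ is generated by the classes $[D_i]$, and, taking closures in $F^r_{\R}$, this cone equals $\langle\q_1,\dots,\q_{n+r}\rangle=\CQ$ --- which is exactly \cite[Lemma~15.1.8]{CLS}. For (2) I would use the toric criterion for movability: a class is movable precisely when its complete linear system has no fixed prime component, i.e. when it lies in $\bigl\langle\,[D_j]:j\neq i\,\bigr\rangle$ for every $i$; intersecting over $i$ and rewriting with the columns of $Q$ yields $\overline{\Mov}(X(\Si))=\bigcap_{i=1}^{n+r}\langle Q^{\{i\}}\rangle=\Mov(V)$, which is the content of \cite[(15.1.5), (15.1.7), Thm.~15.1.10, Prop.~15.2.4]{CLS}.

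For (3) the plan is to match the combinatorial construction of $\Ga(V)$ with the GIT (secondary) fan of $X(\Si)$. Its maximal cones are the GIT chambers for the $\Hom(\Cl(X(\Si)),\C^*)$--action on $\C^{n+r}$, and the chamber through an ample class of $X(\Si')$, for $\Si'\in\P\SF(V)$, is $\Nef(X(\Si'))=\bigcap_{\sigma\in\Si'(n)}\langle Q^{I_\sigma}\rangle$, where $I_\sigma\subset\{1,\dots,n+r\}$ indexes the rays of $\sigma$. By $\Z$--linear Gale duality \cite[Thm.~2.2]{RT-LA&GD}, when $\langle V_{I_\sigma}\rangle$ is a simplicial $n$--cone the submatrix $Q^{I_\sigma}$ has $r$ linearly independent columns, so $\langle Q^{I_\sigma}\rangle$ is one of the full--dimensional simplicial subcones of $\CQ$ entering the family $\mathcal{S}_r$; conversely every $r$--dimensional simplicial subcone of $\CQ$ spanned by columns of $Q$ arises this way from some $n$--subset $I$ with $\det(V_I)\neq 0$. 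Thus the intersections of simplicial subcones of $\CQ$ realize exactly the cone arrangement cut out by the cones $\langle Q^{I}\rangle$, whose minimal $r$--dimensional cells are the GIT chambers --- this is the combinatorial description of the secondary fan in \cite[\S~15.2]{CLS} (see also \cite{GKZ, GKZ2, Oda-Park}). Hence $\Ga(r)$ is the set of GKZ chambers, $\Ga(V)$ (obtained by adjoining all their faces) is the classical GKZ fan, and it is a genuine fan with $|\Ga(V)|=\overline{\Eff}(X(\Si))=\CQ\subset F^r_+$, because the effective cone is the union of the closures of the GIT chambers \cite[\S~15.2]{CLS}.

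The step I expect to be delicate is this reconciliation in (3): Definition~\ref{def:GKZ&Mov} forms minimal elements among intersections of \emph{arbitrary} simplicial subcones of $\CQ$, not only those coming from honest projective fans in $\P\SF(V)$, and one must make sure this creates no spurious minimal cell nor a finer subdivision. The remedy is that any $r$ columns of $Q$ spanning a full--dimensional cone form $Q^{I}$ for an $n$--subset $I$ with $\langle V_I\rangle$ simplicial and $n$--dimensional (again \cite[Thm.~2.2]{RT-LA&GD}); the arrangement these cones cut on $\CQ$ is precisely the one whose chambers and faces constitute the secondary fan, so the two constructions cannot differ.
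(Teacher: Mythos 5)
Your proposal is correct, and it takes essentially the same route as the paper: the paper offers no independent proof of Theorem~\ref{thm:GKZ}, treating it as a translation of the cited facts \cite[Lemma~15.1.8, (15.1.5), (15.1.7), Thm.~15.1.10, Prop.~15.2.4, \S~15.2]{CLS} into the $\Z$--linear Gale-dual language, which is exactly what you spell out. Your reconciliation in part (3) --- matching, via (\ref{Gale_det}), the $r$--dimensional simplicial cones $\langle Q_J\rangle=\langle Q^I\rangle$ of Definition~\ref{def:GKZ&Mov} with the $n$--subsets $I$ having $\det(V_I)\neq 0$ --- correctly fills in the only point the paper leaves implicit.
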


\begin{theorem}[\cite{CLS} Prop. 15.2.1]\label{thm:Gale sui coni} There is a one to one correspondence between the following sets
\begin{eqnarray*}
    \mathcal{A}_{\Ga}(V)&:=&\{\g\in\Ga(r)\ |\ \g\subset\Mov(V)\}\\
    \P\SF(V)&:=&\{\Si\in\SF(V)\ |\ X(\Si)\ \text{is projective}\}\ .
\end{eqnarray*}
\end{theorem}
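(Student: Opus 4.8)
The plan is to produce the bijection explicitly in both directions, using Theorem \ref{thm:GKZ} to identify $\Ga(V)$ with the GKZ decomposition of $X(\Si)$ for any fixed $\Si\in\SF(V)$ (and $\Mov(V)$ with $\overline{\Mov}(X(\Si))$), and the $\Z$--linear Gale duality of \cite[Thm.~2.2]{RT-LA&GD} to translate between $n$--subsets of the columns of $V$ and $r$--subsets of the columns of $Q$, via the bunch--of--cones formalism of \cite{Berchtold-Hausen}.

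\emph{From projective fans to chambers.} Let $\Si\in\P\SF(V)$. Since $X(\Si)$ is $\Q$--factorial one has $\Pic(X(\Si))_{\R}=\Cl(X(\Si))_{\R}=F^r_{\R}$, and projectivity means the ample cone is a nonempty open subcone of $F^r_{\R}$, so $\Nef(X(\Si))$ is $r$--dimensional. By the construction of the GKZ fan recalled in Theorem \ref{thm:GKZ}(3), $\Nef(X(\Si))$ is exactly one of its chambers, the full--dimensional nef cones of the projective $\Q$--factorial models sharing the rays $\langle\v_1\rangle,\dots,\langle\v_{n+r}\rangle$ being precisely the maximal cones of $\Ga(V)$. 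Moreover every ample toric class is basepoint free, hence has empty stable base locus and is therefore movable; passing to closures and using Theorem \ref{thm:GKZ}(2) gives $\Nef(X(\Si))\subseteq\overline{\Mov}(X(\Si))=\Mov(V)$. Thus $\g_{\Si}:=\Nef(X(\Si))$ lies in $\mathcal{A}_{\Ga}(V)$, and this assignment is injective because a full--dimensional nef cone determines the fan on which its interior classes are ample.

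\emph{From chambers to projective fans.} Conversely let $\g\in\mathcal{A}_{\Ga}(V)$ and pick $\b\in\Relint(\g)$. Declare an $n$--subset $I\subseteq\{1,\dots,n+r\}$ admissible when $\langle V_I\rangle$ is $n$--dimensional and $\b$ lies in the interior of $\langle Q^I\rangle$; by \cite[Thm.~2.2]{RT-LA&GD} these two requirements are the Gale dual faces of a single condition on the pair $(I,I^c)$. Set $\Si_{\b}$ to be the set of faces of the cones $\langle V_I\rangle$, $I$ admissible. That $\Si_{\b}$ is a complete simplicial fan with $\Si_{\b}(1)=\{\langle\v_1\rangle,\dots,\langle\v_{n+r}\rangle\}$ and that $X(\Si_{\b})$ is projective with $\b$ ample is the content of the bunch--of--cones dictionary: completeness and glueing along common faces are the Gale dual translation of $\{\langle Q^I\rangle\}$ covering $\gkz$ and satisfying the bunch axioms (a Carathéodory argument inside $\gkz$), while strict convexity of the $\b$--support function with respect to $\Si_{\b}$ — i.e.\ ampleness — is built into the choice of the $\langle V_I\rangle$. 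The sole place where the hypothesis $\g\subseteq\Mov(V)$, and not merely that $\g$ is a chamber, intervenes is in $\Si_{\b}\in\SF(V)$: a ray $\langle\v_i\rangle$ appears iff some admissible $I$ contains $i$, iff $\b\in\langle Q^{\{i\}}\rangle$, and this holds for every $i$ exactly because $\b\in\Mov(V)=\bigcap_{i=1}^{n+r}\langle Q^{\{i\}}\rangle$. Finally the combinatorial type of $\Si_{\b}$ is constant on $\Relint(\g)$, since the conditions $\b\in\langle Q^I\rangle$ only change when $\b$ crosses a wall of $\Ga(V)$, so $\Si_{\g}:=\Si_{\b}$ is well defined and lies in $\P\SF(V)$.

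\emph{Mutual inverseness.} For $\g\in\mathcal{A}_{\Ga}(V)$ and $\b\in\Relint(\g)$, the class $\b$ is ample on $\Si_{\g}$, hence $\b\in\Relint\bigl(\Nef(X(\Si_{\g}))\bigr)$; since $\Nef(X(\Si_{\g}))$ is itself a chamber and two chambers sharing an interior point coincide, $\Nef(X(\Si_{\g}))=\g$. Conversely, for $\Si\in\P\SF(V)$ and $\b$ in the ample cone of $X(\Si)$, the maximal cones of $\Si_{\b}$ are exactly those on which the strictly convex $\b$--support function is linear, namely the maximal cones of $\Si$, so $\Si_{\b}=\Si$. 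This establishes the bijection; it is in essence \cite[Prop.~15.2.1]{CLS} read through the $\Z$--linear Gale duality of \cite{RT-LA&GD}. The main technical obstacle is precisely the middle paragraph: checking, via the bunch--of--cones correspondence, that $\b\mapsto\Si_{\b}$ returns a genuine complete simplicial fan and that membership of $\b$ in $\Mov(V)$ is equivalent to all $n+r$ rays occurring — everything else is then formal.
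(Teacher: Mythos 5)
Your proposal is correct and follows exactly the route the paper intends: the paper does not prove this statement itself but cites \cite[Prop.~15.2.1]{CLS} and then records the resulting correspondence via the bunch--of--cones dictionary ($\g\mapsto\Si_\g$ with $\Si_\g(n)=\{\langle V^J\rangle\,|\,\langle Q_J\rangle\in\mathcal{B}(\g)\}$ and $\Si\mapsto\g_\Si=\bigcap_{\b\in\mathcal{B}_\Si}\b$), which is precisely the construction you carry out. Your middle paragraph simply fills in the details that the paper delegates to the citation; the only loose phrase is the parenthetical claim that \emph{all} maximal cones of $\Ga(V)$ are nef cones of models sharing every ray (only those inside $\Mov(V)$ are), but you handle this correctly in the converse direction, so the argument stands.
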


For the following it is useful to understand the construction of such a correspondence. Namely (compare \cite{CLS} Prop. 15.2.1):
\begin{itemize}
  \item after \cite{Berchtold-Hausen}, given a chamber $\g\in\mathcal{A}_{\Ga}$ let us call \emph{the bunch of cones of $\g$} the collection of cones in $F^r_+$ given by
      \begin{equation*}
        \mathcal{B}(\g):=\left\{\left\langle Q_J\right\rangle\ |\ J\subset\{1,\ldots,n+r\}, |J|=r, \det\left(Q_J\right)\neq 0, \g\subset\left\langle Q_J\right\rangle\right\}
      \end{equation*}
      (see also \cite[p.~738]{CLS}),
  \item it turns out that $\bigcap_{\b\in\mathcal{B}(\g)}\b=\g$\,,
  \item  for any $\g\in\mathcal{A}_{\Ga}(V)$ there exists a unique fan $\Si_{\g}\in\P\SF(V)$ such that
  \begin{equation*}
    \Si_{\g}(n):=\left\{\left\langle V^J\right\rangle\ |\ \left\langle Q_J\right\rangle\in \mathcal{B}(\g)\right\}\,,
  \end{equation*}
  \item for any $\Si\in\P\SF(V)$ the collection of cones
  \begin{equation}\label{bunch}
    \mathcal{B}_{\Si}:=\left\{\left\langle Q^I\right\rangle\ |\ \left\langle V_I\right\rangle\in\Si(n)\right\}
  \end{equation}
  is the bunch of cones of the chamber $\g_{\Si}\in\mathcal{A}_{\Ga}$ given by $\g_{\Si}:=\bigcap_{\b\in\mathcal{B}_{\Si}}\b$.
\end{itemize}
Then the correspondence in Theorem \ref{thm:Gale sui coni} is realized by setting
\begin{equation*}
\begin{array}{ccc}
  \mathcal{A}_{\Ga}(V) & \longleftrightarrow & \P\SF(V) \\
  \g & \longmapsto & \Si_{\g} \\
  \g_{\Si} & \longmapsfrom & \Si
\end{array}
\end{equation*}

\begin{remark}\label{rem:cameraregolare} Notice that in the previous picture we get well established bijections
\begin{equation*}
  \forall\,\g\in\mathcal{A}_{\Ga}(V)\quad \begin{array}{ccc}
                                            \mathcal{B}(\g) & \longleftrightarrow & \Si_{\g}(n) \\
                                            \left\langle Q_J\right\rangle & \longmapsto & \left\langle V^J\right\rangle
                                          \end{array}
\end{equation*}
\begin{equation*}
  \forall\,\Si\in\P\SF(V)\quad \begin{array}{ccc}
                                            \Si(n) & \longleftrightarrow & \mathcal{B}_{\Si} \\
                                            \left\langle V_I\right\rangle & \longmapsto & \left\langle Q^I\right\rangle
                                          \end{array} \ .
\end{equation*}
A significant consequence of \cite[Cor.~3.3]{RT-LA&GD} is that these bijections preserve, possibly up to a constant integer, the determinants of generating submatrices, which is
 \begin{equation}\label{Gale_det}
    \d\cdot|\det\left(Q_J\right)|=|\det\left(V^J\right)|\quad\text{and}\quad|\det\left(V_I\right)|=\d\cdot|\det\left(Q^I\right)|\ ,
 \end{equation}
where $\d=1$ if and only if $V$ is a $CF$--matrix.
Therefore, in the following, when $V$ is a $CF$--matrix, a chamber $\g\in\mathcal{A}_{\Ga}(V)$ will be called \emph{non-singular} if the bunch of cones $\mathcal{B}(\g)$ is entirely composed by unimodular cones (as defined in list \ref{ssez:lista}) or, equivalently, if the associated fan $\Si_{\g}\in\P\SF(V)$ is non-singular.
\end{remark}

As a final result let us recall the following
\begin{proposition}[\cite{CLS} Thm. 15.1.10(c)]\label{prop:nef} If $V=(\v_1\ldots,\v_{n+r})$ is a $F$--matrix then, for every fan $\Si\in\P\SF(V)$, there is a na\-tu\-ral isomorphism $\Pic(X(\Si))\otimes\R\cong F^r_{\R}$ taking the cones
\begin{equation*}
    \Nef(X(\Si))\subseteq\overline{\Mov}(X(\Si))\subseteq\overline{\Eff}(X(\Si))
\end{equation*}
to the cones
\begin{equation*}
    \g_{\Si}\subseteq\Mov(V)\subseteq\CQ\,.
\end{equation*}
In particular, calling $d:\mathcal{W}_T(X(\Si))\to\Cl(X(\Si))$  the morphism giving to a torus invariant divisor $D$ its linear equivalence class $d(D)$, we get that:
\begin{enumerate}
  \item a $\Q$--Cartier divisor $D$ on $X(\Si)$ is a nef (ample) divisor if and only if $d(D)\in\g_{\Si}$ (\,$d(D)\in\Relint\left(\g_{\Si}\right)$, where $\Relint$ denotes the interior of the cone $\g_{\Si}$ in its linear span),
  \item $X(\Si)$ is $\Q$--Fano if and only if $$\sum_{j=1}^{n+r}d(D_j)\in\Relint\left(\g_{\Si}\right)$$
  where $D_j$ is the closure of the torus orbit of the ray $\langle\v_j\rangle$.
\end{enumerate}
 \end{proposition}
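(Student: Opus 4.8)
The plan is to obtain the three cone identifications by reading the divisor exact sequence through the $\Z$--linear Gale--duality dictionary of \cite{RT-LA&GD} together with Theorem~\ref{thm:GKZ}, and then to deduce the ample and $\Q$--Fano criteria as formal consequences.

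First I would pin down the isomorphism. Since $X=X(\Si)$ is $\Q$--factorial, $\Pic(X)\otimes\R=\Cl(X)\otimes\R$; tensoring the divisor sequence $0\to M\to\mathcal{W}_T(X)=\Z^{n+r}\xrightarrow{d}\Cl(X)\to0$ with $\R$ kills torsion, so $\Cl(X)\otimes\R=F^r_{\R}$, and under this identification $d_{\R}$ is, in the standard bases, the linear surjection represented by $Q=\G(V)$; in particular $d(D_j)=\q_j$ for the torus--invariant prime divisor $D_j$ attached to the ray $\langle\v_j\rangle$. The two outer cones then come for free and are in fact already recorded in Theorem~\ref{thm:GKZ}(1),(2): every effective class has an effective torus--invariant representative $\sum a_jD_j$ with $a_j\ge0$, so $\overline{\Eff}(X)=\langle\q_1,\dots,\q_{n+r}\rangle=\CQ$, and a class is movable exactly when it stays effective after deleting each single $D_j$, giving $\overline{\Mov}(X)=\bigcap_{j=1}^{n+r}\langle Q^{\{j\}}\rangle=\Mov(V)$.

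The substantive step is $\Nef(X(\Si))=\g_{\Si}$. For a complete simplicial fan a $\Q$--Cartier class $\alpha$ is nef iff its support function is convex, which amounts, cone by cone, to the following: for each maximal $\s=\langle V_I\rangle\in\Si(n)$ the local Cartier datum $m_\s\in M_{\Q}$ (uniquely determined because $\det(V_I)\neq0$) makes every slack quantity $a_\rho+\langle m_\s,\v_\rho\rangle$ with $\rho\notin I$ nonnegative; rewriting the representative of $\alpha$ determined by this datum turns this into the statement $\alpha\in\langle\q_\rho:\rho\notin I\rangle=\langle Q^I\rangle$, the coefficients being unique since $\{\q_\rho:\rho\notin I\}$ is a basis of $F^r_{\R}$ (by Gale duality $\det(Q^I)\neq0\iff\det(V_I)\neq0$, cf.~\eqref{Gale_det}). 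Intersecting over $\Si(n)$ and applying the bunch--of--cones description recalled after Theorem~\ref{thm:Gale sui coni} yields $\Nef(X)=\bigcap_{\langle V_I\rangle\in\Si(n)}\langle Q^I\rangle=\bigcap_{\b\in\mathcal{B}_{\Si}}\b=\g_{\Si}$, which is full--dimensional and contained in $\Mov(V)$ because $X$ is projective. I expect this cone--by--cone translation of convexity to be the only real work; alternatively one simply cites \cite[Thm.~15.1.10(c)]{CLS} and matches notation.

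The two numbered assertions then follow formally. For (1), on a projective variety the ample cone is the relative interior of the (full--dimensional) nef cone, so $D$ is ample iff $d(D)\in\Relint(\g_{\Si})$. For (2), $-K_{X(\Si)}=\sum_{j=1}^{n+r}D_j$ gives $d(-K_X)=\sum_{j=1}^{n+r}\q_j$, hence $X$ is $\Q$--Fano, i.e.\ $-K_X$ is an ample $\Q$--Cartier divisor, iff $\sum_{j=1}^{n+r}d(D_j)\in\Relint(\g_{\Si})$.
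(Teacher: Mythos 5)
Your proof is correct. Note that the paper does not actually prove this proposition: it is stated as a recalled result, with the proof delegated entirely to \cite[Thm.~15.1.10(c)]{CLS} (an option you yourself mention). Your argument — identifying $d_{\R}$ with $Q$, reading $\overline{\Eff}$ and $\overline{\Mov}$ off Theorem~\ref{thm:GKZ}, and characterizing nefness cone-by-cone via convexity of the support function, so that $\Nef(X)=\bigcap_{\langle V_I\rangle\in\Si(n)}\langle Q^I\rangle=\g_{\Si}$ by the bunch-of-cones description — is exactly the standard proof underlying the cited CLS theorem, correctly translated into the paper's Gale-duality notation, and parts (1) and (2) do follow formally as you say.
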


 \begin{example}[Example \ref{ex:1} continued]\label{ex:2} Let $X(\Si)$ be one of the two possible smooth and projective toric varieties defined in Example~\ref{ex:1}. \begin{figure}
\begin{center}
\includegraphics[width=7truecm]{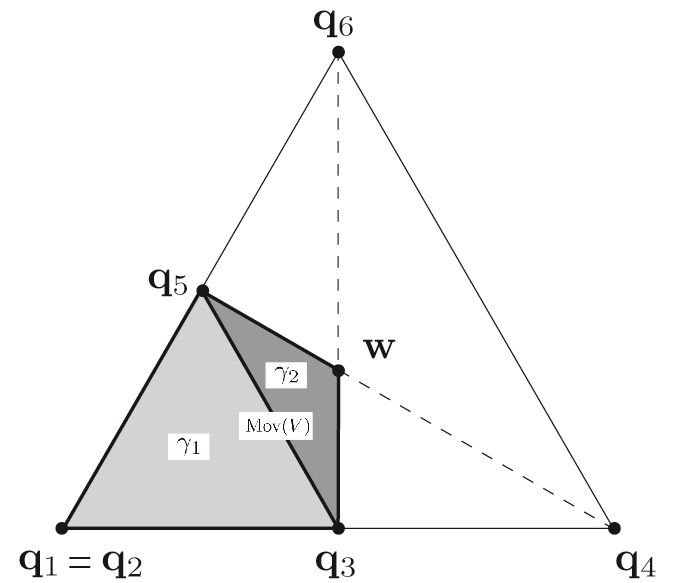}
\caption{\label{fig2}Ex.~\ref{ex:PTB}: the section of the cone $\Mov(V)$ and its chambers, inside the Gale dual cone $\mathcal{Q}=F^3_+$, as cut out by the plane $\sum_{i=1}^3x_i^2=1$.}
\end{center}
\end{figure}
One can visualize the pseudo-effective cone $\overline{\Eff}(X(\Si))$, that is the Gale dual cone $\gkz=\langle Q\rangle =F^3_+$, and the movable cone $\Mov(V)\subseteq\gkz$ by giving a picture of their section with the hyperplane $\{x_1+x_2+x_3=1\}\subseteq F^3_{\R}\cong\Cl(X)\otimes\R$, as in Fig.~\ref{fig2}.\\
$\P\SF(V)=\SF(V)$ can be dually described by the only two
chambers of $\Mov(V)$ represented in Fig.~\ref{fig2} and explicitly given by
\begin{equation*}
    \g_1=\left\langle\q_1=\q_2,\q_3,\q_5\right\rangle=\left\langle
            \begin{array}{ccc}
              1 & 1 & 1 \\
              0 & 1 & 0 \\
              0 & 0 & 1 \\
            \end{array}
          \right\rangle\quad,\quad\g_2=\left\langle\q_3,\w,\q_5\right\rangle=\left\langle
                                    \begin{array}{ccc}
                                      1 & 1 & 1 \\
                                      1 & 1 & 0 \\
                                      0 & 1 & 1 \\
                                    \end{array}
                                  \right\rangle
\end{equation*}
$$\Mov(V)=\langle\q_2,\q_3,\w,\q_5\rangle = \g_1+\g_2$$
where, as usual, $\q_1,\ldots,\q_6$ are the columns of $Q$ and $\w:=\q_3+\q_6=\q_4+\q_5$\,. Then
$\P\SF(V)=\SF(V)=\{\Si_1=\Si_{\g_1},\Si_2=\Si_{\g_2}\}$ and $\g_i=\Nef(X(\Si_i))$, $i=1,2$.
 \end{example}

 \subsection{Toric flips}\label{ssez:toricflip} In the present context, a \emph{toric flip} will be a torus--equivariant birational equivalence of projective $\Q$--factorial toric varieties which is an isomorphism in codimension 1. A toric flip is a composition of \emph{elementary flips} and a toric isomorphism  \cite[Thm.~15.3.14]{CLS}: given a reduced $F$--matrix $V$, an elementary flip is defined as the birational equivalence realized by passing, inside $\Mov(V)$, from a chamber to another one, just \emph{crossing a wall} \cite[(15.3.14)]{CLS}. \\
 E.g. in the previous examples \ref{ex:1} and \ref{ex:2}, the smooth projective toric varieties $X(\Si_1)$ and $X(\Si_2)$ are related by an elementary flip, obtained by crossing the wall determined by cutting $\gkz$ with the plane containing $\q_3$ and $\q_5$ (see Fig.~\ref{fig2}). Hence they are isomorphic in codimension 1.

\section{The Batyrev classification revised}\label{sez:Batyrev-rivisto}

 In the present section we are going to propose an alternative approach to Batyrev's results presented in \cite[\S~3, 4]{Batyrev91}, not depending on the smoothness hypothesis and holding for the case of a  $\Q$--factorial projective toric variety.

\subsection{Primitive relations and bordering chambers}\label{sssez:primitive2} Given a reduced $F$--matrix $V=(\v_1,\ldots,\v_{n+r})$ and a fan $\Si\in\SF(V)$, the datum of a collection of rays $\pc=\{\rho_1,\ldots,\rho_k\}\subseteq\Si(1)$ determines a subset $P=\{j_1,\ldots,j_k\}\subseteq\{1,\ldots,n+r\}$ such that
\begin{equation*}
    \pc=\{\rho_1,\ldots,\rho_k\}=\left\{\langle\v_{j_1}\rangle,\ldots,\langle\v_{j_k}\rangle\right\}
\end{equation*}
and a submatrix $V_P$ of $V$.

\subsubsection{Notation}\label{sssez:collezioni} By abuse of notation we will often write
\begin{equation*}
    \pc=\{\v_{j_1},\ldots,\v_{j_k}\}=\{V_P\}\,.
\end{equation*}
From the point of view of the Gale dual cone $\CQ=\langle Q\rangle$, where $Q=(\q_1,\ldots,\q_{n+r})=\G(V)$ is a reduced, positive, REF, $W$--matrix, the subset $P\subseteq\{1,\ldots,n+r\}$ determines the collection $\pc^*=\{\langle\q_{j_1}\rangle,\ldots,\langle\q_{j_k}\rangle\}\subseteq\Ga(1)$. By the same abuse of notation we will often write
\begin{equation*}
    \pc^*=\{\q_{j_1},\ldots,\q_{j_k}\}=\{Q_P\}\,.
\end{equation*}
The vector $\v_{\mathcal{P}}:=\sum_{i=1}^k \v_{j_i}$ lies in the relative interior of a cone $\s\in\Si$ and there is a unique relation
\begin{equation}\label{relazione primitiva 2}
    \v_{\mathcal{P}}-\sum_{\rho\in\s(1)}c_{\rho}\v_{\rho}=0\ \text{with $\left\langle\v_{\rho}\right\rangle=\rho\cap N$ and $c_{\rho}\in\Q\,,\,c_{\rho}> 0$}
\end{equation}
This fact allows us to define a rational vector $r(P)=r(\mathcal{P})=(b_1,\ldots,b_{n+r})\in\Q^{n+r}$, where $b_j$ is the coefficient of the column $\v_j$ of $V$ in (\ref{relazione primitiva 2}). Let $l$ be the least common denominator of $b_1,\ldots,b_{n+r}$. Then,
\begin{equation}\label{r_Z(P)}
    r_{\Z}(P)=r_{\Z}(\mathcal{P}):=l\,r(\mathcal{P})=(lb_1,\ldots,lb_{n+r})\in\mathcal{L}_r(Q)\subset\Z^{n+r}\,.
\end{equation}
Let us recall that a collection $\pc\subset\Si(1)$ is called \emph{primitive for $\Si$} if it is not contained in a single cone of $\Si$ but every proper subset of $\pc$ is (compare \cite[Def.~2.6]{Batyrev91}, \cite[Def.~1.1]{Cox-vRenesse}, \cite[Def.~5.1.5]{CLS}).
 If $\mathcal{P}$ is a primitive collection then it is determined by the positive entries in $r_{\Z}(\mathcal{P})$ (for the details see \cite[Lemma~1.8]{Cox-vRenesse}); this is no more the case if $\mathcal{P}$ is not a primitive collection.

 \noindent Consider the $\Q$-factorial complete toric variety $X=X(\Si)$ and recall the standard exact sequence on divisors
 \begin{equation}\label{div-sequence}
    \xymatrix{0 \ar[r] & M\ar[r]^-{div}_-{V^T}& \mathcal{W}_T(X)=\Z^{n+r}
\ar[r]^-{d} & \Cl(X) \ar[r]& 0}\ .
\end{equation}
where $\Weil(X)$ denotes the group of torus-invariant Weil divisors. Dualizing this sequence, one gets the following exact sequence of free abelian groups
\begin{equation}\label{HomZ-div-sequence}
                    \xymatrix{0 \ar[r] & A_1(X):=\Hom(\Cl(X),\Z) \ar[r]^-{d^{\vee}}_-{Q^T} & \Hom(\mathcal{W}_T(X),\Z)= \Z^{n+r}
 \ar[r]^-{div^{\vee}}_-{V} & N }
\end{equation}
Then (\ref{r_Z(P)}) gives that $r_{\Z}(P)\in\im(d^{\vee})$. Since $d^{\vee}$ is injective there exists a unique $\n_P\in A_1(X)$ such that
\begin{equation}\label{normalP}
    d^{\vee}(\n_P)=Q^T\cdot\n_P=r_{\Z}(P)
\end{equation}
which turns out to be the numerical equivalence class of the 1-cycle $r_{\Z}(P)$, whose intersection index with the torus--invariant Weil divisor $lD_j$ is given by the integer $lb_j$, for every $1\leq j\leq n+r$. In particular, given a primitive collection $\pc$ the associated primitive relation $r_{\Z}(P)$ is a numerically effective 1-cycle (nef) if and only if all the coefficients $lb_j$ in (\ref{r_Z(P)}) are non-negative: in this case $\pc$ will be called \emph{a numerically effective (nef) primitive collection}.

\begin{definition}\label{def:support} Given a collection $\pc=\{V_P\}$, for $P\subseteq\{1,\ldots,n+r\}$, its associated numerical class $\n_{P}\in N_1(X):=A_1(X)\otimes\R$, defined in (\ref{normalP}), determines a unique dual hyperplane
$$H_P\subseteq F^r_{\R}=\Cl(X)\otimes\R$$
which is called \emph{the support} of $\pc$, a \emph{positive half-space} $\mathcal{H}_P^+:=\{\mathbf{x}\in F^r_{\R}\,|\,\n_P\cdot\mathbf{x}\geq 0\}$ and a \emph{negative half-space} $\mathcal{H}_P^-:=\{\mathbf{x}\in F^r_{\R}\,|\,\n_P\cdot\mathbf{x}\leq 0\}$.
\end{definition}

Calling $\mathfrak{P}=\mathfrak{P}(\{1,\ldots,n+r\})$ the power set of $\{1,\ldots,n+r\}$, notation introduced in \ref{sssez:collezioni} allows us to think of the set of primitive collections of a fan $\Si\in\SF(V)$ as a suitable subset of $\mathfrak{P}$, namely
\begin{equation*}
    \PC(\Si)=\{P\in\mathfrak{P}\,|\,\{V_P\}\ \text{is a primitive collection}\}\,.
\end{equation*}

The following proposition gives some further characterization of a primitive collection.

\begin{proposition}\label{prop:primitive} Let $V$ be a reduced $F$--matrix, $Q=\G(V)$ be a Gale dual $\REF$, positive $W$--matrix, $\Si\in\P\SF(V)$ and $P\in\PC(\Si)$ such that $\pc=\{V_P\}$ is a primitive collection for $\Si$. Then $|\pc|=|P|\leq n+1$ and the following facts are equivalent:
\begin{enumerate}
  \item $\pc$ is a primitive collection for $\Si$, which is
  \begin{itemize}
    \item[($i.1$)] $\forall\,\s\in\Si(n)\quad\pc\nsubseteq\s(1)$,
    \item[($ii.1$)] $\forall\,\rho_i\in\pc\quad\exists\,\s\in\Si(n):\pc\backslash\{\rho_i\}\subseteq\s(1)$;
  \end{itemize}
  \item $V_P$ is a submatrix of $V$ such that
  \begin{itemize}
    \item[($i.2$)] $\forall\,J\subseteq\{1,\ldots,n+r\}:\langle V_J\rangle\in\Si(n)\quad\langle V_P\rangle\nsubseteq\langle V_J\rangle$,
    \item[($ii.2$)] $\forall\,i\in P\quad\exists\,J\subseteq\{1,\ldots,n+r\}:\langle V_J\rangle\in\Si(n)\ ,\ \langle V_{P\backslash\{i\}}\rangle\subseteq\langle V_J\rangle$;
  \end{itemize}
  \item $Q_P$ is a submatrix of $Q=\G(V)$ such that
  \begin{itemize}
    \item[($i.3$)] $\forall\,J\subseteq\{1,\ldots,n+r\}:\langle Q^J\rangle\in\mathcal{B}(\g_{\Si})\quad\langle Q^J\rangle\nsubseteq\langle Q^P\rangle$,
    \item[($ii.3$)] $\forall\,i\in P\quad\exists\,J\subseteq\{1,\ldots,n+r\}:\langle Q^J\rangle\in\mathcal{B}(\g_{\Si})\ ,\ \langle Q^J\rangle\subseteq\langle Q^{P\backslash\{i\}}\rangle$;
  \end{itemize}
  \item $Q_P$ is a submatrix of $Q=\G(V)$ such that
  \begin{itemize}
    \item[($i.4$)] $\g_{\Si}\nsubseteq\langle Q^P\rangle$,
    \item[($ii.4$)] $\forall\,i\in P\quad\g_{\Si}\subseteq\langle Q^{P\backslash\{i\}}\rangle$.
   \end{itemize}
\end{enumerate}
Moreover the previous conditions ($ii.1$), ($ii.2$), ($ii.3$), ($ii.4$) are equivalent to the following one:
\begin{itemize}
  \item[($ii$)] $\forall\,i\in P\quad\exists\, \mathcal{C}_{i,P}\in\mathcal{B}(\g_{\Si}):\ \mathcal{C}_{i,P}(1)\cap\pc^*=\{\langle\q_{i}\rangle\}$\,.
\end{itemize}
\end{proposition}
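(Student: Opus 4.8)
The plan is to prove the displayed bound $|P|=|\pc|\le n+1$ and then to run two cycles of implications, $(1)\Rightarrow(2)\Rightarrow(4)\Rightarrow(3)\Rightarrow(1)$ and $(ii.1)\Rightarrow(ii.2)\Rightarrow(ii.3)\Rightarrow(ii.4)\Rightarrow(ii.1)$, the translation between statements about $V$ and statements about $Q=\G(V)$ being effected by the Gale dual bijection of Remark~\ref{rem:cameraregolare} (recall $Q^I=Q_{I^c}$ and that $\langle V_I\rangle\in\Si(n)$ iff $\langle Q^I\rangle\in\mathcal{B}_{\Si}=\mathcal{B}(\g_{\Si})$). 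The bound is immediate: $|P|=|\pc|$ because $V$ is a reduced $F$--matrix, so distinct columns span distinct rays; and for each $\rho_i\in\pc$ the set $\pc\setminus\{\rho_i\}$, being a proper subset of a primitive collection, lies in $\s(1)$ for some $\s\in\Si(n)$, hence $|\pc|-1\le|\s(1)|=n$ by simpliciality of $\Si$.

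The equivalence $(1)\Leftrightarrow(2)$ is pure translation: identifying $P$ with $\pc$, the fan axiom that two cones of $\Si$ meet along a common face --- applied to a ray $\langle\v_i\rangle$ of $\Si$ contained in a simplicial maximal cone $\langle V_J\rangle$ --- gives $\langle V_P\rangle\subseteq\langle V_J\rangle\iff P\subseteq J$, and since every cone of $\Si$ is a face of a maximal cone, ``$\pc$ lies in a cone of $\Si$'' means exactly ``$\pc\subseteq\s(1)$ for some $\s\in\Si(n)$''; so $(i.1),(ii.1)$ become $(i.2),(ii.2)$ verbatim. Next, the Gale bijection, the elementary fact $J^c\subseteq P^c\Rightarrow\langle Q^J\rangle=\langle Q_{J^c}\rangle\subseteq\langle Q_{P^c}\rangle=\langle Q^P\rangle$, and the identity $\g_{\Si}=\bigcap_{\b\in\mathcal{B}(\g_{\Si})}\b$ (list after Theorem~\ref{thm:Gale sui coni}, giving $\g_{\Si}\subseteq\b$ for every $\b$ in the bunch) together yield all the ``one--line'' implications, in particular $(i.4)\Rightarrow(i.3)\Rightarrow(i.1)$ and $(ii.1)\Rightarrow(ii.2)\Rightarrow(ii.3)\Rightarrow(ii.4)$. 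Closing the cycles then requires only the converse passage ``$\g_{\Si}$ contained in a large cone $\Rightarrow$ $\g_{\Si}$ contained in a simplicial cone of the bunch'', which is the next, substantive, step.

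The crux is the following \emph{key lemma}: if $\g_{\Si}\subseteq\langle Q_S\rangle$ for some $S\subseteq\{1,\dots,n+r\}$, then there is an $r$--element subset $K\subseteq S$ with $\det Q_K\ne0$ and $\g_{\Si}\subseteq\langle Q_K\rangle$, i.e. $\langle Q_K\rangle\in\mathcal{B}(\g_{\Si})$, equivalently $\langle V_{K^c}\rangle\in\Si(n)$. I would prove it as follows: pick a point $x$ in the relative interior of the $r$--dimensional cone $\g_{\Si}$ lying on none of the finitely many proper linear subspaces spanned by at most $r-1$ columns of $Q$ --- such $x$ exists because $\g_{\Si}$ is full dimensional in $F^r_{\R}$. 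By Carath\'eodory's theorem for cones, $x$ is a strictly positive combination of columns $\q_k$, $k\in K\subseteq S$, which may be taken linearly independent; by the choice of $x$ necessarily $|K|=r$ and $\det Q_K\ne0$, so $x\in\Relint\langle Q_K\rangle$. Then $\g_{\Si}\cap\langle Q_K\rangle$ contains a full neighbourhood of $x$, hence is $r$--dimensional; being an intersection of simplicial subcones of $\CQ$ it belongs to the family $\mathcal{S}_r$ of Definition~\ref{def:GKZ&Mov}, and since $\g_{\Si}$ is a \emph{minimal} element of $\mathcal{S}_r$ we conclude $\g_{\Si}\cap\langle Q_K\rangle=\g_{\Si}$, i.e. $\g_{\Si}\subseteq\langle Q_K\rangle$. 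Feeding $S=P^c$ into the lemma closes the $(i)$--cycle (from $\g_{\Si}\subseteq\langle Q^P\rangle$ it produces $\langle V_{K^c}\rangle\in\Si(n)$ with $P\subseteq K^c$, i.e. the negation of $(i.2)$, hence $(i.2)\Rightarrow(i.4)$), and feeding $S=(P\setminus\{i\})^c$ closes the $(ii)$--cycle in the same way (giving $(ii.4)\Rightarrow(ii.1)$). \textbf{This genericity/minimality step is the main obstacle}; everything else is bookkeeping with the Gale bijection and with inclusions of cones generated by subsets of the columns of $Q$.

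Finally, for the equivalence of $(ii.1),\dots,(ii.4)$ with $(ii)$: a cone $\mathcal{C}\in\mathcal{B}(\g_{\Si})$ equals $\langle Q_K\rangle$ for some $r$--subset $K$ with $\det Q_K\ne0$, hence is simplicial with $1$--skeleton $\mathcal{C}(1)=\{\langle\q_k\rangle\mid k\in K\}$; as $Q=\G(V)$ is reduced (because $V$ is) and $\det Q_K\ne0$, the rays $\langle\q_k\rangle$, $k\in K$, are pairwise distinct, so $\mathcal{C}(1)\cap\pc^*=\{\langle\q_i\rangle\}$ exactly when $K\cap P=\{i\}$. Thus $(ii)$ reads: for each $i\in P$ there is an $r$--subset $K$ with $\g_{\Si}\subseteq\langle Q_K\rangle$ and $K\cap P=\{i\}$. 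From such a $K$ one has $K\subseteq(P\setminus\{i\})^c$, whence $\g_{\Si}\subseteq\langle Q_K\rangle\subseteq\langle Q^{P\setminus\{i\}}\rangle$, proving $(ii)\Rightarrow(ii.4)$; as the $(ii.\ast)$ are mutually equivalent by the cycle above, it remains to prove $(ii.4)\Rightarrow(ii)$. Assuming $(ii.4)$, apply the key lemma with $S=(P\setminus\{i\})^c=P^c\cup\{i\}$ to obtain $K\subseteq P^c\cup\{i\}$ with $\langle Q_K\rangle\in\mathcal{B}(\g_{\Si})$; then $K\cap P\subseteq\{i\}$, and $K\cap P=\emptyset$ would give $\langle Q_K\rangle\subseteq\langle Q^P\rangle$, hence $\g_{\Si}\subseteq\langle Q^P\rangle$, contradicting $(i.4)$ --- which holds because $\pc$ is a primitive collection. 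Therefore $K\cap P=\{i\}$, and $(ii)$ follows.
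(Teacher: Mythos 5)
Your proof is correct, and while its skeleton (Gale--dualize everything and reduce all conditions to containments of the chamber $\g_{\Si}$ in the cones $\langle Q^P\rangle$ and $\langle Q^{P\backslash\{i\}}\rangle$) matches the paper's, you treat the substantive step differently and more completely. The paper proves the equivalences pairwise and, at the crux ($(i.3)\Rightarrow(i.4)$, $(ii.4)\Rightarrow(ii.3)$, $(ii.4)\Rightarrow(ii)$), simply \emph{asserts} that a full--dimensional cone generated by columns of $Q$ and containing $\g_{\Si}$ must contain a simplicial cone of the bunch $\mathcal{B}(\g_{\Si})$ (``there certainly exists a simplicial subcone\dots'', ``which exists by the definition of the secondary fan''); your key lemma is exactly this assertion, and you actually prove it, via Carath\'eodory for cones applied to a generic point of $\Relint(\g_{\Si})$ avoiding the spans of fewer than $r$ columns, combined with the minimality of chambers in the family $\mathcal{S}_r$ of Definition~\ref{def:GKZ&Mov}. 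For $(ii.4)\Rightarrow(ii)$ the paper instead performs a star subdivision of $\langle Q^{P\backslash\{i\}}\rangle$ centered at $\q_i$ and takes the subdivision cone containing $\g_{\Si}$; your route --- apply the key lemma to $S=(P\backslash\{i\})^c$ and exclude $K\cap P=\emptyset$ by contradiction with $(i.4)$ --- is genuinely different and arguably cleaner, since it avoids having to justify that the chamber lands in a subdivision cone admitting $\langle\q_i\rangle$ as a ray (a point the paper settles only ``by construction'', which again secretly relies on the same minimality fact). What the paper's version buys is brevity; what yours buys is self-containedness and a single reusable lemma closing both cycles. One caveat, shared equally by the paper's own proof: identifying $\mathcal{C}(1)\cap\pc^*$ with $\{\langle\q_k\rangle\,|\,k\in K\cap P\}$ tacitly assumes no column of $Q$ indexed outside $P$ is proportional to one indexed inside $P$, which a reduced $W$--matrix does not guarantee; this is a defect of phrasing condition $(ii)$ in terms of rays rather than column indices, not a flaw specific to your argument.
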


\begin{proof} First of all let us notice that if $|\pc|\geq n+2$ then condition ($ii.1$) can't be satisfied since every cone $\s\in\Si(n)$ is simplicial, implying that $|\s(1)|=n<n+1\leq |\pc|-1$. Then $|\pc|\leq n+1$ for a primitive collection.

The equivalence $(1)\Leftrightarrow(2)$ is clear. The equivalence $(2)\Leftrightarrow(3)$ follows by Gale duality and Theorem \ref{thm:Gale sui coni}. For the equivalence $(3)\Leftrightarrow(4)$:

\noindent $(i.3)\Rightarrow(i.4)$: $(i.4)$ is always true when $|P|=n+1$ because $\dim(\langle Q^P\rangle)\leq r-1$. Let us then assume $|P|\leq n$ and assume that $\g_{\Si}\subseteq\langle Q^P\rangle$. Then there certainly exists a simplicial subcone of $\langle Q^P\rangle$ containing $\g_{\Si}$, which is
\begin{equation}\label{no(i.3)}
    \exists\, J\subseteq\{1,\ldots,n+r\}:\langle Q^J\rangle\in\mathcal{B}(\g_{\Si})\ ,\ \langle Q^J\rangle\subseteq\langle Q^P\rangle
\end{equation}
contradicting $(i.3)$.

\noindent $(i.4)\Rightarrow(i.3)$: Assume (\ref{no(i.3)}). Then $\g_{\Si}\subseteq\langle Q^J\rangle\subseteq\langle Q^P\rangle$, contradicting $(i.4)$.

\noindent $(ii.3)\Rightarrow(ii.4)$: By $(ii.3)$, $\g_{\Si}\subseteq\langle Q^J\rangle\subseteq\langle Q^{P\backslash\{i\}}\rangle$, clearly giving $(ii.4)$.

\noindent $(ii.4)\Rightarrow(ii.3)$: Since $|P|-1\leq n$, assuming $\g_{\Si}\subseteq\langle Q^{P\backslash\{i\}}\rangle$ always give a simplicial subcone $\langle Q^J\rangle$ of $\langle Q^{P\backslash\{i\}}\rangle$ containing $\g_{\Si}$. Then $(ii.3)$ follows.

For the last part:

 \noindent$(ii.4)\Rightarrow(ii)$\,:\ if $|P|-1=n$ then define $\mathcal{C}_{i,P}:=\langle Q^{P\backslash\{i\}}\rangle$ which is a simplicial cone; if $|P|\leq n$ then $\langle Q^{P\backslash\{i\}}\rangle\supseteq\g_{\Si}$ and $|\langle Q^{P\backslash\{i\}}\rangle(1)|\geq r+1$; consider the simplicial star-subdivision of  $\langle Q^{P\backslash\{i\}}\rangle$ having center in the ray $\q_i\in\pc^*$; in this subdivision let $\mathcal{C}_{i,P}$ be the unique simplicial subcone containing $\g_{\Si}$, which exists by the definition of the secondary fan $\Ga$; being $\mathcal{C}_{i,P}\subseteq\langle Q^{P\backslash\{i\}}\rangle$ then $\mathcal{C}_{i,P}(1)\cap(\pc^*\backslash\{\langle\q_i\rangle\})=\emptyset$; but $\langle\q_i\rangle\in\mathcal{C}_{i,P}(1)$ by construction; then $\mathcal{C}_{i,P}(1)\cap\pc^*=\{\langle\q_i\rangle\})$;

 \noindent$(ii)\Rightarrow(ii.3)$\,:\ set $\langle Q^J\rangle:=\mathcal{C}_{i,P}$\,; then
  \begin{equation*}
    \langle Q^J\rangle(1)\cap(\pc^*\backslash\{\langle\q_i\rangle\})=\mathcal{C}_{i,P}(1)\cap(\pc^*\backslash\{\langle\q_i\rangle\})=\emptyset\,
    \Rightarrow\,\langle Q^J\rangle\subseteq\langle Q^{P\backslash\{i\}}\rangle
  \end{equation*}
\end{proof}

Let $\overline{\NE}(X)\subseteq N_1(X)$ be the Mori cone, generated by the numerical classes of effective curves. M.~Reid proved that $\overline{\NE}(X)$ is closed and polyhedral when $X$ is a $\Q$--factorial complete toric variety (\cite[Cor.~(1.7)]{Reid83}) and generated by classes of torus--invariant curves. When $X$ is smooth, C.~Casagrande ensures that the numerical class $\n_P$, of a primitive relation  $r_{\Z}(P)$, belong to $A_1(X)\cap\overline{\NE}(X)$ \cite[Lemma~1.4]{Casagrande}. D.~Cox and C.~von~Renesse generalize and improve this fact to toric varieties whose fan has convex support, showing that the Mori cone is generated by numerical classes of primitive relations \cite[Propositions.~1.9 and 1.10]{Cox-vRenesse}, namely
\begin{equation}\label{Mori}
    \overline{\NE}(X)=\sum_{P\in\PC(\Si)}\R_+ \n_P\,.
\end{equation}
In particular we get the following

\begin{proposition}[Lemma 1.4 in \cite{Casagrande}, Prop.~1.9 in \cite{Cox-vRenesse}]\label{prop:normal-inward} If $\pc=\{V_P\}$ is a primitive collection, for some $P\in\PC(\Si)$, then its numerical class $\n_P$ is positive against every nef divisor of $X(\Si)$, which is $\g_{\Si}\subseteq\mathcal{H}^+_P$.
\end{proposition}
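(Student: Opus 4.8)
The plan is to deduce the statement from the two facts already recalled just above the proposition: that the numerical class $\n_P$ of a primitive relation lies in the Mori cone $\overline{\NE}(X(\Si))$ (formula (\ref{Mori}), due to Casagrande \cite{Casagrande} and to Cox--von Renesse \cite{Cox-vRenesse}), and that $\Nef(X(\Si))$ is by definition the dual cone of $\overline{\NE}(X(\Si))$.

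First I would note that, since $P\in\PC(\Si)$, the class $\n_P$ constructed in (\ref{normalP}) is one of the generators of $\overline{\NE}(X(\Si))$ listed in (\ref{Mori}), so in particular $\n_P\in\overline{\NE}(X(\Si))$; no smoothness of $X$ is needed here, since (\ref{Mori}) only requires the fan to have convex support, which holds for every $\Si\in\SF(V)$. Then, by duality, $\langle\delta,\n_P\rangle\ge 0$ for every nef $\R$--divisor class $\delta\in\Nef(X(\Si))$, where $\langle\,,\,\rangle$ denotes the natural pairing between $\Pic(X(\Si))\otimes\R$ and $N_1(X(\Si))$. Finally I would invoke Proposition \ref{prop:nef}: its natural isomorphism $\Pic(X(\Si))\otimes\R\cong F^r_{\R}$ carries $\Nef(X(\Si))$ onto $\g_{\Si}$, and a class $\delta$ is nef precisely when the corresponding $\mathbf{x}\in F^r_{\R}$ lies in $\g_{\Si}$; under this identification $\langle\delta,\n_P\rangle$ becomes the evaluation $\n_P\cdot\mathbf{x}$ of Definition \ref{def:support}. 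Hence $\n_P\cdot\mathbf{x}\ge 0$ for all $\mathbf{x}\in\g_{\Si}$, i.e. $\g_{\Si}\subseteq\mathcal{H}^+_P$, which is the assertion.

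The only point demanding a line of justification is the compatibility of the two pairings under the isomorphism of Proposition \ref{prop:nef}. This is immediate from (\ref{normalP}): writing $d\colon\mathcal{W}_T(X)\to\Cl(X)$ for the class map of (\ref{div-sequence}), one has $d(D_j)=\q_j$ and $Q^{T}\n_P=r_{\Z}(P)$, so for any torus--invariant divisor $D=\sum_j a_jD_j$ we get $\n_P\cdot d(D)=\n_P^{T}Q\,\mathbf{a}=r_{\Z}(P)^{T}\mathbf{a}=\sum_j (lb_j)a_j$, which is exactly the intersection number of the $1$--cycle $r_{\Z}(P)$ against $D$; since the classes $d(D_j)$ span $F^r_{\R}$, the pairing of Definition \ref{def:support} is the restriction of the intersection pairing transported by Proposition \ref{prop:nef}, and the statement ``$\n_P$ positive against every nef divisor'' translates verbatim into ``$\n_P\cdot\mathbf{x}\ge 0$ on $\g_{\Si}$''. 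I do not foresee a genuine obstacle: the substantive content — the curve--class interpretation of primitive relations in the singular $\Q$--factorial setting — is already packaged into (\ref{Mori}), and the remainder is the standard $\Nef$/$\overline{\NE}$ duality read through the dictionary of Proposition \ref{prop:nef}.
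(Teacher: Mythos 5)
Your argument is correct and is essentially the paper's own: the proposition is presented there as an immediate consequence of the Cox--von Renesse decomposition (\ref{Mori}) of the Mori cone together with the $\Nef$/$\overline{\NE}$ duality, read through the identification of Proposition \ref{prop:nef}. Your extra paragraph checking that the pairing of Definition \ref{def:support} agrees with the intersection pairing via $Q^{T}\n_P=r_{\Z}(P)$ is a harmless (and correct, up to the irrelevant positive factor $l$) elaboration of what the paper leaves implicit.
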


Dualizing (\ref{Mori}) we get the following description of the closure of the \ka cone
\begin{equation}\label{Nef}
    \Nef(X)=\bigcap_{P\in\PC(\Si)}\mathcal{H}^+_P\,.
\end{equation}
Then Proposition \ref{prop:primitive} allows us to give the following alternative description of this cone:

\begin{corollary}\label{cor:NEF} Let $V$ be a reduced $F$--matrix, $Q=\G(V)$ be a $\REF$, positive $W$--matrix and $\Si\in\P\SF(V)$. Then
\begin{equation*}
    \Nef(X(\Si))=\bigcap_{i\in P\in\PC(\Si)}\left\langle Q^{P\backslash\{i\}}\right\rangle\,.
\end{equation*}
\end{corollary}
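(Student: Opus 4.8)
The plan is to combine the dual description of the Nef cone recalled in equation~(\ref{Nef}) with the reformulation of primitive collections given by Proposition~\ref{prop:primitive}, in particular the equivalence $(1)\Leftrightarrow(4)$. Starting from
\[
\Nef(X(\Si))=\bigcap_{P\in\PC(\Si)}\mathcal{H}^+_P\,,
\]
I would rewrite each half-space $\mathcal{H}^+_P$ in terms of the cones $\langle Q^{P\setminus\{i\}}\rangle$ appearing in condition $(ii.4)$. The key observation is that, by condition $(ii.4)$ of Proposition~\ref{prop:primitive}, for each $i\in P$ one has $\g_{\Si}\subseteq\langle Q^{P\setminus\{i\}}\rangle$; since $\g_{\Si}$ has dimension $r$ and $\langle Q^{P\setminus\{i\}}\rangle\subseteq F^r_+$, this cone is $r$-dimensional, hence an intersection of half-spaces, one of whose bounding hyperplanes is precisely $H_P$ with $\g_{\Si}$ on the positive side $\mathcal{H}^+_P$. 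The plan is thus to show
\[
\mathcal{H}^+_P\cap\Mov(V)=\bigcap_{i\in P}\left\langle Q^{P\setminus\{i\}}\right\rangle\cap\Mov(V)
\]
or, more directly, to argue that intersecting over all $i\in P$ the cones $\langle Q^{P\setminus\{i\}}\rangle$ reproduces exactly the constraint imposed by $\mathcal{H}^+_P$ on the chamber $\g_{\Si}$, and then intersect over all $P\in\PC(\Si)$.

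More concretely, I would proceed as follows. First I would recall that $\langle Q^{P}\rangle$ is the cone spanned by the columns of $Q$ indexed outside $P$, and that the hyperplane $H_P$ is spanned by (a facet of) this cone when $P$ is primitive; indeed the numerical class $\n_P$ from~(\ref{normalP}) pairs to zero with the columns $\q_j$ for $j\notin P$ (this is what it means for $r_\Z(P)$ to be supported on $P\cup\s(1)$ against the divisor classes), so $\langle Q^P\rangle\subseteq H_P$, whence $\langle Q^{P\setminus\{i\}}\rangle\subseteq\mathcal{H}^+_P$ once one checks the sign, using Proposition~\ref{prop:normal-inward} together with $\g_{\Si}\subseteq\langle Q^{P\setminus\{i\}}\rangle$. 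Conversely, the cones $\langle Q^{P\setminus\{i\}}\rangle$ as $i$ ranges over $P$ cover the half-space $\mathcal{H}^+_P$ near $\g_{\Si}$: every facet of $\g_{\Si}$ other than the one on $H_P$ is cut out by some hyperplane $H_{\q_i}^\perp$ with $i\in P$ removed from the relevant generating set, so no constraint beyond $\mathcal{H}^+_P$ survives the intersection $\bigcap_{i\in P}\langle Q^{P\setminus\{i\}}\rangle$. Taking the intersection over all primitive $P$ and invoking~(\ref{Nef}) then yields the claimed formula.

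The main obstacle, I expect, is the reverse inclusion: showing that $\bigcap_{i\in P\in\PC(\Si)}\langle Q^{P\setminus\{i\}}\rangle$ is not strictly smaller than $\Nef(X(\Si))$, i.e. that the cones $\langle Q^{P\setminus\{i\}}\rangle$ do not carve out extra facets not coming from any $H_P$. One clean way around this is to work chamber-locally: by Theorem~\ref{thm:Gale sui coni} and Proposition~\ref{prop:nef}, $\Nef(X(\Si))=\g_{\Si}$, and every facet of $\g_{\Si}$ either lies on $\partial\CQ$ or is a wall between $\g_{\Si}$ and an adjacent chamber; in either case the facet-supporting hyperplane is the support $H_P$ of a primitive collection (this is essentially the content of~(\ref{Mori})–(\ref{Nef})). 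Since each $\langle Q^{P\setminus\{i\}}\rangle\supseteq\g_{\Si}$, the intersection $\bigcap_{i,P}\langle Q^{P\setminus\{i\}}\rangle$ contains $\g_{\Si}$; and since it is contained in $\bigcap_P\mathcal{H}^+_P=\g_{\Si}$ by the half-space containment established above, the two coincide. I would take care, in writing this up, to treat separately the degenerate case $|P|=n+1$, where $\langle Q^{P\setminus\{i\}}\rangle$ may drop dimension or where $\langle Q^P\rangle=\{0\}$, using that $(i.4)$ is automatic there as already noted in the proof of Proposition~\ref{prop:primitive}.
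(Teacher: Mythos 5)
The easy inclusion $\Nef(X(\Si))=\g_{\Si}\subseteq\bigcap_{i\in P\in\PC(\Si)}\langle Q^{P\backslash\{i\}}\rangle$ is handled correctly in your outline: it is exactly condition $(ii.4)$ of Proposition~\ref{prop:primitive}, which is also how the paper argues. The gap is in the reverse inclusion, and it sits in your claim that $\n_P$ pairs to zero with every column $\q_j$ with $j\notin P$, so that $\langle Q^P\rangle\subseteq H_P$ and hence $\langle Q^{P\backslash\{i\}}\rangle=\langle\q_i\rangle+\langle Q^P\rangle\subseteq\mathcal{H}_P^+$. By (\ref{normalP}) one has $\n_P\cdot\q_j=(r_{\Z}(P))_j=lb_j$, and by (\ref{relazione primitiva 2}) the coefficient $b_j$ is strictly \emph{negative} for every $j\notin P$ indexing a ray of the focus $\s$ of $\pc$. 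So unless $\pc$ is nef (focus $0$), $\langle Q^P\rangle$ is only contained in the closed half-space $\mathcal{H}_P^-$ --- which is all the paper asserts --- and $\langle Q^{P\backslash\{i\}}\rangle$ genuinely crosses $H_P$. Example~\ref{ex:noconverse} already refutes your claim: for $\pc_1=\{\v_1,\v_2\}$ the primitive relation is $\v_1+\v_2-\v_4=0$, so $r_{\Z}(P_1)=(1,1,0,-1)$ and $\n_{P_1}=(1,-1)^T$; thus $\n_{P_1}\cdot\q_4=-1\neq 0$ although $4\notin P_1$, and $\langle Q^{P_1\backslash\{1\}}\rangle=\langle\q_1,\q_3,\q_4\rangle=F^2_+\nsubseteq\mathcal{H}_{P_1}^+=\{x_1\geq x_2\}$. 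Consequently the inclusion $\bigcap_{i\in P\in\PC(\Si)}\langle Q^{P\backslash\{i\}}\rangle\subseteq\bigcap_{P}\mathcal{H}_P^+$ cannot be obtained termwise, and your argument for the hard direction collapses exactly at the step you describe as ``the half-space containment established above''.

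The paper closes this gap with a genuinely additional argument: it writes each cone as the union $\langle Q^{P\backslash\{i\}}\rangle=\langle Q^P\rangle\cup\left(\mathcal{H}_P^+\cap\langle Q^{P\backslash\{i\}}\rangle\right)$, notes $\langle Q^P\rangle\subseteq\mathcal{H}_P^-$, and uses that $\dim(\g_{\Si})=r$ (available since $\Si\in\P\SF(V)$) to discard the $\langle Q^P\rangle$ pieces from the total intersection before invoking (\ref{Nef}). Some device of this kind is unavoidable in the non-nef case, and your write-up needs it (or an equivalent) to become a proof. A minor point of exposition: you describe the main obstacle as showing the intersection is ``not strictly smaller'' than $\Nef(X(\Si))$, but that direction is the automatic one coming from $(ii.4)$; the work is entirely in showing it is not strictly larger.
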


\begin{proof} By Proposition \ref{prop:nef}, $\Nef(X(\Si))=\g_{\Si}=\bigcap_{\b\in\mathcal{B}(\g_{\Si})}\b$; then clearly
\begin{equation*}
    \Nef(X(\Si))\subseteq\bigcap_{i\in P\in\PC(\Si)}\mathcal{C}_{i,P}\subseteq \bigcap_{i\in P\in\PC(\Si)}\left\langle Q^{P\backslash\{i\}}\right\rangle
\end{equation*}
where $\mathcal{C}_{i,P}\in\mathcal{B}(\g_{\Si})$ are the cones defined in condition ($ii$) of Proposition \ref{prop:primitive}. For the converse notice that
$$\forall\,P\in\PC(\Si)\quad\left\langle Q^{P\backslash\{i\}}\right\rangle=\left\langle Q^P\right\rangle\cup\left(\mathcal{H}_P^+\cap\left\langle Q^{P\backslash\{i\}}\right\rangle\right)\,.$$
But $\langle Q^P\rangle\subseteq\mathcal{H}_P^-$ and $\Si\in\P\SF(V)$ implies that $\dim(\Nef(X))=\dim(\g_{\Si})=r$. Then (\ref{Nef}) gives
\begin{equation*}
    \bigcap_{i\in P\in\PC(\Si)}\left\langle Q^{P\backslash\{i\}}\right\rangle=\bigcap_{i\in P\in\PC(\Si)}\left(\mathcal{H}_P^+\cap\left\langle Q^{P\backslash\{i\}}\right\rangle\right)\subseteq\bigcap_{P\in\PC(\Si)}\mathcal{H}_P^+= \Nef(X)\,.
\end{equation*}
\end{proof}

\begin{definition}[\emph{Bordering} collections and chambers]\label{def:bordering}\hfill
\begin{enumerate}
  \item Let $V$ be a reduced $F$--matrix and $Q=\G(V)$ a $\REF$, positive $W$--matrix. A collection $\pc=\{V_P\}$, for some $P\in\mathfrak{P}$, is called \emph{bordering} if its support $H_P$ cuts out a facet of the Gale dual cone $\gkz=\langle Q\rangle$.
  \item A chamber $\g\in\Ga(V)$ is called \emph{bordering} if $\dim(\g\cap\partial\gkz)\geq 1$. Notice that $\g\cap\partial\gkz$ is always composed by faces of $\g$: if it contains a facet of $\g$ then $\g$ is called \emph{maximally bordering} (\emph{maxbord} for short).  A hyperplane $H$ cutting a facet of $\gkz$ and such that $\dim(\g\cap H)\geq 1$ is called a \emph{bordering hyperplane of $\g$} and the bordering chamber $\g$ is also called \emph{bordering with respect to $H$}. A normal vector $\n$ to a bordering hyperplane $H$ is called \emph{inward} if $\n\cdot x\geq 0$ for every $x\in\g$.
\end{enumerate}

\end{definition}

\begin{remark}\label{rem:bordering->nef} Let us give some geometric interpretation of concepts introduced in the previous Definition~\ref{def:bordering}.
\begin{enumerate}
  \item $P\in\PC(\Si)$ gives a bordering primitive collection $\pc=\{V_P\}$ if and only $\pc$ is nef, which is that $r_{\Z}(\pc)$ is a numerically effective 1-cycle.
  \item Thinking of $\g\in\Ga(V)$ as the cone $\Nef(X(\Si_{\g}))\subseteq\Eff(X(\Si_{\g}))$, then the chamber $\g$ turns out to be bordering if and only if $X(\Si_{\g})$ admits non-trivial effective divisors which are nef but non-big \cite{FS}. Following \cite{Hu-Keel} and \cite{Casagrande13}, this is equivalent to require the existence of a \emph{rational contraction of fiber type} $f:X\dashrightarrow Y$ to a normal projective toric variety $Y$.
\end{enumerate}

\end{remark}

\begin{remark}\label{rem:conobordante}
Let $\g\in\Ga(V)$ be a bordering chamber and $H$ be a bordering hyperplane of $\g$. Then there exist al least $r-1$ columns of $Q=\G(V)$ belonging to $H$. Let $\mathcal{C}_H$ be the $(r-1)$--dimensional cone generated by all the columns of $Q$ belonging to $H$. Then
\begin{equation}\label{nelcono}
    \g\cap H\subset \mathcal{C}_H.
\end{equation}
In fact $\gkz=|\Ga(V)|$ and $\g\subset\gkz$, giving that $\g\cap H\subset \CQ \cap H=\mathcal{C}_H$.
\end{remark}

\begin{proposition}\label{prop:maxbord} Let $V$ be a reduced $F$--matrix, $Q=\G(V)$ a positive, $\REF$ $W$--matrix and $\g\in\Ga(r)\subseteq\gkz=\langle Q\rangle$. Then $\g$ is a maxbord chamber w.r.t. a hyperplane $H$ if and only if
\begin{equation*}
    \forall\,\b\in\mathcal{B}(\g)\quad\exists\q\in\gkz(1)\backslash\mathcal{C}_H(1):\b=\langle\q\rangle+\b\cap H
\end{equation*}
where $\mathcal{C}_H$ is the $(r-1)$--dimensional cone generated by all the columns of $Q$ belonging to $H$.
\end{proposition}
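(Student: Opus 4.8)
The plan is to unwind the definitions on both sides and match them via the bunch-of-cones picture established in Remark~\ref{rem:cameraregolare} and Remark~\ref{rem:conobordante}. Recall that, by Definition~\ref{def:bordering}(2), $\g$ being maxbord w.r.t.\ $H$ means that $\g\cap H$ is a facet of $\g$, i.e. it has dimension $r-1$; and by Remark~\ref{rem:conobordante} we already know $\g\cap H\subseteq\mathcal{C}_H$, where $\mathcal{C}_H=\langle Q_{I_H}\rangle$ is generated by exactly the columns $\q_i$ of $Q$ lying on $H$. Since $H$ cuts a facet of $\gkz$, the cone $\mathcal{C}_H$ is $(r-1)$-dimensional and is itself a face of every $\b\in\mathcal{B}(\g)$ that it touches; the content of the statement is that, for a maxbord chamber, \emph{every} $\b\in\mathcal{B}(\g)$ meets $H$ in one of its facets, and that facet is $\b\cap H=\b\cap\mathcal{C}_H$, the remaining generator of $\b$ being a single column $\q$ of $Q$ off $H$.

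First I would prove the forward implication. Assume $\g\cap H$ is a facet of $\g$. Fix $\b=\langle Q_J\rangle\in\mathcal{B}(\g)$, so $\b$ is a simplicial $r$-dimensional cone with $\g\subseteq\b$. Then $\g\cap H\subseteq\b\cap H$ and $\g\cap H$ is $(r-1)$-dimensional, hence $\b\cap H$ is a facet of the simplicial cone $\b$ (a proper face of $\b$ containing an $(r-1)$-dimensional set must be the unique facet of that dimension, by simpliciality). A facet of the simplicial cone $\langle Q_J\rangle$ is spanned by $r-1$ of its $r$ generators; call the omitted one $\q$, so $\b=\langle\q\rangle+\b\cap H$ and $\b\cap H=\langle Q_{J\setminus\{j\}}\rangle$. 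It remains to see $\q\notin\mathcal{C}_H(1)$, equivalently $\q\notin H$: if it were, then all $r$ generators of $\b$ would lie in $H$, forcing $\b\subseteq H$, contradicting $\dim\b=r$. Also the $r-1$ generators of $\b\cap H$ lie in $H$, hence in $\mathcal{C}_H$, so $\q\in\gkz(1)\setminus\mathcal{C}_H(1)$, which is the asserted form.

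Then I would prove the converse. Suppose that for every $\b\in\mathcal{B}(\g)$ we have $\b=\langle\q\rangle+\b\cap H$ with $\q\in\gkz(1)\setminus\mathcal{C}_H(1)$; in particular $\b\cap H$ is an $(r-1)$-dimensional face (a facet) of $\b$ for each $\b$. Using $\g=\bigcap_{\b\in\mathcal{B}(\g)}\b$ (Remark~\ref{rem:cameraregolare}), intersect: $\g\cap H=\bigcap_{\b}(\b\cap H)$. Each $\b\cap H$ is a full-dimensional cone inside the hyperplane $H$ (dimension $r-1$), and all of them contain $\g\cap H$; the point is to show this intersection is still $(r-1)$-dimensional, i.e. that $\g$ genuinely reaches the wall $H$ in codimension one rather than in higher codimension. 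Here I would argue inside the hyperplane $H$: the cones $\b\cap H$, as $\b$ ranges over $\mathcal{B}(\g)$, are exactly the maximal cones of the bunch of the face $\g\cap H$ relative to the restricted GKZ picture on $\mathcal{C}_H\subseteq H$ — more concretely, deleting the columns of $Q$ not on $H$ and the corresponding row via the $\REF$ normalization realizes $\mathcal{C}_H$ as the Gale dual cone of a lower-dimensional $F$-matrix (this is precisely the "good conditions / delete bottom row and last columns" mechanism referenced in the introduction), and the $\b\cap H$ are its chambers' defining cones. Their common intersection is then a chamber of that lower picture, hence of full dimension $r-1$ in $H$; therefore $\g\cap H$ is a facet of $\g$ and $\g$ is maxbord w.r.t.\ $H$.

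The main obstacle is the converse, specifically confirming that $\bigcap_{\b\in\mathcal{B}(\g)}(\b\cap H)$ has dimension exactly $r-1$. The forward direction is essentially elementary convex geometry of simplicial cones, but the converse requires controlling how the several facets $\b\cap H$ fit together inside $H$; the clean way to do this is to identify $(H,\mathcal{C}_H)$ with the Gale dual cone of the "boundary" $F$-matrix obtained by the row/column deletion, so that the family $\{\b\cap H\}$ becomes the bunch of cones of $\g\cap H$ there and its intersection is a genuine (full-dimensional) chamber. Isolating and justifying that identification — i.e. that restricting to $H$ is compatible with Gale duality in the sense of Theorem~\ref{thm:Gale sui coni} — is the step that needs care, and it is exactly the linear-algebraic bridge that the rest of \S\ref{sez:Batyrev-rivisto} is built to provide.
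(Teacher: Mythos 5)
Your forward implication is correct and coincides with the paper's: maxbordness gives $\dim(\g\cap H)=r-1$, hence $\dim(\b\cap H)=r-1$ for every $\b$ in the bunch, and since $H$ supports $\gkz\supseteq\b$ the set $\b\cap H$ is a proper face of the simplicial cone $\b$, hence a facet spanned by $r-1$ of its generators, the omitted generator necessarily lying off $H$. (Your parenthetical ``the unique facet of that dimension'' is off --- a simplicial $r$--cone has $r$ facets of dimension $r-1$ --- but the conclusion you actually need, that $\b\cap H$ is \emph{a} facet, is right.)

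The converse is where your proposal has a genuine gap, and you flag it yourself: you must show that $\g\cap H=\bigcap_{\b\in\mathcal{B}(\g)}(\b\cap H)$ is $(r-1)$--dimensional, and you propose to do so by realizing $\mathcal{C}_H$ as the Gale dual cone of a lower--dimensional $F$--matrix (delete the bottom row of $Q$ and the columns off $H$) and the family $\{\b\cap H\}$ as the bunch of cones of a chamber there. There are two problems with this route. First, that identification is not available at this point of the paper: the facts that the truncated matrix $Q'$ is (essentially) a $W$--matrix, that $\Mov(V')=H\cap\Mov(V)$, and that $\g\cap H$ is a genuine chamber downstairs are all established in part (a) of the proof of Theorem~\ref{thm:maxbord}, and they are derived there \emph{from} the hypothesis that $\g$ is maxbord w.r.t.\ $H$ --- exactly the conclusion you are trying to reach --- so invoking them here is circular. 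Second, even granting a secondary--fan structure on $\mathcal{C}_H$, the cones $\b\cap H$ are a priori just a family of $(r-1)$--dimensional simplicial cones each containing the face $\g\cap H$; an arbitrary such family can intersect in something of strictly smaller dimension, so you would still need to prove that $\{\b\cap H\}$ is precisely the bunch of a single chamber, which is the point at issue. The paper's own converse does not pass through any lower--dimensional Gale duality: it argues directly from the equality $\g=\bigcap_{\b\in\mathcal{B}(\g)}\b$ together with $\dim(\g)=r$ that intersecting the decompositions $\b=\langle\q\rangle+(\b\cap H)$ forces $\g\cap H$ to be a facet of $\g$. To complete your write--up you should either supply that direct convex--geometric argument or establish the restriction--to--$H$ identification by an argument independent of maxbordness.
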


\begin{proof} If $\g$ is maxbord w.r.t. $H$ then $\dim(\g\cap H)=r-1$ implying that
\begin{equation*}
    \forall\,\b\in\mathcal{B}(\g)\quad\dim(\b\cap H)=r-1\,.
\end{equation*}
Since $\b$ is simplicial, this suffices to show that there exists a unique $\q\in\b(1)$ not belonging to $H$ and such that $\b=\langle\q\rangle+\b\cap H$.

The converse follows immediately by recalling that $\g=\bigcap_{\b\in\mathcal{B}(\g)}\b$ and we are assuming $\dim(\g)=r$.
\end{proof}

\begin{definition}\label{def:intbord} Let $V$ be a reduced $F$--matrix. A bordering chamber $\g\in\Ga(V)$, w.r.t. the hyperplane $H$,
is called \emph{internal bordering} (\emph{intbord} for short) w.r.t. $H$, if either $\g$ is maxbord w.r.t $H$ or there exists an hyperplane $H'$, cutting a facet of $\g$ and such that
\begin{itemize}
  \item[($i$)] $\g\cap H\subseteq \g\cap H'$
  \item[($ii$)] $\exists\,\q_1,\q_2\in H\cap\gkz(1) : (\n'\cdot\q_1)(\n'\cdot\q_2)< 0$
 \end{itemize}
where $\n'$ is the inward primitive normal vector of $H'$.
\end{definition}

\begin{remark}\label{rem:max-int-bordering} For Picard number $r\leq 2$, a chamber $\g\in \Ga(V)$ is bordering w.r.t. a hyperplane $H$ if and only if it is intbord w.r.t. $H$ if and only if it is maxbord w.r.t. $H$. For $r\geq 3$, this is no more the case but
\begin{equation*}
    \text{maxbord w.r.t.}\,H\ \Longrightarrow\ \text{intbord w.r.t.}\,H\ \Longrightarrow\ \text{bordering w.r.t.}\,H
\end{equation*}
and there exist chambers which are either bordering and not intbord or intbord and not maxbord  w.r.t $H$.
\end{remark}

The following result gives the existence of a bordering primitive collection, hence of a numerically effective primitive relation, for a $\Q$--factorial projective toric variety whose fan corresponds to an intbord chamber of the secondary fan $\Ga(V)$. This is one of the key results of the present paper, allowing us to improve and extend the Batyrev classification, explained in \cite{Batyrev91}, to the case of singular $\Q$--factorial projective toric varieties. In some sense, the following result is the analogue, in a singular setup, of Batyrev's result \cite[Prop.~3.2]{Batyrev91} (see the following Remark \ref{rem:intbord->smooth}).

\begin{theorem}\label{thm:intbord-pc} Let $V$ be a reduced $F$--matrix and assume $\g\in\mathcal{A}_{\Ga}(V)$ be a bordering chamber w.r.t. a hyperplane $H$. Then $\g$ is an intbord chamber w.r.t. $H$ if and only if the hyperplane $H$ is the support of a bordering primitive collection $\pc$, for the fan $\Si_{\g}\in\P\SF(V)$.
\end{theorem}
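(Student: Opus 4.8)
The plan is to prove both implications by translating everything into the combinatorics of the bunch of cones $\mathcal{B}(\g)$ via Gale duality, using Proposition~\ref{prop:primitive} to identify primitive collections with their supports, and Proposition~\ref{prop:maxbord} together with Definition~\ref{def:intbord} to pin down the geometry of the intbord condition. I would begin by fixing notation: let $\mathcal{C}_H$ be the $(r-1)$-dimensional cone generated by the columns of $Q$ lying on $H$, and let $P\subseteq\{1,\dots,n+r\}$ be the index set of the \emph{remaining} columns, i.e. $\q_j\notin H$ for $j\in P$. The candidate primitive collection is $\pc=\{V_P\}$; note that since $H$ cuts a facet of $\gkz$, the set $\pc$ is automatically bordering in the sense of Definition~\ref{def:bordering}(1), so the real content is that $\pc$ is a primitive collection whose support is exactly $H$, which I will verify through the criterion ($i.4$)--($ii.4$) of Proposition~\ref{prop:primitive}.

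\emph{($\Leftarrow$) Suppose $H$ is the support of a bordering primitive collection $\pc=\{V_P\}$ for $\Si_\g$.} Then by Proposition~\ref{prop:primitive}($ii.4$), $\g\subseteq\langle Q^{P\backslash\{i\}}\rangle$ for every $i\in P$, while ($i.4$) gives $\g\nsubseteq\langle Q^P\rangle$. The columns indexed outside $P$ are exactly those on $H$, so $\langle Q^{P\backslash\{i\}}\rangle$ is the cone on $\mathcal{C}_H$ together with the single extra ray $\q_i$. I would argue that $\g\cap H$ is contained in every such $\langle Q^{P\backslash\{i\}}\rangle$ and hence, by Remark~\ref{rem:conobordante}, equals a face of $\mathcal{C}_H$; the key dimensional dichotomy is whether $\dim(\g\cap H)=r-1$ (then $\g$ is maxbord w.r.t.\ $H$ and we are done) or $\dim(\g\cap H)\le r-2$. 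In the latter case I need to produce the auxiliary hyperplane $H'$ of Definition~\ref{def:intbord}: take $H'$ to be a supporting hyperplane of the facet of $\g$ whose relative interior meets $H$ (such a facet exists because $\g$ is bordering, so $\dim(\g\cap\partial\gkz)\ge 1$, forcing $\g\cap H$ to lie in a facet of $\g$). Condition ($i$) $\g\cap H\subseteq\g\cap H'$ is then immediate. For ($ii$) I use that $\pc$ genuinely fails to lie in a single cone of $\Si_\g$: by ($i.4$), $\g\not\subseteq\langle Q^P\rangle$, and since $\langle Q^P\rangle\subseteq\mathcal{H}_P^-$ while $\g\subseteq\mathcal{H}_P^+$ with $\dim\g=r$, the hyperplane $H'$ must separate two rays of $Q_P$ — more precisely, I would show the inward normal $\n'$ of $H'$ takes strictly opposite signs on a suitable pair $\q_1,\q_2$ among the columns on $H$, exploiting that $H'$ cuts a \emph{proper} facet of $\g$ that is not a facet of $\gkz$.

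\emph{($\Rightarrow$) Suppose $\g$ is intbord w.r.t.\ $H$.} If $\g$ is maxbord w.r.t.\ $H$, then Proposition~\ref{prop:maxbord} says each $\b\in\mathcal{B}(\g)$ is of the form $\langle\q\rangle+\b\cap H$ with $\q\notin\mathcal{C}_H(1)$; reading this through the bijection of Remark~\ref{rem:cameraregolare}, I verify ($i.4$) and ($ii.4$) for $P=$ (indices off $H$) directly, so $\pc=\{V_P\}$ is primitive with support $H$. If $\g$ is intbord but not maxbord w.r.t.\ $H$, I use the hyperplane $H'$ from Definition~\ref{def:intbord}: condition ($i$) forces $\g\cap H\subseteq\g\cap H'$, and condition ($ii$), saying $\n'$ changes sign on columns of $Q$ on $H$, is exactly what prevents $\g$ from being contained in $\langle Q^P\rangle$ — this yields ($i.4$). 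For ($ii.4$) I again build, for each $i\in P$, the star-subdivision of $\langle Q^{P\backslash\{i\}}\rangle$ centered at $\q_i$ and locate the unique subcone $\mathcal{C}_{i,P}$ containing $\g$, as in the proof of Proposition~\ref{prop:primitive}; the point is that removing $\q_i$ leaves all other off-$H$ generators plus $\mathcal{C}_H$, a cone that does contain $\g$ because $\g\cap H$ has already been shown to sit inside $\mathcal{C}_H$ and $\g$ lies on the positive side. Finally I check that the support so produced is $H$ and not some other hyperplane, which follows since $r_\Z(P)$ vanishes precisely on $\mathcal{C}_H$.

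\textbf{Main obstacle.} The delicate point is the non-maxbord case of both directions: reconciling Definition~\ref{def:intbord}'s auxiliary hyperplane $H'$ with the primitive-collection criterion requires showing that "$\pc$ is not contained in a single maximal cone" (condition $i.4$) is equivalent to the sign-change condition ($ii$) on $\n'$, i.e.\ that the combinatorial separation encoded by $H'$ is exactly the obstruction to $\g\subseteq\langle Q^P\rangle$. I expect this to need a careful argument about which facets of a chamber $\g$ can meet $\partial\gkz$ and in what dimension — essentially a local analysis of the fan $\Ga(V)$ near the face $\g\cap H$ — and that is where the real work of the proof will lie.
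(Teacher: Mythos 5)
Your overall framework is the same as the paper's: take $P$ to be the indices of the columns of $Q$ not lying on $H$, set $\pc=\{V_P\}$, and run both implications through criterion ($i.4$)--($ii.4$) of Proposition~\ref{prop:primitive} together with the cone $\mathcal{C}_H$. However, both substantive steps are missing or broken, and they sit exactly where you flag ``the real work''. In the direction ``intbord $\Rightarrow$ $H$ supports a bordering primitive collection'', your verification of ($ii.4$) does not go through. Note first that $\langle Q^{P\backslash\{i\}}\rangle$ is the cone on the columns \emph{not} indexed by $P\backslash\{i\}$, i.e.\ $\mathcal{C}_H+\langle\q_i\rangle$, whereas you describe it as ``all other off-$H$ generators plus $\mathcal{C}_H$'' --- a different and much larger cone. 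What must be shown is the strong containment $\g\subseteq\langle\q_i\rangle+\mathcal{C}_H$ for \emph{every} $i\in P$; knowing $\g\cap H\subseteq\mathcal{C}_H$ controls only the part of $\g$ on $H$, and ``$\g$ lies on the positive side'' does not place $\g$ inside the cone generated by $\mathcal{C}_H$ and the single ray $\q_i$. This is precisely where condition ($ii$) of Definition~\ref{def:intbord} must be spent; you instead spend it on ($i.4$), which is free anyway because $\langle Q^P\rangle\subseteq H$ has dimension at most $r-1$ while $\dim\g=r$.

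In the converse direction you correctly reduce, in the non-maxbord case, to producing a facet hyperplane $H'$ of $\g$ containing $\g\cap H$ whose inward normal changes sign on the columns lying on $H$ --- but you only assert that you ``would show'' this, and your proposed route (a local analysis of $\Ga(V)$ near $\g\cap H$) is not carried out. The paper closes this gap with a Mori-cone computation: since $\dim(H\cap\Nef(X))\leq r-2$, the class $\n_P$ is not extremal, so $\n_P=\sum_k\mu_k\n_k$ with the $\n_k$ extremal facet normals of $\g$ and $\mu_k>0$; assuming no such normal changes sign on $H\cap\gkz(1)$ and evaluating $0=\n_P\cdot\q_j$ at a suitable column $\q_j\in H$ forces some $\n_{k_0}$ to be nonpositive on all of $\langle\q_i\rangle+\mathcal{C}_H\supseteq\g$ (the inclusion coming from ($ii.4$)), contradicting that $\n_{k_0}$ is an inward facet normal of the full-dimensional $\g$. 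Without this argument, or a genuine substitute for it, the equivalence is not established.
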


\begin{proof}
Let $H$ be a bordering hyperplane for $\g$. Let us assume, up to a permutation of columns of $Q=\G(V)$, that the first $s\geq r-1$ columns $\q_1,\ldots,\q_s$ are all the columns of $Q$ belonging to $H$. Setting $P=\{s+1,\ldots,n+r\}\in\mathfrak{P}$, consider the collection $\pc=\{V_P\}$. We want to show that $\pc$ is a primitive collection for $\Si_{\g}$. On the one hand, condition ($i.4$) in Proposition \ref{prop:primitive} is immediately satisfied since $\det(Q^P)=\det(Q_{\{1,\ldots,s\}})=0$, being $\q_1,\ldots,\q_s\in H$. On the other hand, to show that condition ($ii.4$), in Proposition \ref{prop:primitive}, is holding notice that, for every $i\in P$, condition ($ii$) in Definition \ref{def:intbord} ensures that
$$\g\subseteq \langle\q_i\rangle+\mathcal{C}_{H}=\left\langle Q^{P\backslash\{i\}}\right\rangle$$
where $\mathcal{C}_{H}=H\cap\gkz$, as defined in Remark \ref{rem:conobordante}.

For the converse, let us assume $\pc=\{V_P\}$ be a bordering primitive collection w.r.t. the hyperplane $H$. By (\ref{Nef}), $H$ turns out to cut out a face of $\Nef(X)$. If $H$ cut out a facet of $\Nef(X)=\g$ then $\g$ turns out to be maxbord w.r.t. $H$, hence intbord. Let us then assume that $$\dim(H\cap\Nef(X))\leq r-2\,.$$
This means that the numerical class $\n_P$ is not extremal in the decomposition (\ref{Mori}) of the Mori cone. Then there exist $l\geq 2$ extremal classes $\n_1,\ldots,\n_{l}\in\partial\overline{\NE}(X)$ such that $\n_P=\sum_{k=2}^{l}\mu_k \n_k$, for some $\mu_k> 0$. Let $H_k\subseteq F^r_{\R}$ be the dual hyperplane to $\n_k$, for $1\leq k\leq l$, which by construction cuts out a facet of $\g$. Since we are assuming $\g$ be bordering w.r.t. $H$, then $\g\cap H=\g\cap \left(\bigcap_{k=2}^l H_k\right)$. Hence, by Definition \ref{def:intbord}, if $\g$ would not be intbord w.r.t. $H$ then
\begin{equation*}
    \forall\,k=1,\ldots,l\ \text{either}\ \forall\,j\not\in P\quad\n_k\cdot\q_j\leq 0\ \text{or}\ \forall\,j\not\in P\quad\n_k\cdot\q_j\geq 0\,.
\end{equation*}
In particular, since $H_k\neq H$, there exists $j\not\in P$ such that $\n_k\cdot\q_{j}\neq 0$. But $\q_j\in H$, giving
\begin{eqnarray*}
    0=\n_P\cdot\q_{j}=\sum_{k=2}^{l}\mu_k \n_k\cdot\q_{j}&\Longrightarrow&\exists\,1\leq k_0\leq l : \n_{k_0}\cdot\q_{j_0}<0\\
                                                         &\Longrightarrow&\forall\,j\not\in P\ \n_{k_0}\cdot\q_j\leq 0\,.
\end{eqnarray*}
By construction there exists $i\in P$ such that  $\q_i\in H_{k_0}\cap\pc$. Then $\langle Q^{P\backslash\{i\}}\rangle=\langle\q_i\rangle+\mathcal{C}_{H}$. On the one hand $\g\subseteq\langle Q^{P\backslash\{i\}}\rangle$, by condition $(ii.4)$ of Proposition \ref{prop:primitive}. Therefore there exists $\mathbf{x}\in\g\subseteq\langle Q^{P\backslash\{i\}}\rangle$ such that $\n_{k_0}\cdot \mathbf{x}>0$, being $\n_{k_0}$ the primitive inward normal vector to the facet $H_{k_0}$ of $\g$. On the other hand
\begin{equation*}
    \forall\,\mathbf{x}\in\langle Q^{P\backslash\{i\}}\rangle=\langle\q_i\rangle+\mathcal{C}_{H}\quad \n_{k_0}\cdot\mathbf{x} = \l_i\n_{k_0}\cdot\q_i+\sum_{j\not\in P}\l_j\n_{k_0}\cdot\q_j=\sum_{j\not\in P}\l_j\n_{k_0}\cdot\q_j\leq 0
\end{equation*}
giving a contradiction. Then $\g$ has to be intbord w.r.t. $H$.
\end{proof}

\subsubsection{Notation}\label{ssez:Hi} Calling $x_1,\ldots,x_r$ the coordinates of $F^r_{\R}=\R^r$, in the following  $H_i$ denotes the coordinate hyperplane $x_i=0$; in particular $H_r:=\{x_r=0\}$.

\begin{corollary}\label{cor:intbord+reg} Let $V$ be a reduced $F$--matrix and $\g\in\mathcal{A}_{\Ga}(V)$ be an intbord and non--singular chamber. Then the as\-so\-cia\-ted fan $\Si_{\g}\in\P\SF(V)$ admits a nef primitive collection $\pc$ whose primitive relation (\ref{r_Z(P)}) has all the non--zero coefficients equal to 1.
\end{corollary}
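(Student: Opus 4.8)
The plan is to derive the statement from Theorem~\ref{thm:intbord-pc} and then to squeeze out the extra information carried by the non--singularity of the chamber. Since $\g$ is intbord it is in particular a bordering chamber with respect to some hyperplane $H$ (Definition~\ref{def:intbord}), and $\g\in\mathcal{A}_{\Ga}(V)$ by hypothesis; hence Theorem~\ref{thm:intbord-pc} applies and yields a bordering primitive collection $\pc=\{V_P\}$ for the fan $\Si_{\g}$ whose support is exactly $H$. In particular $P\in\PC(\Si_{\g})$, so $\pc$ has a well--defined focus $\s\in\Si_{\g}$ and a primitive relation $r_{\Z}(\pc)$ as in (\ref{r_Z(P)}).

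Next I would observe that $\pc$ is nef. This is Remark~\ref{rem:bordering->nef}(1); in more detail, $H$ is the supporting hyperplane of a facet of the strongly convex cone $\gkz=\langle Q\rangle$, so $\gkz$ lies in one of the two closed half--spaces bounded by $H$, while Proposition~\ref{prop:normal-inward} gives $\g\subseteq\mathcal{H}^+_P$; since $\dim\g=r$ this forces $\gkz\subseteq\mathcal{H}^+_P$, i.e. $\n_P\cdot\q_j\geq 0$ for every column $\q_j$ of $Q$. By (\ref{normalP}) the $j$--th entry of $r_{\Z}(\pc)$ is precisely $\n_P\cdot\q_j$, so all the coefficients of the primitive relation (\ref{r_Z(P)}) are non--negative.

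Finally I would invoke non--singularity. By the definition of a non--singular chamber (Remark~\ref{rem:cameraregolare}) the associated fan $\Si_{\g}$ is non--singular, so $X(\Si_{\g})$ is smooth and the classical description of primitive relations on smooth toric varieties applies (\cite[Prop.~3.1]{Batyrev91}, \cite[Prop.~7.3.6(a)]{CLS}, recalled in \S~\ref{sssez:primitive}): the focus $\s$ satisfies $\s(1)\cap\pc=\emptyset$ and every coefficient $c_{\rho}$ with $\rho\in\s(1)$ is a \emph{positive integer}. Hence the denominator $l$ in (\ref{r_Z(P)}) equals $1$, and $r_{\Z}(\pc)$ has entry $1$ at each index of $P$, entry $-c_{\rho}<0$ at each index corresponding to a ray of $\s$, and $0$ at all remaining indices. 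Comparing with the non--negativity obtained above forces $\s(1)=\emptyset$, i.e. $\s=\{0\}$ and $\v_{\pc}=0$; thus the focus of $\pc$ is $0$ and the only non--zero entries of its primitive relation are the $1$'s indexed by $P$, which is exactly the claim.

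The only delicate point --- indeed the place where the hypotheses are genuinely used --- is this last step. For a merely $\Q$--factorial $\Si_{\g}$ a bordering primitive collection may have a nonzero focus, with fractional coefficients $c_{\rho}$ bounded above by $1$, so the corollary is strictly sharper than the bare existence statement of Theorem~\ref{thm:intbord-pc}; smoothness is what clears the denominators, after which integrality of the $c_{\rho}$ together with $r_{\Z}(\pc)\geq 0$ collapses the focus to $\{0\}$. The orientation bookkeeping in the nef step also warrants a line of care, but it is already carried out in the proofs of Remark~\ref{rem:bordering->nef} and Theorem~\ref{thm:intbord-pc}.
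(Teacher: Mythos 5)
Your proof is correct. The first half coincides with the paper's argument: both invoke Theorem~\ref{thm:intbord-pc} to produce a bordering primitive collection supported on the bordering hyperplane $H$, and both deduce nef--ness from the fact that $H$ cuts a facet of $\gkz$ (your half--space argument is exactly the content of Remark~\ref{rem:bordering->nef}(1); the paper gets the same conclusion by normalizing $H$ to $H_r$ via Lemma~\ref{lm:dibase}, so that $r_{\Z}(P)$ becomes the bottom row of the positive matrix $Q$). Where you genuinely diverge is the final step. The paper stays inside its Gale--dual machinery: it takes the cones $\mathcal{C}_{i,P}\in\mathcal{B}(\g)$ supplied by condition $(ii)$ of Proposition~\ref{prop:primitive}, observes that for each $i\in P$ the only generator of $\mathcal{C}_{i,P}$ outside $H_r$ is $\q_i$, so $|\det(\mathcal{C}_{i,P})|$ is a multiple of $q_{r,i}$, and lets unimodularity of the bunch force $q_{r,i}=1$; the vanishing of the focus then falls out of the relation $\sum_{i\in P}\v_i=0$. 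You instead translate non--singularity of the chamber into smoothness of $\Si_{\g}$ (legitimate, since by Remark~\ref{rem:cameraregolare} the two are equivalent and the hypothesis puts you in the $CF$ setting) and import the classical smooth--case facts recalled in \S~\ref{sssez:primitive} --- integrality of the $c_{\rho}$ and $\s(1)\cap\pc=\emptyset$ --- so that nef--ness directly annihilates the focus. Your route makes the interaction between nef--ness and smoothness more transparent, at the cost of relying on \cite[Prop.~3.1]{Batyrev91}/\cite[Prop.~7.3.6(a)]{CLS} as a black box; the paper's route is self--contained in its linear--algebraic framework and shows exactly which determinant the non--singularity hypothesis is controlling. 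Both are valid, and your closing remark correctly identifies the role of smoothness in clearing denominators and collapsing the focus.
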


\begin{proof} This is immediate after \cite[Prop.~3.2]{Batyrev91}. Alternatively, the previous Theorem \ref{thm:intbord-pc} gives a bordering, hence nef, primitive collection $\pc=\{V_P\}\subset\Si_{\g}(1)$ with $P=\{s+1,\ldots,n+r\}$. By the following Lemma \ref{lm:dibase}, one can always assume the bordering hyperplane $H_P$ be given by $H_r:=\{x_r=0\}$, recalling notation \ref{ssez:Hi}, and $Q$ be a positive, $\REF$, $W$--matrix. By (\ref{normalP}), this means that
\begin{equation*}
    r_{\Z}(P)=Q^T\cdot\n_P=Q^T\cdot\left(
                                     \begin{array}{c}
                                       0 \\
                                       \vdots \\
                                       0 \\
                                       1 \\
                                     \end{array}
                                   \right)=\left(0,\ldots,0,q_{r,s+1},\ldots,q_{r,n+r}\right)
\end{equation*}
i.e. the bottom row of $Q$ gives the primitive relation $r_{\Z}(P)$.
Then condition $(ii)$ of Proposition \ref{prop:primitive} gives that, for every $s+1\leq i\leq n+r$, the column $\q_i$ of the weight matrix $Q$ is always a generator of the simplicial cone $\mathcal{C}_{i,P}\in\mathcal{B}(\g)$, whose determinant is necessarily a multiple of the entry $q_{r,i}$ of $Q$. The non-singularity of $\g$ then imposes $q_{r,i}=1$, for all $i\in P$.
\end{proof}

\begin{example}[Examples \ref{ex:1} and \ref{ex:2} continued]\label{ex:3} Consider the two isomorphic in codimension 1, smooth and projective toric varieties $X(\Si_1),X(\Si_2)$, defined in Example \ref{ex:1}. Their chambers (i.e. Nef cones), $\g_1,\g_2$, respectively, are described in Example \ref{ex:2} and Fig.~\ref{fig2}. From the latter it is evident that both the chambers are intbord w.r.t. both the hyperplanes $H_2$ and $H_3$, under notation \ref{ssez:Hi}, and moreover $\g_1$ is maxbord w.r.t. these hyperplanes. Thm.~\ref{thm:intbord-pc} then gives that $H_2$ and $H_3$ are supporting two collections, $\pc_2=\{\v_3,\v_4\}$ and $\pc_3=\{\v_5,\v_6\}$, respectively, which are primitive and nef for both the fans $\Si_1$ and $\Si_2$.

\end{example}

\begin{lemma}\label{lm:dibase} Let $H$ be a hyperplane cutting a facet of $\gkz$. Then there exist $\a\in\GL_r(\Z)$ and a permutation matrix $\b\in\GL_{n+r}(\Z)$ such that $\a\,Q\,\b$ is in $\REF$ and $H$ is sent to the hyperplane $H_r$.
\end{lemma}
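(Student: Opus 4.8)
The plan is to separate the lemma's two demands — transporting $H$ onto the coordinate hyperplane $H_r=\{x_r=0\}$ and keeping the weight matrix in $\REF$ — and to exploit the block decomposition of $Q$ forced by the hypothesis that $H$ supports a facet of $\gkz$. First I would record the elementary facts (compare Remark~\ref{rem:conobordante}): since $\gkz=\langle Q\rangle$ is a full-dimensional strongly convex cone and $H$ is a supporting hyperplane of one of its facets, that facet is generated by exactly those columns of $Q$ lying on $H$, so there are $s\ge r-1$ of them, while every other column lies strictly on one fixed side of $H$; moreover at least one column is off $H$, since $\gkz$ is $r$-dimensional. I would then fix a permutation matrix $\beta_1$ with $Q\beta_1=(\q_1,\dots,\q_s\mid\q_{s+1},\dots,\q_{n+r})$, where $\q_1,\dots,\q_s\in H$ and $\q_{s+1},\dots,\q_{n+r}\notin H$, and pick the primitive functional $\n\in(\Z^r)^\vee$ cutting out $H$, oriented so that $\n\cdot\q_j>0$ for all $j>s$.

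Next I would straighten $H$. Because $\n$ is primitive, $H\cap\Z^r$ is a rank-$(r-1)$ direct summand of $\Z^r$; choosing a $\Z$-basis $f_1,\dots,f_{r-1}$ of it, completing to a $\Z$-basis $f_1,\dots,f_r$ of $\Z^r$ with $\n\cdot f_r>0$ (flip the sign of $f_r$ if needed), and letting $\alpha_1\in\GL_r(\Z)$ be the change of coordinates $f_i\mapsto e_i$, one gets $\alpha_1(H)=H_r$ and bottom row of $\alpha_1$ equal to $\n^T$. Hence $\alpha_1\,Q\,\beta_1=\left(\begin{smallmatrix}A&B\\0&C\end{smallmatrix}\right)$ with $A$ a $(r-1)\times s$ block of rank $r-1$ (the $\q_1,\dots,\q_s$ spanning $H$), $B$ a $(r-1)\times(n+r-s)$ block, and $C=(\n\cdot\q_{s+1},\dots,\n\cdot\q_{n+r})$ a strictly positive row. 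So $H$ is already in the required position, but the matrix need not yet be in $\REF$.

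To fix the echelon shape I would use only transformations that preserve $H_r$, namely left multiplication by $\left(\begin{smallmatrix}A'&0\\0&1\end{smallmatrix}\right)$ with $A'\in\GL_{r-1}(\Z)$ — which maps $\Z^{r-1}\times\{0\}$ to itself — together with column permutations inside the two blocks. Taking for $A'$ a matrix in $\GL_{r-1}(\Z)$ carrying $A$ into $\REF$ (for instance the unimodular factor of its row Hermite normal form), and possibly an inner column permutation $\beta_2$ of the first block, the fact that $A$ has full row rank $r-1$ forces $A'A\beta_2$ to have all $r-1$ rows as pivot rows, with pivots in columns $\le s$. Setting $\alpha:=\left(\begin{smallmatrix}A'&0\\0&1\end{smallmatrix}\right)\alpha_1\in\GL_r(\Z)$ and $\beta:=\beta_1\left(\begin{smallmatrix}\beta_2&0\\0&I\end{smallmatrix}\right)$, one still has $\alpha(H)=H_r$, and $\alpha\,Q\,\beta=\left(\begin{smallmatrix}A'A\beta_2&A'B\\0&C\end{smallmatrix}\right)$ has its top $r-1$ rows in $\REF$ with pivots in columns $\le s$ and its last row $(0\mid C)$ nonzero with leading entry in column $s+1$, strictly to the right of every pivot above it — so the whole matrix is in $\REF$, which is the assertion.

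The only real obstacle is exactly this interplay: the $\GL_r(\Z)$-change that straightens $H$ will in general destroy the staircase shape, and the point is that, once the columns on $H$ are grouped to the left, the transformations still available — row operations on the top $r-1$ rows together with column swaps — are precisely enough to re-echelonize, while they cannot disturb the bottom row because its support lies in the complementary columns. I do not expect to need (nor does the statement claim) that $\alpha Q\beta$ be \emph{positive} $\REF$: the orientation of $\n$ already makes the last row nonnegative, but positivity of the upper rows, if one wanted it, would require the separate positivization argument behind \cite[Thm.~2.19]{RT-LA&GD}.
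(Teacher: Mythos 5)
Your argument is correct and follows essentially the same route as the paper's: group the columns lying on $H$ on the left, use that they span exactly the hyperplane $H$ (so the facet normal vanishes on them while having constant sign on the remaining columns, placing the last row's leading entry at column $s+1$), and echelonize. The only cosmetic difference is the order of operations — you straighten $H$ first via a lattice basis of $H\cap\Z^r$ and then re-echelonize the top block with $H_r$-preserving transformations, whereas the paper echelonizes the submatrix of $H$-columns directly and observes that this automatically sends $H$ to $H_r$ because the resulting bottom row is zero.
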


\begin{proof} Since $H$ cuts a facet of $\gkz$, up to a permutation of columns, one can assume that the first $s\geq r-1$ columns $\q_1,\ldots,\q_s$ are all the columns of $Q=\G(V)$ belonging to $H$. Consider $\a'\in\GL_r(\Z)$ and a permutation matrix $\b'\in\GL_{s}(\Z)$ such that
$\a'\,Q_{\{1,\ldots,s\}}\,\b'$ is $\REF$. Since there cannot exist $r$ linearly independent vectors among $\q_1,\ldots,\q_s$, the last $r$--th row of $\a'\,Q_{\{1,\ldots,s\}}\,\b'$ has to be $\mathbf{0}$, meaning that $H$ has been sent to $H_r$. In particular the primitive inward normal vector of $H$ has been transformed to $(0,\ldots,0,\pm 1)$. Therefore
\begin{equation*}
     \alpha' \,Q\,\begin{pmatrix} \beta' & \mathbf{0}^T\\ \mathbf{0}& \mathbf{I}_{n+r-s}\end{pmatrix} = \overbrace{\left(\begin{array}{c}
                   \\
                 \REF \\
                 \\
               \end{array}\right.}^{s}\overbrace{\left.\begin{array}{c}
                  \vdots \\
                 q'_{r,s+1}\cdots q'_{r,n+r}\\
               \end{array}\right)}^{n+r-s}
\end{equation*}
with $q'_{r,s+1},\ldots, q'_{r,n+r}$ either strictly positive or strictly negative integer entries, depending on the sign of $(0,\ldots,0,\pm 1)$. The proof then ends up, after a possible change of sign of the bottom $r$-th row, by  summing up suitable multiples of this latter row to the upper ones and by reordering the last $n+r-s$ columns to finally get a $\REF$ matrix.
\end{proof}

\begin{remark}\label{rem:intbord->smooth} The previous Corollary \ref{cor:intbord+reg} would give an alternative proof of Ba\-ty\-rev's result \cite[Prop.~3.2]{Batyrev91} if it would be possible proving that \emph{a non--singular chamber is a bordering chamber}. Moreover Theorem \ref{thm:intbord-pc} would give a generalization of this Batyrev's result to a singular setup. Actually this is the case for Picard number $r\leq2$. In fact for $r=1$ every chamber is maxbord. For $r=2$, a non--singular chamber $\g$ is maxbord, hence bordering. For $r=3$, in \cite{RT2} we prove that a non--singular chamber is bordering by assuming the existence of a nef primitive collection, hence assuming \cite[Prop.~3.2]{Batyrev91}: this fact gives strong geometric consequences on smooth projective toric varieties of rank $r\leq 3$. Unfortunately for $r\geq 4$ non--singular chambers which are not bordering may exist: in fact, recalling Remark \ref{rem:bordering->nef}(2), in \cite{FS} Fujino and Sato exhibited examples of smooth projective toric varieties with $r\geq 5$ whose non-trivial nef divisors are big. We improved this result in \cite[\S~4.3, 4.4]{RT2} to the case of Picard number $r=4$.

For further comments, evidences and details, we refer the interested reader to the above mentioned \cite{RT2}.
\end{remark}

\subsection{Toric bundles and covers}\label{ssez:t-bundles&covers}

The present subsection is dedicated to in\-tro\-du\-cing the main objects useful for the Batyrev--type classification of the next \S~\ref{ssez:maxbord&WPTB}.

\subsubsection{Weighted Projective Toric Bundles (WPTB)}\label{sssez:WPTB} We will adopt an obvious ge\-ne\-ra\-lization of notation and terminology given in \cite{CLS} \S~7.3 for a projective toric bundle (PTB).

\noindent Let $X'(\Si')$ be a $n'$--dimensional $\Q$--factorial complete toric variety, of rank $r'$, and consider $s+1$ Cartier divisors $E_0,\ldots,E_s$ and the associated locally free sheaf $\mathcal{E}=\bigoplus_{k=0}^s\mathcal{O}_{X'}(E_k)$ of rank $s+1$. Let $W=(w_0,\ldots,w_s)$ be a reduced $1\times(s+1)$ $W$--matrix and consider the $W$--weighted symmetric algebra $S^W(\mathcal{E})$: if $\mathcal{E}$ is locally free then $S^W(\mathcal{E})$ is locally free, too. The bundle $\P^W(\mathcal{E})\rightarrow X'$, defined by setting
\begin{equation*}
    \P^W(\mathcal{E}):=\mathbf{Proj}\left(S^W(\mathcal{E})\right)=\mathbf{Proj}\left(S^W\left(\bigoplus_{k=0}^s\mathcal{O}_{X'}(E_k)\right)\right)
\end{equation*}
is called \emph{the ($W$--)weighted projective toric bundle (WPTB) associated with $\mathcal{E}$}. Its fibers look like the $s$--dimensional weighted projective space $\P(w_0,\dots,w_s)$ and it turns out to be a $\Q$--factorial complete toric variety whose fan is described as follows.

\noindent Let $\Si_W\subset N_{W,\R}\cong\R^s$ be a fan of $\P(W)$. Then its 1--skeleton $\Si_W(1)$ is composed by $s+1$ rays whose primitive generators are $s+1$ integer vectors $\e_0,\e_1,\ldots,\e_s\in\Z^s$ such that
$$\sum_{k=0}^s w_k\e_k=0\quad\text{and}\quad\forall\,0\leq i\leq s\ |\det(\e_0,\ldots,\widehat{\e}_{i},\ldots,\e_s)|=w_i\,.$$
(see e.g. \cite[Thm.~3]{RT-WPS}).
The fan $\Si_W$ is then composed by the cones
\begin{equation}\label{coniW}
    F_i:=\langle\e_0,\e_1\ldots,\widehat{\e}_i,\ldots,\e_s\rangle\subset\R^s ,\quad 1\leq i\leq s\,,
\end{equation}
and all their faces.
Consider now the fan defining $X'$, given by $\Si'\subset N_{X',\R}\cong\R^{n'}$. Let $V'=(\v'_1,\ldots,\v'_{n'+r'})$ be a $n'\times(n'+r')$ fan matrix of $X'$. Let $D'_j$ be the torus invariant Weil divisor associated with the ray $\langle\v'_j\rangle\in\Si(1)$. Then
\begin{equation*}
    \forall\,k=0,\ldots,s\quad E_k=\sum_{j=1}^{n'+r'}a_{kj}D'_j\ .
\end{equation*}
For a cone $\s\in\Si'$ define the \emph{fibred} cone
\begin{equation}\label{cono fibrato}
    \s_i:=\left\langle \left\{\left(\begin{array}{c}
                           \v'_j \\
                           \mathbf{0}_{s,1}
                         \end{array}\right)
     + \sum_{k=0}^s a_{kj}\left(\begin{array}{c
     }
                            \mathbf{0}_{n',1} \\
                             \e_k
                          \end{array}\right)
     \ |\ \langle\v'_j\rangle\in\s(1)\right\}\right\rangle + F_i \subset N_{X',\R}\times N_{W,\R}\cong\R^{n'+s}
\end{equation}

\begin{proposition}\label{prop:fan fibrato}
The set of fibred cones (\ref{cono fibrato}) and all their faces give rise to a fan $\Si_{W,\mathcal{E}}\subset N_{X',\R}\times N_{W,\R}$ whose toric variety is the $W$--weighted projective toric bundle $\P^W(\mathcal{E})$.
\end{proposition}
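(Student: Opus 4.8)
The plan is to adapt the proof of \cite[Prop.~7.3.3]{CLS}, where the corresponding statement is established for an ordinary projective toric bundle $\P(\mathcal{E})$. Only two features change: the relative $\mathbf{Proj}$ is now taken of the $W$--weighted symmetric algebra, so the typical fibre is the (possibly singular) weighted projective space $\P(W)$ rather than $\P^s$, and accordingly the affine charts of the fibre are the toric varieties $U_{F_i}$ of the simplicial cones $F_i$ of (\ref{coniW}), which carry cyclic quotient singularities of order $w_i$ in place of being affine spaces. One could alternatively build $\P^W(\mathcal{E})$ by a Cox/GIT quotient, adjoining to the Cox ring of $X'$ variables $y_0,\dots,y_s$ of degree $(-[E_k],w_k)\in\Cl(X')\oplus\Z$ and checking that the resulting irrelevant ideal recovers the fan $\Si_{W,\mathcal{E}}$; the argument below stays closer to \cite{CLS}.

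First I would use local triviality over the affine charts of the base. Each $E_k$ is Cartier, so for every $\s\in\Si'$ there is a character $m(\s,k)\in M_{X'}$ giving a local equation of $E_k$ on $U_\s$, with $\langle m(\s,k),\v'_j\rangle=-a_{kj}$ for $\langle\v'_j\rangle\in\s(1)$ (choosing the sign convention that makes (\ref{cono fibrato}) come out). Multiplication by $\chi^{m(\s,k)}$ trivialises $\mathcal{O}_{X'}(E_k)|_{U_\s}$, hence $\mathcal{E}|_{U_\s}\cong\mathcal{O}_{U_\s}^{\oplus(s+1)}$ and $S^W(\mathcal{E})|_{U_\s}\cong\mathcal{O}_{U_\s}[y_0,\dots,y_s]$ with $\deg y_k=w_k$, so that $\P^W(\mathcal{E})|_{U_\s}\cong U_\s\times\P(W)$. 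Since $\mathbf{Proj}$ is a local construction, $\P^W(\mathcal{E})$ is glued from the pieces $U_\s\times\P(W)$ along the opens lying above $U_{\s\cap\tau}$, the transition being induced in the $k$--th fibre coordinate by the monomial $\chi^{m(\s,k)-m(\tau,k)}$; these transitions are equivariant for the torus $T:=T_{N_{X'}}\times T_{N_W}$, which is therefore dense in $\P^W(\mathcal{E})$. Hence $\P^W(\mathcal{E})$ is a toric variety, and it is complete since it is proper over $X'$ --- locally the projection $U_\s\times\P(W)\to U_\s$, which is proper because $\P(W)$ is complete --- and $X'$ is complete.

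It remains to compute the fan. Cover $\P^W(\mathcal{E})$ by the $T$--invariant affine opens $U_{\s,i}:=U_\s\times U_{F_i}$, with $\s\in\Si'(n')$ and $F_i$ a maximal cone of $\Si_W$. Seen through the trivialisation over $U_\s$, the chart $U_{\s,i}$ is the affine toric variety of $\s\times F_i$; but that trivialisation identifies the dense torus of $U_{\s,i}$ with the intrinsic torus $T$ only after the unimodular \emph{shear} $\phi_\s$ of $N_{X',\R}\times N_{W,\R}$ which fixes each fibre direction $\binom{\mathbf{0}}{\e_k}$ and sends $\binom{\v'_j}{\mathbf{0}}$ to $\binom{\v'_j}{\mathbf{0}}+\sum_k a_{kj}\binom{\mathbf{0}}{\e_k}$ for $\langle\v'_j\rangle\in\s(1)$ --- precisely the transformation turning $\s\times F_i$ into the fibred cone $\s_i$ of (\ref{cono fibrato}). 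Each $\phi_\s$ is defined only on $\operatorname{span}(\s)$, but for $\s,\tau\in\Si'$ the maps $\phi_\s$ and $\phi_\tau$ agree on $\operatorname{span}(\s\cap\tau)$, whose rays lie in $\s(1)\cap\tau(1)$ and on which both shears are prescribed by $\v'_j\mapsto\sum_k a_{kj}\e_k$; this compatibility is the cocycle condition $m(\s,k)-m(\rho,k)=(m(\s,k)-m(\tau,k))+(m(\tau,k)-m(\rho,k))$ needed for the cones $\s_i$ and their faces to patch into a fan $\Si_{W,\mathcal{E}}$ with toric variety $\P^W(\mathcal{E})$. Along the way one records that for $\s\in\Si'(n')$ the cone $\s_i$ is simplicial, since its $n'+s$ generators have linearly independent projections to $N_{X',\R}$ (a basis, by $\Q$--factoriality of $X'$) and restrict in the fibre to the $s$ generators of $F_i$, and that $|\Si_{W,\mathcal{E}}|=N_{X',\R}\times N_{W,\R}$ because the $F_i$ cover $N_{W,\R}$ and $\Si'$ is complete; thus $\P^W(\mathcal{E})$ is $\Q$--factorial and complete. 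The step demanding real care --- the main obstacle --- is this last identification of $U_{\s,i}$ with the affine toric variety of $\s_i$: since $U_{F_i}$ is a cyclic quotient singularity rather than an affine space, the explicit coordinate computation in the proof of \cite[Prop.~7.3.3]{CLS} must be carried out at the level of the affine semigroups $\s^{\vee}\cap M_{X'}$ and $F_i^{\vee}\cap M_W$ (equivalently, one pulls $\P(W)$ back to $\P^s$ along the standard finite toric cover, applies the smooth case, and descends), after which recognising the resulting cone as $\s_i$ is routine.
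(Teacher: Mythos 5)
Your proposal is correct and follows exactly the route the paper intends: the paper's entire ``proof'' is the remark that the fibred cone (\ref{cono fibrato}) is the analogue of the cone in \cite[(7.3.3)]{CLS} and that the argument of \cite[Prop.~7.3.3]{CLS} carries over verbatim. You have simply written out that adaptation in full --- local trivialisation of $S^W(\mathcal{E})$ over the charts $U_\s$ using the Cartier data $m(\s,k)$, the shear identifying $\s\times F_i$ with $\s_i$, the cocycle compatibility on overlaps, and the extra care (working at the level of affine semigroups, or pulling back along $\P^s\to\P(W)$) required because the fibre charts $U_{F_i}$ are cyclic quotient singularities --- all of which is consistent with the paper's setup, e.g.\ with the Gale--dual weight matrix (\ref{Qfibrata}).
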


The fibred cone (\ref{cono fibrato}) is the analogue of the cone (7.3.3) in \cite{CLS} giving the fan of a projective toric bundle. The proof of the previous proposition is then the same of \cite[Prop.~7.3.3]{CLS}.

Let $V$ be a fan matrix of $\P^W(\mathcal{E})$: setting $r=r'+1$ and $n=n'+s$, by (\ref{cono fibrato}), $V$ can be chosen to be the following $n\times(n+r)$ matrix
\begin{equation}\label{Vfibrata}
    V=\overbrace{\left(\begin{array}{ccc}
                 &V'& \\
                 \sum_{k=0}^sa_{k,1}e_{1k}&\cdots&\sum_{k=0}^sa_{k,n'+r'}e_{1k}\\
                 &\cdots&\\
                 \sum_{k=0}^sa_{k,1}e_{sk}&\cdots&\sum_{k=0}^sa_{k,n'+r'}e_{sk}\\
               \end{array}\right.}^{n'+r'}\overbrace{\left.\begin{array}{c}
               \mathbf{0}
                   \\
                   \\
                 \e_0\cdots \e_s\\
                 \\
               \end{array}\right)}^{s+1}
\end{equation}
By Gale duality, a weight matrix of $\P^W(\mathcal{E})$ is then given by the following $r\times(n+r)$ matrix
\begin{equation}\label{Qfibrata}
    Q=\G(V)=\overbrace{\left(\begin{array}{c}
                 Q' \\
                 0\cdots0\\
               \end{array}\right.}^{n'+r'}\overbrace{\left.\begin{array}{c}
               Q'' \\
                 w_0\cdots w_s\\
               \end{array}\right)}^{s+1}
\end{equation}
where $Q'=\G(V')$ and the $r'\times(s+1)$ matrix $Q''$ is defined by observing that
\begin{equation*}
    Q'\,V''^T +Q''\,\left(
                  \begin{array}{c}
                    \e_0^T \\
                    \vdots \\
                    \e_s^T \\
                  \end{array}
                \right) = 0
\end{equation*}
where $V''$ is the $s\times(n'+r')$ matrix whose $(i,j)$--entry is given by $\sum_{k=0}^sa_{k,j}e_{ik}$. Therefore $Q''=(b_{h,k+1})$ with
\begin{equation*}
    b_{h,k+1}=-\sum_{j=1}^{n'+r'}q'_{hj}a_{kj}\quad\text{for $0\leq k\leq s$\,,}
\end{equation*}
where $(q'_{h,j})=Q'$. Recalling the morphism $d':\mathcal{W}_T(X'(\Si'))\to\Cl(X'(\Si'))$ introduced in Proposition \ref{prop:nef}, this means that the $(k+1)$--st column of $Q''$ is given by
\begin{equation}\label{colonneQ''}
    \mathbf{b}_{k+1}=-d'(E_k)\quad\text{for $0\leq k\leq s$\,,}
\end{equation}
where $d'(E_k)$ is the class of $E_k$ in $\Cl(X')$.

\subsubsection{Toric covers}\label{sssez:Tcovers} Let us recall that, given two lattices $N$ and  $\widetilde{N}$ with two fans $\Si\subset N_{\R}$ and $\widetilde{\Si}\subset \widetilde{N}_{\R}$, a $\Z$--linear map $\overline{f}:N\to \widetilde{N}$ is called \emph{compatible} with the given fans if
$$ \forall\,\s\in\Si\quad \exists\,\widetilde{\s}\in\widetilde{\Si}:\overline{f}_{\R}(\s)\subseteq\widetilde{\s} $$
where $\overline{f}_{\R}:N_R\to\widetilde{N}_R$ is the natural $\R$--linear extension of $\overline{f}$ (see \cite[Def.~3.3.1]{CLS}).

For the following notion of \emph{toric cover} we refer the interested reader to \cite[\S~3]{AP}.

\begin{definition}\label{def:toricover} A \emph{toric cover} $f:X(\Si)\to \widetilde{X}(\widetilde{\Si})$ is a finite morphism of toric varieties inducing a $\Z$--linear map $\overline{f}:N\to\widetilde{N}$, compatible with $\Si$ and $\widetilde{\Si}$, such that:
\begin{enumerate}
  \item $ \overline{f}(N)\subseteq\widetilde{N}$ is a subgroup of finite index, so that $\overline{f}(N)\otimes\R=\widetilde{N}\otimes \R$,
  \item $\overline{f}_{\R}(\Si)=\widetilde{\Si}$.
\end{enumerate}
\end{definition}

\begin{lemma}[\cite{AP} Lemma 3.3]\label{lm:AP} A toric cover $f:X(\Si)\to \widetilde{X}(\widetilde{\Si})$ has the following properties:
\begin{enumerate}
  \item $f$ is an \emph{abelian cover} with Galois group $G\cong \widetilde{N}/\overline{f}(N)$,
  \item $f$ is ramified only along the torus invariant divisors $D_{\rho}$, with multiplicities $d_{\rho}\geq 1$ defined by the condition that the integral generator of $\overline{f}(N)\cap\langle\v_{\rho}\rangle$ is $d_{\rho}\v_{\rho}$, for every ray $\rho=\langle\v_{\rho}\rangle\in\widetilde{\Si}(1)$.
\end{enumerate}
\end{lemma}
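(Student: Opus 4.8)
The plan is to reduce both assertions to the structure theory of toric morphisms induced by a $\Z$-linear map between lattices, as developed in \cite[\S 3.3]{CLS}, and then to identify the quotient group and the ramification data explicitly. First I would recall that since $\overline{f}\colon N\to\widetilde N$ has finite index image, it is injective (the kernel, being a subgroup of a finitely generated group of rank equal to $\operatorname{rk}N$, would otherwise lower the rank) and $\overline{f}_\R$ is an isomorphism; hence $f$ is a morphism between toric varieties of the same dimension, and the hypothesis $\overline{f}_\R(\Si)=\widetilde\Si$ means that $\overline{f}_\R$ carries $\Si$ isomorphically onto $\widetilde\Si$ as a fan. This is precisely the situation in which $f$ is finite and surjective (each maximal affine chart $U_{\widetilde\s}$ pulls back to $U_{\s}$, where $\s=\overline{f}_\R^{-1}(\widetilde\s)$, and on coordinate rings one has a finite extension $\C[\widetilde\s^\vee\cap\widetilde M]\subseteq\C[\s^\vee\cap M]$ because $\s^\vee=\overline{f}_\R^\vee(\widetilde\s^\vee)$ and $M\supseteq\overline{f}^\vee(\widetilde M)$ is a finite-index inclusion of lattices).

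For part (1), I would argue that $f$ restricts on the open tori to the isogeny $T_N\to T_{\widetilde N}$ dual to the inclusion $\overline{f}^\vee\colon\widetilde M\hookrightarrow M$, whose kernel is $G:=\operatorname{Hom}(M/\overline{f}^\vee(\widetilde M),\C^*)$, a finite abelian group (the character group of the finite group $M/\overline{f}^\vee(\widetilde M)$). Since $N=\operatorname{Hom}(M,\Z)$ and $\widetilde N=\operatorname{Hom}(\widetilde M,\Z)$, applying $\operatorname{Hom}(-,\Z)$ to the exact sequence $0\to\widetilde M\to M\to M/\overline{f}^\vee(\widetilde M)\to 0$ and using that the last term is finite gives $\widetilde N/\overline f(N)\cong\operatorname{Ext}^1(M/\overline{f}^\vee(\widetilde M),\Z)\cong M/\overline f^\vee(\widetilde M)$, which is (non-canonically) isomorphic to its own character group $G$. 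Thus $G\cong\widetilde N/\overline f(N)$ acts on $T_N$ with quotient $T_{\widetilde N}$, and because the $\Sigma$-cone structure is transported isomorphically by $\overline f_\R$, this torus action extends to all of $X(\Si)$ making $f$ a $G$-quotient, i.e.\ an abelian (Galois) cover with group $G$; equivariance of all charts under the $T_N$-action and hence under $G$ is what upgrades ``finite surjective'' to ``Galois with the asserted group.''

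For part (2), the branch locus is computed chart by chart, and since $f$ is an isogeny on the tori it is \'etale there, so ramification occurs only over the toric boundary, i.e.\ along the torus-invariant divisors $D_\rho$, $\rho\in\widetilde\Si(1)$. To get the multiplicity along a fixed $D_\rho$, I would localize at the corresponding one-dimensional cone: writing $\rho=\langle\v_\rho\rangle$ and letting $d_\rho\v_\rho$ be the primitive generator of the rank-one lattice $\overline f(N)\cap\langle\v_\rho\rangle$ inside $\widetilde N$, the induced map on the one-parameter subgroups is multiplication by $d_\rho$, so in the local coordinate $x$ on $U_\rho$ adapted to that ray the cover looks like $x\mapsto x^{d_\rho}$, giving ramification index exactly $d_\rho$ along $D_\rho$. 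The main obstacle I anticipate is the bookkeeping in part (1) — pinning down that the abstract group $\widetilde N/\overline f(N)$ really is the Galois group $G$ of the cover and that the $G$-action is globally defined rather than just on the torus — but this follows cleanly once one observes that $f$ is $T_N$-equivariant for the isogeny $T_N\to T_{\widetilde N}$ and that $\ker(T_N\to T_{\widetilde N})\cong\widetilde N/\overline f(N)$ by the duality computation above; for the precise verification I would simply cite \cite[\S 3.3]{CLS} together with \cite[\S 3]{AP}.
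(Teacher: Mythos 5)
The paper does not prove this lemma at all: it is quoted verbatim from \cite{AP}, Lemma 3.3, so there is no internal argument to compare against. Your sketch is the standard (and surely the intended) proof — identifying $f$ on the dense tori with the isogeny dual to $\overline{f}^{\vee}\colon\widetilde M\hookrightarrow M$, computing $\ker(T_N\to T_{\widetilde N})\cong M/\overline{f}^{\vee}(\widetilde M)\cong\widetilde N/\overline{f}(N)$ by dualizing the finite-cokernel inclusion, checking chart by chart via $\C[\widetilde\s^{\vee}\cap\widetilde M]\subseteq\C[\s^{\vee}\cap M]$ that the quotient by this subgroup of $T_N$ recovers $\widetilde X$, and reading off the ramification index along $D_\rho$ from the index $d_\rho$ of $\overline{f}(N)\cap\langle\v_\rho\rangle$ in $\langle\v_\rho\rangle\cap\widetilde N$ — and it is correct as stated.
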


\subsubsection{Weighted Projective Toric weak Bundles (WPTwB)}\label{sssez:WPTwB} Let us first of all notice that Proposition \ref{prop:fan fibrato} holds regardless of whether divisors $E_k=\sum_{j=1}^{n'+r'}a_{kj}D'_j$, for $0\leq k\leq s$, are truly Cartier divisors or instead, more generally, Weil divisors. Therefore the following natural question arises: which kind of geometric structures supports the toric variety associated with the simplicial complete fan given by Prop.~\ref{prop:fan fibrato} in the case of Weil non--Cartier divisors $E_k$'s?

The answer gives a nice account of both the previous subsections \ref{sssez:WPTB} and \ref{sssez:Tcovers}.

Let us recall that, given a Weil divisor $D$ on a $\Q$--factorial variety, the \emph{Cartier index} of $D$ is the least positive integer $c(D)\in\N$ such that $c(D)\,D$ is a Cartier divisor.

\begin{proposition}\label{prop:WPTwB} Let $V'$ be a $n'\times(n'+r')$ $CF$--matrix and $\Si'\in\SF(V')$. Consider the set $\Si$ of fibred cones (\ref{cono fibrato}) and all their faces and assume that
\begin{equation*}
    \forall\,0\leq k\leq s\quad E_k=\sum_{j=1}^{n'+r'}a_{kj}D'_j\in\Weil(X')\quad \text{whose Cartier index is}\quad l_k:=c(E_k).
\end{equation*}
Then $\Si$ is a simplicial and complete fan whose associated toric variety $X(\Si)$ is a toric cover of the WPTB $\P^{W'}\left(\mathcal{E}\right)$, where $W'$ is the reduced weight vector of $(l_0w_0,\ldots,l_sw_s)$ and
$\mathcal{E}=\bigoplus_{k=0}^s\mathcal{O}_{X'}\left(\eta_k E_k\right)$,
with $\eta_k$ defined by setting
\begin{eqnarray*}
  \l &:=& \gcd(l_0w_0,\ldots,l_sw_s) = \gcd(l_0,\ldots,l_s)\quad\text{\emph{(since $\gcd(w_0,\ldots,w_s)=1$)}}\\
  d_k &:=& \gcd\left(\frac{l_0w_0}{\l},\ldots,\widehat{\frac{l_{k}w_{k}}{\l}},\ldots,\frac{l_sw_s}{\l}\right)\\
  &=& \gcd\left(\frac{l_0}{\l},\ldots,\widehat{\frac{l_{k}}{\l}},\ldots,\frac{l_s}{\l}\right)\quad\text{\emph{(since $W$ is reduced)}} \\
  a_k &:=& \lcm\left(d_0,\ldots,\widehat{d}_{k},\ldots,d_{s}\right) = \frac{\prod_{i=0}^s d_i}{d_k}\quad\text{\emph{(by \cite[Prop.~3(2)]{RT-WPS})}}\\
  a   &:=& \lcm(a_0,\ldots,a_s)=\prod_{k=0}^s d_k\quad\text{\emph{(by \cite[Prop.~3(5)]{RT-WPS})}}\\
  \eta_k &:=& l_k a/a_k =l_k d_k
\end{eqnarray*}
In particular the toric cover $X(\Si)\to\P^{W'}(\mathcal{E})$ is an abelian covering admitting a Galois group $G$ of order
$$|G|=\frac{\prod_{k=0}^s l_k}{\l}$$
and ramified along the torus invariant divisors $D_{n'+r'+1+k}$, with multiplicity $\eta_k$, for every $0\leq k\leq s$.  In the following $X(\Si)$ is called a \emph{weighted projective toric weak bundle (WPTwB)}: it is a PWS.
\end{proposition}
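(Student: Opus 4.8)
The plan is to establish the three assertions — that $\Si$ is a simplicial complete fan, that $X(\Si)$ is a PWS and a toric cover of $\P^{W'}(\mathcal{E})$, and that the covering data are as stated — by reading everything off the block forms \eqref{Vfibrata} and \eqref{Qfibrata} of the fan and weight matrices. First I would note that simpliciality and completeness of $\Si$ come for free: the fibred cones \eqref{cono fibrato} and their faces form a fan by exactly the argument proving Proposition~\ref{prop:fan fibrato}, which uses only that the $a_{kj}$ are integers (so that the shearing of the generators of a cone $\s\in\Si'$ by the integer vectors $\sum_k a_{kj}\e_k$ makes sense), never that the $E_k$ are Cartier. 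Hence $\Si$ is an $n$--dimensional simplicial complete fan whose fan matrix $V$ has the shape \eqref{Vfibrata}, with $V'$ in the horizontal block and the primitive generators $\e_0,\dots,\e_s$ of the (well--formed) fan of $\P(W)$ in the vertical block.

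Next I would check that $X(\Si)$ is a PWS, which by Definition~\ref{def:PWS} means its reduced fan matrix is a $CF$--matrix. The columns of $V$ of the form $\binom{\v'_j}{\ast}$ are primitive because $\v'_j$ is ($V'$ being a $CF$--matrix, hence reduced), and the columns $\binom{\0}{\e_k}$ are primitive because the $\e_k$ are the primitive ray generators of the well--formed fan of $\P(W)$; so $V$ is already reduced. Moreover $\mathcal{L}_c(V)=\Z^n$, since the $\v'_j$ span $\Z^{n'}$ ($V'$ is $CF$) and the $\e_k$ span $\Z^s$ (the reduced fan matrix of the well--formed WPS $\P(W)$ is a $CF$--matrix). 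Thus $V$ is a $CF$--matrix and $X(\Si)$ is a PWS.

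The heart of the proof is the toric cover. The target $\P^{W'}(\mathcal{E})$ is a genuine WPTB, because $l_k\mid\eta_k$ forces $\eta_k E_k$ to be Cartier, so Proposition~\ref{prop:fan fibrato} together with \eqref{Vfibrata}--\eqref{colonneQ''} produces its fan matrix $\widetilde V$ and weight matrix $\widetilde Q$ in the same block form, now with $W'$ in place of $W$ and the coefficients $\eta_k a_{kj}$ (i.e. the divisors $\eta_k E_k$) in place of the $a_{kj}$. Both $X(\Si)$ and $\P^{W'}(\mathcal{E})$ carry the same index set of maximal cones — pairs $(\s,i)$ with $\s\in\Si'(n)$ and $F_i$ a maximal cone of the WPS fan \eqref{coniW} — so there is an evident $\R$--linear isomorphism of the two fans that is the identity on the base lattice $N_{X'}$ and matches the $k$--th fibre ray of $\Si$ with that of $\Si_{W',\mathcal{E}}$. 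I would realize the covering morphism $f$ by exhibiting the compatible lattice map $\overline f:N\to\widetilde N$ of Definition~\ref{def:toricover}, equivalently (dualizing \eqref{div-sequence}) the commutative ladder between the two divisor sequences: a toric cover ramified along $D_{n'+r'+1+k}$ with multiplicity $\eta_k$ and trivially along the $D'_j$ corresponds to an integral $r\times r$ matrix $B$ (the pull--back on divisor class groups) with
\[
B\cdot\widetilde Q \;=\; Q\cdot\diag\bigl(\underbrace{1,\dots,1}_{n'+r'},\,\eta_0,\dots,\eta_s\bigr).
\]
Comparing horizontal blocks forces $B$ to be the identity there, and comparing vertical blocks collapses the whole assertion to a single scalar divisibility identity relating the $w_k$, the weights $w'_k$ of $W'$ and the $\eta_k$. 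That this identity holds, and that $|\det B|=|G|=\prod_k l_k/\lambda$, is precisely what the gcd/lcm bookkeeping in the statement records: using that $W$ is reduced and the divisibility identities of \cite[Prop.~3]{RT-LA&GD}, one identifies $W'$ with the well--formed model of $(l_0w_0,\dots,l_sw_s)$, verifies that the prescribed $\eta_k=l_kd_k$ clear all denominators in the fibre lattice map, and reads off the index. Ramification along $D_{n'+r'+1+k}$ with multiplicity $\eta_k$ (and triviality along the $D'_j$) then follows from Lemma~\ref{lm:AP}(2), the abelian structure of the Galois group $G$ from Lemma~\ref{lm:AP}(1), and $X(\Si)$ being a PWS was already shown.

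I expect the main obstacle to be exactly this last step: checking that the diagonal lattice map on the WPS fibres — sending the well--formed fan of $\P(W)$, rescaled by the $\eta_k$, into the well--formed fan of $\P(W')$ — is genuinely $\Z$--linear and of the asserted finite index, so that $\overline f$ exists as a lattice homomorphism and Lemma~\ref{lm:AP} applies. This is where the well--formedness reduction of weight vectors, the interplay of the $\gcd$'s $\lambda,d_k$ with the $\lcm$'s $a_k,a$, and \cite[Prop.~3]{RT-LA&GD} all come into play and where one has to be most careful with divisibility; everything else amounts to bookkeeping with the block matrices \eqref{Vfibrata}--\eqref{Qfibrata}.
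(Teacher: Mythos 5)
Your overall strategy coincides with the paper's: simpliciality and completeness via the argument of \cite[Prop.~7.3.3]{CLS}, the PWS property from $V$ being a $CF$--matrix, and the toric cover produced by a commutative ladder between the two (dualized) divisor sequences, with the lattice map read off from diagonal rescaling matrices acting on $Q$ and $\widetilde Q$ and the ramification then supplied by Lemma~\ref{lm:AP}. The paper implements exactly this with the matrices $\Delta$, $\Lambda'$, $\Lambda''$, $\Lambda$ and the diagram (\ref{covering-diagram-1}).

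There is, however, one concrete error in your degree computation. The matrix $B$ you define by $B\cdot\widetilde Q=Q\cdot\diag(1,\dots,1,\eta_0,\dots,\eta_s)$ is forced to be $\diag(1,\dots,1,\l a)$ (it is $\Delta^{-1}$ in the paper's notation), so $|\det B|=\l a$, which is the index of the induced map on $\Hom(\Cl(\cdot),\Z)$, \emph{not} the degree of the covering. These two numbers differ in general: take $s=2$, $W=(1,1,1)$ and pairwise coprime Cartier indices $l_0,l_1,l_2$; then $\l=1$, each $d_k=1$, so $\l a=1$, while the covering degree is $l_0l_1l_2$. The correct bookkeeping is multiplicativity of indices across the three columns of the ladder (equivalently the snake lemma applied to diagram (\ref{covering-diagram-1})): the middle column contributes $[\Z^{n+r}:\Lambda^T\Z^{n+r}]=\prod_k\eta_k$, the top column contributes $\l a$, hence
\begin{equation*}
|G|=\bigl[\widetilde N:\overline f(N)\bigr]=\frac{\prod_{k=0}^s\eta_k}{\l a}=\frac{\prod_{k=0}^s l_k d_k}{\l\prod_{k=0}^s d_k}=\frac{\prod_{k=0}^s l_k}{\l}\,.
\end{equation*}
So the Galois group is the cokernel of $\overline f$, computed as a quotient of the cokernel of $\Lambda^T$ by the cokernel of $(\Delta^{-1})^T$, and not the determinant of the class--group map. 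With that correction (and it does not disturb the rest of your argument: the scalar identity $\eta_kw_k=\l a\,w'_k$ you isolate is precisely what makes $\widetilde Q$ integral and $W'$ reduced), your proof matches the paper's.
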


\begin{proof} As for the proof of Prop.~\ref{prop:fan fibrato}, the fact that $\Si$ is a simplicial and complete fan, follows by the same argument proving \cite[Prop.~7.3.3]{CLS}.

\noindent Given Cartier indexes $l_k=c(E_k)\geq 1$, consider the diagonal matrix
$$\Lambda':=\left(
                                                          \begin{array}{cc}
                                                            I_{n'+r'} &  \mathbf{0}_{n'+r',s+1}\\
                                                            \mathbf{0}_{s+1,n'+r'} & \diag(l_0,\ldots,l_s) \\
                                                          \end{array}
                                                        \right)\in\GL_{n+r}(\Q)\cap \mathbf{M}_{n+r}(\Z)\,.$$
Then, recalling (\ref{Qfibrata}) and (\ref{colonneQ''}), one gets
\begin{equation*}
    Q\cdot \Lambda' =\overbrace{\left(\begin{array}{c}
                 Q' \\
                 0\cdots0\\
               \end{array}\right.}^{n'+r'}\overbrace{\left.\begin{array}{ccc}
               -d(l_0E_0)&\cdots&-d(l_sE_s) \\
                 l_0w_0&\cdots&l_sw_s\\
               \end{array}\right)}^{s+1}
\end{equation*}
If the weight vector $(l_0w_0,\ldots,l_sw_s)$ is reduced then set $W'=(l_0w_0,\ldots,l_sw_s)$ and $Q\cdot\Lambda'$ turns out to be a weight matrix of $\P^{W'}\left(\bigoplus_{k=0}^s\mathcal{O}_{X'}\left(l_k E_k\right)\right)$.

\noindent If $(l_0w_0,\ldots,l_sw_s)$ is not reduced then define $\l$, $d_k$, $a_k$ and $a$ as in the statement and consider the matrices
\begin{eqnarray*}
  \Delta &:=& \diag\left(1,\ldots,1,\frac{1}{\l a}\right)\in\GL_r(\Q)  \\
  \Lambda'' &:=& \left(\begin{array}{cc}
                                                            I_{n'+r'} &  \mathbf{0}_{n'+r',s+1}\\
                                                            \mathbf{0}_{s+1,n'+r'} & \diag\left(\frac{a}{a_0},\ldots,\frac{a}{a_s}\right) \\
                                                          \end{array}
                                                        \right)\in\GL_{n+r}(\Q)\cap \mathbf{M}_{n+r}(\Z)\\
  \Lambda  &:=&\Lambda'\cdot\Lambda'' =  \left(\begin{array}{cc}
                                                            I_{n'+r'} &  \mathbf{0}_{n'+r',s+1}\\
                                                            \mathbf{0}_{s+1,n'+r'} & \diag\left(\frac{l_0a}{a_0},\ldots,\frac{l_sa}{a_s}\right) \\
                                                          \end{array}
                                                        \right)\\
 \widetilde{Q}&:=&\Delta\cdot Q\cdot\Lambda = \left(\begin{array}{cccc}
                 Q' &-d(\frac{l_0a}{a_0}E_0)&\cdots&-d(\frac{l_sa}{a_s}E_s)\\
                 0\cdots0 & \frac{l_0w_0}{\l a_0}&\cdots&\frac{l_sw_s}{\l a_s}\\
               \end{array}\right)\\
              & =& \left(\begin{array}{cccc}
                 Q' &-d(\eta_0E_0)&\cdots&-d(\eta_sE_s)\\
                 0\cdots0 & w'_0&\cdots&w'_s\\
               \end{array}\right)
\end{eqnarray*}
where $W'=(w'_0,\ldots,w'_s)$ is the reduced weight vector of $W$ and $\eta_k=l_ka/a_k$. Then $\widetilde{Q}$ turns out to be a weight matrix of $\P^{W'}\left(\bigoplus_{k=0}^s\mathcal{O}_{X'}\left(\eta_k E_k\right)\right)$.

\noindent Recalling $(\ref{Vfibrata})$, the fan matrix $V$ is a $CF$--matrix since $V'$ is a $CF$--matrix. Then $X(\Si)$ is a PWS and $\Cl(X)$ is a free $\Z$--module. Then the dualized divisors' exact sequence (\ref{HomZ-div-sequence}) is exact on the right, too, hence giving the following short exact sequence of free abelian groups
\begin{equation}\label{HomZ-div-sequence_exact}
                    \xymatrix{0 \ar[r] & \Hom(\Cl(X),\Z) \ar[r]^-{d^{\vee}} & \Hom(\mathcal{W}_T(X),\Z)
 \ar[r]^-{div^{\vee}} & N \ar[r]&  0}
\end{equation}
Fixing once for all a basis of $M\cong\Z^n$, the basis $\{D_j\}_{j=0}^{n+r}$ of $\Weil(X)\cong\Z^{n+r}$, a basis of $F^r=\Cl(X)\cong\Z^r$ and their dual bases, then $Q^T$ and $V$ turns out to be representative matrices of morphisms $d^{\vee}$ and $div^{\vee}$, respectively. Then we get the following commutative diagram of exact sequences
\begin{equation}\label{covering-diagram-1}
    \xymatrix{&0\ar[d]&0\ar[d]&0\ar[d]&\\
    0\ar[r]&\Hom(\Cl(X),\Z)\cong\Z^r\ar[d]_-{d^{\vee}}^-{Q^T}\ar[r]^-{(\Delta^{-1})^T}&\Z^r\ar[d]^{\widetilde{Q}^T}\ar[r]&\Z/(\l a)\Z\ar[d]\ar[r]&0\\
    0\ar[r]&\Hom(\mathcal{W}_T(X),\Z)\cong\Z^{n+r}\ar[r]^-{\Lambda^T}\ar[d]_-{div^{\vee}}^-{V}&\Z^{n+r}\ar[r]\ar[d]^{\widetilde{V}}&
    \left(\bigoplus_{k=0}^s\Z/\eta_k\Z\right) \ar[r]\ar[d]&0\\
    0\ar[r]&N\cong\Z^n\ar[r]^-{\Phi^T}_-{\overline{f}}\ar[d]&N_{X'}\times N_{W'}\cong\Z^n\ar[r]\ar[d]&G\ar[r]\ar[d]&0\\
    &0&0&0&}
\end{equation}
where the matrix $\Phi$ is obtained as follows:
\begin{itemize}
  \item $V$ is a $CF$--matrix if and only if  $H:=\HNF(V^T)=\left(
                                                                 \begin{array}{c}
                                                                   I_n \\
                                                                   \mathbf{0}_{r,n} \\
                                                                 \end{array}
                                                               \right)$ \cite[Thm.~2.1(4)]{RT-QUOT},
  \item let $U\in \GL_{n+r}(\Z)$ such that $U\cdot V^T = H$,
  \item then the upper $n$ rows of $U$ give $^nU\cdot V^T = I_n$ (recall notation in list \ref{ssez:lista})
\end{itemize}
Therefore
\begin{equation*}
    V^T\cdot\Phi=\Lambda\cdot \widetilde{V}^T\ \Longrightarrow\ \Phi=\ ^nU\cdot  \Lambda\cdot \widetilde{V}^T\,.
\end{equation*}
From diagram (\ref{covering-diagram-1}), the $\Z$-linear morphism $\overline{f}:N\to N_{X'}\times N_{W'}$, represented by $\Phi$, is clearly injective, giving rise to the toric cover we were looking for, whose Galois group is given by $G$ in the same diagram. The exactness of the vertical sequence on the right implies that
$$|G|=\frac{\prod_{k=0}^s\eta_k}{\l a} = \frac{\prod_{k=0}^s l_k d_k}{\l a} = \frac{\prod_{k=0}^s l_k}{\l}\,,$$
while the ramification is given by the matrix $\Lambda^T=\Lambda$.
\end{proof}

\begin{remark}\label{rem:fan_fibrato} The hypothesis that $V'$ is a $CF$-matrix is essential in proving Proposition \ref{prop:WPTwB}. In fact, recalling (\ref{Vfibrata}) if $V'$ is a $F$ non--$CF$--matrix then $V$ is a $F$ non--$CF$--matrix, too. Then the dual exact sequence (\ref{HomZ-div-sequence}) is not exact on the right, meaning that the morphism $\overline{f}$ in diagram (\ref{covering-diagram-1}) may not exist.

\noindent On the other hand the set $\Si$ of fibred cones (\ref{cono fibrato}) and all their faces still turns out to be a simplicial complete fan, due to the same argument proving \cite[Prop.~7.3.3]{CLS}. It remains then open the geometric interpretation of this case, for which the reader is referred to \S~\ref{sez:Qfproj} and in particular to Theorem~\ref{thm:quot-maxbord} and Remark~\ref{rem:fan_fibrato_2}.
\end{remark}

\subsection{Maximally bordering chambers and WPTB}\label{ssez:maxbord&WPTB}
The present subsection is devoted to generalize Batyrev's results given in \cite[\S~4]{Batyrev91}, by dropping the smoothness hypothesis.
Let us first of all notice the following useful fact:

\begin{lemma}\label{lm:cover&bord} Let $V$ and $\widetilde{V}$ be $n\times(n+r)$ reduced $F$--matrices such that $f:X(\Si)\to \widetilde{X}\left(\widetilde{\Si}\right)$ is a toric cover for some $\Si\in\SF(V)$ and $\widetilde{\Si}\in\SF\left(\widetilde{V}\right)$. Then $\g_{\Si}\in\Ga(V)$ is a maxbord chamber if and only if $\widetilde{\g}_{\widetilde{\Si}}\in\Ga\left(\widetilde{V}\right)$ is a maxbord chamber.
\end{lemma}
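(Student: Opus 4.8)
The plan is to show that a toric cover identifies the secondary fans $\Ga(V)$ and $\Ga(\widetilde{V})$ via an $\R$--linear automorphism of $F^r_{\R}$ carrying $\g_{\Si}$ to $\widetilde{\g}_{\widetilde{\Si}}$, the cone $\gkz=\langle Q\rangle$ to $\widetilde{\gkz}=\langle\widetilde{Q}\rangle$, and hence $\partial\gkz$ to $\partial\widetilde{\gkz}$; the equivalence of the two maxbord properties then follows at once from Definition~\ref{def:bordering}.

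First I would record how the cover acts on fan matrices. The induced map $\overline{f}_{\R}:N_{\R}\to\widetilde{N}_{\R}$ is an $\R$--linear isomorphism (the linear extension of a finite--index inclusion) with $\overline{f}_{\R}(\Si)=\widetilde{\Si}$, so it restricts to a bijection $\Si(1)\to\widetilde{\Si}(1)$; after reordering the columns of $\widetilde{V}$ --- which alters neither $\SF(\widetilde{V})$, nor $\Ga(\widetilde{V})$, nor the maxbord property --- we may assume $\overline{f}_{\R}(\langle\v_j\rangle)=\langle\widetilde{\v}_j\rangle$ for each $j$. Since $V$ and $\widetilde{V}$ are reduced, $\v_j$ and $\widetilde{\v}_j$ are primitive, so $\overline{f}(\v_j)=d_j\widetilde{\v}_j$ with $d_j\in\N_{>0}$ the ramification multiplicity of Lemma~\ref{lm:AP}(2). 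Writing $\Phi$ for an integral matrix representing $\overline{f}$ and $D=\diag(d_1,\ldots,d_{n+r})$, this says $\Phi\,V=\widetilde{V}\,D$, with $\Phi$ invertible over $\Q$.

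Passing to Gale duals, both $Q=\G(V)$ and $\widetilde{Q}=\G(\widetilde{V})$ have rank $r$ and satisfy $V\,Q^T=0$ and $\widetilde{V}\,\widetilde{Q}^T=0$, so $\mathcal{L}_r(Q)\otimes\R=\ker(V:\R^{n+r}\to\R^n)$ and likewise for $\widetilde{Q}$ (this is the one point where the comparison must be run over $\R$ rather than $\Z$, since for reduced non--$CF$ matrices $\mathcal{L}_r(Q)$ need not be the full kernel lattice). From $\Phi\,V=\widetilde{V}\,D$ and the invertibility of $\Phi$ one gets $V\,(\widetilde{Q}\,D^{-1})^T=0$, whence $\widetilde{Q}\,D^{-1}$ and $Q$ have the same $\R$--row span; as both have rank $r$ there is a unique $A\in\GL_r(\Q)$ with $\widetilde{Q}=A\,Q\,D$, that is $\widetilde{\q}_j=d_j\,A\,\q_j$ for every $j$. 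Hence the automorphism $A$ of $F^r_{\R}$ sends $\langle\q_j\rangle$ to $\langle\widetilde{\q}_j\rangle$; it sends each simplicial subcone $\langle Q_J\rangle$ with $\det Q_J\neq 0$ to $\langle\widetilde{Q}_J\rangle$ with $\det\widetilde{Q}_J\neq 0$ (as $\widetilde{Q}_J=A\,Q_J\,D_J$ with $D_J$ invertible), so $A$ carries $\Ga(V)$ isomorphically onto $\Ga(\widetilde{V})$, $\gkz$ onto $\widetilde{\gkz}$, and $\Mov(V)$ onto $\Mov(\widetilde{V})$ (so that, $\Si$ being projective, $\widetilde{\Si}$ is projective too and $\widetilde{\g}_{\widetilde{\Si}}$ is well defined).

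Finally I would match the chambers. By \eqref{bunch}, $\mathcal{B}_{\Si}=\{\langle Q^I\rangle\ |\ \langle V_I\rangle\in\Si(n)\}$, and since $\overline{f}_{\R}$ is a linear isomorphism taking $\langle V_I\rangle$ to $\langle\widetilde{V}_I\rangle$ and $\Si$ to $\widetilde{\Si}$, we have $\langle V_I\rangle\in\Si(n)\iff\langle\widetilde{V}_I\rangle\in\widetilde{\Si}(n)$; therefore $\mathcal{B}_{\widetilde{\Si}}=\{A\,\b\ |\ \b\in\mathcal{B}_{\Si}\}$, and $\widetilde{\g}_{\widetilde{\Si}}=\bigcap_{\b\in\mathcal{B}_{\widetilde{\Si}}}\b=A\bigl(\bigcap_{\b\in\mathcal{B}_{\Si}}\b\bigr)=A(\g_{\Si})$. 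As $A$ is a linear automorphism it carries faces to faces, facets to facets, and $\partial\gkz$ to $\partial\widetilde{\gkz}$, so $\g_{\Si}\cap\partial\gkz$ contains a facet of $\g_{\Si}$ exactly when $\widetilde{\g}_{\widetilde{\Si}}\cap\partial\widetilde{\gkz}$ contains a facet of $\widetilde{\g}_{\widetilde{\Si}}$; by Definition~\ref{def:bordering} this is the asserted equivalence. The only genuine subtlety, flagged above, is to carry out the Gale--side comparison over $\R$ and to check that the resulting $A$ is an honest rational invertible matrix; the rest is a mechanical transport of ``$\overline{f}_{\R}$ matches the two simplicial fans'' into ``$A$ matches the two GKZ pictures''.
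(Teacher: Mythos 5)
Your proof is correct, but it takes a genuinely different route from the paper's. The paper argues through primitive collections: it invokes Theorem~\ref{thm:intbord-pc} to produce a bordering primitive collection $\pc=\{V_P\}$ supported on the maxbord hyperplane, then uses the characterization of Proposition~\ref{prop:maxbord} (every $\langle Q^I\rangle$ in the bunch splits as $\langle\q_i\rangle+\langle Q^I\rangle\cap H_P$, equivalently $\langle V_I\rangle=\langle V_{P\setminus\{i\}}\rangle+\langle V_{I\setminus P}\rangle$), transports this splitting of maximal fan cones through $\overline{f}_{\R}$, and reads the same splitting back on the Gale--dual side for $\widetilde{\Si}$. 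You instead work entirely on the Gale--dual side from the start: from $\Phi\,V=\widetilde{V}\,D$ you extract the automorphism $A\in\GL_r(\Q)$ with $\widetilde{Q}=A\,Q\,D$ and show it identifies $\gkz$ with $\widetilde{\gkz}$, $\Ga(V)$ with $\Ga(\widetilde{V})$, and $\g_{\Si}$ with $\widetilde{\g}_{\widetilde{\Si}}$, after which the equivalence of the maxbord conditions is immediate from Definition~\ref{def:bordering}. Your argument is more elementary (it needs neither Theorem~\ref{thm:intbord-pc} nor Proposition~\ref{prop:maxbord}) and proves more: the same identification instantly gives the corresponding statements for \emph{bordering} and \emph{intbord} chambers, and it makes explicit the compatibility of the two secondary fans that the paper only observes later, in case (b) of the proof of Theorem~\ref{thm:maxbord}, for the specific matrices constructed there. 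The points you flag as delicate are handled properly: the row spans of $Q$ and $\widetilde{Q}D^{-1}$ are compared over $\R$ (where both equal the kernel of $V$), and the resulting $A$ is an honest element of $\GL_r(\Q)$ because all the data are rational.
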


\begin{proof} Recalling the Definition \ref{def:toricover} of a toric cover, the induced $\Z$--linear morphism $\overline{f}:N\to \widetilde{N}$, which is compatible with the fans $\Si$ and $\widetilde{\Si}$, gives actually an equality of fans $\overline{f}_{\R}(\Si)=\widetilde{\Si}$. Then for every cone $\s\in\Si$ define $\widetilde{\s}:=\overline{f}_{\R}(\s)\in\widetilde{\Si}$.
By Theorem \ref{thm:intbord-pc}, there exists a bordering primitive collection $\pc=\{V_P\}$ of $\Si$, for some $P\in\mathfrak{P}$, whose support hyperplane $H_P$ is the maximally bordering hyperplane of $\g$. Then, by Proposition \ref{prop:maxbord},
\begin{eqnarray*}
    \forall\,\langle V_I\rangle\in\Si(n)\ \exists!\,i\in P&:&\langle Q^I\rangle=\langle\q_{i}\rangle+\langle Q^I\rangle\cap H_P\\
                                            &\Longleftrightarrow& \langle V_I\rangle= \langle V_{P\backslash\{i\}}\rangle+\langle V_{I\backslash P}\rangle
\end{eqnarray*}
On the other hand $\widetilde{\pc}:=\left\{\overline{f}_{\R}(\v_i)\,|\,i\in P\right\}=\left\{\widetilde{V}_{\widetilde{P}}\right\}$ turns out to be a primitive collection for $\widetilde{\Si}=\overline{f}_{\R}(\Si)$. Up to a permutation on columns of $\widetilde{V}$ we can assume $\widetilde{P}=P$. Then
\begin{eqnarray*}
    \forall\,\left\langle \widetilde{V}_{I}\right\rangle\in\widetilde{\Si}(n)\ \exists!\,i\in P&:&\left\langle\widetilde{V}_{I}\right\rangle =
    \overline{f}_{\R}\left(\langle V_I\rangle\right)=\overline{f}_{\R}\left(\langle V_{P\backslash\{i\}}\rangle\right)+\overline{f}_{\R}\left(\langle V_{I\backslash P}\rangle\right)\\
    &&\hskip0.9truecm=\left\langle\widetilde{V}_{P\backslash\{i\}}\right\rangle+
    \left\langle\widetilde{V}_{P\backslash I}\right\rangle\\
    &\Longleftrightarrow&\left\langle \widetilde{Q}^{I}\right\rangle=\left\langle\widetilde{\q}_{i}\right\rangle+\left(\left\langle \widetilde{Q}^{I}\right\rangle\cap \widetilde{H}_{P}\right)
\end{eqnarray*}
which is enough, by Proposition \ref{prop:maxbord}, to show that $\widetilde{\g}_{\widetilde{\Si}}$ is maxbord w.r.t. the support $\widetilde{H}_{P}$ of $\widetilde{\pc}$.

The converse can be proved in the same way by observing that $\Si=\overline{f}_{\R}^{-1}\left(\widetilde{\Si}\right)$.
\end{proof}

We are now in a position to state and prove the following generalization of \cite[Prop.~4.1]{Batyrev91}.

\begin{theorem}\label{thm:maxbord} Given a reduced $n\times(n+r)$ $CF$--matrix $V$, with $r\geq 2$, a chamber $\g\in\mathcal{A}_{\Gamma}(V)$ is maximally bordering if and only if the associated PWS $X(\Si_{\g})$ is a toric cover of a weighted projective toric bundle $\P^W(\mathcal{E})$.
\end{theorem}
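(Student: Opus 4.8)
The statement is an equivalence, so I will prove the two implications separately, translating everything to the language of weight matrices via Gale duality (Theorem~\ref{thm:Gale sui coni}, Proposition~\ref{prop:nef}) and the combinatorics of the secondary fan. The key dictionary entry is Theorem~\ref{thm:intbord-pc} together with Proposition~\ref{prop:maxbord}: a chamber $\g=\g_{\Si}$ is maxbord with respect to a hyperplane $H$ iff $H$ is the support of a \emph{bordering} primitive collection $\pc=\{V_P\}$ and, for every maximal cone $\langle V_I\rangle\in\Si(n)$, there is a unique index $i\in P$ with $\langle V_I\rangle=\langle V_{P\setminus\{i\}}\rangle+\langle V_{I\setminus P}\rangle$ (equivalently $\langle Q^I\rangle=\langle\q_i\rangle+\langle Q^I\rangle\cap H$).

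\textbf{($\Leftarrow$) WPTB $\Rightarrow$ maxbord.} Suppose $X(\Si_\g)$ is a toric cover of some $\P^W(\mathcal E)$ over a base $X'(\Si')$ with $\mathcal E=\bigoplus_{k=0}^s\mathcal O_{X'}(E_k)$. By Lemma~\ref{lm:cover&bord} it suffices to show that the WPTB $\P^W(\mathcal E)$ itself corresponds to a maxbord chamber. For this I use the explicit weight matrix $Q$ of $\P^W(\mathcal E)$ computed in~(\ref{Qfibrata}): its bottom row is $(0,\dots,0,w_0,\dots,w_s)$ with the last $s+1$ entries strictly positive. Taking $H=H_r=\{x_r=0\}$, the columns $\q_1,\dots,\q_{n'+r'}$ of $Q$ (those coming from the base) all lie on $H$, while $\q_{n'+r'+1},\dots,\q_{n+r}$ do not; thus $\mathcal C_H=\langle Q^P\rangle$ where $P=\{n'+r'+1,\dots,n+r\}$, and $\{V_P\}$ is precisely the fibre primitive collection. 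From the description of the fibred cones in Proposition~\ref{prop:fan fibrato} — each maximal cone $\s_i$ (see~(\ref{cono fibrato})) is obtained from a maximal cone of $\Si'$ together with one of the cones $F_i$ of~(\ref{coniW}), so it omits exactly one of the fibre rays — I read off that every maximal cone of $\Si_{W,\mathcal E}$ has the shape required by Proposition~\ref{prop:maxbord}. Hence the corresponding chamber is maxbord with respect to $H_r$.

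\textbf{($\Rightarrow$) maxbord $\Rightarrow$ WPTB.} This is the substantive direction. Starting from a maxbord chamber $\g$ with respect to $H$, I apply Lemma~\ref{lm:dibase} to normalise: after an element of $\GL_r(\Z)$ and a column permutation, $Q=\G(V)$ is positive $\REF$, $H=H_r$, and the columns of $Q$ lying on $H$ are the first $s\geq r-1$ ones, $\q_1,\dots,\q_s$. Write $P=\{s+1,\dots,n+r\}$; by Theorem~\ref{thm:intbord-pc}, $\pc=\{V_P\}$ is the bordering primitive collection supported on $H_r$, and its primitive relation~(\ref{r_Z(P)}) is exactly the bottom row $(0,\dots,0,q_{r,s+1},\dots,q_{r,n+r})$ of $Q$, with all $q_{r,j}>0$ for $j\in P$ (this is the content of~(\ref{normalP}) applied to $\n_P=\e_r$, as in the proof of Corollary~\ref{cor:intbord+reg}). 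Now delete the bottom row and the last $s+1=|P|$ columns of $Q$: by the $\REF$ structure the resulting $(r-1)\times(n-s+r-1)$ matrix $Q'$ is still positive $\REF$ and, provided it satisfies the $W$-matrix axioms (this is the ``good conditions'' hypothesis in spirit — here it is automatic because $V$ is $CF$ and $\g$ is a genuine chamber), $Q'$ is a reduced $W$-matrix; set $V'=\G(Q')$, a $CF$-matrix, with $n'=n-s$, $r'=r-1$. The remaining columns of $Q'$ (those indexed by $P$, restricted to the top $r-1$ rows) are, by the sign discussion above and~(\ref{colonneQ''}), of the form $-d'(E_k)$ for Weil divisors $E_k$ on $X'$, $0\le k\le s$; and $W:=(q_{r,s+1},\dots,q_{r,n+r})$ (reduced) is the fibre weight vector. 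The chamber $\g$ restricted to the first $r-1$ coordinates determines a chamber $\g'$ of $\Ga(V')$, hence a fan $\Si'\in\P\SF(V')$ via Theorem~\ref{thm:Gale sui coni}. Running Proposition~\ref{prop:WPTwB} with these data produces a WPTwB $X(\Si)\to\P^{W'}(\mathcal E)$; it remains only to check that the fan $\Si$ so produced coincides with $\Si_\g$. For this I use the maxbord condition once more: Proposition~\ref{prop:maxbord} says each maximal cone of $\Si_\g$ is $\langle V_{P\setminus\{i\}}\rangle+\langle V_{I\setminus P}\rangle$, which is exactly the shape~(\ref{cono fibrato}) of a fibred cone with $\langle V'_{I\setminus P}\rangle\in\Si'(n')$ — matching bunches of cones under Gale duality (Remark~\ref{rem:cameraregolare}). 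Hence $\Si=\Si_\g$ and $X(\Si_\g)$ is a toric cover of $\P^{W'}(\mathcal E)=\P^W(\mathcal E)$.

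\textbf{Main obstacle.} The delicate point is the ($\Rightarrow$) direction's bookkeeping: verifying that the truncated matrix $Q'$ is a bona fide (reduced) $W$-matrix — i.e.\ that deleting the bottom row and the $|P|$ fibre columns does not destroy conditions (b)--(f) of Definition~\ref{def:Wmatrix} — and, dually, that $V'=\G(Q')$ is $CF$ with $\Si'$ a legitimate projective fan whose bunch of cones is the image of $\mathcal B(\g)$. The $\REF$ normalisation from Lemma~\ref{lm:dibase} does most of the work, but one must argue that the fibre columns really do involve the full bottom row with positive entries and zero above in a way compatible with~(\ref{Qfibrata}); this is where the maxbord hypothesis (as opposed to merely intbord) is essential, since it forces every maximal cone to omit exactly one fibre ray, which is precisely what pins down the block structure of $Q$. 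The appearance of a nontrivial toric cover — rather than $\Si$ being literally the WPTB fan on the nose — comes from the Cartier indices $l_k=c(E_k)$ being $>1$, and is handled by quoting Proposition~\ref{prop:WPTwB} verbatim.
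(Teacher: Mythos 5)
Your ($\Leftarrow$) direction and the overall strategy for ($\Rightarrow$) match the paper, but there is a genuine gap in the forward direction at exactly the point you flag as the ``main obstacle'' and then dismiss. You assert that the truncated matrix $Q'$ is ``automatically'' a reduced $W$--matrix ``because $V$ is $CF$ and $\g$ is a genuine chamber.'' This is false, and the paper's proof is structured around precisely this failure. What the maxbord hypothesis actually buys (via a nontrivial argument comparing $\g\cap H_r$ with $\Mov(V)\cap H_r$ and the cones $\langle Q'^{\{i\}}\rangle$, $\langle Q'^{\{i,j\}}\rangle$ — see the displays (\ref{intersezione_i})--(\ref{intersezione_ij2}) in the paper) is that $Q'$ satisfies conditions (a), (c), (d), (e), (f) of Definition~\ref{def:Wmatrix}. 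Condition (b) (no cotorsion of $\mathcal{L}_r(Q')$) can fail, and even when $Q'$ is a $W$--matrix it can fail to be reduced: Examples~\ref{ex:WPTB(b)} and~\ref{ex:WPTB(c)} exhibit $CF$--matrices $V$ with genuine maxbord chambers where $Q'$ is non--reduced, respectively not a $W$--matrix. The paper handles these as cases (b) and (c), each requiring the construction of an auxiliary toric cover $f:X\to\widetilde X$ via explicit matrices $A$, $B$ and the commutative diagram (\ref{covering-diagram}), reducing to case (a) for $\widetilde X$. Your proof only produces the toric cover coming from the Cartier indices $l_k=c(E_k)$ in Proposition~\ref{prop:WPTwB}; it cannot produce the extra coverings needed in cases (b) and (c), so as written it proves the theorem only under the additional (unstated) hypothesis that $Q'$ is a reduced $W$--matrix.

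A secondary, smaller omission: even in case (a) you still need to argue that $\g'=\g\cap H_r$ is a chamber of $\Mov(V')$ and that the bunch $\mathcal{B}(\g')$ is obtained by slicing $\mathcal{B}(\g)$ with $H_r$ (the paper does this via the identity $\Mov(V')=H_r\cap\Mov(V)$, which again uses that $H_r$ cuts a facet of every $\langle Q^{\{i\}}\rangle$). Your final ``matching bunches of cones'' sentence gestures at this but does not establish that $\Si'$ is a legitimate projective fan on $V'$ before invoking Proposition~\ref{prop:WPTwB}. Once conditions (e), (f) for $Q'$ and the chamber statement for $\g'$ are proved, and cases (b), (c) are added, your argument coincides with the paper's.
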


\begin{proof} Recalling Definition \ref{def:bordering} and Lemma \ref{lm:dibase} we can assume $\g$ be a maxbord chamber w.r.t. $H_r$, hence giving $\dim(\g\cap H_r)=r-1$. Since $\g$ is maxbord, it is intbord and Theorem \ref{thm:intbord-pc} implies that, after suitable transformations, the reduced $W$--matrix $Q=\G(V)$ can be set in $\REF$ with the bottom $r$-th row giving a primitive relation for $\Si_{\g}$,  $\pc=\{\v_{n+r-s},\ldots,\v_{n+r}\}$, on the last $s+1$ columns of $V$ (here we are exchanging each other the roles of $s$ and $n+r-s$ with respect to the proof of Thm. \ref{thm:intbord-pc}). Then $Q$ looks like (\ref{Qfibrata}) where $Q'$ is $(r-1)\times(n+r-s-1)$ matrix in $\REF$.

\noindent First of all let us notice that $Q'$ can be thought of as a $W$--matrix of a $n'=(n-s)$--dimensional variety, with the exception of condition b in Definition \ref{def:Wmatrix}. In fact, the $\REF$ form of $Q$ and the fact that $Q$ is a $W$--matrix imply immediately conditions a,c and d of Definition \ref{def:Wmatrix} for $Q'$.

\noindent For what concerns condition e, let us observe that $\mathcal{L}_r(Q')$ cannot contain any vector of the form $(0,\ldots,0,q,0,\ldots,0)$. Otherwise, if the non--trivial entry $q$ is in the $i$--th position then the $i$--th column $\q'_i$ of $Q'$ cannot be in $\mathcal{L}_c(Q'^{\{i\}})$. Therefore $\dim\langle Q'^{\{i\}}\rangle\leq r-2$. On the other hand, by the $\REF$ of $Q$ and (\ref{Mov}), one gets
\begin{equation}\label{intersezione_i}
\g\cap H_r\subseteq \Mov(V)\cap H_r\subseteq \langle Q^{\{i\}}\rangle\cap H_r = \langle Q'^{\{i\}}\rangle
\end{equation}
where the last equality on the right comes from the fact that $\g$ is maxbord w.r.t. $H_r$, meaning that $H_r$ cuts a facet of $\Mov(V)$, hence a facet of $\langle Q^{\{i\}}\rangle$. Clearly (\ref{intersezione_i}) contradicts the maxbord hypothesis $\dim(\g\cap H_r)=r-1$.

\noindent The same argument applies to guarantee condition f of Definition \ref{def:Wmatrix} for $Q'$. In fact $\mathcal{L}_r(Q')$ cannot contain any vector of the form $(0,\ldots,0,a,0,\ldots,0,b,0,\ldots,0)$ with $ab<0$. Otherwise, if the non--trivial entries $a,b$ are in the $i$--th, $j$--th positions, respectively, then the $i$--th and the $j$--th columns $\q'_i,\q'_j$ of $Q'$ cannot be in $\mathcal{L}_c(Q'^{\{i,j\}})$. Therefore $\dim\langle Q'^{\{i,j\}}\rangle\leq r-2$. Moreover one also gets that
\begin{equation}\label{HPassurda}
    \forall\,\mu,\l\quad\mu\l>0\ \Longrightarrow\ \mu\q_i-\l\q_j\not\in\mathcal{L}_c(Q'^{\{i,j\}})
\end{equation}
because $\mu a-\l b\neq 0$. On the other hand, by the $\REF$ of $Q$ and (\ref{Mov}), one gets
\begin{equation}\label{intersezione_ij}
    \g\cap H_r\subseteq \Mov(V)\cap H_r\subseteq \langle Q^{\{i\}}\rangle\cap \langle Q^{\{j\}}\rangle\cap H_r = \langle Q'^{\{i\}}\rangle\cap \langle Q'^{\{j\}}\rangle
\end{equation}
where the last equality on the right comes from the fact that $H_r$ cuts a facet of both $\langle Q^{\{i\}}\rangle$ and $\langle Q^{\{j\}}\rangle$. Notice that if one proves that
\begin{equation}\label{intersezione_ij2}
    \langle Q'^{\{i\}}\rangle\cap \langle Q'^{\{j\}}\rangle=\langle Q'^{\{i,j\}}\rangle
\end{equation}
then (\ref{intersezione_ij}) turns out to contradicting the maxbord hypothesis $\dim(\g\cap H_r)=r-1$. Since clearly $\langle Q'^{\{i\}}\rangle\cap \langle Q'^{\{j\}}\rangle\supseteq\langle Q'^{\{i,j\}}\rangle$, to show (\ref{intersezione_ij2}) we need to prove that: if $\mathbf{x}\in\langle Q'^{\{i\}}\rangle\cap \langle Q'^{\{j\}}\rangle$ then $\mathbf{x}\in\langle Q'^{\{i,j\}}\rangle$. For this purpose consider the linear combinations with nonnegative coefficients
$$\mathbf{x}=\sum_{k\neq i,j}\l_k\q_k+\l_j\q_j=\sum_{k\neq i,j}\mu_k\q_k+\mu_i\q_i\in \langle Q'^{\{i\}}\rangle\cap \langle Q'^{\{j\}}\rangle\,.$$
This gives
$$\mu_i\q_i-\l_j\q_j=\sum_{k\neq i,j}(\l_k-\mu_k)\q_k$$
contradicting (\ref{HPassurda}) unless $\mu_i=\l_j=0$, which is $\mathbf{x}\in\langle Q'^{\{i,j\}}\rangle$.

We have now to consider three possible cases: (a) $Q'$ is a reduced $W$--matrix, (b) $Q'$ is a non--reduced $W$--matrix, (c) $Q'$ is not a $W$--matrix in the sense that $\mathcal{L}_r(Q')$ has cotorsion in $\Z^{n'+r'}$, with $n'=n-s$ and $r'=r-1$.

(a) Assume now that $Q'$ is a reduced $W$--matrix. Since $\g$ is maxbord then $\g':=\g\cap H_r$ is $(r-1)$--dimensional. Let us show that $\g'$ is actually a chamber contained in $\Mov(V')$, with $V'=\G(Q')$. For this purpose notice that (\ref{Mov}) and the fact that $H_r$ cuts out a facet of every $\langle Q^{\{i\}}\rangle$ give
\begin{equation*}
    \Mov(V')=\bigcap_{i=1}^{n+r-s-1}\left\langle Q'^{\{i\}}\right\rangle=H_r\cap\bigcap_{i=1}^{n+r}\left\langle Q^{\{i\}}\right\rangle
    = H_r\cap\Mov(V)
\end{equation*}
Hence $\g'= H_r\cap\g\subset H_r\cap\Mov(V)=\Mov(V')$.
Finally observe that Prop.~\ref{prop:maxbord} gives that every simplicial cone $\langle Q_I\rangle$ in the bunch of cones $\mathcal{B}(\g)$ has to contain the $(r-1)$--dimensional chamber $\g'=\g\cap H_r$, hence cutting out a simplicial cone $\langle Q_{I}\rangle\cap H_r=:\langle Q'_{I'}\rangle\in\mathcal{B}(\g')$ and admitting a unique ray generated by a column $\q_i$ of $Q$ not belonging to $H_r$ i.e.
\begin{equation*}
    \q_i\in\pc^*\quad,\quad I=I'\cup\{i\}\quad,\quad\langle Q_I\rangle=\langle Q'_{I'}\rangle+ \langle\q_i\rangle\,,
\end{equation*}
with $i\geq n+r-s$. Let us now consider Gale dualities w.r.t. $W$--matrices $Q$, $Q'$ and $W$, giving the Gale dual cones $\G(\langle Q_I\rangle)=\langle V^I\rangle$, $\G(\langle Q'_{I'}\rangle)=\langle (V')^{I'}\rangle$ and $\G(\langle\q_i\rangle)=F_i$, respectively, where the latter is precisely the cone $F_i$ defined in (\ref{coniW}). Then $\langle V^I\rangle=\langle (V')^{I'}\rangle+F_i$. Notice that, on the one hand, the set of cones $\langle (V')^{I'}\rangle$ and all their faces define a $(n-s)$--dimensional fan $\Si'_{\g'}$ and, on the other hand, the cones $F_i$, jointly with all their faces, give the fan $\Si_W$ of $\P(W)$. This suffices to show that $\Si_{\g}$ is \emph{split by $\Si'_{\g'}$ and $\Si_W$}, in the sense of \cite[Def.~3.3.18]{CLS}. Therefore \cite[Thm.~3.3.19]{CLS} gives a locally trivial fibre bundle
$    X(\Si_{\g})\twoheadrightarrow X'(\Si'_{\g'})$
whose fibers are all isomorphic to $\P(W)$.

\noindent It remains to prove that such a fiber bundle is actually a WPTwB, as defined in \ref{sssez:WPTwB}, hence a toric cover of a WPTB associated with some locally free sheaf $\mathcal{E}$. For this purpose add suitable negative multiples of the bottom row of $Q$ to the previous ones until one gets no positive entries in the $(i,j)$--positions with $1\leq i\leq r-1$ and $n+r-s\leq j\leq n+r$. These entries give the matrix $Q''$ in (\ref{Qfibrata}) whose columns give, up to a sign, the linear equivalence classes of some Weil divisors $E_0,\ldots,E_s$, as in (\ref{colonneQ''}). Consequently the Gale dual $\langle V^I\rangle$, of every cone $\langle Q_I\rangle\in\mathcal{B}(\g)$, turns out to be a fibred cone (\ref{cono fibrato}). Recalling \cite[Prop.~3.12(1)]{RT-LA&GD}, $V'=\G(Q')$ is a $CF$--matrix. This suffices to show that $X(\Si_{\g})$ is a WPTwB, by Proposition \ref{prop:WPTwB}. Then $X(\Si_{\g})$ is a toric cover of $\P^{W'}\left(\bigoplus_{k=0}^s\mathcal{O}_{X'(\Si'_{\g'})}(\eta_kE_k)\right)$, where $W'$ and $\eta_k$ are defined as in Proposition \ref{prop:WPTwB}. Notice that, by (\ref{Vfibrata}), $V'$ is a $CF$--matrix if and only if $V$ is a $CF$--matrix: then this case (a) can occur only if $V$ is a $CF$--matrix, as assumed in the statement.

(b) Assume now that $Q'$ is a non-reduced $W$--matrix (hence $\mathcal{L}_r(Q')$ has not cotorsion in $\Z^{n'+r'}$). Then $V'=\G(Q')$ admits some non--primitive column. Without loss of generality, up to a permutation on columns and an iteration of the following argument, assume that the unique non primitive column of $V'$ is the first one: namely $\v_1=d\w_1$, with $\w_1$ primitive. Consider the reduced weight matrix $Q'^{\text{red}}=\G(V'^{\text{red}})$ (see the list of notation \ref{ssez:lista}). The construction described in \cite[Thm.~3.15(3)]{RT-LA&GD} then gives
\begin{equation*}
    Q'^{\text{red}}=\diag(1,\ldots,1,1/d)\,\a_1\,Q'\,\diag(d,1,\ldots,1)
\end{equation*}
for a suitable $\a_1\in\GL_{r-1}(\Z)$. Define
\begin{eqnarray}\label{matrici}
\nonumber
    A&=&\left(
                    \begin{array}{cc}
                      \diag(1,\ldots,1,1/d)\,\a_1 & \mathbf{0}_{r-1,1} \\
                      \mathbf{0}_{1,r-1} & 1/d \\
                    \end{array}
                  \right)\in\GL_r(\Q)\\
    B&=& \left(
                                 \begin{array}{cc}
                                   \diag(d,1,\ldots,1) & \mathbf{0}_{n+r-s-1,s+1} \\
                                   \mathbf{0}_{s+1,n+r-s-1} & d\,I_{s+1} \\
                                 \end{array}
                               \right)\in\GL_{n+r}(\Q)\cap \mathbf{M}_{n+r}(\Z)\\
                               \nonumber
    \widetilde{Q}&=&A\,Q\,B\in \mathbf{M}(r,n+r;\Z)
\end{eqnarray}
Notice that the bottom $r$--th row of $\widetilde{Q}$ coincides with the bottom $r$--th row of $Q$. Then $\widetilde{Q}$ is reduced for the $\REF$ of $Q$ and the fact that $Q'^{\text{red}}$ is reduced. Define
$$\widetilde{V}=\left(\widetilde{\v}_1,\ldots,\widetilde{\v}_{n+r}\right):=\G(\widetilde{Q})\,.$$
Consider the dual divisors' sequence (\ref{HomZ-div-sequence}) for $X(\Si_\g)$ and notice that it turns out to be exact on the right, since $V$ is a $CF$--matrix, hence giving the short exact sequence (\ref{HomZ-div-sequence_exact}). Fixing the bases of the $\Z$-modules appearing in (\ref{HomZ-div-sequence_exact}), the matrices defined in (\ref{matrici}) define $\Z$--linear morphisms giving the following commutative diagram
\begin{equation}\label{covering-diagram}
    \xymatrix{&0\ar[d]&0\ar[d]&0\ar[d]&\\
    0\ar[r]&\Hom(\Cl(X))\cong\Z^r\ar[d]_-{d^{\vee}}^-{Q^T}\ar[r]^-{(A^{-1})^T}&\Z^r\ar[d]^{\widetilde{Q}^T}\ar[r]&\left(\Z/d\Z\right)^{\oplus 2}\ar[d]\ar[r]&0\\
    0\ar[r]&\Hom(\mathcal{W}_T(X))\cong\Z^{n+r}\ar[r]^-{B^T}\ar[d]_-{div^{\vee}}^-{V}&\Z^{n+r}\ar[r]\ar[d]^{\widetilde{V}}&\left(\Z/d\Z\right)^{\oplus s+2} \ar[r]\ar[d]&0\\
    0\ar[r]&N\cong\Z^n\ar[r]^-{C^T}\ar[d]&\Z^n\ar[r]\ar[d]&\left(\Z/d\Z\right)^{\oplus s}\ar[r]\ar[d]&0\\
    &0&0&0&}
\end{equation}
where $C^T$ is the representative matrix of an injective $\Z$--linear map $\overline{f}:\Z^n\to\Z^n$ easily defined by diagram chasing. Then
\begin{equation*}
    C^T\,V = \widetilde{V}\,B^T= \left(d\widetilde{\v}_1,\widetilde{\v}_2,\ldots,\widetilde{\v}_{n+r-s-1},d\widetilde{\v}_{n+r-s},\ldots,d\widetilde{\v}_{n+r}\right)
\end{equation*}
and $\overline{f}(N)$ is a subgroup of finite index $d^s$ of $\Z^n=\widetilde{N}$, as deduced by the vertical sequence on the right. Moreover, by Lemma~\ref{lm:cover&bord}, $\widetilde{\Si}:=\overline{f}_{\R}(\Si_{\g})$ is a fan in $\P\mathcal{SF}(\widetilde{V})$ defining a PWS $\widetilde{X}(\widetilde{\Si})$ whose weight matrix is $\widetilde{Q}$ and whose chamber $\widetilde{\g}_{\widetilde{\Si}}\in\Ga(\widetilde{V})$ is maxbord. Then, by the previous part (a), $\widetilde{X}$ is a WPTwB, which is a toric cover of a WPTB. Moreover $\overline{f}$ induces a toric cover $f:X\to\widetilde{X}$, hence $X$ is a toric cover of a WPTB.

\noindent Let us notice that in the present situation one can say something more than Lemma~\ref{lm:cover&bord}: in fact, the matrix $A^{-1}$ represents an injective $\Z$--linear map $g:\Cl(\widetilde{X})\to\Cl(X)$ which is compatible with the secondary fans $\Ga(\widetilde{V})$ and $\Ga(V)$ and gives $g_{\R}\left(\Ga(\widetilde{V})\right)=\Ga(V)$.

(c) Finally let us now assume that $\mathcal{L}_r(Q')$ has cotorsion in $\Z^{n'+r'}$. Without loss of generality, up to a permutation on rows and an iteration of the following argument, assume that $\a Q'$, for some $\a\in\GL_{r'}(\Z)$, admits a unique row giving cotorsion, namely the bottom $r'$--th row of $Q'$. Let $d>1$ be the greater common divisor of all entries in that row, i.e. $d=\gcd(q_{r',1},\ldots,q_{r',n'+r'})$. Recall the matrix $A$ given in (\ref{matrici}) and define
\begin{eqnarray*}
    A'&=&A\,\left(
                                 \begin{array}{cc}
                                   \a & \mathbf{0}_{r',1} \\
                                   \mathbf{0}_{1,r'} & 1 \\
                                 \end{array}
                               \right)\in\GL_{r}(\Q)\\
    B&=& \left(
                                 \begin{array}{cc}
                                   I_{n'+r'} & \mathbf{0}_{n'+r',s+1} \\
                                   \mathbf{0}_{s+1,n'+r'} & d\,I_{s+1} \\
                                 \end{array}
                               \right)\in\GL_{n+r}(\Q)\cap \mathbf{M}_{n+r}(\Z)\\
    \widetilde{Q}&=&A'\,Q\,B\in \mathbf{M}(r,n+r;\Z)
\end{eqnarray*}
Clearly the bottom $r$--th row of $\widetilde{Q}$ coincides with the bottom $r$--th row of $Q$. Moreover $\widetilde{Q}$ turns out to be a $W$--matrix for the $\REF$ of $Q$ and the fact that $\widetilde{Q}'=\diag(1,\ldots,1,1/d)\,\a_1\,\a\,Q'$ is now a $W$--matrix. From now on we can go on as in part (b), showing that $X$ is a toric cover of a suitable WPTB.

For the converse, let us assume that $X(\Si_{\g})$ is a toric cover of a WPTB  $\P^W(\mathcal{E})$. This means that there exists a $\Z$--linear morphism $\overline{f}:N\to \widetilde{N}$ such that $\widetilde{\Si}=\overline{f}_{\R}(\Si_{\g})$ is the fan of $\widetilde{X}=\P^W(\mathcal{E})$.
Therefore $\widetilde{\Si}$ is composed by fibred cones (\ref{cono fibrato}) implying that it is split by a $n'$--dimensional fan $\widetilde{\Si}'$ and a $s$--dimensional fan $\widetilde{\Si}_W$. The equality of fans $\widetilde{\Si}=\overline{f}_{\R}(\Si_{\g})$ then imposes an analogous splitting for the fan $\Si_\g$. Gale duality then gives that every cone in the bunch $\mathcal{B}(\g)$ is the sum of a 1--dimensional cone and a $(r-1)$--dimensional cone belonging to a fixed facet of the Gale dual cone ${\gkz}$. Prop.~\ref{prop:maxbord} then enables us to conclude that $\g$ is a maxbord chamber.
\end{proof}

The geometric picture described by the previous Theorem \ref{thm:maxbord} dramatically simplifies in the case of smooth projective toric varieties: in this context, the following result is equivalent to \cite[Prop.~4.1]{Batyrev91}.

\begin{corollary}\label{cor:smooth&maxbord} Given a reduced $n\times(n+r)$ $CF$--matrix $V$, with $r\geq 2$, a chamber $\g\in\mathcal{A}_{\Gamma}(V)$ is maximally bordering and non-singular if and only if the associated PWS $X(\Si_{\g})$ is a projective toric bundle $\P(\mathcal{E})\to X'$ over a smooth PWS $X'(\Si')$.
\end{corollary}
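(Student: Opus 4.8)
The plan is to deduce this from Theorem \ref{thm:maxbord} by tracking how the extra non-singularity hypothesis collapses the weighted/covering structure produced there to an honest projective bundle over a smooth base.

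For the forward implication, suppose $\g$ is maxbord and non-singular. Theorem \ref{thm:maxbord} gives that $X(\Si_{\g})$ is a toric cover of a WPTB $\P^W(\mathcal{E})$, and I would retain the explicit data from its proof: a positive $\REF$ matrix $Q=\G(V)$ of shape (\ref{Qfibrata}), whose bottom row $(0,\dots,0,w_0,\dots,w_s)$ is a primitive relation of $\Si_{\g}$ supported on the last $s+1$ columns. Since $\g$ is non-singular, Corollary \ref{cor:intbord+reg} forces every nonzero coefficient of this relation to be $1$, i.e. $W=(1,\dots,1)$; hence the $W$--weighted symmetric algebra is the ordinary one, the fibre $\P(W)$ is $\P^s$, and $\P^W(\mathcal{E})=\P(\mathcal{E})$ is a genuine projective toric bundle. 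Next, the base is smooth: by case (a) of the proof of Theorem \ref{thm:maxbord} every maximal cone of $\Si_{\g}$ is a fibred cone $\langle V^I\rangle=\langle(V')^{I'}\rangle+F_i$ with $F_i$ as in (\ref{coniW}), so its generator matrix is block triangular with diagonal blocks $(V')^{I'}$ and $(\e_k)_{k\neq i}$, whence $|\det(V^I)|=|\det((V')^{I'})|\cdot|\det(\e_0,\dots,\widehat{\e}_i,\dots,\e_s)|=|\det((V')^{I'})|$ using $w_i=1$. As $V$ is a $CF$--matrix, $\delta=1$ in (\ref{Gale_det}), so non-singularity of $\g$ gives $|\det(V^I)|=1$ for every $\langle V^I\rangle\in\Si_{\g}(n)$, hence $|\det((V')^{I'})|=1$ for every maximal cone of the complete fan $\Si'_{\g'}$; thus $X'(\Si'_{\g'})$ is smooth and complete, hence a smooth PWS. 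Finally, since $X'$ is smooth every Weil divisor on it is Cartier, so in the notation of Proposition \ref{prop:WPTwB} one has $l_k=c(E_k)=1$ for every $k$, and therefore $\lambda=1$, $d_k=1$, $a_k=a=1$, $\eta_k=1$, while the reduced weight vector $W'$ of $(l_0w_0,\dots,l_sw_s)$ equals $W=(1,\dots,1)$. In particular $|G|=\prod_k l_k/\lambda=1$, so the toric cover is an isomorphism and $X(\Si_{\g})\cong\P^{W'}\big(\bigoplus_k\mathcal{O}_{X'}(\eta_kE_k)\big)=\P\big(\bigoplus_k\mathcal{O}_{X'}(E_k)\big)=\P(\mathcal{E})$ is a projective toric bundle over the smooth PWS $X'(\Si')$.

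For the converse, suppose $X(\Si_{\g})\cong\P(\mathcal{E})\to X'$ is a projective toric bundle over a smooth PWS $X'(\Si')$, with $\mathcal{E}$ a direct sum of line bundles. A PTB is the special case $W=(1,\dots,1)$ of a WPTB and $X(\Si_{\g})$ is trivially a toric cover of itself, so Theorem \ref{thm:maxbord} immediately yields that $\g$ is maximally bordering. Moreover a projective bundle associated with a split locally free sheaf over a smooth variety is smooth (equivalently, from (\ref{Vfibrata})--(\ref{coniW}) every fibred cone is unimodular when $W=(1,\dots,1)$ and $\Si'$ is a smooth fan), so $\Si_{\g}$ is non-singular and, by Remark \ref{rem:cameraregolare}, $\g$ is a non-singular chamber.

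The main obstacle is the bookkeeping in the forward implication: non-singularity of the single chamber $\g$ must simultaneously (i) trivialize the fibre weights via Corollary \ref{cor:intbord+reg}, (ii) descend to smoothness of the fibration basis through the determinant factorization of fibred cones, and (iii) force the covering degree $|G|$ of Proposition \ref{prop:WPTwB} to be $1$, so that the \emph{a priori} weak bundle collapses to a genuine PTB. Steps (i) and (ii) are the delicate points; step (iii) is then formal once the base is known to be smooth.
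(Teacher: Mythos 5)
Your converse direction and your step (i) (forcing $W=(1,\dots,1)$ via Corollary \ref{cor:intbord+reg}) match the paper. But the forward direction has a genuine gap: your smoothness-of-the-base argument begins with ``by case (a) of the proof of Theorem \ref{thm:maxbord} every maximal cone of $\Si_{\g}$ is a fibred cone,'' yet you never justify that case (a) is the case that occurs. Theorem \ref{thm:maxbord} only asserts that $X(\Si_{\g})$ is a \emph{toric cover} of a WPTB; in cases (b) and (c) of its proof (where $Q'$ is a non--reduced $W$--matrix, or $\mathcal{L}_r(Q')$ has cotorsion) the fan $\Si_{\g}$ is \emph{not} split into fibred cones over $V'=\G(Q')$, the covering $f:X\to\P^W(\mathcal{E})$ is genuinely nontrivial (with group of order a power of $d>1$ coming from diagram (\ref{covering-diagram})), and $X$ is not itself a projective bundle. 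Your step (iii) does not close this hole: the covering you trivialize there is the Cartier-index covering of Proposition \ref{prop:WPTwB}, which lives entirely inside case (a), whereas the coverings of cases (b) and (c) have a different origin and are untouched by the observation that Weil divisors on a smooth base are Cartier.

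This is exactly where the paper's proof does its real work: it shows that non-singularity of $\g$ \emph{excludes} cases (b) and (c). Concretely, by Proposition \ref{prop:maxbord} every $\langle Q_I\rangle\in\mathcal{B}(\g)$ has a unique generator $\q_i$ with $i\in P$, and the $\REF$ of $Q$ gives $|\det(Q_I)|=q_{r,i}\,|\det(Q'_{I\setminus\{i\}})|$; if $\mathcal{L}_r(Q')$ had cotorsion no $(r-1)$--minor of $Q'$ could be unimodular, so $|\det(Q_I)|>1$, contradicting non-singularity (case (c)), and a variant of the same computation applied to a cone of $\mathcal{B}(\g)$ avoiding the offending column rules out case (b). You need to insert this step before you may speak of the fibred-cone decomposition and of the base $X'(\Si'_{\g'})$ at all. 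Once that is done, your remaining computations (the block-triangular determinant $|\det(V^I)|=|\det((V')^{I'})|\cdot w_i$ giving smoothness of $X'$, and the consequent triviality of the Proposition \ref{prop:WPTwB} covering) are correct and in fact spell out the Cartier-index point more explicitly than the paper, which simply infers smoothness of $X'$ from smoothness of $X$ via the fibre-bundle structure.
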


\begin{proof} Since maxbord implies intbord, Theorem \ref{thm:intbord-pc} and Corollary \ref{cor:intbord+reg} give a numerically effective primitive collection for $\Si_{\g}$ whose primitive relation has all the non--zero coefficient equal to 1. By Lemma~\ref{lm:dibase}, such a primitive relation can be considered as the bottom row of the $\REF$ positive weight matrix $Q$. Hence, Theorem~\ref{thm:maxbord} gives that $X$ is a toric cover of a projective toric bundle  $\P(\mathcal{E})$, since $W=(1,\ldots,1)$. In particular the covering map $f:X\to\P(\mathcal{E})$ is the identity if and only if the $r'\times (n'+r')$ matrix $Q'$, obtained as in (\ref{Qfibrata}), is a reduced $W$-matrix. The fact that $\g$ is a non-singular chamber implies that $Q'$ is necessarily a reduced $W$--matrix: in other words cases (b) and (c) in the proof of Theorem~\ref{thm:maxbord} cannot occur. In fact:
\begin{itemize}
  \item for (c) notice that Prop.~\ref{prop:maxbord} gives that every cone $\langle Q_I\rangle\in\mathcal{B}(\g)$ admits a unique generator $\q_i$, with $n'+r'+1\leq i\leq n+r$, such that $\langle Q_I\rangle=\langle\q_i\rangle+\langle Q'_{I\backslash\{i\}}\rangle$; by the REF of $Q$, $|\det(Q_I)|=q_{r,i}|\det(Q'_{I\backslash\{i\}})|$; since $\mathcal{L}_r(Q')$ has cotorsion in $\Z^{n'+r'}$, every $r$--minor of $Q'$ can't be unimodular, giving $|\det(Q_I)|>1$, against the non-singularity of $\g$;
  \item for (b) notice that if $Q'$ is a non--reduced $W$--matrix, then there exists a column index $h$ such that $1\leq h\leq n'+r'$ and $\mathcal{L}_r(Q'^{\{h\}})$ has cotorsion in $\Z^{n'+r'-1}$; in the bunch $\mathcal{B}(\g)$ there certainly exists a cone $\s^{\{h\}}$ not admitting the column $\q_h$ as a generator; Prop.~\ref{prop:maxbord} then gives that $\s^{\{h\}}$ admits a unique generator $\q_i$ with $n'+r'+1\leq i\leq n+r$; as above, the $\REF$ of $Q$ then gives $|\det(\s^{\{h\}})|>1$, against the non-singularity of $\g$.
\end{itemize}
Then $X$ is a PTB $\P(\mathcal{E})\to X'$. The smoothness of $X'$ follows by the smoothness of $X$.

The converse is obvious, since a PTB $X(\Si_{\g})=(\P(\mathcal{E})\to X')$ over a smooth PWS $X'$ is clearly smooth, giving the non-singularity of $\g$. Moreover $\g$ is maxbord by Theorem \ref{thm:maxbord}.
\end{proof}

The previous results allows us to give the following characterization of a PWS which is a toric flip (in the sense of \S~\ref{ssez:toricflip}) of a toric cover of a PWS, by means of a particular condition on the weight matrix: see the following Example \ref{ex:noWPTB} for a PWS not satisfying such a condition and then not realizing this kind of a birational equivalence.

\begin{theorem}\label{thm:birWPTB} Let $V$ be a $CF$--matrix and consider $X(\Si)$, with $\Si\in\P\SF(V)$. Then $X$ is a toric flip of a toric cover $\widetilde{X}\twoheadrightarrow\P^W(\mathcal{E})$ of a WPTB if and only if $\Mov(V)$ is maxbord w.r.t. an hyperplane $H\subseteq F^r_{\R}$ i.e., up to an application of Lemma~\ref{lm:dibase} sending $H$ to $H_r=\{x_r=0\}$, there exists a positive, REF, $W$--matrix $Q=\G(V)$ looking as in (\ref{Qfibrata}) and such that
 \begin{enumerate}
   \item either the left--upper submatrix $Q'$ is a reduced $W$--matrix: in this case $X$ is a toric flip of a WPTwB;
   \item or the left--upper submatrix $Q'$ is either a non--reduced $W$--matrix or satisfies all the conditions of Definition \ref{def:Wmatrix} but condition b: in this case $X$ is a toric flip of a toric cover of a WPTwB.
 \end{enumerate}
 Moreover, if $X$ is smooth case (2) cannot occur and $X$ turns out to be a toric flip of a PTB if and only if the left--upper submatrix $Q'$ is a reduced $W$--matrix.
 \end{theorem}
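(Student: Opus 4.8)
The plan is to re-express everything in the language of chambers of the secondary fan and then to reduce the ``toric flip'' clause to a property of the whole moving cone $\Mov(V)$, building directly on Theorem~\ref{thm:maxbord}. Recall from \S\ref{ssez:toricflip} that two $\Q$--factorial projective toric varieties with fan matrix $V$ are related by a toric flip exactly when both their fan chambers lie in $\mathcal{A}_{\Ga}(V)\subseteq\Mov(V)$ (one walks across walls interior to $\Mov(V)$). Hence $X(\Si)$ is a toric flip of a toric cover of a WPTB if and only if there is a chamber $\widetilde{\g}\in\mathcal{A}_{\Ga}(V)$ with $X(\Si_{\widetilde{\g}})$ a toric cover of a WPTB, and by Theorem~\ref{thm:maxbord} this happens if and only if $\mathcal{A}_{\Ga}(V)$ contains a maximally bordering chamber. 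The flip stays within projective $\Q$--factorial toric varieties sharing the $CF$--matrix $V$, and a WPTB, being a $\Proj$ over a projective base, is again projective, so no completeness/projectivity issue arises.

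The core step is then the equivalence: $\mathcal{A}_{\Ga}(V)$ contains a chamber maxbord w.r.t.\ a hyperplane $H$ $\iff$ $\Mov(V)$ is maxbord w.r.t.\ $H$. For ``$\Rightarrow$'': if $\widetilde{\g}$ is maxbord w.r.t.\ $H$ then $\dim(\widetilde{\g}\cap H)=r-1$; since $\widetilde{\g}\subseteq\Mov(V)\subseteq\gkz$ and $H$ supports $\gkz$ (hence $\Mov(V)$), the set $\Mov(V)\cap H$ is a proper face of $\Mov(V)$ of dimension $\geq r-1$, i.e.\ a facet lying on $\partial\gkz$. For ``$\Leftarrow$'': by Theorem~\ref{thm:GKZ}(2) and the correspondence of Theorem~\ref{thm:Gale sui coni} one has $\Mov(V)=\bigcup_{\g\in\mathcal{A}_{\Ga}(V)}\g$ (a union of cones of $\Ga$); a generic relative--interior point of the facet $\Mov(V)\cap H$ lies in the relative interior of a unique cone $\tau\in\Ga$, which by genericity is $(r-1)$--dimensional, is contained in $H\subseteq\partial\gkz$, and is therefore a facet of a (unique) chamber $\widetilde{\g}\in\mathcal{A}_{\Ga}(V)$ — so $\widetilde{\g}$ is maxbord w.r.t.\ $H$.

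Next I unwind the matrix description. Applying Lemma~\ref{lm:dibase} I send the bordering hyperplane to $H_r=\{x_r=0\}$ and put $Q=\G(V)$ in positive $\REF$; since $H_r$ cuts a facet of $\gkz$, positivity and the $\REF$ shape force $q_{r,j}=0$ for the first $n'+r':=n+r-s-1$ columns and $q_{r,j}>0$ for the last $s+1$, i.e.\ $Q$ has the block form (\ref{Qfibrata}) with bottom--right block $W$ (made reduced by a final rescaling) and top--left block $Q'$ of rank $r-1$. Conversely, from any positive $\REF$ $Q=\G(V)$ of shape (\ref{Qfibrata}) the hyperplane $H_r$ cuts the facet $\langle Q'\rangle$ of $\gkz$ and, using (\ref{Mov}) together with $w_k>0$, one checks $\Mov(V)\cap H_r=\Mov(V')$; as the moving cone of a $W$--matrix (even of one failing only condition b of Definition~\ref{def:Wmatrix}) is full--dimensional, $\dim(\Mov(V)\cap H_r)=r-1$, so $\Mov(V)$ is maxbord w.r.t.\ $H_r$ and one recovers a maxbord chamber $\widetilde{\g}$, hence via Theorem~\ref{thm:maxbord} a toric cover of a WPTB of which $X$ is a toric flip. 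The trichotomy for $Q'$ is precisely cases (a), (b), (c) in the proof of Theorem~\ref{thm:maxbord}: in case (a), $Q'$ is a reduced $W$--matrix and $X(\Si_{\widetilde{\g}})$ is a WPTwB, so $X$ is a toric flip of a WPTwB (part~(1)); in cases (b) and (c), $Q'$ is a non--reduced $W$--matrix or fails only condition b, and $X(\Si_{\widetilde{\g}})$ is a toric cover of a WPTwB, so $X$ is a toric flip of a toric cover of a WPTwB (part~(2)). When $V$ is a $CF$--matrix, Remark~\ref{rem:fan_fibrato} rules out case (c).

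For the smooth addendum: if $X$ is smooth then $\g_{\Si}$ is non--singular, and combining Batyrev's Theorem~\ref{thm:Batyrev} with Corollary~\ref{cor:intbord+reg} one arranges the maxbord chamber $\widetilde{\g}$ to be non--singular as well, equivalently the primitive relation occupying the bottom row of the $\REF$ form to have all non--zero coefficients equal to $1$, i.e.\ $W=(1,\dots,1)$. The non--singularity of $\widetilde{\g}$ then excludes cases (b) and (c) exactly as in the proof of Corollary~\ref{cor:smooth&maxbord}, so $Q'$ is a reduced $W$--matrix, the covering trivializes, and $X(\Si_{\widetilde{\g}})$ is a genuine projective toric bundle $\P(\mathcal{E})\to X'$ over a smooth $X'$; conversely if $X$ is smooth and $Q'$ is a reduced $W$--matrix the same analysis produces a PTB that $X$ is a toric flip of. I expect the two delicate points to be the bookkeeping of the second paragraph — that $\Mov(V)$ is exactly the union of the chambers of $\mathcal{A}_{\Ga}(V)$ and that a facet of it on $\partial\gkz$ is a facet of one of them — and, in the smooth case, the verification that the maxbord chamber may be taken non--singular, which is where the known special cases of Batyrev's result (and the attached conjecture of \S\ref{sssez:primitive2}) genuinely enter.
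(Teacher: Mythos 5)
Your overall route is the paper's: reduce ``toric flip of a toric cover of a WPTB'' to the existence of a maximally bordering chamber inside $\Mov(V)$, invoke Theorem~\ref{thm:maxbord}, and read off items (1) and (2) from cases (a), (b), (c) of that proof. Your second paragraph even supplies the generic--point argument for the equivalence ``$\Mov(V)$ maxbord w.r.t. $H$ $\Leftrightarrow$ some chamber of $\mathcal{A}_{\Ga}(V)$ is maxbord w.r.t. $H$'', which the paper only asserts; that part is sound and is a genuine improvement in explicitness.

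Two points, however, are wrong or unjustified. First, the parenthetical ``when $V$ is a $CF$--matrix, Remark~\ref{rem:fan_fibrato} rules out case (c)'' is false and contradicts both the sentence preceding it and item (2) of the statement: case (c) is precisely the situation in which $Q'$ fails condition b of Definition~\ref{def:Wmatrix}, and Example~\ref{ex:WPTB(c)} exhibits it for a $CF$--matrix $V$ (a PWS); Remark~\ref{rem:fan_fibrato} is about $V'$ failing to be a $CF$--matrix in the fibred--cone construction, a different matter. Second, and more seriously, in the smooth addendum you claim that smoothness of $X(\Si)$ lets you ``arrange the maxbord chamber $\widetilde{\g}$ to be non--singular''. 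Smoothness of $X$ only makes $\g_{\Si}$ non--singular; $\widetilde{\g}$ is in general a different chamber, and Theorem~\ref{thm:Kleinschmidt}(3) together with Proposition~\ref{prop:flip-lisci}(4) produce smooth $X$ of rank $2$ all of whose toric flips are singular --- there every chamber is maxbord, so the singular flip chambers are themselves the maxbord chambers of $\Mov(V)$. Hence you cannot transfer non--singularity from $\g_{\Si}$ to $\widetilde{\g}$ and then quote the case--exclusion argument of Corollary~\ref{cor:smooth&maxbord} verbatim; what is actually needed is an argument that non--reducedness or cotorsion of $Q'$ forces a non--unimodular cone in the bunch of the smooth chamber $\g_{\Si}$ itself. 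A smaller unsupported step: you assert that the moving cone of a ($W$--like) matrix $Q'$ is automatically full--dimensional, in order to pass from the bare block shape (\ref{Qfibrata}) back to maxbordness of $\Mov(V)$; the paper never claims this and, in the converse direction, takes maxbordness of $\Mov(V)$ (not the matrix shape alone) as the hypothesis.
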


\begin{proof} Let $X(\Si)$ be a toric flip of a toric cover $\widetilde{X}(\widetilde{\Si})$ of a WPTB $\P^W(\mathcal{E})$. This means that we can assume $\widetilde{\Si}\in\P\SF(V)$ and $\widetilde{\g}:=\g_{\widetilde{\Si}}$ be a maxbord chamber, w.r.t. an hyperplane $H$, of $\Ga(V)$. This is enough to show that $\Mov(V)$ is maxbord w.r.t. $H$. Let us apply Lemma~\ref{lm:dibase}: then we can assume $Q=\G(V)$ be a positive, REF, $W$--matrix looking as in (\ref{Qfibrata}) and $H=H_r$ be the supporting hyperplane of a bordering primitive collection for $\widetilde{\Si}$. Proceeding as in the proof of Theorem~\ref{thm:maxbord}, the upper left submatrix $Q'$ turns out to satisfy conditions (1) and (2) of the statement.

Conversely, let us assume $\Mov(V)$ be maxbord w.r.t. an hyperplane $H$. This means that there exists a maxbord chamber $\widetilde{\g}\subseteq\Mov(V)$ w.r.t. the hyperplane $H$. Proceeding as in the proof of Theorem~\ref{thm:maxbord} we can assume $H=H_r$ and $Q=\G(V)$ be a positive, REF, $W$--matrix looking as in (\ref{Qfibrata}). In particular the upper left submatrix $Q'$ turns out to satisfy conditions (1) and (2) in the statement. Setting $\widetilde{\Si}:=\Si_{\widetilde{\g}}\in\P\SF(V)$, Theorem~\ref{thm:maxbord} ensures that $\widetilde{X}(\widetilde{\Si})$ is either a WPTwB, when $Q'$ satisfies condition (1), or a toric cover of a WPTwB, when $Q'$ satisfies condition (2). Clearly $X$ is a toric flip of $\widetilde{X}$.

The last part of the statement, regarding the smooth case, follows by Corollary \ref{cor:smooth&maxbord}.
\end{proof}

\subsection{The geometric meaning of a maximally bordering chamber}\label{ssez:geometrico}
Recalling Proposition~\ref{prop:nef} a maxbord chamber $\g$ w.r.t. a hyperplane $H$ gives a fan $\Si=\Si_{\g}$ such that the hyperplane $H$ cuts out a common facet of $\Nef(X(\Si))$ and $\overline{\Eff}(X(\Si))$: dually we are fixing an extremal ray of the Mori cone $\overline{\NE}(X_{\Si})$. By \cite[Prop.~15.4.1, Lemma~15.4.2(b,c) and Prop.~15.4.5(a)]{CLS}, contracting such an extremal ray gives rise to a fibering morphism of $\Q$--factorial complete toric varieties $\phi:X(\Si)\to X_0(\Si_0)$ whose fibers are connected and isomorphic to a finite abelian quotient of a WPS (also called a \emph{fake WPS}) whose dimension is given by $s$, where $s+1$ is the cardinality $|\pc_H|$ of the primitive collection supported by $H$.

On the other hand if $X$ is a PWS then Theorem~\ref{thm:maxbord} exhibits $X$ as a toric cover of a WPTB $\P^W(\mathcal{E})$ so giving
\begin{equation*}
  \xymatrix{X\ar[rr]^-f_-{\text{toric cover}}&&\P^W(\mathcal{E})\ar[rr]^-{\varphi}_-{\text{WPTB}}&& X'}
\end{equation*}
Putting all together this means that the fibering morphism $\phi$ gives the morphism with connected fibers of the Stein factorization of $\varphi\circ f$, which is
\begin{equation}\label{Stein}
  \xymatrix{X\ar[d]^-{\phi}\ar[r]^-f&\P^W(\mathcal{E})\ar[d]^-{\varphi}\\
            X_0\ar[r]^-{f_0}_-{\text{finite}}&X'}
\end{equation}
Let us underline that, by Theorem~\ref{thm:maxbord}, the right hand side of diagram (\ref{Stein}) allows one to completely determine (starting from a fan $CF$--matrix $V$ and, by Gale duality, a REF positive $W$-matrix $Q=\G(V)$) the toric cover $f$, the WPS giving the fibers of $\P^W(\mathcal{E})$ and the basis $X'$, in terms of a collection of matrices giving diagram (\ref{covering-diagram}).

Moreover Corollary~\ref{cor:smooth&maxbord} shows that when $X$ is smooth both the finite morphisms $f$ and $f_0$ in the commutative diagram (\ref{Stein}) are trivial, meaning that in the smooth case $\phi=\varphi$.

Let us finally notice that \cite[Prop.\,1.11]{Hu-Keel} and considerations following Prop.\,2.5 in \cite{Casagrande13} suggest that a similar construction may probably be proposed in the more general setup of Mori Dream Spaces and their ambient toric varieties.

\subsection{Maximally bordering chambers and splitting fans}\label{ssez:maxbord&splitting}
In \cite[\S~4]{Batyrev91} V.~Batyrev relates the fibred structure of smooth complete toric varieties with some intersection properties of their primitive collections. In particular, restricting our attention to the subclass of projective varieties, the previous Corollary \ref{cor:smooth&maxbord}, compared with \cite[Prop.~4.1]{Batyrev91}, gives that

\begin{itemize}
\item[]\emph{Given a reduced $n\times(n+r)$ $CF$--matrix $V$, with $r\geq 2$, a non-singular chamber $\g\in\mathcal{A}_{\Gamma}(V)$ is maximally bordering if and only if there exists a primitive collection $\mathcal{P}$ for $\Si_{\g}$ such that:}
      \begin{itemize}
        \item[$(i)$] \emph{the corresponding primitive relation $r(\mathcal{P})$ is numerically effective,}
        \item[$(ii)$] \emph{$\mathcal{P}\cap\mathcal{P}'=\emptyset$ for any primitive collection $\mathcal{P}'$ for $\Si_{\g}$ such that $\mathcal{P}'\neq\mathcal{P}$.}
      \end{itemize}
\end{itemize}
Therefore Theorem \ref{thm:maxbord} is clearly the extension of \cite[Prop.~4.1]{Batyrev91} to the case of $\Q$--factorial projective toric varieties.

Moreover the just given characterization of maxbord chambers in terms of pri\-mi\-ti\-ve collections can be obtained by dropping the smoothness hypothesis too, i.e.

\begin{proposition}\label{prop:maxbord vs primitive} Given a reduced $n\times(n+r)$ $F$--matrix $V$, with $r\geq 2$, a chamber $\g\in\mathcal{A}_{\Gamma}(V)$ is maximally bordering if and only if there exists a primitive collection $\mathcal{P}$ for $\Si_{\g}$ such that:
      \begin{itemize}
        \item[$(i)$] the corresponding primitive relation $r_{\Z}(\mathcal{P})$ is numerically effective,
        \item[$(ii)$] $\mathcal{P}\cap\mathcal{P}'=\emptyset$ for any primitive collection $\mathcal{P}'$ for $\Si_{\g}$ such that $\mathcal{P}'\neq\mathcal{P}$.
      \end{itemize}
\end{proposition}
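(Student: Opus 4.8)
The plan is to establish the two implications by pairing Theorem~\ref{thm:maxbord}, which already identifies maxbord chambers with toric covers of weighted projective toric bundles, with the bunch--of--cones criterion of Proposition~\ref{prop:maxbord}, passing back and forth between the maximal cones of $\Si_{\g}$ and the cones of the bunch $\mathcal B(\g)$ through the Gale--dual bijection of Remark~\ref{rem:cameraregolare}. Everywhere I use the elementary fact that two distinct primitive collections are incomparable for inclusion, since a proper subset of a primitive collection lies in a cone of the fan.

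For the implication $\Longrightarrow$, assume $\g$ is maxbord. By Theorem~\ref{thm:maxbord}, $X(\Si_{\g})$ is a toric cover $f\colon X\to\P^{W}(\mathcal E)$ of a WPTB $\P^{W}(\mathcal E)\to X'(\Si')$; as $\overline f$ has finite--index image, $\overline f_{\R}$ is a linear isomorphism taking $\Si_{\g}$ onto the fibred fan $\Si_{W,\mathcal E}$ of Proposition~\ref{prop:fan fibrato}, so I may identify the two fans combinatorially, in particular their primitive collections. Let $\mathcal P$ be the set of the $s+1$ fibre rays of $\Si_{W,\mathcal E}$ (the last $s+1$ columns of $V$ in~(\ref{Vfibrata})). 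Its primitive relation $r_{\Z}(\mathcal P)$ is, by~(\ref{normalP}) and the shape~(\ref{Qfibrata}) of the weight matrix, the bottom row $(\mathbf 0,w_{0},\dots,w_{s})$ of $Q$, whose nonzero entries are the positive weights $w_{k}$; hence $r_{\Z}(\mathcal P)$ is numerically effective, which is $(i)$. For $(ii)$, let $\mathcal P'\neq\mathcal P$ be a primitive collection; since $\mathcal P\not\subseteq\mathcal P'$ we may pick $\rho_{0}\in\mathcal P\setminus\mathcal P'$. If $\mathcal P'\cap\mathcal P\neq\emptyset$ choose $\rho_{f}\in\mathcal P'\cap\mathcal P$; primitivity of $\mathcal P'$ puts $\mathcal P'\setminus\{\rho_{f}\}$ inside a maximal cone of $\Si_{W,\mathcal E}$, which is a fibred cone over some $\sigma'\in\Si'(n')$, and reading off base parts shows the base part of $\mathcal P'$ lies in $\sigma'(1)$. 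Then the fibred cone over $\sigma'$ omitting precisely $\rho_{0}$ — which belongs to $\Si_{W,\mathcal E}$ by construction — contains all of $\mathcal P'$, contradicting primitivity of $\mathcal P'$; hence $\mathcal P'\cap\mathcal P=\emptyset$, proving $(ii)$. (This is the weighted analogue of the description of primitive collections of a projective bundle in \cite[Prop.~7.3.6]{CLS}, which should be spelled out from Proposition~\ref{prop:fan fibrato}.)

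For the implication $\Longleftarrow$, I would argue directly with Proposition~\ref{prop:maxbord}. Let $\mathcal P=\{V_{P}\}$ satisfy $(i)$ and $(ii)$, with supporting hyperplane $H:=H_{P}$. By $(i)$ and Remark~\ref{rem:bordering->nef}(1), $\mathcal P$ is a bordering collection, so $H$ cuts a facet of $\gkz$; by Proposition~\ref{prop:normal-inward}, $\g\subseteq\mathcal H_{P}^{+}$, whence $\gkz\subseteq\mathcal H_{P}^{+}$ and $\n_{P}\cdot\q_{j}=\big(r_{\Z}(P)\big)_{j}\geq 0$ for all $j$, with $\big(r_{\Z}(P)\big)_{j}=0$ for $j\notin P$ (a nef primitive relation has its focus contained in the collection). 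The key step is that $(ii)$ forces every maximal cone of $\Si_{\g}$ to omit \emph{exactly} one ray of $\mathcal P$: "at least one" is primitivity of $\mathcal P$; and if $\s\in\Si_{\g}(n)$ omitted two rays $\rho_{a}\neq\rho_{b}$ of $\mathcal P$, then $\s(1)\cup\{\rho_{a}\}$ is a set of $n+1$ rays, hence not a face, so it contains a primitive collection $\mathcal P_{1}\ni\rho_{a}$ with $\mathcal P_{1}\neq\mathcal P$ (as $\rho_{b}\notin\s(1)\cup\{\rho_{a}\}$), violating $(ii)$. Dualizing via Remark~\ref{rem:cameraregolare}, each $\b=\langle Q^{I}\rangle\in\mathcal B(\g)$ has exactly one generator $\q_{j}$ with $j\in P$. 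Next I would check that no column $\q_{j}$ with $j\in P$ lies on $H$: otherwise, choosing $\s$ with $\mathcal P\setminus\{\rho_{j}\}\subseteq\s(1)$ (it omits exactly $\rho_{j}$), the Gale--dual cone $\langle Q^{I}\rangle$ would have all of its $r$ generators on $H$, contradicting $\det(Q^{I})\neq 0$. Consequently $\mathcal C_{H}$ is generated exactly by the columns $\q_{j}$, $j\notin P$, and — since $\n_{P}\cdot\q_{j}>0$ for $j\in P$ — every $\b\in\mathcal B(\g)$ admits a generator $\q\notin\mathcal C_{H}(1)$ with $\b=\langle\q\rangle+\b\cap H$; Proposition~\ref{prop:maxbord} then yields that $\g$ is maxbord with respect to $H$.

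The step I expect to be the main obstacle is the one just outlined in the "$\Longleftarrow$" part: ruling out that a column $\q_{j}$ with $j\in P$ sits on the supporting hyperplane $H_{P}$. In Batyrev's smooth setting this cannot happen, because the coefficients of a nef primitive relation are all $0$ or $1$ and its focus is $\{\mathbf 0\}$; in the $\Q$--factorial case the focus may be a nonzero subcone contained in $\mathcal P$, so one must genuinely combine $(ii)$ — via the "omits exactly one ray" property — with the full--dimensionality of the cones in the bunch $\mathcal B(\g)$. This is precisely the point where the singular argument departs from the smooth one.
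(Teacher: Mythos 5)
Your argument is correct in substance but takes a genuinely different route from the paper's in both directions. For ``$\Rightarrow$'' the paper never invokes the structure Theorem~\ref{thm:maxbord}: it obtains the nef primitive collection $\pc$ directly from Theorem~\ref{thm:intbord-pc} (maxbord $\Rightarrow$ intbord $\Rightarrow$ the bordering hyperplane supports a nef primitive collection), and proves disjointness by combining Proposition~\ref{prop:maxbord} (every $\s\in\mathcal{B}(\g)$ meets $\pc^*$ in exactly one ray) with condition $(ii)$ of Proposition~\ref{prop:primitive} applied to a hypothetical $\pc'$ meeting $\pc$: the cone $\mathcal{C}_{i,\pc'}$ then meets both $\pc^*$ and $\pc'^*$ only in $\langle\q_i\rangle$, forcing $H'=H_P$ and $\pc'=\pc$. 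Your detour through the fibred--cone combinatorics of $\P^W(\mathcal{E})$ is heavier but sound. For ``$\Leftarrow$'' the paper argues by contradiction with a different mechanism: if $\g$ were not maxbord w.r.t.\ $H_P$, then for any $\q_i\in\pc^*$ some facet hyperplane $H'\neq H_P$ of $\g$ pairs positively with $\q_i$ (otherwise $-\q_i\in\g\subseteq F^r_+$, absurd), and the primitive collection supported by $H'$ (via \cite[Thm.~1.4]{Cox-vRenesse}) then meets $\pc$, contradicting $(ii)$. Your dual argument --- $(ii)$ forces each maximal cone to omit exactly one ray of $\pc$, full--dimensionality of the cones in $\mathcal{B}(\g)$ rules out $P$--columns on $H_P$, and Proposition~\ref{prop:maxbord} closes --- is equally valid and arguably more self--contained, since it does not rely on facets of $\g$ generating primitive collections.

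Two points need patching. First, and most concretely: the proposition is stated for a reduced $F$--matrix $V$, whereas Theorem~\ref{thm:maxbord}, which your forward direction invokes, requires a $CF$--matrix. You must first pass to $\widehat{V}=\G(Q)$ and the 1--covering PWS $Y(\widehat{\Si}_{\g})$, noting that $\Ga(V)=\Ga(\widehat{V})$, so the bunch $\mathcal{B}(\g)$, the combinatorics of the fan, the primitive collections (as index sets $P$) and the nef--ness of their relations all coincide for $\Si_{\g}$ and $\widehat{\Si}_{\g}$; this is exactly the observation the paper makes at the start of \S~\ref{sez:Qfproj}, and the paper's own proof sidesteps it by never leaving the Gale--dual cone $\gkz$. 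Second, your identification of $r_{\Z}(\pc)$ with the bottom row of $Q$ requires knowing that the focus of the fibre collection is spanned by fibre rays (any cone of the fibred fan whose relative interior meets $\{0\}\times N_{W,\R}$ is generated by fibre rays), so that $r_{\Z}(\pc)$ is supported on $P$ and, since $\rk(Q')=r-1$, must be a positive multiple of $(\mathbf{0},w_0,\ldots,w_s)$; this is the content of the ``should be spelled out'' remark you make yourself, and it does need to be spelled out because in the weighted case the focus is generally a nonzero cone.
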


\begin{proof} If $\g$ is maxbord then the existence of a nef primitive collection $$\pc=\{\langle\v_{s+1}\rangle,\ldots,\langle\v_{n+r}\rangle\}$$
is guaranteed by  Theorem \ref{thm:intbord-pc}: in particular we can assume that $\g$ is maxbord w.r.t. the hyperplane $H_r$ and, saying $\pc^*=\{\langle\q_{s+1}\rangle,\ldots,\langle\q_{n+r}\rangle\}$, Prop.~\ref{prop:maxbord} guarantees that $|\s(1)\cap\pc^*|=1$, for every cone $\s\in\mathcal{B}(\g)$.  Let $\pc'$ be a further primitive collection, for $\Si_{\g}$, supported on a hyperplane $H'$ and let $\n'$ be the numerical class of $\pc'$, which is the inward primitive normal vector to $H'$. If there would exist a $\q_i\in\pc\cap\pc'$ then condition ($ii$) of Proposition \ref{prop:primitive} gives a cone $\mathcal{C}_{i,\pc'}\in\mathcal{B}(\g)$ such that $\mathcal{C}_{i,\pc'}(1)\cap\pc'^*=\{\langle\q_i\rangle\}$. On the other hand $|\mathcal{C}_{i,\pc'}(1)\cap\pc^*|=1$ implying that $\mathcal{C}_{i,\pc'}(1)\cap\pc^*=\{\langle\q_i\rangle\}$. This is enough to show that $\mathcal{C}_{i,\pc'}\cap H'=\mathcal{C}_{i,\pc'}\cap H_r$ hence giving $H'=H_r$ and therefore $\pc'=\pc$.

Conversely let $\pc=\{V_P\}$ be a primitive collection satisfying conditions ($i$) and ($ii$) and supported by a hyperplane $H_P$. Then ($i$) ensures that $\pc$ is bordering and Lemma \ref{lm:dibase} allows us to assume that $H_P=H_r$ and $P=\{i\,|\,s+1\leq i\leq n+r\}$.
\begin{claim} Let $\q_i\in\pc^*$. If $\g$ is not maxbord w.r.t. $H_r$ then there exists a hyperplane $H'\neq H_r$, cutting a facet of $\g$, whose inward normal vector $\n'$ gives
$$\n'\cdot\q_i>0\,.$$
\end{claim}
Then the collection supported by $H'$, i.e.
$$\pc'^*:=\{\q_j\,|\,\q_j\ \text{is a column of $Q$ with $\n'\cdot\q_j>0$} \}$$
turns out to be a primitive collection such that $\pc'\neq\pc$ and $\pc'\cap\pc\supseteq\{\langle\q_i\rangle\}\neq\emptyset$, giving a contradiction.

\noindent To prove the Claim let us consider all the hyperplanes $H^{(1)},\ldots,H^{(l)}$ cutting a facet of $\g$. Since $\g$ is not maxbord w.r.t. $H_r$, none of them coincides with $H_r$. Let $\n_j$ be the primitive inward normal vector to $H^{(j)}$. If $\n_j\cdot\q_i\leq 0$, for every $1\leq j\leq l$, then $-\q_i\in\g$ giving a contradiction since $-\q_i$ has negative entries. Then there should exist $\n_j$ such that $\n_j\cdot\q_i>0$.
\end{proof}

Keeping in mind the previous Proposition \ref{prop:maxbord vs primitive} one can then give the following generalization of  \cite[Thm.~4.3]{Batyrev91}.

\begin{proposition}\label{prop:bimaxbord} Given a reduced $n\times(n+r)$ $CF$--matrix $V$, with $r\geq 2$, let $\g\in\mathcal{A}_{\Gamma}(V)$ be a maximally bordering chamber w.r.t. two distinct hyperplanes $H$ and $H'$. Then $X(\Si_{\g})$ is a toric cover of a WPTB over a PWS $X'(\Si')$ which is still a toric cover of a WPTB.
\end{proposition}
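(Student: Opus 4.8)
The plan is to apply Theorem~\ref{thm:maxbord} twice, peeling off one weighted projective toric bundle at a time. Since $\g$ is maxbord with respect to $H$, Theorem~\ref{thm:maxbord} presents $X(\Si_\g)$ as a toric cover $f\colon X\to\P^W(\mathcal E)$ of a WPTB over a PWS $X'(\Si')$. By Lemma~\ref{lm:cover&bord} together with the reductions of cases (b),(c) to case (a) in the proof of Theorem~\ref{thm:maxbord} — which replace $X$ by a PWS of which it is a toric cover and $(\g,H,H')$ by $(g_{\R}^{-1}\g,g_{\R}^{-1}H,g_{\R}^{-1}H')$ through the compatible isomorphism $g$ of secondary fans recorded there — I may assume we are in case (a): after an application of Lemma~\ref{lm:dibase} the reduced $W$--matrix $Q=\G(V)$ has the block form (\ref{Qfibrata}) with $H=H_r$, with bottom row the numerically effective primitive relation of the fibre primitive collection $\pc_H=\{\v_{n+r-s},\dots,\v_{n+r}\}$ (which exists by Theorem~\ref{thm:intbord-pc}), with $Q'$ a reduced $W$--matrix, $V'=\G(Q')$ a $CF$--matrix, $X'(\Si')$ the PWS carrying the base fan, and $\g':=\g\cap H_r$ the corresponding chamber of $\Ga(V')$; moreover, from case (a) one has the bijection $\mathcal B(\g)\to\mathcal B(\g')$, $\langle Q_I\rangle\mapsto\langle Q_I\rangle\cap H_r=\langle Q'_{I'}\rangle$, where $I=I'\cup\{i\}$ and $\q_i$ (with $i$ in the index set $P$ of $\pc_H$) is the unique generator of $\langle Q_I\rangle$ off $H_r$.

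Now I bring in the second hyperplane. Being maxbord, hence intbord, with respect to $H'$ too, $\g$ supports (by Theorem~\ref{thm:intbord-pc}) a bordering primitive collection $\pc_{H'}$ of $\Si_\g$ with support $H'$; since $H'\ne H$ we have $\pc_{H'}\ne\pc_H$, so Proposition~\ref{prop:maxbord vs primitive} forces $\pc_{H'}\cap\pc_H=\emptyset$. Consequently the rays of $\pc_{H'}$ sit among the columns of $Q'$, the restriction $\bar{\n}$ of the numerical class $\n_{\pc_{H'}}$ to the summand $\Cl(X')\otimes\R\subset F^r_{\R}$ is nonzero (it would vanish only for $H'=H_r$) and cuts out a hyperplane $\overline H'\subseteq\Cl(X')\otimes\R$, and $\pc_{H'}$ descends to a bordering primitive collection $\pc'$ of $\Si'$ supported by $\overline H'$ (its primitive relation stays numerically effective because that of $\pc_{H'}$ is, and, $|\pc'|\le n'+1$ by Proposition~\ref{prop:primitive}, $\overline H'$ carries at least $r-2$ columns of $Q'$, hence cuts a facet of $\overline{\Eff}(X')$). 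I then apply the criterion of Proposition~\ref{prop:maxbord} to $(\g',\overline H')$ in $\Ga(V')$: fix $\b=\langle Q_I\rangle\in\mathcal B(\g)$ and let $\q_j$ (with $j$ in the index set of $\pc_{H'}$) be its unique generator off $H'$, which exists since $\g$ is maxbord with respect to $H'$; as $\pc_H\cap\pc_{H'}=\emptyset$ we have $j\ne i$, so $\q_j$ is a generator of $\b'=\b\cap H_r$, and since every other generator of $\b$ lies on $H'$ we get $\b'=\langle\q_j\rangle+\b'\cap\overline H'$ with $\q_j\notin\overline H'$. Running over $\mathcal B(\g')=\{\,\b\cap H_r:\b\in\mathcal B(\g)\,\}$, Proposition~\ref{prop:maxbord} yields that $\g'$ is maxbord with respect to $\overline H'$.

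Finally, Theorem~\ref{thm:maxbord} applied to the $CF$--matrix $V'$ and the maxbord chamber $\g'$ says that $X'(\Si')$ is itself a toric cover of a WPTB, which is the assertion (undoing the reductions of the first paragraph only composes $X$ with further toric covers and does not affect $X'$). The step I expect to be the main obstacle is the bookkeeping of the second paragraph: making precise the identification of the ambient ``effective cone'' picture for $X$ with the base picture for $X'$ across the block decomposition (\ref{Qfibrata}) and the modifications of cases (b),(c) in the proof of Theorem~\ref{thm:maxbord}, and in particular verifying that $\pc_{H'}$ genuinely descends to a primitive collection $\pc'$ of $\Si'$ — so that $\overline H'$ cuts a true facet of $\overline{\Eff}(X')$ and $r_{\Z}(\pc')$ remains numerically effective. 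Once this translation is secured, the remaining combinatorics on bunches of cones is exactly of the kind already used to prove Theorem~\ref{thm:maxbord} and Proposition~\ref{prop:maxbord vs primitive}.
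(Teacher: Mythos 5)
Your proposal is correct and follows essentially the same route as the paper's own proof: an iterated application of Theorem~\ref{thm:maxbord}, reduced to showing that $\g'=\g\cap H$ is maxbord w.r.t.\ $H\cap H'$ by combining the disjointness of the two nef primitive collections (Proposition~\ref{prop:maxbord vs primitive}) with the fact that each cone of $\mathcal{B}(\g)$ has exactly one generator off $H$ and one off $H'$, hence splits as $\langle\pp\rangle+\langle\pp'\rangle+(\text{a cone in }H\cap H')$. Your version merely spells out in more detail the identification of $H\cap H'$ with a facet-cutting hyperplane $\overline{H}'$ in $\Cl(X')\otimes\R$ and the descent of $\pc_{H'}$ to the base, which the paper leaves implicit.
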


\begin{proof} The proof is given by an iterated application of Theorem \ref{thm:maxbord}. To start the iteration one has to prove that the chamber $\g'=\g\cap H$, as defined in the proof of the Thm.~\ref{thm:maxbord}, part (a), possibly up to a toric cover if we are in cases (b) or (c), is still a maxbord chamber in $\Mov(V')$ w.r.t. $H\cap H'$, where $V'$ is the fan matrix of the basis of the first, possibly trivial, toric cover. This fact follows by observing that every cone $\s\in\mathcal{B}(\g)$ has the following properties:
\begin{enumerate}
  \item $|\s(1)\cap \pc^*|=1$\,,
  \item $|\s(1)\cap \pc'^*|=1$\,,
  \item $\pc^*\cap\pc'^*=\emptyset$\,,
\end{enumerate}
where $\pc$ and $\pc'$ are nef primitive collections associated with $H$ and $H'$ respectively. Then (1) and (2) follow by
the maxbord hypothesis with respect to both these hyperplanes and (3) follows immediately by Prop.~\ref{prop:maxbord vs primitive}. Therefore every cone $\s\in\mathcal{B}(\g)$ admits the decomposition
$\s=\langle\pp\rangle+\langle\pp'\rangle+\langle\q_1,\ldots,\q_{r-2}\rangle$, with $\pp\in\pc^*$, $\pp'\in\pc'^*$ and $\langle\q_1,\ldots,\q_{r-2}\rangle\subset H\cap H'$. This suffices to show that $\g'$ is maxbord w.r.t. $H\cap H'$: in fact all the cones of $\mathcal{B}(\g')$ comes from a cone in $\mathcal{B}(\g)$, since the latter is always the Gale dual of a fibred cone, in the sense of (\ref{cono fibrato}).
\end{proof}

We are now in a position of understanding, in the projective case, the concept of a \emph{splitting fan}, as given in Definition 4.2. in \cite{Batyrev91}, in terms of the geometry of the associated chamber. Let us then set the following crucial definition.

\begin{definition}\label{def:totally maxbord} Let $V$ be a reduced $n\times(n+r)$ $F$--matrix. A chamber $\g\in\Gamma(V)$ is called \emph{totally maxbord} if it is maxbord with respect to $r-1$ distinct hyperplanes. Moreover, for $1\leq l\leq r-1$, the chamber $\g$ is called \emph{$l$--recursively maxbord} if there exists a sequence of $l$ distinct hyperplanes $H^{(1)},\ldots,H^{(l)}$ such that $\g$ is maxbord w.r.t. $H^{(1)}$ and, for every $0\leq i\leq l-1$, $\g^{(i)}:=\g\cap\bigcap_{j\leq i}H^{(j)}$ is maxbord w.r.t. $H^{(i+1)}\cap\bigcap_{j\leq i}H^{(j)}$, possibly up to a finite sequence of toric covers. When $l=r-1$ we simply say that $\g$ is \emph{recursively maxbord}.

\noindent Notice that $1$--recursively maxbord means simply maxbord. In particular
\begin{itemize}
  \item if $r=2$ then maxbord\, $\Leftrightarrow$\, recursively maxbord\, $\Leftrightarrow$\, totally maxbord.
\end{itemize}

\end{definition}

By an easy induction, the previous Proposition \ref{prop:bimaxbord} shows that \emph{a totally maxbord chamber is a recursively maxbord chamber}. Let us underline that the converse is false, as the following Example \ref{ex:nototmaxbord} shows.

We can then state the following generalization of Cor.~4.4 in \cite{Batyrev91}.

\begin{theorem}\label{thm:recmaxbord} A PWS $X(\Si)$ is produced from a toric cover of a WPS by a sequence of toric covers of WPTB's if and only if the corresponding chamber $\g_{\Si}$ is recursively maxbord.
\end{theorem}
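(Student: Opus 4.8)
The plan is to prove Theorem~\ref{thm:recmaxbord} by induction on the Picard number $r$, using Theorem~\ref{thm:maxbord} as the base engine and mimicking, at the level of GKZ chambers, the induction that Batyrev runs on the length of a splitting sequence. The key translation is that ``$X(\Si)$ is built from a toric cover of a WPS by a sequence of toric covers of WPTB's'' should correspond precisely to ``$\g_{\Si}$ is recursively maxbord'', i.e. maxbord with respect to $r-1$ hyperplanes taken in the recursive sense of Definition~\ref{def:totally maxbord}, where after cutting by each successive hyperplane (and after a finite toric cover) one still meets a facet of the relevant moving cone.

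\medskip
\emph{Forward direction (recursively maxbord $\Rightarrow$ iterated toric-cover-of-WPTB structure).} Suppose $\g=\g_{\Si}$ is recursively maxbord via hyperplanes $H^{(1)},\ldots,H^{(r-1)}$. First I would apply Theorem~\ref{thm:maxbord} to the maxbord chamber $\g$ w.r.t. $H^{(1)}$: this exhibits $X(\Si_{\g})$ as a toric cover $f:X\to \P^{W}(\mathcal{E})$ of a WPTB over a $\Q$--factorial projective toric variety $X'(\Si')$ of Picard number $r-1$, whose fan matrix $V'$ is a $CF$--matrix and whose associated chamber is $\g'=\g\cap H^{(1)}$ (as produced in the proof of Theorem~\ref{thm:maxbord}, part~(a), possibly after the toric cover introduced in parts~(b),(c)). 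The crucial inductive bookkeeping, carried out essentially as in the proof of Proposition~\ref{prop:bimaxbord}, is that the chamber $\g'\subseteq\Mov(V')$ is again recursively maxbord, now via the $r-2$ hyperplanes obtained by intersecting $H^{(2)},\ldots,H^{(r-1)}$ with $H^{(1)}$: this is exactly the content of the recursive clause in Definition~\ref{def:totally maxbord}, and it follows because every cone of $\mathcal{B}(\g)$ is the Gale dual of a fibred cone \eqref{cono fibrato}, so that the bunch $\mathcal{B}(\g')$ is obtained from $\mathcal{B}(\g)$ by deleting the unique generator lying off $H^{(1)}$, and the bordering property with respect to the remaining hyperplanes is inherited verbatim. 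By induction on $r$, $X'(\Si')$ is produced from a toric cover of a WPS by a sequence of toric covers of WPTB's; prepending the morphism $\varphi\circ f:X\to X'$ (a toric cover of a WPTB) completes the sequence for $X$. The base case $r=1$ is the statement that a PWS of Picard number $1$ is a WPS (and a WPS is a trivial toric cover of itself), cf.~\cite{Conrads} and \cite[Thm.~2.2]{RT-QUOT}; the case $r=2$ is already recorded in Definition~\ref{def:totally maxbord} and follows from Theorem~\ref{thm:maxbord} together with the $r=1$ case.

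\medskip
\emph{Converse (iterated toric-cover-of-WPTB structure $\Rightarrow$ recursively maxbord).} Conversely, suppose $X(\Si)=X_0$ fits into a tower
\[
  X_0 \xrightarrow{\ \psi_0\ } X_1 \xrightarrow{\ \psi_1\ } \cdots \xrightarrow{\ \psi_{r-2}\ } X_{r-1},
\]
where each $\psi_i$ is a toric cover of a WPTB with fibre a fake WPS and $X_{r-1}$ is a toric cover of a WPS (so $\rk(X_i)=r-i$). By Theorem~\ref{thm:maxbord}, the hypothesis that $X_0=X(\Si_{\g})$ is a toric cover of a WPTB over $X_1$ forces $\g=\g_{\Si}$ to be maxbord, say w.r.t. a hyperplane $H^{(1)}$, and the proof of Theorem~\ref{thm:maxbord} identifies the chamber of $X_1$ with $\g\cap H^{(1)}$ (after the finite toric cover of parts (b),(c)). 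Iterating, the hypothesis that $X_1$ is a toric cover of a WPTB over $X_2$ makes $\g\cap H^{(1)}$ maxbord w.r.t. some $H^{(2)}\cap H^{(1)}$, and so on down the tower; after $r-1$ steps one reaches $X_{r-1}$ of Picard number $1$, which by Theorem~\ref{thm:maxbord} (or directly, since every chamber is maxbord when $r=1$, cf.~Remark~\ref{rem:intbord->smooth}) is consistent with $\g$ having been cut successively by $r-1$ hyperplanes. Reassembling, $\g$ is maxbord w.r.t. $H^{(1)}$ and, for each $i$, $\g^{(i)}=\g\cap\bigcap_{j\le i}H^{(j)}$ is maxbord w.r.t. $H^{(i+1)}\cap\bigcap_{j\le i}H^{(j)}$ up to a finite sequence of toric covers — which is exactly the definition of recursively maxbord.

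\medskip
\emph{Main obstacle.} The delicate point, and the one I would spend most care on, is the bookkeeping of the finite toric covers appearing in cases (b) and (c) of the proof of Theorem~\ref{thm:maxbord}: after the first reduction one works not with $X_1$ itself but with a toric cover $\widetilde{X}_1$ of it, and one must check that ``recursively maxbord up to a finite sequence of toric covers'' is stable under composing such covers, and that the fan-matrix $CF$--condition is preserved at every stage so that Theorem~\ref{thm:maxbord} remains applicable. This is precisely why Definition~\ref{def:totally maxbord} bakes the phrase ``possibly up to a finite sequence of toric covers'' into the recursion, and Lemma~\ref{lm:cover&bord} (invariance of the maxbord property under toric covers) is what makes the inductive step go through. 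Everything else is a routine unwinding of the Gale-dual dictionary between fibred cones \eqref{cono fibrato}, bunches of cones, and bordering hyperplanes established in \S\ref{sssez:primitive2}--\S\ref{ssez:maxbord&WPTB}.
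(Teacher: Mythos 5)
Your proposal is correct and follows essentially the same route as the paper, which disposes of this theorem with the single remark that ``the proof is an easy iteration of Theorem~\ref{thm:maxbord}''; your induction on the Picard number, with the chamber of the base identified as $\g\cap H^{(1)}$ and the recursive clause of Definition~\ref{def:totally maxbord} absorbing the finite toric covers from cases (b),(c) of that proof, is exactly the intended iteration. The only point worth noting is that you spell out the bookkeeping (via Lemma~\ref{lm:cover&bord} and Proposition~\ref{prop:bimaxbord}) that the paper leaves implicit.
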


The proof is an easy iteration of Theorem \ref{thm:maxbord}.

Recalling that a Batyrev's splitting fan is a non--singular fan whose primitive collections are all disjoint pair by pair, by \cite[Cor.~4.4]{Batyrev91} and the previous Theorem \ref{thm:recmaxbord} a splitting fan turns out to be completely equivalent to a fan associated with a non--singular recursively maxbord chamber. Analogously to what was done in Proposition \ref{prop:maxbord vs primitive} we can try to drop the smoothness hypothesis obtaining the following:

\begin{proposition}\label{prop:rec-maxbord vs splitting} Given a reduced $n\times(n+r)$ $F$--matrix $V$, if $\g\in\Gamma(V)$ is a $(r-2)$--recursively maxbord chamber then any two different primitive collection for $\Si_{\g}$ have no common elements.
\end{proposition}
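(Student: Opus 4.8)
The plan is to argue by induction on the Picard number $r$, peeling off one weighted projective toric bundle structure at each recursive step and using Proposition \ref{prop:maxbord vs primitive} to control the primitive collection supported by the outermost bordering hyperplane. For $r=1$ the fan $\Si_{\g}$ is that of a fake weighted projective space, which has a single primitive collection and nothing to prove. For $r=2$ the hypothesis is empty, but one checks directly from the description of two--dimensional secondary fans (ordering the columns of $Q=\G(V)$ by slope as in the proof of Theorem \ref{thm:Kleinschmidt}, so that $\g=\langle\q_k,\q_{k+1}\rangle$) that $\Si_{\g}$ has exactly the two primitive collections $\{\v_1,\dots,\v_k\}$ and $\{\v_{k+1},\dots,\v_{n+2}\}$, which are disjoint.

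Assume now $r\ge 3$. Since $\g$ is $(r-2)$--recursively maxbord and $r-2\ge 1$, it is in particular maxbord with respect to the first hyperplane $H:=H^{(1)}$ of the defining sequence $H^{(1)},\dots,H^{(r-2)}$. By Proposition \ref{prop:maxbord vs primitive} the fan $\Si_{\g}$ admits a nef primitive collection $\pc_H$ supported by $H$ with $\pc_H\cap\pc'=\emptyset$ for every primitive collection $\pc'\neq\pc_H$ of $\Si_{\g}$. By Theorem \ref{thm:maxbord} and the fibred--cone description (\ref{cono fibrato}) of Proposition \ref{prop:fan fibrato} (see also Remark \ref{rem:fan_fibrato}), the fan $\Si_{\g}$ is, up to a toric cover which does not affect the underlying combinatorial fan, fibred over a fan $\Si'\in\P\SF(V')$ of Picard number $r'=r-1$: its rays split into the $s+1$ ``fibre'' rays, which constitute exactly $\pc_H$, and the ``base'' rays, in natural bijection with $\Si'(1)$. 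Moreover the chamber $\g':=\g\cap H$ associated with $\Si'$ is $(r'-2)=(r-3)$--recursively maxbord, the sequence $H^{(2)}\cap H,\dots,H^{(r-2)}\cap H$ read inside $H\cong F^{r-1}_{\R}$ witnessing this, since $\g\cap\bigcap_{j\le i}H^{(j)}=\g'\cap\bigcap_{2\le j\le i}(H^{(j)}\cap H)$ for every $i\ge 1$ and the clause ``up to a finite sequence of toric covers'' in Definition \ref{def:totally maxbord} absorbs the cover above.

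It remains to match the primitive collections. Any primitive collection $\pc'\neq\pc_H$ of $\Si_{\g}$ is disjoint from $\pc_H$ by the choice of $\pc_H$, hence consists only of base rays; and from the face structure of the fibred cones (\ref{cono fibrato}) a set of base rays spans a cone of $\Si_{\g}$ if and only if the corresponding subset of $\Si'(1)$ spans a cone of $\Si'$, so both conditions of Proposition \ref{prop:primitive}(1) transfer in both directions. Hence $\PC(\Si_{\g})=\{\pc_H\}\sqcup\{\widetilde{\pc}\ :\ \pc\in\PC(\Si')\}$, the lift $\pc\mapsto\widetilde{\pc}$ being induced by the injection $\Si'(1)\hookrightarrow\Si_{\g}(1)$; by the inductive hypothesis the elements of $\PC(\Si')$ are pairwise disjoint, so are their lifts, and each lift is disjoint from $\pc_H$. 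Thus any two distinct primitive collections of $\Si_{\g}$ are disjoint, closing the induction. The main obstacle is precisely this transfer of the primitive--collection conditions across the fibred structure: it rests on understanding which subsets of the generators of a fibred cone (\ref{cono fibrato}) span a face, and on verifying that the toric covers occurring in cases (b) and (c) of the proof of Theorem \ref{thm:maxbord}, although non--trivial on lattices, leave the combinatorial fan and hence $\PC$ unchanged; the remaining bookkeeping for the recursion is routine.
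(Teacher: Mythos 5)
Your proof is correct and follows essentially the same strategy as the paper's: use Proposition \ref{prop:maxbord vs primitive} to isolate the nef primitive collection $\pc_H$ supported by the first maxbord hyperplane (which is automatically disjoint from all others), show that every other primitive collection of $\Si_{\g}$ descends to a primitive collection of the base fan $\Si'$ preserving intersections, note that $\g'=\g\cap H$ is $(r-3)$--recursively maxbord, and induct on $r$. The only cosmetic difference is that you justify the descent via the split/fibred--cone structure of $\Si_{\g}$, whereas the paper argues Gale--dually with the bunches $\mathcal{B}(\g)$ and $\mathcal{B}(\g')$ via Proposition \ref{prop:maxbord}; these are equivalent.
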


\begin{remark}\label{rem:r-2} In the statement of Proposition~\ref{prop:rec-maxbord vs splitting} $\g$ is supposed to be a $(r-2)$--recursively maxbord chamber and not necessarily a $(r-1)$--recursively maxbord one: this fact may result surprising since, after the previous Theorem~\ref{thm:recmaxbord}, a splitting fan is equivalent to a fan associated with a non--singular recursively maxbord chamber. The following Example~\ref{ex:noconverse} clarifies the situation, describing a case which cannot occur in the smooth case. Actually: \emph{a non--singular $(r-2)$--recursively maxbord chamber turns out to be necessarily a $(r-1)$--recursively maxbord one. }

\noindent This fact is a consequence of \cite[Thm.~4.3]{Batyrev91} and Thm.~\ref{thm:recmaxbord}. Notice that the starting step of the induction proving \cite[Thm.~4.3]{Batyrev91} does not more hold in the singular case even for projective varieties, which is: there exist projective $\Q$--factorial toric va\-rie\-ties not admitting any nef primitive collection although their primitive collections are disjoint pair by pair, as Example~\ref{ex:noconverse} shows.
\end{remark}

\begin{example}\label{ex:noconverse} Consider the 2--dimensional PWS of rank 2 whose weight and fan matrices are, respectively, given by
\begin{equation*}
    Q=\left(
        \begin{array}{cccc}
          1 & 2 & 1 & 0 \\
          0 & 1 & 1 & 1 \\
        \end{array}
      \right)\quad\Rightarrow\quad V=\G(Q)=\left(
                                             \begin{array}{cccc}
                                               1 & 0 & -1 & 1 \\
                                               0 & 1 & -2 & 1 \\
                                             \end{array}
                                           \right)
\end{equation*}
Then $\Mov(V)=\langle\q_2,\q_3\rangle=\left\langle\begin{array}{cc}
                            2 & 1 \\
                            1 & 1
                          \end{array}
\right\rangle \subset \gkz=F^2_+$, and there is a unique chamber $\g=\Mov(V)$, giving a unique fan $\Si_{\g}$. This fan admits only two disjoint primitive collections given by $\pc_1=\{\v_1,\v_2\}$ and $\pc_2=\{\v_3,\v_4\}$ whose primitive relations are given, respectively, by
\begin{equation*}
    \v_1+\v_2=\v_4\quad,\quad\v_3+2\v_4=\v_1\,.
\end{equation*}
Hence $\g$ does not admit any nef primitive collection.

\noindent In particular notice that $\pc_1\cap\pc_2=\emptyset$ but $\g$ is not even a bordering chamber.
\end{example}

\begin{proof}[Proof of Proposition \ref{prop:rec-maxbord vs splitting}] Let us first of all observe that if $r=2$ then any chamber always admits only two distinct and disjoint primitive collections.

\noindent Assume now $r\geq 3$ and $\g$ be a $(r-2)$--recursively maxbord chamber. Then there exists a hyperplane $H$ such that $\g$ is maxbord w.r.t. $H$ and, possibly up to a toric cover, $\g':=\g\cap H\in\mathcal{A}_{\Gamma}(V')$, where $V'$ is a reduced fan matrix of the base of the mentioned toric cover. Let us assume, for ease, that such a toric cover is trivial, hence $V'=\G(Q')$ where $Q'$ is the left--upper $(r-1)\times(n+r-s-1)$ submatrix of $Q=\G(V)$ and it is a $W$--matrix, as in part (a) of the proof of Thm.~\ref{thm:maxbord}: such an assumption does not cause any loss of generality since, after a toric cover, the general case is reduced precisely to this situation, as in cases (b) and (c) of the proof of Thm.~\ref{thm:maxbord}. Let $\pc$ be the nef primitive collection associated with $H$. As a first step we want to show that:
\begin{itemize}
\item[$(i)$] \emph{a primitive collection $\pc^{(1)}\neq\pc$ for the fan $\Si$ is still a primitive collection for $\Si'=\Si_{\g'}'$.}                               \end{itemize}
Let us first of all observe that, by the maxbord hypothesis and Prop.~\ref{prop:maxbord vs primitive}, $\pc^{(1)}\cap\pc=\emptyset$, meaning that $\pc^{(1)*}\subset H$. To prove $(i)$ notice that all the rays contained in $\pc^{(1)}$ cannot be contained in a unique cone of the fan $\Si'$ which is that, dually, there cannot exist a cone $\s'\in\mathcal{B}(\g')$ such that $\s'(1)\cap\pc^{(1)*}=\emptyset$. In fact, by the maxbord hypothesis w.r.t. $H$, there exists $\pp\not\in H$ such that
$\s=\langle\pp\rangle+\s'\in\mathcal{B}(\g)$: if $\pc^{(1)*}\cap\s'(1)=\emptyset$ then $\pc^{(1)*}\cap\s(1)=\emptyset$, since $\pc^{(1)*}\subset H$ and $\pp\not\in H$; this gives a contradiction with the assumption that $\pc^{(1)}$ is a primitive collection for $\Si$.

\noindent On the other hand if $\pp^{(1)}\in\pc^{(1)*}$ then there exists a cone $\s\in\mathcal{B}(\g)$ such that $\s(1)\cap\pc^{(1)*}=\{\pp^{(1)}\}$. Again the maxbord hypothesis for $\g$ w.r.t. $H$ gives the existence of $\pp\not\in H$ and $\s'\in\mathcal{B}(\g')$ such that
$\s=\langle\pp\rangle+\s'$. Consequently
$$\s'(1)\cap\pc^{(1)*}=\s(1)\cap\pc^{(1)*}=\{\pp^{(1)}\}\,,$$
since $\pc^{(1)}\subset H$ while $\pp\not\in H$. This suffices to prove $(i)$.

As a second step, observe now that if $\pc^{(1)}\neq\pc^{(2)}$ are two distinct primitive collections for $\Si$ with $\pc^{(1)}\cap\pc^{(2)}\neq\emptyset$ then they give two distinct primitive collections for $\Si'$ admitting common elements.
End up now by induction.
\end{proof}

Let us now focus on the number of primitive collections. The following result gives a relation between the mi\-ni\-mal number of primitive collections and the mi\-ni\-mal number of facets of a chamber $\g$.

\begin{proposition}\label{prop:split-vs-simplicial} Let $V$ be a reduced $n\times(n+r)$ $F$--matrix and $\g\in\Gamma(V)$ a $(r-2)$--recursively maxbord chamber. Then $\g$ is a simplicial cone if and only if the associated fan $\Si_{\g}$ admits precisely $r$ primitive collections. In particular if $\g$ is simplicial and
\begin{enumerate}
  \item either $r\geq 3$
  \item or $\g$ is recursively maxbord
\end{enumerate}
 then at least one primitive collection is numerically effective.
\end{proposition}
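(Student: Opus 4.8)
The plan is to translate the statement to the dual (Mori) side. By Proposition~\ref{prop:nef} the chamber $\g$ is $\Nef(X(\Si_{\g}))$, and its dual cone is the Mori cone $\overline{\NE}(X(\Si_{\g}))$; since $\g$ is $r$--dimensional (being a chamber) and strongly convex (being contained in $\gkz\subseteq F^r_+$), so is $\overline{\NE}(X(\Si_{\g}))$ inside $F^r_{\R}\cong\R^r$. A full--dimensional strongly convex polyhedral cone is simplicial precisely when its dual is, both amounting to having exactly $r$ extremal rays; hence ``$\g$ simplicial'' is equivalent to ``$\overline{\NE}(X(\Si_{\g}))$ simplicial''. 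By (\ref{Mori}) we have $\overline{\NE}(X(\Si_{\g}))=\sum_{P\in\PC(\Si_{\g})}\R_+\,\n_P$, so the task is to match the extremal rays of this cone with the primitive collections.

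One direction is immediate: if $\PC(\Si_{\g})$ has exactly $r$ elements, then $\overline{\NE}(X(\Si_{\g}))$ is an $r$--dimensional cone generated by the $r$ vectors $\n_P$, which must then form a basis of the ambient space, so the cone — and hence $\g$ — is simplicial. For the converse I would proceed as follows. Assume $\g$ simplicial; then $\overline{\NE}(X(\Si_{\g}))$ has exactly $r$ extremal rays $\R_+\n_{P_1},\dots,\R_+\n_{P_r}$ with $P_1,\dots,P_r\in\PC(\Si_{\g})$ pairwise distinct, and I claim these are all the primitive collections. Two ingredients enter. First, because $\g$ is $(r-2)$--recursively maxbord, Proposition~\ref{prop:rec-maxbord vs splitting} — together with the case $r=2$ handled at the beginning of its proof — guarantees that any two distinct primitive collections of $\Si_{\g}$ are disjoint. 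Second, for a primitive collection $\pc=\{V_P\}$ with focus $\s$, the vector $r_{\Z}(P)$ defined in (\ref{r_Z(P)}) takes the positive value $l$ (its least common denominator) at every index of $P$ lying outside $\s(1)$ — such indices exist, since a primitive collection is never contained in a single cone — and is $\leq 0$ at every index outside $P$ (it equals $-lc_{\rho}<0$ on $\s(1)\setminus P$ and $0$ elsewhere). Now suppose $P\in\PC(\Si_{\g})\setminus\{P_1,\dots,P_r\}$; since $\n_P\in\overline{\NE}(X(\Si_{\g}))$ we may write $\n_P=\sum_i\mu_i\n_{P_i}$ with $\mu_i\geq 0$, and applying $Q^T$ gives, by (\ref{normalP}), $r_{\Z}(P)=\sum_i\mu_i\,r_{\Z}(P_i)$. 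Choosing an index $j_0$ at which $r_{\Z}(P)$ is strictly positive we have $j_0\in P$ and $\sum_i\mu_i\,(r_{\Z}(P_i))_{j_0}>0$, so $(r_{\Z}(P_{i_0}))_{j_0}>0$ for some $i_0$, whence $j_0\in P_{i_0}$; thus $j_0\in P\cap P_{i_0}$, contradicting disjointness. Therefore $\PC(\Si_{\g})=\{P_1,\dots,P_r\}$.

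For the ``in particular'' part, the key observation is that both hypotheses force $\g$ to be maxbord with respect to some hyperplane $H$: for $r\geq 3$ because a $(r-2)$--recursively maxbord chamber (with $r-2\geq 1$) is, by Definition~\ref{def:totally maxbord}, maxbord w.r.t.\ the first hyperplane of its defining sequence, and for a recursively maxbord chamber for the same reason (the case $r=1$ being trivial). Since a maxbord chamber is intbord (Remark~\ref{rem:max-int-bordering}), Theorem~\ref{thm:intbord-pc} then produces a bordering primitive collection $\pc$ for $\Si_{\g}$ supported on $H$, and by Remark~\ref{rem:bordering->nef}(1) such a $\pc$ is numerically effective. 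I expect the converse implication of the equivalence — excluding any ``extra'', non--extremal primitive collection — to be the main obstacle: it is exactly there that the recursive--maxbord hypothesis (via disjointness of primitive collections) and the sign pattern of the primitive relations must be combined, a combination that genuinely fails for general $\Q$--factorial toric varieties.
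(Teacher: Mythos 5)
Your proof is correct, and for the substantive direction it takes a genuinely different route from the paper's. For ``exactly $r$ primitive collections $\Rightarrow\g$ simplicial'' the two arguments are dual formulations of the same count (the paper bounds the number of facets of $\g$ by the number of primitive collections via the extremal primitive relations; you bound the number of generators of the full--dimensional Mori cone). For the converse, however, the paper argues by induction on $r$: it uses the $(r-2)$--recursively maxbord hypothesis to descend to the chamber $\g'=\g\cap H^{(1)}$ of the base of the fibration, invokes step $(i)$ of the proof of Proposition~\ref{prop:rec-maxbord vs splitting} to show that every primitive collection other than the nef one survives as a primitive collection for the base fan, and concludes by induction that the surviving collections are exactly the $r-1$ supported on the remaining facets of $\g$. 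You instead stay entirely in $N_1(X)$: simpliciality of $\g$ gives exactly $r$ extremal rays of $\overline{\NE}(X)$, each carried by a primitive class $\n_{P_i}$ by (\ref{Mori}); any further primitive collection $P$ would have $\n_P=\sum_i\mu_i\n_{P_i}$ with $\mu_i\geq 0$, and the sign pattern of $r_{\Z}(\cdot)$ (positive exactly on the collection, as recorded after (\ref{r_Z(P)}) via \cite[Lemma~1.8]{Cox-vRenesse}) forces $P$ to meet some $P_{i_0}$, contradicting the pairwise disjointness supplied by Proposition~\ref{prop:rec-maxbord vs splitting}. Your version is non--inductive and isolates a cleaner intermediate statement --- the equivalence holds for any chamber of $\mathcal{A}_{\Ga}(V)$ whose primitive collections are pairwise disjoint, the recursive--maxbord hypothesis entering only through that disjointness --- whereas the paper's induction keeps the fibration structure in view, which is what it reuses immediately afterwards in Theorem~\ref{thm:3pc}. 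Your treatment of the ``in particular'' clause (maxbord $\Rightarrow$ intbord $\Rightarrow$ Theorem~\ref{thm:intbord-pc} produces a bordering, hence nef, primitive collection) coincides with the paper's and, as you note, does not actually use simpliciality.
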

\begin{proof}
Let us start by proving the only if condition: in fact after Thm. 1.4 in \cite{Cox-vRenesse} one knows that every facet of the chamber  $\g$ generates a primitive collection $\pc$: e.g. by thinking $\pc^*$ as all the columns $\q$ of $Q=\G(V)$ such that $\n\cdot\q> 0$, where $\n$ is the inward primitive normal vector $\n$ to the considered facet. Then $\g$ admits the minimal number $r$ of facets, meaning that it is necessarily simplicial.

For the converse, let us first of all notice that, when $r=2$, every chamber $\g$ is simplicial and admits precisely 2 primitive collections. For the second part of the statement notice that, in this case, one of these two primitive collection is nef if and only if $\g$ is bordering, hence maxbord.

Let us now assume $r\geq 3$ and $\g$ be a simplicial $(r-2)$--recursively maxbord chamber. Let
$H^{(1)}$ be a hyperplane w.r.t. $\g$ is maxbord: then $H^{(1)}$ cuts out a facet of $\g$ and, possibly up to a toric cover, $\g':=\g\cap H^{(1)}\in\mathcal{A}_{\Gamma}(V')$, where $V'$ is a reduced fan matrix of the base of this toric cover. Let $H^{(2)},\ldots,H^{(r)}$ be the further $r-1$ hyperplanes cutting out the remaining facets of $\g$. For every $1\leq i\leq r$, $H^{(i)}$ is the support of a primitive collection $\pc^{(i)}$ defined by setting $\pc^{(i)*}=\{\q\in\gkz(1)\,|\,\n_i\cdot\q>0 \}$, where $\n_i$ is the primitive inward normal vector to $H^{(i)}$. By Prop.~\ref{prop:rec-maxbord vs splitting}, $i\neq j\Rightarrow \pc^{(i)}\cap\pc^{(j)}=\emptyset$ and the first step ($i$) in the proof of this Proposition ensures that $\pc^{(2)},\ldots,\pc^{(r)}$ are $r-1$ distinct primitive collections for $\Si'=\Si'_{\g'}$. Assume now by induction that $\Si'$ admits precisely $r-1$ primitive collections, meaning that $\pc^{(2)},\ldots,\pc^{(r)}$ give all the primitive collections for $\Si'$. By the same argument, if $\pc\neq\pc^{(1)}$ is a primitive collection of $\Si$ then it is a primitive collection of $\Si'$, which is, by induction, that $\pc=\pc^{(i)}$ for some $2\leq i\leq r$. This suffices to show that $\Si$ admits precisely $r$ primitive collections given by the facets of $\g$.
Since $\g$ is maxbord w.r.t. $H^{(1)}$ then $\pc^{(1)}$ is nef.
\end{proof}

Let us now assume that $r\leq 3$. The previous Proposition \ref{prop:split-vs-simplicial} allows us to prove the following result extending to the singular case an analogous result proven by V.~Batyrev in sections 5 and 6 of \cite{Batyrev91} under smoothness hypothesis (see the following Remark \ref{rem:batyrev}).

\begin{theorem}\label{thm:3pc} Let $V$ be a reduced $n\times(n+r)$ $F$--matrix with $r\leq 3$. If $\g\in\Gamma(V)$ is a maxbord chamber then $\g$ is simplicial and the associated fan $\Si_{\g}$ admits precisely $r$ primitive collections and at least one of them is numerically effective.
\end{theorem}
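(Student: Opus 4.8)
The plan is to isolate the only substantial point, namely that a maximally bordering chamber $\g$ in a Gale dual cone of dimension $r\le 3$ is simplicial; once this is granted, the exact count of primitive collections and the existence of a numerically effective one both follow from Proposition~\ref{prop:split-vs-simplicial}. Throughout I may assume that $V$ is a $CF$-matrix: for a general reduced $F$-matrix one first replaces $X(\Si_\g)$ by its covering PWS (\cite{RT-QUOT}), which has the same fan combinatorics, hence the same primitive collections, and whose Gale dual cone carries a combinatorially identical chamber decomposition, so that neither simpliciality of $\g$ nor the number of primitive collections is affected.

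The cases $r\le 2$ are immediate. For $r=1$ the cone $\gkz$ is a half-line, $\g=\gkz$ is trivially simplicial, $X(\Si_\g)$ is a fake weighted projective space, and its single primitive collection is the set of all of its rays, with numerically effective primitive relation $\sum_k w_k\v_k=\0$. For $r=2$ every chamber is two-dimensional, hence simplicial, and --- as already noted inside the proof of Proposition~\ref{prop:split-vs-simplicial} --- $\Si_\g$ has precisely two primitive collections; since for $r\le 2$ maxbord, intbord and bordering coincide (Remark~\ref{rem:max-int-bordering}), one of these two collections is numerically effective.

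So assume $r=3$. Here ``$(r-2)$-recursively maxbord'' is the same as ``maxbord'', so Proposition~\ref{prop:split-vs-simplicial} applies and the whole statement is reduced to showing that $\g$ is simplicial. To see this I would use the fibred structure of $X(\Si_\g)$. By Theorem~\ref{thm:maxbord} (and its proof) $X(\Si_\g)$ is a toric cover $f\colon X(\Si_\g)\to\P^W(\mathcal{E})$ of a weighted projective toric bundle $\varphi\colon\P^W(\mathcal{E})\to X'(\Si')$ whose base has rank $r'=r-1=2$. A toric cover is a linear isomorphism of fans (Definition~\ref{def:toricover}), so $\Si_\g$ and the fan of $\P^W(\mathcal{E})$ have the same rays and the same primitive collections. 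The fan of $\P^W(\mathcal{E})$ is the fibred fan of Proposition~\ref{prop:fan fibrato}, and, just as for an ordinary projective toric bundle (\cite{CLS}, \S~7.3), its primitive collections are the $|\PC(\Si')|$ collections pulled back from $X'$ together with the single ``fibre'' collection $\{\langle\e_0\rangle,\dots,\langle\e_s\rangle\}$ of the fibre $\P(W)$; the weights $w_k$ produce no further primitive collection because $\P(W)$ has Picard rank $1$. Since $X'$ has rank $2$ it admits exactly two primitive collections (as recalled in the proof of Proposition~\ref{prop:split-vs-simplicial}), so $\P^W(\mathcal{E})$, and therefore $\Si_\g$, admits exactly $r=3$ primitive collections. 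Finally, each facet of $\g$ generates a primitive collection of $\Si_\g$ through its inward primitive normal vector, and distinct facets give distinct collections because a primitive collection determines its support hyperplane; hence $\g$ has at most $3$ facets, and being a three-dimensional strongly convex cone it has at least $3$, so exactly $3$: that is, $\g$ is simplicial. Proposition~\ref{prop:split-vs-simplicial} now yields that $\Si_\g$ has precisely $r$ primitive collections and, since $r\ge 3$, that at least one of them is numerically effective.

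The hard part is the passage through the weighted projective toric bundle: one must check that the fibred fan of Proposition~\ref{prop:fan fibrato} has exactly $|\PC(\Si')|+1$ primitive collections --- the verbatim analogue of the projective bundle computation in \cite{CLS}, \S~7.3, the only new ingredient being that the fibre weights contribute no extra primitive collection --- and one must keep track of the (possibly non-trivial) toric cover $f$, which is harmless precisely because it induces an isomorphism of fans. A more hands-on route to the simpliciality of $\g$ would instead combine the explicit description of the bunch of cones $\mathcal{B}(\g)$ given in the proof of Theorem~\ref{thm:maxbord} with the rank-two Kleinschmidt analysis of \S~\ref{sez:Kleinschmidt}, forcing a planar cross-section of $\g$ to be a triangle; in either approach the rank-two input is what makes the restriction $r\le 3$ essential.
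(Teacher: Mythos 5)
Your argument is logically sound and reaches the right conclusion, but it takes a genuinely different and considerably heavier route than the paper's. The paper proves simpliciality of $\g$ in two lines directly from Proposition~\ref{prop:maxbord}: since $\g$ is maxbord w.r.t.\ $H$, every $\s\in\mathcal{B}(\g)$ decomposes as $\langle\pp\rangle+(\s\cap H)$ with $\pp\notin H$, and since $\g=\bigcap_{\s\in\mathcal{B}(\g)}\s$ one deduces that $\g$ itself is $\langle\pp\rangle+\g'$ with $\g'=\g\cap H$ of dimension $2$, hence simplicial; Proposition~\ref{prop:split-vs-simplicial} then finishes exactly as you say, and the nef collection is the one supported by $H$. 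Your route instead invokes the full structure Theorem~\ref{thm:maxbord} (toric cover of a WPTB over a rank-$2$ base), transports primitive collections along the fan isomorphism induced by the toric cover, and counts them on the fibred fan. The parts of this that are in the paper are used correctly: the reduction to a $CF$--matrix via the covering PWS is legitimate (same $Q$, same secondary fan, same fan combinatorics), the facet-counting step (number of facets of $\g$ $\le$ number of primitive collections, via extremality of the $\n_P$ generating the Mori cone and the fact that $P$ is recoverable from $\n_P$) is exactly the first half of the proof of Proposition~\ref{prop:split-vs-simplicial}, and the fact that a rank-$2$ projective chamber carries exactly two primitive collections is asserted there as well. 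What your approach buys, if completed, is an explicit identification of the three primitive collections (two lifted from the base, one fibre collection); what it costs is dependence on the hardest theorem of the section where the paper needs only Proposition~\ref{prop:maxbord}.

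The one step you should not wave at is the count of primitive collections of the fibred fan. The claim that a split fan has exactly the primitive collections of the base plus the single fibre collection is \emph{not} in \cite{CLS}~\S~7.3 (which only constructs the fan); it is essentially Batyrev's analysis of splitting fans, proved there under smoothness, and you would need to redo it for the simplicial split fans of Proposition~\ref{prop:fan fibrato} (the argument is standard --- a primitive collection meeting both the fibre rays and the lifted rays properly cannot be minimal --- but it is a real lemma, not a citation). You flag this yourself, and your suggested ``more hands-on route'' via the bunch $\mathcal{B}(\g)$ is in fact the paper's proof; I would recommend promoting it from an afterthought to the main argument.
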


\begin{proof} Let $\g$ be a maxbord chamber w.r.t. the hyperplane $H$. Notice that
\begin{itemize}
  \item \emph{if $r\leq 3$ then a maxbord chamber is a simplicial cone.}
\end{itemize}
In fact, if $r\leq 2$ there is nothing to prove since every chamber is simplicial. Let us assume $r=3$. Therefore, every cone $\s\in\mathcal{B}(\g)$ can be written as follows
\begin{equation}\label{decomposizione 3cono}
    \s=\langle\pp\rangle + \s'\,,\quad\text{with}\ \pp\not\in H\,,\ \s'\subset H
\end{equation}
Since $\g=\bigcap_{\s\in\mathcal{B}(\g)}\s$, (\ref{decomposizione 3cono}) shows that $\g$ admits a unique ray outside of the hyperplane $H$, generated by some $\pp\not\in H$. Hence $\g=\pp+\s'$ for some $\s'\subset H$ which is necessarily simplicial since $\dim(\s')=2$. Then Proposition \ref{prop:split-vs-simplicial} concludes the proof. In particular the primitive collection associated with the hyperplane $H$ is nef.
\end{proof}

\begin{remark}\label{rem:batyrev}
Recalling Remark \ref{rem:r-2} the previous Theorem \ref{thm:3pc} generalizes a result already proved by Batyrev under the further hypothesis that $\g$ is non--singular, meaning that $\Si_{\g}$ is a splitting fan in the sense of Def.~4.2 in \cite{Batyrev91} (see Propositions~5.2--5, Theorem 5.7 and Theorem 6.6 in \cite{Batyrev91}). Actually Batyrev proved also the converse result, which is: a non--singular fan admitting precisely 3 primitive collections is necessarily a splitting fan, which is, by Corollary 4.4 in \cite{Batyrev91} and Theorem \ref{thm:recmaxbord}, that $\g$ is a non--singular recursively maxbord chamber.
Notice that this latter fact is false in the singular case, as the following Example \ref{ex:WPTB(c)} shows.
\end{remark}

\subsection{Maximally bordering chambers and contractible primitive relations}\label{ssez:contraibile}

Let us conclude the present subsection by giving a partial generalization of results by H.~Sato and C.~Casagrande. Let us first of all recall the following definition.

\begin{definition}[\cite{Casagrande}, Def.\ 2.3] A class $\kappa\in A_1(X)\cap\overline{\NE}(X)$ is called \emph{contractible} if $\kappa$ is a generator of the semigroup $A_1(X)\cap\Q_{\geq 0}\kappa$ and there exist
\begin{itemize}
  \item some irreducible curve having numerical class in $\Q_{\geq 0}\kappa$,
  \item a toric variety $X_{\kappa}$,
  \item an equivariant morphism $\varphi_{\kappa}:X\longrightarrow X_{\kappa}$ with connected fibers,
\end{itemize}
such that for every irreducible curve $C\subset X$
\begin{equation*}
    \varphi_{\kappa}(C)=\{pt\}\ \Longleftrightarrow\ [C]\in \Q_{\geq 0}\kappa\,.
\end{equation*}
\end{definition}

Let us now assume that $X$ is smooth: then $r_{\Z}(\mathcal{P})=r(\mathcal{P})$. Corollary 2.4 and Prop.~3.4 in \cite{Casagrande} jointly with Prop.~\ref{prop:maxbord vs primitive} above, allows one to conclude that

\begin{proposition}\label{prop:contraibile} Let $X(\Si)$ be a smooth projective toric variety. Then the following facts are equivalent:
\begin{enumerate}
  \item there exists a numerically effective primitive relation $\kappa=r(\mathcal{P})$ for $\Si$ which is contractible,
  \item there exists a nef primitive collection $\mathcal{P}$ for $\Si$ such that for every primitive collection $\pc'\neq\pc$, for $\Si$, then $\pc'\cap\pc=\emptyset$,
  \item $\g_{\Si}$ is a maxbord chamber.
\end{enumerate}
 In particular $X_{\kappa}$ is smooth of dimension $n-s$ and rank $r-1$ and $\varphi_{\kappa}$ is a toric $\P^{s}$--bundle.
\end{proposition}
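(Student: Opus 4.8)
The plan is to deduce Proposition~\ref{prop:contraibile} by combining three already available ingredients: the characterization of maxbord chambers in terms of primitive collections (Proposition~\ref{prop:maxbord vs primitive}), the description of the fibering morphism attached to a maxbord chamber (Corollary~\ref{cor:smooth&maxbord} together with \S~\ref{ssez:geometrico}), and Casagrande's results (\cite[Cor.~2.4, Prop.~3.4]{Casagrande}) which relate contractible primitive relations with primitive collections disjoint from all others. Since $X$ is smooth we have $r_{\Z}(\mathcal{P})=r(\mathcal{P})$, so the distinction between the integral primitive relation and the rational one disappears and we may speak unambiguously of $\kappa=r(\mathcal{P})$.

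\textbf{Equivalence $(2)\Leftrightarrow(3)$.} This is just the smooth case of Proposition~\ref{prop:maxbord vs primitive}: a chamber $\g_{\Si}\in\mathcal{A}_{\Ga}(V)$ (with $V$ a reduced fan matrix of $X$, which is a $CF$--matrix since $X$ is smooth) is maxbord if and only if $\Si_{\g}=\Si$ admits a primitive collection $\pc$ whose primitive relation is numerically effective and which is disjoint from every other primitive collection. By Batyrev's Theorem~\ref{thm:Batyrev} (or by $(i)$ of Proposition~\ref{prop:maxbord vs primitive}, since in the smooth case a nef primitive relation has all nonzero coefficients equal to $1$ and focus $0$ by \cite[Prop.~3.1]{Batyrev91}), the condition ``$r(\pc)$ numerically effective'' is equivalent to ``$\pc$ has focus $0$''. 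Hence $(2)$ and $(3)$ are literally the same statement.

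\textbf{Equivalence $(1)\Leftrightarrow(2)$.} This is where Casagrande enters. First I would observe that by Proposition~\ref{prop:normal-inward} the numerical class $\n_{\pc}$ of any primitive relation lies in $A_1(X)\cap\overline{\NE}(X)$. If $\kappa=r(\pc)$ is contractible then, by \cite[Prop.~3.4]{Casagrande}, $\pc$ must be disjoint from every other primitive collection and its focus must be $0$ (otherwise the contraction $\varphi_\kappa$ would not have the required fibre behaviour), giving $(2)$. Conversely, if $(2)$ holds, Corollary~\ref{cor:smooth&maxbord} exhibits $X$ as a projective toric bundle $\P(\mathcal{E})\to X'$ over a smooth PWS $X'(\Si')$; the bundle projection $\varphi=\phi$ (here $f$ and $f_0$ in diagram~(\ref{Stein}) are trivial by Corollary~\ref{cor:smooth&maxbord}) is an equivariant morphism with connected fibres $\P^s$, and an irreducible curve is contracted by $\varphi$ if and only if its class is a nonnegative multiple of the class of a line in a fibre, which is exactly $\kappa=r(\pc)$; moreover $\kappa$ generates $A_1(X)\cap\Q_{\geq 0}\kappa$ because the fibre is a projective space, whose curves have positive integral degree. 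This is precisely the definition of contractibility, so $(2)\Rightarrow(1)$. The final sentence of the proposition then follows directly from Corollary~\ref{cor:smooth&maxbord}: $X_\kappa=X'$ is smooth of dimension $n'=n-s$ and rank $r'=r-1$, and $\varphi_\kappa$ is the toric $\P^s$--bundle $\P(\mathcal{E})\to X'$.

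The only genuinely delicate point is matching Casagrande's hypotheses with ours: her contractibility definition requires $\kappa$ to generate the semigroup $A_1(X)\cap\Q_{\geq 0}\kappa$, and one must check that the primitive relation of a focus-$0$ disjoint primitive collection indeed has this primitivity property in $A_1(X)$ — but in the smooth case this is exactly the content of \cite[Cor.~2.4]{Casagrande}, so I would simply cite it rather than reprove it. I would therefore keep the proof short: a line setting $r_{\Z}=r$ in the smooth case, a line invoking Proposition~\ref{prop:maxbord vs primitive} for $(2)\Leftrightarrow(3)$, a short paragraph invoking \cite[Cor.~2.4, Prop.~3.4]{Casagrande} for $(1)\Leftrightarrow(2)$, and a closing sentence reading off the structure of $X_\kappa$ and $\varphi_\kappa$ from Corollary~\ref{cor:smooth&maxbord}.
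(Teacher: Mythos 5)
Your proposal is correct and follows essentially the same route as the paper: the equivalence $(2)\Leftrightarrow(3)$ is Proposition~\ref{prop:maxbord vs primitive} (using that nef $=$ focus $0$ in the smooth case), $(1)\Leftrightarrow(2)$ is \cite[Prop.~3.4]{Casagrande}, and the final structural claim comes from \cite[Cor.~2.4]{Casagrande} / Corollary~\ref{cor:smooth&maxbord}. The only difference is that you flesh out the direction $(2)\Rightarrow(1)$ directly from the bundle structure of Corollary~\ref{cor:smooth&maxbord} where the paper simply cites Casagrande for both directions; that elaboration is sound and does not change the argument.
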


\begin{proof}
$(1)\Leftrightarrow (2)$. This is precisely \cite[Prop.~3.4]{Casagrande}. In particular the contractible primitive relation $\kappa$ in part (1) is the primitive relation $\kappa=r(\pc)$ of a primitive collection $\pc$ like in part (2) and viceversa.

$(2)\Leftrightarrow (3)$. This is Prop.~\ref{prop:maxbord vs primitive}. In particular $\g_{\Si}$ turns out to be maxbord w.r.t. the the hyperplane $H_{\pc}$ supporting the primitive collection $\pc$ as in part (2) and viceversa.

Finally \cite[Cor.~2.4]{Casagrande} gives the last part of the statement.
\end{proof}

Theorem \ref{thm:maxbord} and Proposition \ref{prop:maxbord vs primitive} allows us to extend the previous result to a PWS, although the situation turns out to be more intricate. Let us first of all notice that if $f:X(\Si)\to\widetilde{X}\left(\widetilde{\Si}\right)$ is a toric cover then Lemma \ref{lm:cover&bord} guarantees that $\g=\g_{\Si}$ is a maxbord chamber if and only if $\widetilde{\g}=\widetilde{\g}_{\widetilde{\Si}}$ is a maxbord chamber. Let $H$ and $\widetilde{H}$ be bordering hyperplanes of $\g$ and $\widetilde{\g}$, respectively, and $\pc$ and $\widetilde{\pc}$ be the collections supported by $H$ and $\widetilde{H}$, respectively, and such that $\widetilde{\pc}=\{f_{\R}(\v)\,|\,\v\in\pc\}$: in this case we will write $\widetilde{\pc}=f(\pc)$. Prop.~\ref{prop:maxbord vs primitive} implies that $\pc$ and $\widetilde{\pc}=f(\pc)$ turns out to be nef primitive collection for $\Si$ and $\widetilde{\Si}$, respectively, both satisfying property (2) in Prop.~\ref{prop:contraibile}. Consider the associated numerically effective primitive relations $\kappa=r_{\Z}(\pc)$ and $\widetilde{\kappa}=r_{\Z}\left(\widetilde{\pc}\right)$: also in this case we will write $\widetilde{\kappa}=f(\kappa)$.

\begin{definition}\label{def:pseudocontraibile} Given a  toric cover $f:X(\Si)\to\widetilde{X}\left(\widetilde{\Si}\right)$, a numerically effective primitive relation $\kappa=r_{\Z}(\pc)\in A_1(X)\cap\overline{\NE}(X)$ for $\Si$ is called \emph{pseudo--contractible} if $\widetilde{\kappa}=f(\kappa)$ is a contractible class in $A_1\left(\widetilde{X}\right)\cap\overline{\NE}\left(\widetilde{X}\right)$. Then there exist a toric variety $X_{\widetilde{\kappa}}$ and the following commutative diagram of equivariant morphisms
\begin{equation}\label{pseudocontraibile}
    \xymatrix{X\ar[r]^-f\ar[rd]_{\varphi_{\k}}&\widetilde{X}\ar[d]^-{\varphi_{\widetilde{\k}}}\\
                                           & X_{\widetilde{\k}}}
\end{equation}
such that $\varphi_{\widetilde{\k}}$ has connected fibers and for every irreducible curve $C\subset X$
\begin{equation*}
    \varphi_{\k}(C)=\{pt\}\ \Longleftrightarrow\ [f(C)]\in \Q_{\geq 0}\widetilde{\kappa}\,.
\end{equation*}
\end{definition}

\begin{theorem}\label{thm:contraibile} Let $V$ be a reduced $n\times(n+r)$ $CF$--matrix  and $\Si\in\P\SF(V)$. Assume that there exists a primitive collection $\pc$ for $\Si$ whose primitive relation $\kappa=r_{\Z}(\pc)$ is numerically effective. By applying Lemma \ref{lm:dibase}, assume that $H_r=\{x_r=0\}\subseteq F^r_{\R}$ is the supporting hyperplane of $\pc$, meaning that there exists a $W$--matrix $Q=\G(V)$ in positive $\REF$ and looking as in (\ref{Qfibrata}), which is
 $$Q=\overbrace{\left(\begin{array}{c}
                 Q' \\
                 0\cdots0\\
               \end{array}\right.}^{n'+r'}\overbrace{\left.\begin{array}{c}
               Q'' \\
                 w_0\cdots w_s\\
               \end{array}\right)}^{s+1}$$
hence giving $r_{\Z}(\pc)=(w_0,\ldots,w_s)$. Then, setting $V'=\G(Q')$:
\begin{enumerate}
  \item $\kappa$ is contractible if and only if  the chamber $\g_{\Si}$ is maxbord w.r.t. the hyperplane $H_r$ and
  \begin{itemize}
    \item[$(1.i)$] $Q'$ is a $(r-1)\times(n+r-s-1)$ reduced $W$--matrix,
    \item[$(1.ii)$] the columns of $Q''$ are classes of $s+1$ Cartier divisors of $X'(\Si')$ where $\Si'\in\P\SF(V')$ is the fan associated with the chamber $\g'=\g\cap H_r$,
  \end{itemize}
    In particular the contraction $\varphi_{\kappa}:X(\Si)\longrightarrow X_{\kappa}=X'(\Si')$ exhibits $X$ as WPTB whose fibers are isomorphic to the $s$--dimensional WPS $\P(w_0,\ldots,w_s)$.
  \item $\kappa$ is pseudo--contractible if and only if $\g_{\Si}$ is maxbord w.r.t. the hyperplane $H_r$ and either $(1.i)$ holds and
  \begin{itemize}
    \item[$(2.ii)$] there exists a column of $Q''$ giving the class of a Weil non--Cartier divisor of $X'(\Si')$ where $\Si'\in\P\SF(V')$ is the fan associated with the chamber $\g'=\g\cap H_r$,
  \end{itemize}
  or
  \begin{itemize}
    \item[$(2.i)$] either $Q'$ is a $(r-1)\times(n+r-s-1)$ non--reduced $W$--matrix or $Q'$ satisfies all the conditions of Definition \ref{def:Wmatrix} but condition b.
  \end{itemize}
  In particular, in the former case $\varphi_{\k}:X(\Si)\to X'(\Si')$ exhibits $X$ as a WPTwB and in any case $X$ is a toric cover of a WPTB.
\end{enumerate}
In any case, either \emph{(1)} or \emph{(2)} occurs if and only if one of the following equivalent conditions happens:
\begin{itemize}
  \item[(I)] $\g_{\Si}$ is a maxbord chamber,
  \item[(II)] for every primitive collection $\pc'\neq\pc$, for $\Si$, then $\pc'\cap\pc=\emptyset$.
\end{itemize}
\end{theorem}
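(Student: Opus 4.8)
The plan is to prove Theorem~\ref{thm:contraibile} by organizing the argument around the already--established equivalence between maxbord chambers and (pseudo--)fibered structures, adding a careful analysis of contractibility of the numerical class $\kappa=r_{\Z}(\pc)$ in the sense of Casagrande's Definition~2.3. First I would prove the last two equivalences $(\mathrm{I})\Leftrightarrow(\mathrm{II})$ and that either (1) or (2) occurs iff $\g_{\Si}$ is maxbord: the equivalence $(\mathrm{I})\Leftrightarrow(\mathrm{II})$ is exactly Proposition~\ref{prop:maxbord vs primitive} applied to the primitive collection $\pc$ supported by $H_r$, after observing via Lemma~\ref{lm:dibase} that $\pc$ being nef makes $\g_{\Si}$ bordering w.r.t.\ $H_r$, and that being maxbord w.r.t.\ $H_r$ is precisely the facet--dimension condition $\dim(\g_{\Si}\cap H_r)=r-1$. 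Then Theorem~\ref{thm:maxbord} (together with the case analysis (a), (b), (c) in its proof) shows that whenever $\g_{\Si}$ is maxbord w.r.t.\ $H_r$, the submatrix $Q'=\G(V)'$ is either a reduced $W$--matrix (case (a), giving a WPTwB, hence a genuine WPTB when the covering is trivial), or a non--reduced $W$--matrix, or satisfies all conditions of Definition~\ref{def:Wmatrix} except (b); these three mutually exclusive alternatives are exactly the dichotomy $(1.i)$ versus $(2.i)$, and $X$ is always a toric cover of a WPTB $\P^W(\mathcal{E})$ with $W=r_{\Z}(\pc)=(w_0,\dots,w_s)$.

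Next I would handle part (1). For the forward direction, suppose $\kappa$ is contractible with contraction $\varphi_\kappa:X\to X_\kappa$ having connected fibers and contracting exactly the curves with class in $\Q_{\geq0}\kappa$. By Proposition~\ref{prop:maxbord vs primitive} (equivalently the already--cited Casagrande Prop.~3.4 in the smooth case, but here one argues directly), the existence of such a contraction forces $\pc$ to satisfy condition $(\mathrm{II})$, hence $\g_{\Si}$ is maxbord w.r.t.\ $H_r$; then Theorem~\ref{thm:maxbord} puts us in case (a), i.e.\ $Q'$ is a reduced $W$--matrix, giving $(1.i)$; and since a genuine contraction (not merely a toric cover of one) exists, the toric cover $f:X\to\P^W(\mathcal{E})$ of Theorem~\ref{thm:maxbord} must be an isomorphism, which by Proposition~\ref{prop:WPTwB} (the order of the Galois group $|G|=\prod l_k/\lambda$ must be $1$) means every $E_k$ is Cartier, i.e.\ $(1.ii)$. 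Conversely, given $\g_{\Si}$ maxbord w.r.t.\ $H_r$ together with $(1.i)$ and $(1.ii)$, Theorem~\ref{thm:maxbord} (case (a)) with the Cartier hypothesis gives directly $X(\Si_\g)\cong\P^W(\mathcal{E})$, a genuine WPTB over $X'(\Si')$ with fibers $\P(w_0,\dots,w_s)$; the fibering morphism $\varphi:X\to X'$ contracts precisely the curves in the fiber direction, i.e.\ those with numerical class a nonnegative multiple of $\kappa$, and $\kappa$ generates the relevant semigroup because $(w_0,\dots,w_s)$ is a reduced weight vector (so $\gcd=1$); hence $\kappa$ is contractible with $X_\kappa=X'(\Si')$ and $\varphi_\kappa=\varphi$.

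Part (2) is then proved the same way, tracking the Stein factorization. If $\kappa$ is pseudo--contractible then by Definition~\ref{def:pseudocontraibile} there is a toric cover $f:X\to\widetilde X$ with $\widetilde\kappa=f(\kappa)$ contractible on $\widetilde X$; Lemma~\ref{lm:cover&bord} transports the maxbord property, so $\g_{\Si}$ is maxbord w.r.t.\ $H_r$, and the failure of the covering to be trivial is recorded either by $Q'$ being non--reduced / non--$W$ in the cotorsion sense --- this is $(2.i)$, and here $X$ is a toric cover of a WPTwB by cases (b),(c) of the proof of Theorem~\ref{thm:maxbord} --- or by $Q'$ being a reduced $W$--matrix but some column of $Q''$ giving a Weil non--Cartier class, which is $(2.ii)$ and by Proposition~\ref{prop:WPTwB} makes $X$ a WPTwB with nontrivial Galois group $G$, so $\varphi_\kappa:X\to X'(\Si')$ is the connected--fiber morphism of the Stein factorization of $\varphi\circ f$ as in diagram~(\ref{Stein}); conversely each of these configurations produces the commutative triangle~(\ref{pseudocontraibile}) by composing the toric cover of Theorem~\ref{thm:maxbord} with the WPTB projection and taking Stein factorization. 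I expect the main obstacle to be the careful bookkeeping in part~(2): namely verifying that the morphism $\varphi_\kappa$ obtained from Stein factorization genuinely contracts a curve $C\subset X$ iff $[f(C)]\in\Q_{\geq0}\widetilde\kappa$ (rather than $[C]\in\Q_{\geq0}\kappa$), and reconciling the two sub--cases $(2.i)$ and $(2.ii)$ --- which may overlap or interact when the toric cover simultaneously involves a non--reduced $Q'$ and a non--Cartier $Q''$ --- so that the stated dichotomy is both exhaustive and correctly matched to ``WPTwB'' versus ``toric cover of a WPTB''. All remaining verifications (that the numerical class of the fiber curve is the primitive relation $r_{\Z}(\pc)$, that it generates the semigroup $A_1(X)\cap\Q_{\geq0}\kappa$) reduce to the identity~(\ref{normalP}) and the reducedness of the weight vector, and are routine.
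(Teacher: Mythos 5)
Your proposal is correct and follows essentially the same route as the paper's own (very terse) proof: case (1) via Theorem~\ref{thm:maxbord} in the situation where $Q$ is the weight matrix of a genuine WPTB together with the contraction arguments of Casagrande and Sato, case (2) by reduction to case (1) through the toric covers of \S~\ref{sssez:WPTwB} and parts (b),(c) of the proof of Theorem~\ref{thm:maxbord}, and the equivalence $(\mathrm{I})\Leftrightarrow(\mathrm{II})$ via Proposition~\ref{prop:maxbord vs primitive}. You merely spell out details (triviality of the cover forcing $(1.ii)$, the Stein-factorization bookkeeping) that the paper delegates to the cited references.
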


\begin{proof}
 Case (1) is an application of Thm.~\ref{thm:maxbord} in the easiest situation in which $Q$ is a $W$--matrix of a WPTB. Then techniques proving \cite[Prop.~3.4]{Casagrande} and \cite[Thm.1.10]{Sato} here applies as in the smooth case. Proving case (2) reduces to case (1) after we consider the toric covers described in \ref{sssez:WPTwB}, to settle the case $(2.ii)$, and in parts (b) and (c) of the proof of Thm.~\ref{thm:maxbord}, to settle the remaining case $(2.i)$. Finally equivalence of conditions (I) and (II) follows immediately by Prop.\ref{prop:maxbord vs primitive}.
\end{proof}

\begin{remark}\label{rem:geometrico} Let us here recall and apply what has been observed in \S~\ref{ssez:geometrico}. The fact that the chamber $\g_{\Si}$ is maxbord w.r.t. the hyperplane $H$ actually implies that the primitive relation $\k=r_{\Z}(\pc_H)$, supported by $H$, is a nef contractible class of $A_1(X)\cap\overline{\NE}(X)$, whose contraction gives a fibering morphism $\phi:X(\Si)\to X_0(\Si_0)$. Hence:
\begin{itemize}
  \item[] \emph{a pseudo--contractible class is actually a contractible class.}
\end{itemize}
More precisely the fibering morphism $\phi$ turns out to be the morphism with connected fibers of the Stein factorization of the composition $\varphi_{\k}=\varphi_{\widetilde{\k}}\circ f$ in the commutative diagram (\ref{pseudocontraibile}), hence giving the following commutative diagram
\begin{equation}\label{Stein-contraibile}
    \xymatrix{X\ar[d]^-{\phi}\ar[rr]^-f_-{\text{toric cover}}\ar[rrd]^{\varphi_{\k}}&&\widetilde{X}\ar[d]^-{\varphi_{\widetilde{\k}}}\\
                            X_0\ar[rr]^-{f_0}_-{\text{finite morphism}}&& X_{\widetilde{\k}}}
\end{equation}
The geometric description of the Stein factorization $f_0\circ\phi$ of $\varphi_{\k}$ on the left hand side of diagram (\ref{Stein-contraibile}) is well known and has its roots in Reid's paper \cite{Reid83} (see also \cite[Lemma~15.4.2 and Prop.~15.4.5]{CLS} and references therein): nevertheless it simply says that the fibers of $\phi$ are given by a fake WPS.

By Theorem~\ref{thm:contraibile}, the factorization $\varphi_{\widetilde{\k}}\circ f$ of $\varphi_{\k}$ on right hand side of diagram (\ref{Stein-contraibile}) is completely described, starting from a fan matrix $V$ of $X$, in terms of all the matrices representing morphisms giving diagram (\ref{covering-diagram}).
\end{remark}

\subsection{Examples}\label{ssez:esempi}
In this section we give some applications of techniques just illustrated. Let us start with the case of smooth projective toric varieties presented along with the introduction of definitions and constructions given above, to make a comparison between Batyrev's techniques described in \cite{Batyrev91} and techniques here presented.

\begin{example}[Examples \ref{ex:1}, \ref{ex:2} and \ref{ex:3} continued]\label{ex:PTB} Consider the smooth and projective toric variety $X(\Si)$, of dimension and rank equal to 3, with $\Si\in\P\SF(V)=\SF(V)=\{\Si_1,\Si_2\}$ defined in the Example \ref{ex:1}. Recall Fig.~\ref{fig2}, to visualizing visualize the Gale dual cone $\gkz=\langle Q\rangle =F^3_+$ and $\Mov(V)\subseteq\gkz$ and the two chambers $\g_1,\g_2$, explicitly presented in Example \ref{ex:2}.

\noindent In Example \ref{ex:3} we observed that both the chambers are intbord w.r.t. both the hyperplanes $H_2$ and $H_3$ and moreover $\g_1$ is maxbord w.r.t. these hyperplanes: hence it is totally maxbord. Hyperplanes $H_2$ and $H_3$ are supporting collections, $\pc_2=\{\v_3,\v_4\}$ and $\pc_3=\{\v_5,\v_6\}$, respectively, which are primitive and nef for both the fans $\Si_1$ and $\Si_2$.  Batyrev's results \cite[Prop.~4.1, Thm.~4.3, Cor.~4.4]{Batyrev91} or equivalently Corollary \ref{cor:smooth&maxbord} and Theorem \ref{thm:recmaxbord} above, ensure that $X(\Si_1)$ is a PTB over a smooth toric surface of rank 2.

\noindent The weight matrix $Q$ makes this fact quite explicit: the last row of $Q$ gives the class $\k:=r(\pc_3)\in A_1(X(\Si_i))\cap\overline{\NE}(X(\Si_i))$ which is a numerically effective primitive relation for both the fans $\Si_1,\Si_2$. The class $\k$ turns out to be contractible when hypotheses of \cite[Prop.~3.4]{Casagrande} are verified: more easily $\k$ turns out to be contractible by condition (3) in Prop.~\ref{prop:contraibile} above. The contraction morphism $\phi_{\k}$ is now explicitly described by the weight matrix $Q$, following Corollary \ref{cor:smooth&maxbord}: namely
$$\varphi_{\k}:X(\Si_1)\longrightarrow Y\left(\Si'\right)$$
exhibits $X(\Si_1)$ as a PTB, whose fibers are isomorphic to $\P^1$, over the toric surface $Y$, whose weight matrix $Q'$ is obtained from $Q$ by deleting the third row and the 5-th and 6-th columns, and whose fan $\Si'$ is the unique simplicial fan in $\SF\left(\G\left(Q'\right)\right)$,  which is
\begin{equation*}
    Q'=\left(
                    \begin{array}{cccc}
                      1 & 1 & 1 & 0 \\
                      0 & 0 & 1 & 1 \\
                    \end{array}
                  \right)\ ,\  \g'=\g_{\Si'}=\left\langle\begin{array}{cc}
                                                                      1 & 1 \\
                                                                      0 & 1
                                                                    \end{array}
                  \right\rangle\quad \Longrightarrow\quad Y\cong\P\left(\mathcal{O}_{\P^1}\oplus\mathcal{O}_{\P^1}(1)\right)\,.
\end{equation*}
By subtracting the third row and the second rows from the first one in $Q$, and recalling the role of matrix $Q''$ in (\ref{Qfibrata}), one gets the following weight matrix of $X\left(\Si_1\right)$
\begin{equation*}
Q\sim \widetilde{Q}=\left(
  \begin{array}{cccccc}
    1 & 1 & 0 & -1 & 0 & -1 \\
    0 & 0 & 1 & 1 & 0 & 0 \\
    0 & 0 & 0 & 0 & 1 & 1 \\
  \end{array}
\right)\quad\Longrightarrow\quad X(\Si_1)\cong\P\left(\mathcal{O}_Y\oplus\mathcal{O}_Y(h)\right)
\end{equation*}
where $Y=\P(\mathcal{O}_{\P^1}\oplus\mathcal{O}_{\P^1}(1))$ and $h$ is the generator of $\Pic(Y)$ given by the pull--back of the Picard generator $\mathcal{O}_{\P^1}(1)$ of the base $\P^1$. Then we get a recursive PTB structure given by
$$\xymatrix{X(\Si_1)\cong\P\left(\mathcal{O}_Y\oplus\mathcal{O}_Y(h)\right)\ar@{>>}[r]& Y\cong \P\left(\mathcal{O}_{\P^1}\oplus\mathcal{O}_{\P^1}(1)\right)\ar@{>>}[r] &\P^1}\,.$$
Such a fibration of $X(\Si_1)$ is not unique since the contraction of the other numerically effective primitive relation $\k':=r(\pc_2)$ gives precisely the same description of $X(\Si_1)$. This fact can be immediately deduced from the weight matrix $Q$, whose se\-cond row gives the class $\k'$: in fact by exchanging the second and the third row and reordering the columns to still get a REF matrix, one still obtains the $W$--matrix $Q$.

$X(\Si_2)$ is now obtained by $\P\left(\mathcal{O}_Y\oplus\mathcal{O}_Y(h)\right)$ after the elementary flip determined by crossing the internal wall $\langle\q_3,\q_5\rangle$ of $\Mov(V)$. Therefore the indetermination loci of the elementary flip $\xymatrix{X(\Si_1)\ar@{<-->}[r]& X(\Si_2)}$ are given by invariant 1-cycles $C_1:=O(\langle\v_4,\v_6\rangle)\subset X(\Si_1)$ and $C_2:=O(\langle\v_1,\v_2\rangle)\subset X(\Si_2)$\,. In fact in this way the primitive collections $\pc=\{\v_1,\v_2\}$ and $\pc'=\{\v_4,\v_6\}$, supported by the hyperplane cutting the internal wall, and their foci are exchanged each other. This means that $\pc$ is a primitive collection for $\Si_1$ and $\pc'$ is a primitive collection for $\Si_2$. Notice that $\pc\cap\mathcal{P}_3=\emptyset$, but $\pc'\cap\mathcal{P}_3=\{\v_6\}\neq\emptyset$ and $\pc'\setminus\pc_3=\{\v_4\}$, meaning that $X(\Si_2)$ is not a toric $\P^1$--bundle over $Y$ and $\k$ is not a contractible class for $X(\Si_2)$ (by Prop.~\ref{prop:contraibile} and \cite[Prop.~3.4]{Casagrande}, respectively).

Let us finally notice that $\Si_1$ is a splitting fan, by Prop.\ref{prop:rec-maxbord vs splitting}. This is clearly not the case for the fan $\Si_2$: moreover $\Si_1$ turns out to admit 3 primitive collections and $\Si_2$ to admit 5 primitive collections, according with \cite[\S~5]{Batyrev91}.
\end{example}

The following is still an example of smooth projective toric varieties coming from chambers which are recursively maxbord but not maxbord.

\begin{example}\label{ex:nototmaxbord} By adding the further column $\left(
                                                \begin{array}{c}
                                                  1 \\
                                                  1 \\
                                                  1 \\
                                                \end{array}
                                              \right)
$ in the weight matrix of the previous Example \ref{ex:PTB}, one gets the following reduced weight and fan matrices
\begin{equation*}
Q=\left(
                             \begin{array}{ccccccc}
                               1 & 1 & 1 & 0 & 1 & 1 & 0 \\
                               0 & 0 & 1 & 1 & 1 & 0 & 0 \\
                               0 & 0 & 0 & 0 & 1 & 1 & 1 \\
                             \end{array}
                           \right)\ \Rightarrow\ V=\left(
        \begin{array}{ccccccc}
          1 & 0 & 0 & 0 & 0 & -1 & 1 \\
          0 & 1 & 0 & 0 & 0 & -1 & 1 \\
          0 & 0 & 1 & 0 & -1 & 0 & 1 \\
          0 & 0 & 0 & 1 & -1 & 1 & 0
        \end{array}
      \right)=\G(Q)
\end{equation*}
The new weight column introduces a further subdivision in $\gkz=F^3_+$ along the hyperplane $H: x_2-x_3=0$, through $\q_1=\left(
                                                                                                             \begin{array}{c}
                                                                                                               1 \\
                                                                                                               0 \\
                                                                                                               0 \\
                                                                                                             \end{array}
                                                                                                           \right)=\q_2$
and $\q_5=\left(
              \begin{array}{c}
                1 \\
                1 \\
                1 \\
              \end{array}
            \right)$, leaving unchanged $\Mov(V)$ and giving
            $\P\SF(V)=\SF(V)=\{\Si_1,\Si_2,\Si_3,\Si_4\}$.
See Fig.~\ref{fig3}. The four simplicial complete fans $\Si_i$ are described by the following chambers
\begin{eqnarray*}
    \g_1=\left\langle\q_1=\q_2,\w,\q_6\right\rangle=\left\langle
            \begin{array}{ccc}
              1 & 2 & 1 \\
              0 & 1 & 0 \\
              0 & 1 & 1 \\
            \end{array}
          \right\rangle&,&\g_2=\left\langle\q_1=\q_2,\q_3,\w\right\rangle=\left\langle
                                    \begin{array}{ccc}
                                      1 & 1 & 2 \\
                                      0 & 1 & 1 \\
                                      0 & 0 & 1 \\
                                    \end{array}
                                  \right\rangle \\
                                  \g_3=\left\langle\q_3,\w,\q_5\right\rangle=\left\langle
            \begin{array}{ccc}
              1 & 2 & 1 \\
              1 & 1 & 1 \\
              0 & 1 & 1 \\
            \end{array}
          \right\rangle&,&\g_4=\left\langle\w,\q_5,\q_6\right\rangle=\left\langle
                                    \begin{array}{ccc}
                                      2 & 1 & 1 \\
                                      1 & 1 & 0 \\
                                      1 & 1 & 1 \\
                                    \end{array}
                                  \right\rangle
\end{eqnarray*}
\begin{figure}
\begin{center}
\includegraphics[width=7.5truecm]{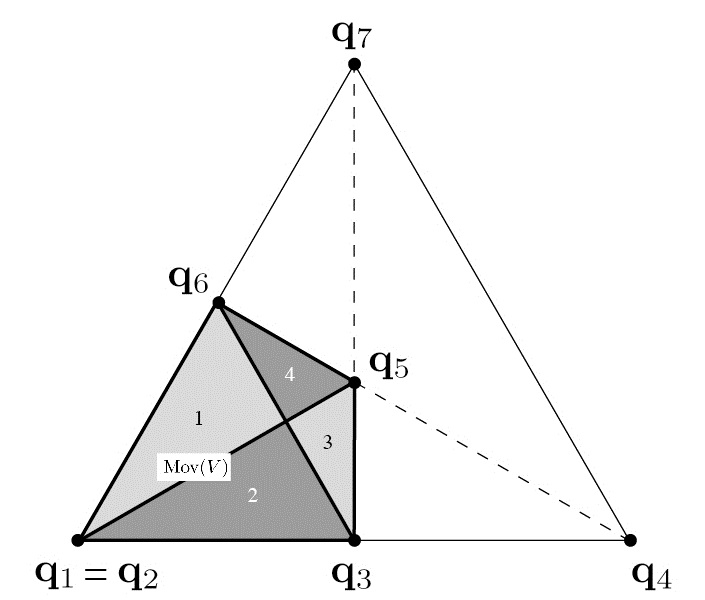}
\caption{\label{fig3}Ex.~\ref{ex:nototmaxbord}: the section of the cone $\Mov(V)$ and its four chambers, inside the Gale dual cone $\mathcal{Q}=F^3_+$, as cut out by the plane $\sum_{i=1}^3x_i^2=1$.}
\end{center}
\end{figure}

respectively, where $\q_1,\ldots,\q_7$ are the columns of $Q$ and $\w:=\q_3+\q_6=\q_1+\q_5=\q_2+\q_5$\,. Let us observe that $X(\Si_i)$ is smooth for every $i=1,\ldots,4$.

\noindent In particular both $\g_1$ and $\g_2$ are recursively maxbord chambers, $\g_1$ w.r.t. the sequence of hyperplanes $H_2, H$ and $\g_2$ w.r.t. the sequence of hyperplanes $H_3, H$: notice that none of them is totally maxbord.

Let us firstly describe the sequence of PTB's describing $X(\Si_2)$ and given by the recursively maxbord structure of $\g_2$. The last row of $Q$ gives the numerically effective primitive relation $\k=r(\pc_2)$ of the primitive collection $\pc_2=\{\v_5,\v_6,\v_7\}$, for $\Si_2$, associated with the maxbord hyperplane $H_3$. The left--upper $2\times 4$ submatrix $Q'$ w.r.t. the primitive collection $\pc_2^*$ is the same as in the previous Example \ref{ex:PTB}. Therefore the contraction of $\k$ gives the morphism $\varphi_{\k}:X(\Si_2)\longrightarrow Y\left(\Si'\right)$, where $Y$ has weight matrix $Q'$ and $\Si'$ is the unique simplicial fan in $\SF\left(\G\left(Q'\right)\right)$, associated with the chamber
$$\g_2'=\g_2\cap H_3=\left\langle\begin{array}{cc}
                                                                      1 & 1 \\
                                                                      0 & 1
                                                                    \end{array}
                  \right\rangle\,.$$
Notice that $H_3\cap H$ gives the line generated by $\q_1$, hence the hyperplane $H_2'$ of $\gkz'$, w.r.t. $\g_2'$ is clearly maxbord.
Hence, as above, $Y\cong\P\left(\mathcal{O}_{\P^1}\oplus\mathcal{O}_{\P^1}(1)\right)$ and the contraction of the primitive relation $\pc_2'$, associated with $\g_2'$, gives the structural projection $\P\left(\mathcal{O}_{\P^1}\oplus\mathcal{O}_{\P^1}(1)\right)\twoheadrightarrow\P^1$. On the other hand, by firstly subtracting the second row to the first one and then subtracting the third row to the previous ones, in $Q$, one gets the following weight matrix of $X\left(\Si_i\right)$
\begin{equation*}
Q\sim \widetilde{Q}=\left(
  \begin{array}{ccccccc}
    1 & 1 & 0 & -1 & -1 & 0 & -1 \\
    0 & 0 & 1 & 1 & 0 & -1 & -1 \\
    0 & 0 & 0 & 0 & 1 & 1 & 1 \\
  \end{array}
\right)
\end{equation*}
which gives
\begin{equation*}
X(\Si_2)\cong\P\left(\mathcal{O}_Y(h)\oplus\mathcal{O}_Y(f)\oplus\mathcal{O}_Y(f+h)\right)
\end{equation*}
where $f,h$ are the generators of $\Pic(Y)$ given by the fibre and the pull--back of the Picard generator of the base $\P^1$, respectively. Therefore $X(\Si_2)$ is obtained from $\P^1$ by the following sequence of PTB's:
$$ \xymatrix@1{X(\Si_2)\cong \P\left(\mathcal{O}_Y(h)\oplus\mathcal{O}_Y(f)\oplus\mathcal{O}_Y(f+h)\right)\ar@{>>}[r]& Y\cong \P\left(\mathcal{O}_{\P^1}\oplus\mathcal{O}_{\P^1}(1)\right)\ar@{>>}[r]&\P^1 }\,.$$
For what concerns $X(\Si_1)$, by exchanging the second and the third rows in $Q$ and reordering the columns one gets still the same weight matrix,  but now the last row gives the primitive relation $\k'=r(\pc_1)$ of the primitive collection $\pc_1=\{\v_3,\v_4,\v_5\}$ for $\Si_1$. By the previous analysis we still get
$$ \xymatrix@1{X(\Si_1)\cong \P\left(\mathcal{O}_{Y'}\oplus\mathcal{O}_{Y'}(f')\oplus\mathcal{O}_{Y'}(f'+h')\right)\ar@{>>}[r]& Y'\cong \P\left(\mathcal{O}_{\P^1}\oplus\mathcal{O}_{\P^1}(1)\right)\ar@{>>}[r]&\P^1 }\,.$$
The elementary flip $\xymatrix{X(\Si_1)\ar@{<-->}[r]& X(\Si_2)}$ is then obtained by crossing the internal wall of $\Mov(V)$ cut out by the hyperplane $H$ and the indetermination loci are described by the foci of the primitive collections supported by $H$ w.r.t. the two fans $\Si_1$ and $\Si_2$, namely $\pc_1'=\{\v_6,\v_7\}$ for $\Si_1$ and $\pc_2'=\{\v_3,\v_4\}$ for $\Si_2$, whose foci are given by the cones $\langle\v_3,\v_4\rangle$ and $\langle\v_6,\v_7\rangle$, respectively. The indetermination loci are then given by $C_1=O(\langle\v_3,\v_4\rangle)\subset X(\Si_1)$ and $C_2=O(\langle\v_6,\v_7\rangle)\subset X(\Si_2)$.

Let us finally observe that both the chambers $\g_1$ and $\g_2$ are simplicial, accordingly with  Proposition \ref{prop:split-vs-simplicial}: moreover they both admit three primitive collections, namely:
\begin{eqnarray*}
  \Si_1 &\rightsquigarrow& \pc_1=\{\v_3,\v_4,\v_5\}, \pc_1'= \{\v_6,\v_7\}, \pc''=\{\v_1,\v_2\}\\
  \Si_2 &\rightsquigarrow& \pc_2=\{\v_5,\v_6,\v_7\}, \pc_2'= \{\v_3,\v_4\}, \pc''=\{\v_1,\v_2\}
\end{eqnarray*}
In particular both $\Si_1$ and $\Si_2$ are splitting fans while this clearly does not hold for $\Si_3$ and $\Si_4$.
\end{example}

The following examples will deal with singular $\Q$--factorial PWS then applying techniques introduced in this paper. Let us start by giving an example not satisfying hypothesis of Theorem \ref{thm:birWPTB}.

\begin{example}\label{ex:noWPTB} Consider the 2--dimensional PWS $X(\Si)$, of rank 3, whose reduced fan matrix is given by
\begin{equation*}
    V=\left(
        \begin{array}{ccccc}
          1 & 0 & -1 & 1 & -1 \\
          0 & 1 & -2 & 1 & -1 \\
        \end{array}
      \right)\ \Longrightarrow\ Q=\left(
                             \begin{array}{ccccc}
          1 & 1 & 0 & 0 & 1 \\
          0 & 1 & 1 & 1 & 0 \\
          0 & 0 & 0 & 1 & 1 \\
        \end{array}
                           \right)=\G(V)\,.
\end{equation*}
Then $\P\SF(V)=\SF(V)=\{\Si\}$ and the unique simplicial complete fan $\Si$ is as\-so\-cia\-ted with the unique chamber of $\Mov(V)\subset\mathcal{Q}$
\begin{equation*}
    \g=\Mov(V)=\left\langle\q_2,\w_1,\w_2,\w_3\right\rangle=\left\langle
            \begin{array}{cccc}
              1 & 1 & 2 & 1 \\
              1 & 2 & 1 & 1 \\
              0 & 1 & 1 & 1\\
            \end{array}
          \right\rangle
\end{equation*}
where $\q_1,\ldots,\q_5$ are the columns of $Q$ and
\begin{equation*}
    \w_1:=\q_2+\q_4\quad,\quad\w_2=\q_2+\q_5\quad,\quad\w_3=\q_1+\q_4=\q_3+\q_5
\end{equation*}
(see Fig.~\ref{fig4}).
\begin{figure}
\begin{center}
\includegraphics[width=8truecm]{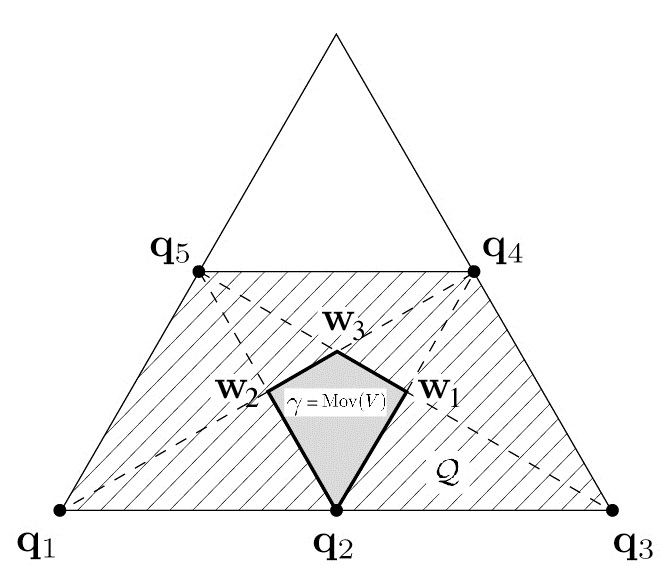}
\caption{\label{fig4}Ex.~\ref{ex:noWPTB}: the section of the cone $\g=\Mov(V)$ inside the Gale dual cone $\mathcal{Q}\subset F^3_+$, as cut out by the plane $\sum_{i=1}^3x_i^2=1$.}
\end{center}
\end{figure}
Notice that the cone $\s=\left\langle\begin{array}{cc}
                                       1 & -1 \\
                                       0 & -2
                                     \end{array}
\right\rangle$ is a maximal cone of $\Si$, hence $X(\Si)$ turns out to admit a singular point. The last row of $Q$ gives the numerical effective primitive relation $r_{\Z}(P)\in A_1(X(\Si))\cap\overline{\NE}(X(\Si))$ of the primitive collection $\pc=\{\v_4,\v_5\}$. Moreover $\g$ is not a maxbord chamber and, being the unique chamber in $\Mov(V)$, the PWS $X$ cannot be birational and isomorphic in codimension 1 to a toric cover of a WPTB, by Theorem \ref{thm:maxbord}. Notice that, by Theorem \ref{thm:birWPTB}, this fact is equivalent to assert that the left--upper $2\times 3$ submatrix $Q'$ of $Q$, w.r.t. the primitive collection $\pc$, has to violate one of the conditions in Def.~\ref{def:Wmatrix} different from condition (b). In fact $Q'=\left(
                                                                                                             \begin{array}{ccc}
                                                                                                               1 & 1 & 0 \\
                                                                                                               0 & 1 & 1 \\
                                                                                                             \end{array}
                                                                                                           \right)
$ giving that $(1,0,-1)\in\mathcal{L}_r(Q)$, contradicting the condition (f) in the Def.~\ref{def:Wmatrix}.
\end{example}

\begin{example}\label{ex:WPTB(b)} The present example is devoted to give an account of the case (b) in the proof of Thm.~\ref{thm:maxbord}. Consider a 4--dimensional PWS of rank 3 given by the following reduced fan and weight matrices
\begin{equation*}
    V=\left(
        \begin{array}{ccccccc}
          1 & 0 & 0 & -1 & 0 & 2 & -4 \\
          0 & 1 & 0 & -1 & 0 & 2 & -4 \\
          0 & 0 & 1 & -1 & 0 & 1 & -2 \\
          0 & 0 & 0 & 0 & 1 & -1 & 1 \\
        \end{array}
      \right)\Rightarrow Q=\left(
                             \begin{array}{ccccccc}
                               1 & 1 & 1 & 1 & 0 & 0 & 0\\
                               0 & 0 & 1 & 2 & 1 & 1 & 0\\
                               0 & 0 & 0 & 0 & 1 & 2 & 1\\
                             \end{array}
                           \right)=\G(V)
\end{equation*}
The first interest of this example is in the fact that
$$|\P\SF(V)|=8<10=|\SF(V)|$$
meaning that $V$ carries two distinct fans of $\Q$--factorial complete toric varieties of rank 3 which are not projective. In particular $X(\Si)$ is singular for every fan $\Si\in\SF(V)$.
\begin{figure}
\begin{center}
\includegraphics[width=7.5truecm]{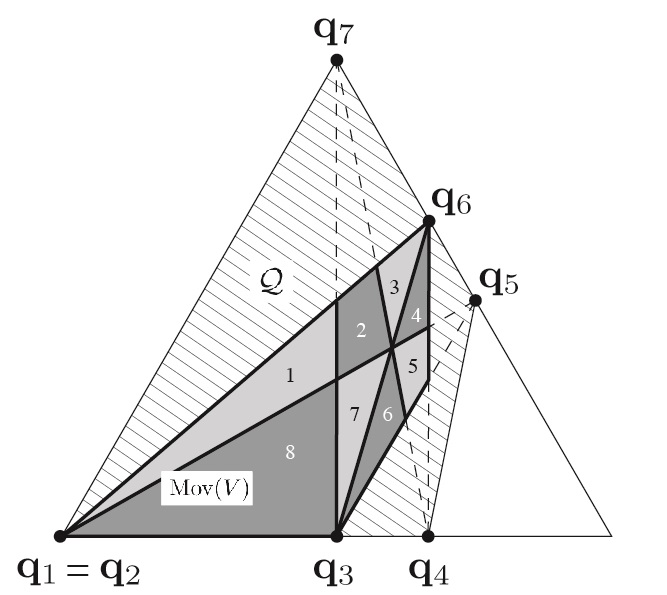}
\caption{\label{fig5}Ex.~\ref{ex:WPTB(b)}: this is the section cut out by the plane $\sum x_i^2=1$ of the cone $\Mov(V)$, with its eight chambers, inside the Gale dual cone $\mathcal{Q}\subset F^3_+$.}
\end{center}
\end{figure}
\noindent Figure \ref{fig5} describes the Gale dual cone $\gkz=\langle Q\rangle$ and $\Mov(V)$ with its eight chambers $\g_i\,,i=1,\ldots,8$. Following the notation introduced Example \ref{ex:PTB}, the associated fans $\Si_i:=\Si_{\g_i}$ are given by all the faces of their maximal cones. The two non projective fans are generated by the following list of maximal cones:
\begin{eqnarray*}
\Si_9(4)&=&\{\{2, 3, 4, 7\}, \{2, 3, 4, 6\}, \{2, 3, 5, 7\}, \{2, 3, 5, 6\}, \{1, 3, 4, 7\}, \{1, 2, 4, 7\},\\
 &&\{1, 3, 4, 6\}, \{1, 2, 4, 6\}, \{1, 3, 5, 7\}, \{1, 2, 5, 7\}, \{1, 3, 5, 6\}, \{1, 2, 5, 6\}\}\\
 \Si_{10}(4)&=&\{\{2, 4, 5, 7\}, \{2, 4, 6, 7\}, \{2, 3, 5, 7\}, \{2, 3, 6, 7\}, \{1, 4, 5, 7\}, \{1, 4, 6, 7\},\\
 &&\{1, 2, 4, 5\}, \{1, 2, 4, 6\}, \{1, 3, 5, 7\}, \{1, 2, 3, 5\}, \{1, 2, 3, 6\}, \{1, 3, 6, 7\}\}\,.
 \end{eqnarray*}
The intersection of the cones in the associated bunches of cones inside $\gkz$, gives, in both the cases, the 1-dimensional cone
$$\langle\w_1\rangle\quad \text{with}\quad\w_1=\q_3+\q_6=\q_1+2\q_5=\q_2+2\q_5=\q_4+2\q_7=\left(\begin{array}{c}
1 \\
2 \\
2 \\
\end{array}
\right)$$
which is the primitive generator of the common ray to the six chambers $\g_i,2\leq i\leq 7$ (see Fig.~\ref{fig5}).
Among the 8 distinct chambers of $\Mov(V)$, giving the projective fans, the unique maxbord chamber is given by
  \begin{equation*}
  \g_8=\langle\q_1=\q_2,\q_3,\w\rangle,\quad\text{with}\quad \w=\q_3+\q_7=\q_1+\q_5=\q_2+\q_5=\left(
                                                     \begin{array}{c}
                                                       1 \\
                                                       1 \\
                                                       1 \\
                                                     \end{array}
                                                   \right)
 \end{equation*}
  which actually is also a recursively (no totally) maxbord chamber w.r.t. the sequence of hyperplanes $H_3,H$, where $H$ is the hyperplane $x_2-x_3=0$, through $\q_1$ (or, equivalently, $\q_2$) and $\q_5$. The maxbord hyperplane $H_3$ supports the nef primitive collection $\pc=\{\v_5,\v_6,\v_7\}$ whose numerically effective primitive relation $\k=r_{\Z}(\pc)$ is given by the last row of $Q$. Notice that the left--upper $2\times 4$ submatrix $Q'=\left(
                            \begin{array}{cccc}
                              1 & 1 & 1 & 1 \\
                              0 & 0 & 1 & 2 \\
                            \end{array}
                          \right)
$ of $Q$ is a non--reduced $W$--matrix: in fact $\mathcal{L}_r\left(Q^{\{3\}}\right)\subset\Z^3$ has cotorsion, admitting the generator $(0,0,2)$. We are then in case (b) in the proof of Thm.~\ref{thm:maxbord}. Using the same notation therein, we then get
\begin{eqnarray}\label{A,B}
  A&=&\diag(1,1/2,1/2)\in\GL_3(\Q) \\
  \nonumber
  B&=&\diag(1,1,2,1,2,2,2)\in\GL_7(\Q)\cap\mathbf{M}_7(\Z)
\end{eqnarray}
which give
\begin{eqnarray*}
  \widetilde{Q}=AQB&=&\left(
                             \begin{array}{ccccccc}
                               1 & 1 & 2 & 1 & 0 & 0 & 0\\
                               0 & 0 & 1 & 1 & 1 & 1 & 0\\
                               0 & 0 & 0 & 0 & 1 & 2 & 1\\
                             \end{array}
                           \right)\\
  \widetilde{V}=\G\left(\widetilde{Q}\right)&=&\left(
        \begin{array}{ccccccc}
          1 & 0 & 0 & -1 & 0 & 1 & -2 \\
          0 & 1 & 0 & -1 & 0 & 1 & -2 \\
          0 & 0 & 1 & -2 & 0 & 1 & -2 \\
          0 & 0 & 0 & 0 & 1 & -1 & 1 \\
        \end{array}
      \right)\\
      \end{eqnarray*}
Recall that $V$ is a $CF$--matrix then $H:=\HNF(V^T)=\left(
                                                                 \begin{array}{c}
                                                                   I_4 \\
                                                                   \mathbf{0}_{3,4} \\
                                                                 \end{array}
                                                               \right)$ \cite[Thm.~2.1(4)]{RT-QUOT}. Let $U\in \GL_{7}(\Z)$ such that $U\cdot V^T = H$. Then the upper $4$ rows of $U$ give $^4U\cdot V^T = I_4$ (recall notation in list \ref{ssez:lista}).
Therefore
\begin{equation*}
    V^T\cdot C=B\cdot \widetilde{V}^T\ \Longrightarrow\ C=\ ^4U\cdot  B\cdot \widetilde{V}^T=\left(
            \begin{array}{cccc}
              1 & 0 & 0 & 0 \\
              0 & 1 & 0 & 0 \\
              0 & 0 & 2 & 0 \\
              0 & 0 & 0 & 2 \\
            \end{array}
          \right)\,.
\end{equation*}
Since $\det C=4$, calling $\overline{f}:N\to\widetilde{N}$ the map represented by $C^T$, one gets that $\overline{f}(N)$ turns out to be a subgroup of index 4 in $\widetilde{N}$. The fan $\widetilde{\Si}=\overline{f}_{\R}(\Si_8)$ is then the fan defined by the simplicial chamber
\begin{equation}\label{gammatilde}
    \widetilde{\g}:=\left\langle\widetilde{\q}_1=\widetilde{\q}_2,\widetilde{\q}_3,\widetilde{\w}\right\rangle=\left\langle\begin{array}{ccc}
                                                 1 & 2 & 2 \\
                                                 0 & 1 & 1 \\
                                                 0 & 0 & 1
                                               \end{array}
    \right\rangle\subset\Mov\left(\widetilde{V}\right)
\end{equation}
where $\widetilde{\q}_i$ are the columns of $\widetilde{Q}$ and $\widetilde{\w}=\widetilde{\q}_3+\widetilde{\q}_7=\left(
                                                                            \begin{array}{c}
                                                                              2 \\
                                                                              1 \\
                                                                              1 \\
                                                                            \end{array}
                                                                          \right)
$. It is still a recursively maxbord chamber w.r.t. the same sequence of hyperplanes $H_3,H$. The maxbord hyperplane $H_3$ supports the nef primitive collection $\widetilde{\pc}=\{\widetilde{\v}_5,\widetilde{\v}_6,\widetilde{\v}_7\}$ whose numerically effective primitive relation $\widetilde{\k}=r_{\Z}(\widetilde{\pc})$ is given by the last row of $\widetilde{Q}$. Now that the left--upper $2\times 4$ submatrix $\widetilde{Q}'=\left(
                            \begin{array}{cccc}
                              1 & 1 & 2 & 1 \\
                              0 & 0 & 1 & 1 \\
                            \end{array}
                          \right)
$ of $\widetilde{Q}$ turns out to be a reduced $W$--matrix: then we are now in case (a) of the proof of Thm.~\ref{thm:maxbord} and $\widetilde{X}(\widetilde{\Si})$ turns out to be either a WPTwB or a WPTB.

Let us first describe the toric cover $f:X\to \widetilde{X}$ induced by the homomorphism $\overline{f}:N\cong\Z^n\to \widetilde{N}\cong\Z^n$, represented by the transposed matrix $C^T$. Recalling the Cox's description of $X$ and $\widetilde{X}$ as geometric quotients, the covering $f$ is then completely described by taking the non-trivial entries of the diagonal matrix $B^T$ as exponents of the Cox ring variables of $X$ to obtain the Cox ring variables of $\widetilde{X}$. One then gets that
 \begin{itemize}
   \item[] \emph{$f$ is a double covering ramified along the torus-invariant divisors $D_3$, $D_5$, $D_6$ and $D_7$.}
 \end{itemize}
On the other hand, to describe the structure of $\widetilde{X}$, notice that
\begin{equation*}
    \widetilde{Q}'\sim \left(
                         \begin{array}{cccc}
                           1 & 1 & 0 & -1 \\
                           0 & 0 & 1 & 1 \\
                         \end{array}
                       \right)
\end{equation*}
Hence, going on as in the previous Example \ref{ex:PTB} and applying Theorem \ref{thm:contraibile}, one gets that $\widetilde{X}$ is a WPTB over the PTB $Y\cong\P\left(\mathcal{O}_{\P^1}\oplus\mathcal{O}_{\P^1}(1)\right)$, whose weights are given by $W=(1,2,1)$. The fibration is given by the contraction $\varphi_{\widetilde{\k}}$ of the class $\widetilde{\k}$. After suitable operations on the rows of $\widetilde{Q}$, one gets the following weight matrix of $\widetilde{X}$
\begin{equation*}
\widetilde{Q}\sim \left(
  \begin{array}{ccccccc}
    1 & 1 & 0 & -1 & -2 & -2 & 0 \\
    0 & 0 & 1 & 1 & 0 & -1 & -1 \\
    0 & 0 & 0 & 0 & 1 & 2 & 1 \\
  \end{array}
\right)
\end{equation*}
hence giving
\begin{equation*}
\widetilde{X}\left(\widetilde{\Si}\right)\cong\P^{(1,2,1)}\left(\mathcal{O}_Y(2h)\oplus\mathcal{O}_Y(f+2h)\oplus\mathcal{O}_Y(f)\right)
\end{equation*}
where $f,h$ are the generators of $\Pic(Y)$ given by the fibre and the pull--back of the Picard generator $\mathcal{O}_{\P^1}(1)$ of the base $\P^1$, respectively.

In conclusion: \emph{$X(\Si_8)$ is obtained from $\P^1$ by means of the following sequence of toric covers and WPTB's}
\begin{equation*}
    \xymatrix{X\ar[r]^-{2:1}_-f&\P^{(1,2,1)}\left(\mathcal{O}_Y(2h)\oplus\mathcal{O}_Y(f+2h)\oplus\mathcal{O}_Y(f)\right)\stackrel{\varphi_{\widetilde{\k}}}{\twoheadrightarrow}
    Y\cong\P\left(\mathcal{O}_{\P^1}\oplus\mathcal{O}_{\P^1}(1)\right)\twoheadrightarrow\P^1}
\end{equation*}
Let us finally observe that, accordingly with Proposition \ref{prop:split-vs-simplicial}, the fan $\Si$ admits only the following primitive collections
\begin{equation}\label{splitting}
    \pc=\{\v_5,\v_6,\v_7\}\quad,\quad\pc'=\{\v_3,\v_4\}\quad,\quad\pc''=\{\v_1,\v_2\},.
\end{equation}
\end{example}

\begin{example}\label{ex:WPTB(c)} The present example is devoted to give an account of the case (c) in the proof of Thm.~\ref{thm:maxbord}. Moreover this example is obtained by the previous Ex.~\ref{ex:WPTB(b)} by \emph{breaking the symmetry} around the ray $\left\langle\begin{array}{c}
                                                                              1 \\
                                                                              2 \\
                                                                              2
                                                                            \end{array}
\right\rangle\in\Ga(1)$ of the secondary fan $\Ga$. This is enough to get a simplicial and complete fan $\Si\in\SF(V)$ such that $\g_{\Si}=\Nef(X(\Si))=0$.

\noindent Consider a 4--dimensional PWS of rank 3 given by the following reduced fan and weight matrices
\begin{equation*}
    V=\left(
        \begin{array}{ccccccc}
          1 & 0 & -1 & 0 & 0 & 6 & -12 \\
          0 & 1 & -1 & 0 & 0 & 4 & -8 \\
          0 & 0 & 0 & 1 & 0 & -2 & 4 \\
          0 & 0 & 0 & 0 & 1 & -1 & 1 \\
        \end{array}
      \right)\Rightarrow Q=\left(
                             \begin{array}{ccccccc}
                               1 & 1 & 1 & 0 & 0 & 0 & 0\\
                               0 & 2 & 6 & 2 & 1 & 1 & 0\\
                               0 & 0 & 0 & 0 & 1 & 2 & 1\\
                             \end{array}
                           \right)=\G(V)
\end{equation*}
\begin{figure}
\begin{center}
\includegraphics[width=11truecm]{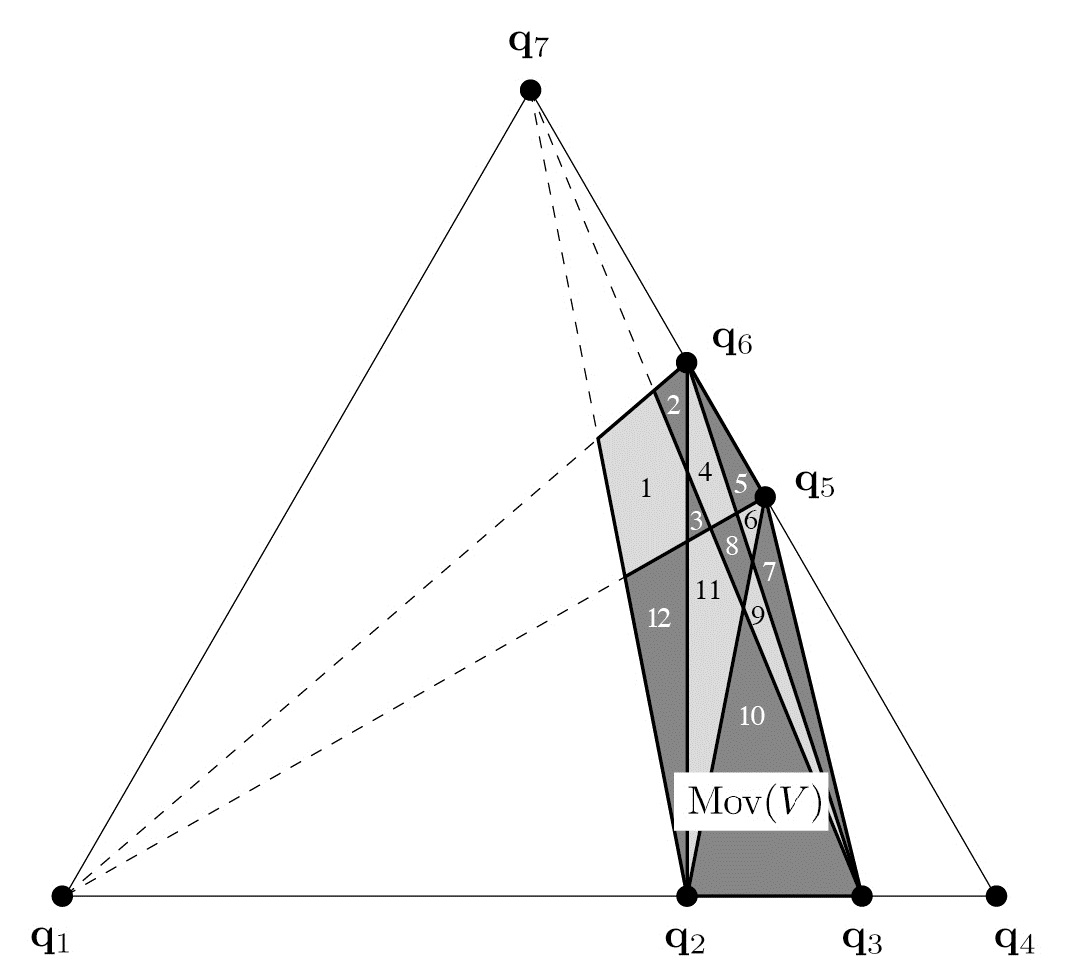}
\caption{\label{fig6}Ex.~\ref{ex:WPTB(c)}: the section of the cone $\Mov(V)$ and its twelve chambers, inside the Gale dual cone $\mathcal{Q}=F^3_+$, as cut out by the plane $\sum_{i=1}^3x_i^2=1$.}
\end{center}
\end{figure}
Then:
\begin{itemize}
  \item $|\P\SF(V)|=12<13=|\SF(V)|$, meaning that $V$ carries one fan of a $\Q$--factorial complete toric varieties of rank 3 which is not projective; explicitly this is generated as the faces of maximal cones in
      \begin{eqnarray*}
        &\Si_{13}(4) = \{\{3, 4, 5, 7\}, \{3, 4, 6, 7\}, \{2, 3, 5, 7\}, \{2, 3, 5, 6\}, \{2, 3, 6, 7\}, \{2, 4, 5, 7\},&\\
        &\{2, 4, 6, 7\}, \{1, 3, 4, 5\}, \{1, 3, 5, 6\}, \{1, 3, 4, 6\}, \{1, 2, 4, 5\}, \{1, 2, 5, 6\}, \{1, 2, 4, 6\}\}&
      \end{eqnarray*}
      The intersection of the cones in the associated bunch of cones inside $\gkz$, gives the trivial cone $\langle 0\rangle$.
  \item $X(\Si)$ is singular for every fan $\Si\in\SF(V)$,
  \item among the 12 distinct chambers of $\Mov(V)$, giving the projective fans, there are two maxbord chambers which are both no recursively maxbord chambers: these are given by the simplicial chambers (see Fig.~\ref{fig6})
\begin{equation*}
    \g_5:=\left\langle\q_5,\q_6,\w_1\right\rangle=\left\langle\begin{array}{ccc}
                                                 0 & 0 & 1 \\
                                                 1 & 1 & 12 \\
                                                 1 & 2 & 12
                                               \end{array}
    \right\rangle\quad,\quad\g_{10}:=\left\langle\q_2,\q_3,\w_2\right\rangle=\left\langle\begin{array}{ccc}
                                                 1 & 1 & 1 \\
                                                 2 & 6 & 6 \\
                                                 0 & 0 & 4
                                               \end{array}
    \right\rangle
\end{equation*}
which give the fans of faces of maximal cones in the following lists
\begin{eqnarray*}
   &\Si_5(4)&  =\{\{2, 3, 5, 7\}, \{2, 3, 4, 7\}, \{2, 3, 5, 6\}, \{2, 3, 4, 6\}, \{1, 3, 5, 7\}, \{1, 3, 4, 7\}, \\
   &&  \{1, 3, 5, 6\}, \{1, 3, 4, 6\}, \{1, 2, 5, 7\}, \{1, 2, 4, 7\}, \{1, 2, 5, 6\}, \{1, 2, 4, 6\}\}\\
   &\Si_{10}(4)& =\{\{2, 3, 5, 7\}, \{2, 3, 5, 6\}, \{2, 3, 6, 7\}, \{2, 4, 5, 7\}, \{2, 4, 5, 6\}, \{2, 4, 6, 7\}, \\
   && \{1, 3, 5, 7\}, \{1, 3, 5, 6\}, \{1, 3, 6, 7\}, \{1, 4, 5, 7\}, \{1, 4, 5, 6\}, \{1, 4, 6, 7\}\}
\end{eqnarray*}
\item $\g_5$ is maxbord w.r.t. the hyperplane $H_1$ and $\g_{10}$ is maxbord w.r.t. the hyperplane $H_3$,
\item the maxbord hyperplane $H_1$ gives the nef primitive collection $\pc'=\{\v_1,\v_2,\v_3\}$ for $\Si_{5}=\Si_{\g_{5}}$ whose numerically effective primitive relation $\k'=r_{\Z}(\pc')$ gives the first row of $Q$,
\item the maxbord hyperplane $H_3$ gives the nef primitive collection $\pc=\{\v_5,\v_6,\v_7\}$ for $\Si_{10}=\Si_{\g_{10}}$ whose numerically effective primitive relation $\k=r_{\Z}(\pc)$ gives the last row of $Q$.
\end{itemize}
Let us start by studying $X(\Si_{10})$.

\noindent The left--upper $2\times 4$ submatrix $Q'=\left(
                            \begin{array}{cccc}
                              1 & 1 & 1 & 0 \\
                              0 & 2 & 6 & 2 \\
                            \end{array}
                          \right)
$ is a positive matrix which does not satisfy condition (b) in the Def.~\ref{def:Wmatrix}: we are then in case (c) in the proof of Thm.~\ref{thm:maxbord}. Using the same notation therein, we then get
\begin{eqnarray*}
  A&=&\diag(1,1/2,1/2)\in\GL_3(\Q) \\
  B&=&\diag(1,1,1,1,2,2,2)\in\GL_7(\Q)\cap\mathbf{M}_7(\Z) \\
\end{eqnarray*}
which give
\begin{eqnarray*}
  \widetilde{Q}=AQB&=&\left(
                             \begin{array}{ccccccc}
                               1 & 1 & 1 & 0 & 0 & 0 & 0\\
                               0 & 1 & 3 & 1 & 1 & 1 & 0\\
                               0 & 0 & 0 & 0 & 1 & 2 & 1\\
                             \end{array}
                           \right)\\
                           \widetilde{V}=\G\left(\widetilde{Q}\right)&=&\left(
        \begin{array}{ccccccc}
          1 & 0 & -1 & 0 & 0 & 3 & -6 \\
          0 & 1 & -1 & 0 & 0 & 2 & -4 \\
          0 & 0 & 0 & 1 & 0 & -1 & 2 \\
          0 & 0 & 0 & 0 & 1 & -1 & 1 \\
        \end{array}
      \right)
\end{eqnarray*}
As in the previous Ex.~\ref{ex:nototmaxbord}, let $U\in \GL_{7}(\Z)$ such that $U\cdot V^T = H=\left(
                                                                 \begin{array}{c}
                                                                   I_4 \\
                                                                   \mathbf{0}_{3,4} \\
                                                                 \end{array}
                                                               \right)$. Then
\begin{equation*}
    V^T\cdot C=B\cdot \widetilde{V}^T\ \Longrightarrow\ C=\ ^4U\cdot  B\cdot \widetilde{V}^T=\left(
            \begin{array}{cccc}
              1 & 0 & 0 & 0 \\
              0 & 1 & 0 & 0 \\
              0 & 0 & 1 & 0 \\
              0 & 0 & 0 & 2 \\
            \end{array}
          \right)\,.
\end{equation*}
Since $\det C=2$, calling $\overline{f}:N\to\widetilde{N}$ be the $\Z$--linear map represented by $C^T$, one gets that $\overline{f}(N)$ turns out to be a subgroup of index 2 in $\widetilde{N}$. Consider the fan $\widetilde{\Si}=\overline{f}_{\R}(\Si_{10})$ and the associated $\Q$--factorial projective toric variety $\widetilde{X}\left(\widetilde{\Si}\right)$.
The left--upper $2\times 4$ submatrix $\widetilde{Q}'=\left(
                            \begin{array}{cccc}
                              1 & 1 & 1 & 0 \\
                              0 & 1 & 3 & 1 \\
                            \end{array}
                          \right)
$ is now a $W$--matrix, meaning that $\widetilde{Q}$ satisfies condition (a) in the proof of Theorem \ref{thm:maxbord}, and $\widetilde{X}$ is either a WPTB or a WPTwB.
Subtracting the third row to the second one, we get
$$\widetilde{Q}\sim\left(
                             \begin{array}{ccccccc}
                               1 & 1 & 1 & 0 & 0 & 0 & 0\\
                               0 & 1 & 3 & 1 & 0 & -1 & -1\\
                               0 & 0 & 0 & 0 & 1 & 2 & 1\\
                             \end{array}
                           \right)
$$
and $\widetilde{X}$ is a WPTB if and only the columns of the right upper $2\times 3$ submatrix $\widetilde{Q}''=\left(
                                                                                                                \begin{array}{ccc}
                                                                                                                  0 & 0 & 0 \\
                                                                                                                  0 & -1 & -1 \\
                                                                                                                \end{array}
                                                                                                              \right)
$ belongs to $\Pic(Y)$, where $Y$ is the toric surface of rank 2 determined by the fan associated with unique chamber of $$\Mov\left(\G\left(\widetilde{Q}'\right)\right)=\left\langle\begin{array}{cc}
                                                         1 & 1 \\
                                                         1 & 3
                                                       \end{array}
\right\rangle\,.$$
Recalling \cite[Thm.~2.9(2)]{RT-LA&GD} a basis of $\Pic(Y)$ can be determined by following the procedure described in \cite[\S~1.2.3]{RT-LA&GD} and obtaining:
$$\Pic(Y)=\mathcal{L}(L_1,L_2)\cong\Z^2\quad\text{where}\quad L_1=\left(
                                                           \begin{array}{c}
                                                             2 \\
                                                             0 \\
                                                           \end{array}
                                                         \right)\ ,\ L_2=\left(
                                                           \begin{array}{c}
                                                             -1 \\
                                                             3 \\
                                                           \end{array}
                                                         \right)\,.
$$
Hence $-(L_1+2L_2)=\left(
         \begin{array}{c}
           0 \\
           -6 \\
         \end{array}
       \right)
$, meaning that the Cartier indexes of Weil divisors whose classes are the columns of $\widetilde{Q}''$ are $l_0=1,l_1=l_2= 6$,
respectively. Recalling Prop.~\ref{prop:WPTwB}, $\widetilde{X}$ turns out to be a WPTwB and in particular a toric cover of the WPTB $\P^{W'}(\mathcal{E})$ where $W'=W=(1,2,1)$ is the reduced weight vector of $(l_0w_0,l_1w_1,l_2w_2)=(1,12,6)$ and $\mathcal{E}=\mathcal{O}_Y\oplus\mathcal{O}_Y(6D'_4)^{\oplus 2}$.

\noindent To explicitly determine the toric cover $g:\widetilde{X}\to\P^W(\mathcal{E})$ one has to determine matrices $\Delta,\Lambda,\Phi$ as in the proof of Prop.~\ref{prop:WPTwB}. Namely
\begin{eqnarray*}
  \Delta&=&\diag(1,1,1/6)\in\GL_3(\Q) \\
  \Lambda&=&\diag(1,1,1,1,6,6,6)\in\GL_7(\Q)\cap\mathbf{M}_7(\Z) \\
\end{eqnarray*}
which give
\begin{eqnarray*}
  \widetilde{\widetilde{Q}}=\Delta\widetilde{Q}\Lambda&=&\left(
                             \begin{array}{ccccccc}
                               1 & 1 & 1 & 0 & 0 & 0 & 0\\
                               0 & 1 & 3 & 1 & 6 & 6 & 0\\
                               0 & 0 & 0 & 0 & 1 & 2 & 1\\
                             \end{array}
                           \right)\\
                           \widetilde{\widetilde{V}}=\G\left(\widetilde{\widetilde{Q}}\right)&=&\left(
        \begin{array}{ccccccc}
          1 & 0 & -1 & 3 & 0 & 0 & 0 \\
          0 & 1 & -1 & 2 & 0 & 0 & 0 \\
          0 & 0 & 0 & 6 & 0 & -1 & 2 \\
          0 & 0 & 0 & 0 & 1 & -1 & 1 \\
        \end{array}
      \right)
\end{eqnarray*}
Hence
$$\P^W(\mathcal{E})=\P^{(1,2,1)}\left(\mathcal{O}_Y\oplus\mathcal{O}_Y(6D'_4)^{\oplus 2}\right)=\widetilde{\widetilde{X}}\left(\widetilde{\widetilde{\Si}}\right),\ \text{with}\ \widetilde{\widetilde{\Si}}=\overline{g}_{\R}\left(\widetilde{\Si}\right)=\overline{(g\circ f)}_{\R}\left(\Si_{10}\right)\,.$$
Moreover, calling $\widetilde{U}\in \GL_{7}(\Z)$ such that $\widetilde{U}\cdot \widetilde{V}^T = \widetilde{H}=\left(
                                                                 \begin{array}{c}
                                                                   I_4 \\
                                                                   \mathbf{0}_{3,4} \\
                                                                 \end{array}
                                                               \right)$, then
\begin{equation*}
    \widetilde{V}^T\cdot \Phi=\Lambda\cdot \widetilde{\widetilde{V}}^T\ \Longrightarrow\ \Phi=\ ^4\widetilde{U}\cdot  \Lambda\cdot \widetilde{\widetilde{V}}^T=\left(
            \begin{array}{cccc}
              1 & 0 & 0 & 0 \\
              0 & 1 & 0 & 0 \\
              3 & 2 & 6 & 0 \\
              0 & 0 & 0 & 6 \\
            \end{array}
          \right)\,.
\end{equation*}
Then $\overline{g}\left(\widetilde{N}\right)$ is a subgroup of index $\det(\Phi)=36$ of $\widetilde{\widetilde{N}}$.
Therefore $X(\Si_{10})$ turns out to admit the following geometric structure:
\begin{equation*}
    \xymatrix{X\left(\Si_{10}\right)\ar@{>>}[r]_-f^-{2:1}&\widetilde{X}\ar@{>>}[r]_-g^-{36:1}&\P^{(1,2,1)}\left(\mathcal{O}_Y\oplus\mathcal{O}_Y(6D'_4)^{\oplus 2}\right)\ar@{>>}[r]_-{\varphi_{\widetilde{\widetilde{\k}}}}&Y}
\end{equation*}
where:
\begin{itemize}
  \item $f$ is a double toric cover ramified along the torus invariant Weil divisors $D_5,D_6,D_7$ of $X$, as one can immediately deduce from the diagonal matrix $B$,
  \item $g$ is a $36:1$ toric cover ramified along the torus invariant Weil divisors $\widetilde{D}_5,\widetilde{D}_6,\widetilde{D}_7$ of $\widetilde{X}$, as one can immediately deduce from the diagonal matrix $\Lambda$,
  \item $\varphi_{\widetilde{\widetilde{\k}}}$ is the contraction morphism of the contractible class $\widetilde{\widetilde{\k}}=g\circ f(\k)$, under the notation introduced in Def.~\ref{def:pseudocontraibile}, meaning that $\k$ is a pseudo-contractible class.
\end{itemize}
For what concerns the further maxbord chamber $\g_5$, the study of $X(\Si_5)$ goes over in the same way as just done for $X(\Si_{10})$, after exchanging each other the first and third rows of $Q$ and reordering the columns to still get a REF matrix, hence obtaining
\begin{equation}\label{Qnuova}
    Q\sim\left(
                             \begin{array}{ccccccc}
                               1 & 2 & 1 & 0 & 0 & 0 & 0\\
                               0 & 1 & 1 & 2 & 6 & 2 & 0\\
                               0 & 0 & 0 & 0 & 1 & 1 & 1\\
                             \end{array}
                           \right)
\end{equation}
Let us reassign $Q$ as the right matrix in (\ref{Qnuova}). Then we get an analogous reassignment for
\begin{equation*}
    V=\G(Q)=\left(
        \begin{array}{ccccccc}
          1 & 1 & -3 & 0 & 0 & 1 & -1 \\
          0 & 2 & -4 & 0 & 0 & 1 & -1 \\
          0 & 0 & 0 & 1 & 0 & -1 & 1 \\
          0 & 0 & 0 & 0 & 1 & -3 & 2 \\
        \end{array}
      \right)
\end{equation*}
Now the left upper $2\times 4$ submatrix $Q'=\left(
           \begin{array}{cccc}
             1 & 2 & 1 & 0 \\
             0 & 1 & 1 & 2 \\
           \end{array}
         \right)$ is a $W$--matrix, implying that $X(\Si_5)$ is already either a WPTB or a WPTwB over the toric surface $Y'$ of rank 2 and determined by the fan associated with unique chamber of $$\Mov\left(\G\left(\widetilde{Q}'\right)\right)=\left\langle\begin{array}{cc}
                                                         2 & 1 \\
                                                         1 & 1
                                                       \end{array}
\right\rangle\,.$$
Still applying \cite[Thm.~2.9(2)]{RT-LA&GD} we get
$$\Pic(Y')=\mathcal{L}(L_1,L_2)\cong\Z^2\quad\text{where}\quad L_1=\left(
                                                           \begin{array}{c}
                                                             4 \\
                                                             0 \\
                                                           \end{array}
                                                         \right)\ ,\ L_2=\left(
                                                           \begin{array}{c}
                                                             0 \\
                                                             2 \\
                                                           \end{array}
                                                         \right)\,.
$$
By subtracting 6 times the third row to the second one in $Q$ we get
$$Q\sim\left(
                             \begin{array}{ccccccc}
                               1 & 2 & 1 & 0 & 0 & 0 & 0\\
                               0 & 1 & 1 & 2 & 0 & -4 & -6\\
                               0 & 0 & 0 & 0 & 1 & 1 & 1\\
                             \end{array}
                           \right)
$$
and we see that the column of the right upper $2\times 3$ submatrix $\left(
                                                                       \begin{array}{ccc}
                                                                         0 & 0 & 0 \\
                                                                         0 & -4 & -6 \\
                                                                       \end{array}
                                                                     \right)
$ belongs to $\Pic(Y')$. Then $X(\Si_5)$ is a WPTB over $Y'$ and better
$$\xymatrix{X(\Si_5)=\P\left(\mathcal{O}_{Y'}\oplus\mathcal{O}_{Y'}(2D'_4)\oplus\mathcal{O}_{Y'}(3D'_4)\right)\ar@{>>}[rr]^-{\varphi_{\k'}}&& Y'}$$
is actually a PTB over $Y'$, whose fibers are isomorphic to $\P^2$ since $W=(1,1,1)$: the fibration morphism $\varphi_{\k'}$ is given by the contraction of the contractible class $\k'=r_{\Z}(\pc')$.

For the remaining ten fans $\Si_i$, $i=1,\ldots,4,6,\ldots,9,11,12$, by Theorem \ref{thm:birWPTB} we can only say that $X(\Si_i)$ is a toric flip either of $X(\Si_{10})$, hence of a toric cover of a WPTB, or of $X(\Si_5)$, hence of a PTB.
  \end{example}

\begin{remark}\label{rem:FP}
For smooth threefolds O.~Fujino and S.~Payne \cite{FP} proved that $\Nef(X)\neq 0$ for $r\leq 4$. In the previous Example \ref{ex:WPTB(c)} the fan $\Si_{13}$ is associated with the trivial cone $\langle 0\rangle\subset \gkz$, giving a $4$--dimensional $\Q$--factorial complete toric variety $X$ with Picard number $r=3$ such that $\Nef(X)=0$, hence showing that the Fujino--Payne inequality does no more hold when dropping the smoothness hypo\-the\-sis. One might object that the given example has dimension 4, while the result of Fujino and Payne applies in dimension 3, but in the forthcoming paper \cite{RT-r=2} we will provide an example of a $3$--dimensional $\Q$--factorial complete toric variety $X$ with Picard number $r=3$ such that $\Nef(X)=0$: for reasons of space and in\-he\-ren\-ce we do not propose that example in the present paper. Anyway we claim that example to be a sharp one, both for the dimension $n$ and the rank $r$ since, on the one hand, it is well known that every complete toric variety of dimension $\leq 2$ is projective (see e.g. \cite[\S~8, Prop.~8.1]{MO}) and, on the other hand, in the same paper we will prove that \emph{a $\Q$--factorial, complete, toric variety of rank $r=2$ is projective.}
\end{remark}

\section{Classifying $\Q$--factorial projective toric varieties}\label{sez:Qfproj}

In the present section we will apply the results obtained in \S~\ref{sez:Batyrev-rivisto} for a PWS to the case of a singular $\Q$--factorial projective variety, recalling that the latter is always a finite quotient of a PWS, after \cite[Thm.~2.2]{RT-QUOT}.

\subsection{1-coverings of $\Q$-factorial complete tric varieties}
For reader convenience we begin this section by presenting some definitions and results from \cite{RT-QUOT}, needed for the comprehension of results given in the next \S~\ref{ssez:classificazione}.

For ease, let us here assume $X$ and $Y$ be normal and complete algebraic varieties, which is enough for our purpose.

\begin{definition}\label{def:1-covering}[see Def.~1.9 in \cite{RT-QUOT}] A finite surjective morphism $\varphi:Y\rightarrow X$ is called a \emph{covering in codimension $1$} (or simply a \emph{$1$--covering}) if it is unramified in codimension $1$, that is, there exists a subvariety $V\subseteq X$ such that $\codim_X V\geq 2$ and $\varphi|_{Y_V}$ is a topological covering, where $Y_V:=\varphi^{-1}(X\setminus V)$. Moreover a \emph{universal covering in codimension $1$} is a $1$-covering $\varphi:Y\rightarrow X$ such that for any $1$--covering $\phi:X'\rightarrow X$ of $X$ there exists a $1$--covering $f:Y\rightarrow X'$ such that $\varphi=\phi\circ f$.
\end{definition}

Recall notation introduced in Definitions \ref{def:Fmatrice}, \ref{def:Wmatrix} and \ref{def:PWS}. A $\Q$--factorial complete toric variety $X$ is given by a reduced $F$--matrix $V$ and a fan $\Si\in\SF(V)$ such that $X=X(\Si)$. Let $Q=\G(V)$ be a positive REF $W$--matrix. Then, by \cite[Prop.~3.12(1)]{RT-LA&GD}, $\widehat{V}=\G(Q)$ is a $CF$--matrix and the choice of $\Si\in\SF(V)$ uniquely determines a fan $\widehat{\Si}\in\SF(\widehat{V})$ such that $Y=Y(\widehat{\Si})$ is a PWS, so giving  a canonical universal 1-covering of $X$. This is, in short, one of the main results of \cite{RT-QUOT}. Namely:

\begin{theorem}[Thm.~2.2 in \cite{RT-QUOT}]\label{thm:covering&quotient} A $\Q$--factorial, complete toric variety $X$ admits a canonical universal 1-covering, $Y$ which is a PWS and such that the 1-covering morphism $\varphi:Y\rightarrow X$ is equivariant with respect to the torus actions. In particular every $\Q$--factorial, complete toric variety $X$ can be canonically described as a finite geometric quotient $X\cong Y/\pi_1(X_\text{reg})$ of a PWS $Y$ by the torus--equivariant action of $\pi_1(X_\text{reg})\cong \Tors(\Cl(X))$.
\end{theorem}

 In particular, if $X$ is projective then, by construction, fans $\Si$ and $\widehat{\Si}$ are associated with the same chamber i.e.
\begin{equation*}
    \g_{\Si}=\g_{\widehat{\Si}}\subseteq \Mov(V)\subseteq\gkz=\langle Q\rangle\,.
\end{equation*}

\begin{remark}\label{rem:Gamma}
 The action of $\pi_1(X_\text{reg})$ on $Y$ can be (non-canonically) described by means of a \emph{torsion matrix} $\Ga$ representing the \emph{torsion part} of the class morphism
 \begin{equation*}
   \xymatrix{d_X=f_X\oplus\tau_X:\Weil(X)\ar[r]^-{Q\oplus\Ga}& \Cl(X)\cong F\oplus\Tors(\Cl(X))}
 \end{equation*}
where $F$ is a free part of the class group $\Cl(X)$. Namely:
\begin{enumerate}
  \item the torsion matrix $\Ga$ is constructed as follows:
  \begin{itemize}
    \item choose fan matrices $V$ and $\widehat{V}=\G(\G(V))$ of $X$ and $Y$, respectively, such that there exists a diagonal matrix
        $$\Delta=\diag(c_1,\ldots,c_n)\in\GL_n(\Q)\cap\mathbf{M}_n(\Z)$$
        with
      \begin{itemize}
            \item[-] $1=c_1\,|\,\ldots\,|\,c_n$\,,
            \item[-] $V=\Delta\cdot\widehat{V}$\,,
            \item[-] $\Tors(\Cl(X))\cong\bigoplus_{i=1}^n\Z/c_i\Z=\bigoplus_{k=1}^s\Z/\tau_k\Z$\,,\\ according with the decomposition of $\Cl(X)$ given by the fundamental theorem of finitely generated abelian groups,
         \begin{equation*}
          \Cl(X)=F\oplus\Tors(\Cl(X))\cong\Z^r\oplus\bigoplus_{k=1}^s\Z/\tau_k\Z
         \end{equation*}
                where $s< n\,,\,\tau_k=c_{n-s+k}>1\,,\,c_1=\cdots=c_{n-s}=1$
          \end{itemize}
          (this is possible by \cite[Thm.~3.2(4)]{RT-QUOT});
    \item recalling notation on submatrices given in list \ref{ssez:lista}, consider
    \begin{eqnarray*}
      U_Q&\in&\GL_{n+r}(\Z)\ :\ U_Q\cdot Q^T = \HNF (Q^T)\\
      U&:=&\left(
           \begin{array}{c}
             ^rU_Q \\
             \widehat{V} \\
           \end{array}
         \right)\in\GL_{n+r}(\Z)\\
      W &\in&\GL_{n+r}(\Z) \ :\  W\cdot ({^{n+r-s}U})^T=\HNF\left(({^{n+r-s}U})^T\right) \\
      G &:=& {_s\widehat{V}}\cdot\ ({_{s}W})^T \in \mathbf{M}_s(\Z)\\
      U_G&\in&\GL_{s}(\Z) \ :\  U_G\cdot G^T =\HNF(G^T)\,.
    \end{eqnarray*}
    then define
    \begin{equation}\label{Gamma}
        \Ga = {U_G}\cdot\ {_{s}W} \mod {\boldsymbol\tau}
    \end{equation}
    where this notation means that the $(k,j)$--entry of $\Ga$ is given by the class in $\Z/\tau_k\Z$ represented by the corresponding $(k,j)$--entry of ${^sU_G}\cdot\ {_{s}W}$, for every $1\leq k\leq s\,,\,1\leq j\leq n+r$ \cite[Thm.~3.2(6)]{RT-Qerr}
  \end{itemize}
  \item consider the action of $\pi_1(X_\text{reg})$ defined by means of its dual group $\mu(X):=\Hom(\Tors(\Cl(X)),\C^*)$ and induced by the natural complex multiplication of $\Hom(\Weil(Y),\C^*)$ on $Y$ and the injection $\mu(X)\hookrightarrow\Hom(\Weil(Y),\C^*)$ dually determined by $\Ga$ \cite[\S~4]{RT-QUOT}
\end{enumerate}
Such an action gives rise to a good geometric quotient
$\xymatrix{Y\ar@{>>}[r]&X=Y/\mu}$,
due to the famous Cox's result \cite{Cox}.
\end{remark}

\subsection{A Batyrev type classification}\label{ssez:classificazione}

 The previous \ref{thm:covering&quotient}, jointly with Theorem~\ref{thm:maxbord}, Theorem~\ref{thm:birWPTB} and Theorem~\ref{thm:recmaxbord}, respectively, allows us to prove the following statements.

\begin{theorem}\label{thm:quot-maxbord} Given a reduced $n\times(n+r)$ $F$--matrix $V$, with $r\geq 2$, a chamber $\g\in\mathcal{A}_{\Gamma}(V)$ is maximally bordering if and only if the associated $\Q$--factorial projective toric variety $X(\Si_{\g})$ is a finite abelian quotient of a toric cover $Y(\widehat{\Si})$ of a weighted projective toric bundle $\P^W(\mathcal{E})$. In particular the quotient map $Y\twoheadrightarrow X$ gives a Galois covering ramified in codimension $\geq 2$, whose Galois group is $\mu(X)$ and described as above by a torsion matrix $\Ga$ determined as in (\ref{Gamma}).
\end{theorem}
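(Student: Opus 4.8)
The plan is to deduce Theorem~\ref{thm:quot-maxbord} from Theorem~\ref{thm:maxbord} by transporting the statement through the canonical covering $Y \twoheadrightarrow X$ constructed in \cite[Thm.~2.2]{RT-QUOT}. First I would set up the dictionary: given the reduced $F$--matrix $V$ and a chamber $\g\in\mathcal{A}_{\Ga}(V)$, let $Q=\G(V)$ be a positive REF $W$--matrix, and put $\widehat{V}:=\G(Q)$, which is a $CF$--matrix by \cite[Prop.~2.12(1)]{RT-LA&GD}. The chamber $\g$ simultaneously determines fans $\Si_\g\in\P\SF(V)$ and $\widehat{\Si}_\g\in\P\SF(\widehat{V})$ associated with the \emph{same} chamber, since Gale duality on cones (Theorem~\ref{thm:Gale sui coni}) depends only on $Q$, not on whether one starts from $V$ or $\widehat{V}$; here one uses that $\G(V)$ and $\G(\widehat{V})$ differ at most by the constant $\d$ of \eqref{Gale_det}, which does not affect the combinatorics of bunches of cones. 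By \cite[Thm.~2.2]{RT-QUOT}, $Y:=Y(\widehat{\Si}_\g)$ is a PWS, the $1$--covering of $X:=X(\Si_\g)$, and $X=Y/\mu(X)$ with $\mu(X)=\Hom(\Tors(\Cl(X)),\C^*)$ acting as described by the torsion matrix $\Ga$ of \cite[Thm.~3.2(6)]{RT-QUOT}, the covering being ramified in codimension $\geq 2$.

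The core step is then purely formal. Since $\g$ is maxbord as a chamber of $\Ga(V)$ if and only if it is maxbord as a chamber of $\Ga(\widehat{V})$ --- the notion of maxbord (Definition~\ref{def:bordering}) is stated entirely in terms of the cone $\gkz=\langle Q\rangle$ and its faces, which is the same object in both cases --- we may apply Theorem~\ref{thm:maxbord} to the $CF$--matrix $\widehat{V}$: the chamber $\g$ is maxbord $\iff$ the PWS $Y(\widehat{\Si}_\g)$ is a toric cover of a weighted projective toric bundle $\P^W(\mathcal{E})$. Composing with the quotient $Y\twoheadrightarrow X$ yields exactly the assertion: $\g$ is maxbord $\iff$ $X(\Si_\g)$ is a finite abelian quotient of a toric cover of a WPTB. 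The last sentence of the theorem, concerning the nature of the covering $Y\twoheadrightarrow X$ (Galois with group $\mu(X)$, ramified in codimension $\geq 2$, encoded by $\Ga$), is then simply a restatement of \cite[Thm.~2.2]{RT-QUOT} and \cite[Thm.~3.2(6)]{RT-QUOT}, valid regardless of the maxbord hypothesis.

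I expect the only genuinely delicate point to be the verification that ``maxbord for $V$'' and ``maxbord for $\widehat{V}$'' truly coincide, i.e.\ that passing from $V$ to its reduced-on-the-weight-side replacement $\widehat{V}=\G(\G(V))$ leaves the secondary fan $\Ga$, the moving cone $\Mov$, and hence the whole bunch-of-cones picture unchanged. This follows because $\G(V)$ and $\G(\widehat{V})$ can be taken to be the \emph{same} matrix $Q$ (by construction $\widehat{V}=\G(Q)$ and $\G(\widehat{V})=Q$ up to the conventions of \cite{RT-LA&GD}), so $\gkz$, $\Mov(V)=\Mov(\widehat{V})$ and $\Ga(V)=\Ga(\widehat{V})$ are literally identical; the determinant constant $\d$ in \eqref{Gale_det} affects only the \emph{singular/non-singular} labelling of chambers, not the \emph{bordering} properties. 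Once this identification is in place the proof is a one-line invocation of Theorem~\ref{thm:maxbord} together with \cite[Thm.~2.2]{RT-QUOT}.
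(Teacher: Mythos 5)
Your proposal is correct and follows essentially the same route as the paper: the paper likewise observes that $\Si$ and $\widehat{\Si}$ are associated with the same chamber $\g_{\Si}=\g_{\widehat{\Si}}\subseteq\Mov(V)\subseteq\gkz=\langle Q\rangle$, so that maxbordness is read off the common Gale dual cone, and then declares Theorem~\ref{thm:quot-maxbord} an immediate consequence of Theorem~\ref{thm:maxbord} applied to the $CF$--matrix $\widehat{V}$ together with \cite[Thm.~2.2]{RT-QUOT} and the torsion matrix description of \cite[Thm.~3.2(6)]{RT-QUOT}. Your extra care about the identification $\G(V)=\G(\widehat{V})=Q$ is exactly the point the paper treats implicitly.
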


\begin{remark}\label{rem:geometrico_quot} Recalling \S~\ref{ssez:geometrico} and Remark~\ref{rem:geometrico}, given a $\Q$-factorial projective toric variety $X$ whose fan is associated with a maxbord chamber we find the following situation
\begin{equation*}
  \xymatrix{X=Y/\mu&&Y\ar[ll]_-{\begin{array}{c}
                                     _{\text{universal}} \\
                                     ^{\text{1-covering}}
                                   \end{array}
  }\ar[dd]^-{\phi}_-{\begin{array}{c}
                                     _{\text{fake WPS}} \\
                                     ^{\text{fibering}}
                                   \end{array}}\ar[rr]_-f^-{\begin{array}{c}
                                     _{\text{toric}} \\
                                     ^{\text{cover}}
                                   \end{array}
  }&&\P^W(\mathcal{E})\ar[dd]_-{\varphi}^-{\text{WPTB}}\\
  &&&&\\
            &&X_0\ar[rr]^-{f_0}_-{\text{finite}}&&X'}
\end{equation*}
In particular starting from a fan matrix $V$ of $X$, both the universal 1-covering and the right hand side composition of toric morphisms $\varphi\circ f$ are explicitly described.
\end{remark}

\begin{remark}\label{rem:fan_fibrato_2} Notice that the previous Theorem~\ref{thm:quot-maxbord} provides a definitive answer to the question left open in Remark \ref{rem:fan_fibrato}, about the the geometric interpretation of the toric variety $X(\Si)$ constructed from a fan $\Si$ generated by fibred cones as in Prop.~\ref{prop:fan fibrato} and admitting a fan matrix as in (\ref{Vfibrata}) which is a $F$ non-$CF$ matrix.
\end{remark}

\begin{theorem}\label{thm:quot-birWPTB} Let $V$ be a reduced $n\times(n+r)$ $F$--matrix and $X(\Si)$ be a $\Q$--factorial projective toric variety, with $\Si\in\P\SF(V)$. Then $X$ is a toric flip of a finite abelian quotient $X'$ of a toric cover $Y'\twoheadrightarrow\P^W(\mathcal{E})$ of a WPTB if and only if $\Mov(V)$ is maxbord w.r.t. an hyperplane $H\subseteq F^r_{\R}$. In particular:
\begin{enumerate}
  \item calling $Y$ the PWS giving the universal 1-covering of $X$, as in Theorem~\ref{thm:covering&quotient}, the toric flip $X\dashrightarrow X'$ uniquely lifts to giving a toric flip $Y\dashrightarrow Y'$ between 1-coverings and giving rise to the following commutative diagram
      \begin{equation*}
        \xymatrix{Y\ar@{-->}[r]\ar@{>>}[d]&Y'\ar@{>>}[d]\\
                  X\ar@{-->}[r]& X'}
      \end{equation*}
      in which vertical maps represent Galois coverings ramified in codimension $\geq 2$, both with Galois group
      $\mu(X)=\mu(X')$
      and both described by a same torsion matrix $\Ga$ determined as in (\ref{Gamma});
  \item $X'$ has associated chamber $\g'\subseteq\Mov(V)\subseteq\gkz$ which is maxbord w.r.t. the hyperplane $H$;
  \item the finite abelian quotient $Y'\twoheadrightarrow X'$ is described by the previous Theorem \ref{thm:quot-maxbord}.
\end{enumerate}
 \end{theorem}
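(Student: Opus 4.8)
The plan is to deduce the statement from its PWS counterpart, Theorem~\ref{thm:birWPTB}, via the universal $1$--covering of \cite[Thm.~2.2]{RT-QUOT}, and then to read off items (1)--(3) from Theorem~\ref{thm:quot-maxbord}. First I would fix notation as at the start of this section: with $Q=\G(V)$ a positive REF $W$--matrix and $\widehat{V}=\G(Q)$ the associated $CF$--matrix, each $\Si\in\P\SF(V)$ determines a fan $\widehat{\Si}\in\P\SF(\widehat{V})$ with the \emph{same} associated chamber $\g_{\Si}=\g_{\widehat{\Si}}\subseteq\Mov(V)$, and $Y=Y(\widehat{\Si})$ is the universal $1$--covering PWS of $X$, the covering $Y\twoheadrightarrow X$ being the Galois quotient by $\mu(X)$, ramified in codimension $\ge 2$ and described by the torsion matrix $\Ga$ of \cite[Thm.~3.2(6)]{RT-QUOT}. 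The one fact I would isolate and use throughout is that $\widehat{V}$, the moving cone $\Mov(\widehat{V})=\bigcap_{i}\langle Q^{\{i\}}\rangle=\Mov(V)$ (by~(\ref{Mov})), and the torsion matrix $\Ga$ all depend only on $V$ and \emph{not} on the chosen chamber in $\P\SF(V)$: this is what will make flips and finite quotients commute.

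For the ``if'' direction I would assume $\Mov(V)=\Mov(\widehat{V})$ maxbord with respect to a hyperplane $H$ and apply Theorem~\ref{thm:birWPTB} to the $CF$--matrix $\widehat{V}$, obtaining a presentation of the PWS $Y$ as a toric flip of a toric cover $Y'\twoheadrightarrow\P^W(\mathcal{E})$ of a WPTB, with $Y'=Y(\widehat{\Si}')$ the PWS attached to the maxbord chamber $\g'\subseteq\Mov(\widehat{V})$ w.r.t.\ $H$. Since the flip $Y\dashrightarrow Y'$ is a composition of elementary flips (crossing walls inside the \emph{fixed} cone $\Mov(\widehat{V})$) and a toric isomorphism, and since $\mu(X)$ acts compatibly on every $Y(\widehat{\Si}'')$, $\widehat{\Si}''\in\P\SF(\widehat{V})$, through the single torsion matrix $\Ga$, I would set $X':=Y'/\mu(X)$, observe that it is $\Q$--factorial projective with fan matrix $V$, and conclude that $Y\dashrightarrow Y'$ descends to a toric flip $X\dashrightarrow X'$. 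Theorem~\ref{thm:quot-maxbord}, applied to the maxbord chamber $\g'$, then identifies $X'$ as a finite abelian quotient of $Y'\twoheadrightarrow\P^W(\mathcal{E})$ and yields items (1) and (3); item (2) is the very construction of $\g'$.

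For the ``only if'' direction I would start from a toric flip $X\dashrightarrow X'$ with $X'=Y'/\mu$ and $Y'\twoheadrightarrow\P^W(\mathcal{E})$ a toric cover of a WPTB. Since a toric flip is an isomorphism in codimension $1$, up to a toric isomorphism I may take $X'=X'(\Si')$ with $\Si'\in\P\SF(V)$; then, by Theorem~\ref{thm:quot-maxbord} applied to $X'$, its universal $1$--covering is $Y'=Y(\widehat{\Si}')$ and its chamber $\g_{\widehat{\Si}'}$ is maxbord w.r.t.\ some hyperplane $H$. Exactly as in the proof of Theorem~\ref{thm:birWPTB}, maxbordness of $\g_{\widehat{\Si}'}$ w.r.t.\ $H$ forces $H$ to cut a facet of $\Mov(\widehat{V})$ with $r-1=\dim(\g_{\widehat{\Si}'}\cap H)\le\dim(\Mov(\widehat{V})\cap H)$, so $\Mov(V)=\Mov(\widehat{V})$ is itself maxbord w.r.t.\ $H$. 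Items (1)--(3) then follow as before, the coincidences $\mu(X)=\mu(X')$ and the equality of torsion matrices being once more the chamber--independence noted in the first paragraph.

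The hard part will be making rigorous the descent of the flip $Y\dashrightarrow Y'$ to the quotients together with the identification $\mu(X)=\mu(X')$. This requires unwinding \cite[Thm.~2.2]{RT-QUOT} and \cite[Thm.~3.2(6)]{RT-QUOT} to see that the $\mu(X)$--action on the Cox ring of $Y$ is built solely from the class--group exact sequence~(\ref{div-sequence}) attached to $V$; consequently it is literally the same action on $Y$ and on $Y'$, and it commutes with elementary flips, which only rearrange the chamber decomposition of the fixed cone $\Mov(V)$ while leaving $\widehat{V}$ and $\Ga$ untouched. Once this is in place, everything else is a direct application of Theorems~\ref{thm:birWPTB} and~\ref{thm:quot-maxbord}.
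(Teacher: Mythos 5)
Your proposal is correct and follows exactly the route the paper intends: Theorem~\ref{thm:quot-birWPTB} is stated there as an immediate consequence of \cite[Thm.~2.2]{RT-QUOT} combined with Theorem~\ref{thm:birWPTB}, using precisely the setup at the start of \S~\ref{sez:Qfproj} (same chamber $\g_{\Si}=\g_{\widehat{\Si}}$, same $\widehat{V}$, $\Mov(V)$ and torsion matrix $\Ga$ for every fan in $\P\SF(V)$, so that the $\mu(X)$--action and the elementary flips inside $\Mov(V)$ commute). You have merely made explicit the descent of the flip $Y\dashrightarrow Y'$ to the quotients, which the paper leaves implicit.
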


 \begin{theorem}\label{thm:quot-recmaxbord} A $\Q$--factorial projective variety $X(\Si)$ is a finite abelian quotient of a PWS, say $Y$, which is produced from a toric cover of a WPS by a sequence of toric covers of WPTB's if and only if the corresponding chamber $\g_{\Si}$ is recursively maxbord. In particular the quotient map $Y\twoheadrightarrow X$ gives a Galois covering ramified in codimension $\geq 2$, whose Galois group is $\mu(X)$ and described by a torsion matrix $\Ga$ determined as in (\ref{Gamma}).
\end{theorem}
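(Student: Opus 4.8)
The plan is to deduce this statement from Theorem~\ref{thm:recmaxbord} together with the canonical covering construction of \cite[Thm.~2.2]{RT-QUOT}, exactly as indicated in the paragraph preceding the three quotient theorems. First I would recall the setup opening the present section: starting from the reduced $F$--matrix $V$ and the fan $\Si\in\P\SF(V)$, one fixes a positive $\REF$ $W$--matrix $Q=\G(V)$; then $\widehat{V}=\G(Q)$ is a $CF$--matrix and the choice of $\Si$ determines a unique $\widehat{\Si}\in\SF(\widehat{V})$ with $Y=Y(\widehat{\Si})$ the universal $1$--covering PWS of $X$. Since $X$ is projective, $\Si$ and $\widehat{\Si}$ are associated with the \emph{same} chamber, $\g_{\Si}=\g_{\widehat{\Si}}\subseteq\Mov(V)\subseteq\gkz=\langle Q\rangle$. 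The crucial observation is that the secondary fan $\Ga$, the moving cone, and hence the notion of a recursively maxbord chamber (Definition~\ref{def:totally maxbord}) only depend on $Q$; therefore ``$\g_{\Si}$ is recursively maxbord'' is literally the same condition as ``$\g_{\widehat{\Si}}$ is recursively maxbord''.

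For the ``if'' implication I would argue as follows. Assume $\g_{\Si}$ is recursively maxbord; then so is the chamber $\g_{\widehat{\Si}}$ of the PWS $Y$, and Theorem~\ref{thm:recmaxbord} applied to $Y$ exhibits $Y$ as produced from a toric cover of a WPS by a sequence of toric covers of WPTB's. By \cite[Thm.~2.2]{RT-QUOT}, $X$ is the quotient $Y/\mu(X)$ of this PWS under the multiplicative action of $\mu(X)=\Hom(\Tors(\Cl(X)),\C^*)$, the quotient map being a torus--equivariant Galois covering ramified in codimension $\geq 2$; and by \cite[Thm.~3.2(6)]{RT-QUOT} this action is encoded by a torsion matrix $\Ga$. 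This yields the asserted presentation of $X$ together with the final ``in particular'' clause.

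For the ``only if'' implication, suppose $X$ is a finite abelian quotient of a PWS $Y$ of the stated type, the quotient map $Y\twoheadrightarrow X$ being (as in the ``in particular'' clause, and as forced by the universal property of the $1$--covering \cite[Thm.~2.2]{RT-QUOT} once one knows that a PWS is $1$--connected in codimension $1$, Definition~\ref{def:PWS} and \cite[Thm.~2.1]{RT-QUOT}) the canonical covering of $X$. Then the fan of $Y$ is exactly the $\widehat{\Si}$ above, and Theorem~\ref{thm:recmaxbord} applied to $Y$ gives that $\g_{\widehat{\Si}}$ is recursively maxbord; by the identification $\g_{\Si}=\g_{\widehat{\Si}}$ recorded above, $\g_{\Si}$ is recursively maxbord.

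I expect the delicate point to be the reduction, in the ``only if'' direction, of an a priori arbitrary abelian quotient presentation $X=Y/G$ to the canonical $1$--covering: one must check that any torus--equivariant Galois covering $Y\twoheadrightarrow X$ from a PWS, ramified in codimension $\geq 2$, is the universal $1$--covering, so that the secondary fan of $Y$ is identified with that of $X$ and the recursive--maxbord property transfers, whereas an abelian cover with codimension--one branching need neither be of this form nor preserve the chamber. This is exactly where the hypothesis must be pinned down to the canonical covering; once that is granted, the argument is the routine splicing of Theorem~\ref{thm:recmaxbord} with \cite[Thm.~2.2]{RT-QUOT} and \cite[Thm.~3.2(6)]{RT-QUOT}, and no genuine new computation is required.
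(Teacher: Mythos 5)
Your proposal is correct and follows exactly the route the paper takes: the paper gives no separate proof of this theorem, declaring it an immediate consequence of Theorem~\ref{thm:recmaxbord} combined with \cite[Thm.~2.2]{RT-QUOT}, via the identification $\g_{\Si}=\g_{\widehat{\Si}}$ established in the setup paragraph of \S~\ref{sez:Qfproj}. Your added remark on the ``only if'' direction — that the quotient presentation must be pinned to the canonical $1$--covering (as the ``in particular'' clause of the statement already does) before the chamber identification and Theorem~\ref{thm:recmaxbord} can be invoked — is a correct filling-in of a detail the paper leaves implicit, not a deviation from its argument.
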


As above, let $V$ be a reduced $F$--matrix, $Q=\G(V)$ be a positive, REF, $W$--matrix and $\widehat{V}=\G(Q)$ be a $CF$--matrix. Let $X(\Si)$ be the $\Q$--factorial projective toric variety given either by the choice of a fan $\Si\in\P\SF(V)$ or by the choice of a chamber $\g=\g_{\Si}\in\mathcal{A}_{\Ga}(V)$.  Let $Y(\widehat{\Si})$ be the PWS giving the universal 1-covering of $X$, which is $\widehat{\Si}=\widehat{\Si}_{\g}\in\P\SF(\widehat{V})$. Let $\pc=\{V_P\}$, for some $P\subseteq\{1,\ldots,n+r\}$, be a nef primitive collection for $\Si$. Then $\widehat{\pc}:=\{\widehat{V}_P\}$ is such that $\pc^*=\widehat{\pc}^*\in\gkz(1)$ meaning that $\widehat{\pc}$ is a nef primitive collection for $\widehat{\Si}$ if and only if $\pc$ is a nef primitive collection for $\Si$. Then:
\begin{itemize}
  \item[]\emph{$\k:=r_{\Z}(\pc)\in A_1(X)\cap\overline{\NE}(X)$ is a numerically effective primitive relation for $\Si$ if and only if $\widehat{\k}:=r_{\Z}(\widehat{\pc})\in A_1(Y)\cap\overline{\NE}(Y)$ is a numerically effective primitive relation for $\widehat{\Si}$.}
\end{itemize}
In the following, the class $\widehat{\k}$ is called \emph{the universal lifting of} $\k$.

\begin{theorem}\label{thm:quot-contraibile} Let $V$ be a reduced $n\times(n+r)$ $F$--matrix  and $\Si\in\P\SF(V)$. Assume that there exists a primitive collection $\pc$ for $\Si$ whose primitive relation $\kappa=r_{\Z}(\pc)$ is numerically effective. Then the universal lifting $\widehat{\k}$ of $\k$ is either contractible or pseudo--contractible if and only if one of the following equivalent conditions occurs:
\begin{itemize}
  \item[(I)] $\g_{\Si}$ is a maxbord chamber,
  \item[(II)] for every primitive collection $\pc'\neq\pc$, for $\Si$, then $\pc'\cap\pc=\emptyset$.
\end{itemize}
In particular, $\widehat{\k}$ is either contractible or pseudo--contractible depending on which condition in the statement of Theorem \ref{thm:contraibile} is satisfied.
\end{theorem}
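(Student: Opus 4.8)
The plan is to deduce the statement from Theorem~\ref{thm:contraibile} by transporting it from $X$ to its universal $1$--covering PWS, exploiting that all the data occurring in that theorem — the weight matrix $Q$, the chamber $\g$, the primitive collections, and the conditions $(1.i)$--$(2.ii)$ — depend only on $Q$ and $\g$, hence are unaffected by the torsion of $\Cl(X)$.

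Concretely, write $Q=\G(V)$ in positive $\REF$ form and $\widehat{V}=\G(Q)$, a $CF$--matrix by \cite[Prop.~2.12(1)]{RT-LA&GD}; then $\Si\in\P\SF(V)$ determines $\widehat{\Si}=\widehat{\Si}_{\g}\in\P\SF(\widehat{V})$ with $\g_{\Si}=\g_{\widehat{\Si}}\subseteq\Mov(V)=\Mov(\widehat{V})$, and $Y=Y(\widehat{\Si})\twoheadrightarrow X$ is the universal $1$--covering. By the remark introducing the universal lifting, $\widehat{\pc}=\{\widehat{V}_P\}$ determines the same rays of $\gkz(1)$ as $\pc$, so $\widehat{\pc}$ is a primitive collection for $\widehat{\Si}$ and $\widehat{\k}=r_{\Z}(\widehat{\pc})$ is numerically effective exactly because $\pc$ and $\k$ are so for $\Si$. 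I would then apply Theorem~\ref{thm:contraibile} to the PWS $Y(\widehat{\Si})$, the $CF$--matrix $\widehat{V}$ and the primitive collection $\widehat{\pc}$: its conclusion gives that $\widehat{\k}$ is contractible or pseudo--contractible if and only if $\g_{\widehat{\Si}}$ is a maxbord chamber, if and only if every primitive collection of $\widehat{\Si}$ distinct from $\widehat{\pc}$ meets $\widehat{\pc}$ trivially; moreover it specifies which of the two alternatives occurs according to whether the upper--left block $Q'$ of $Q$ in (\ref{Qfibrata}) is a reduced $W$--matrix with Cartier $Q''$--columns (the contractible case, i.e.\ case ``(1)'' of Theorem~\ref{thm:contraibile}) or not (the pseudo--contractible case, i.e.\ case ``(2)'').

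It remains to transport these assertions back to $X$. Since $\g_{\Si}=\g_{\widehat{\Si}}$, ``$\g_{\widehat{\Si}}$ maxbord'' is literally condition (I); the primitive collections of $\Si$ and of $\widehat{\Si}$ correspond bijectively via $\pc'\mapsto\widehat{\pc'}$ with the same associated rays of $\gkz(1)$, so ``every primitive collection of $\widehat{\Si}$ distinct from $\widehat{\pc}$ meets it trivially'' is condition (II), and in any case (I)$\Leftrightarrow$(II) is Proposition~\ref{prop:maxbord vs primitive}. The auxiliary conditions $(1.i)$--$(2.ii)$ involve only $Q$, $\g$ and the base $X'(\Si')=X(\Si'_{\g'})$ attached to the chamber $\g'=\g\cap H_r$, all of which coincide for $X$ and for $Y$, whence the ``in particular'' clause. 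The one step demanding care — and the main obstacle — is precisely this last identification: one must check that $\G(\widehat{V})$ can be taken equal to $Q$ and that running the WPTB and toric--cover construction of Theorem~\ref{thm:contraibile} on $Y$ produces the identical base, fibre weights and Cartier data as ascribed to $X$, so that ``$Q'$ a reduced $W$--matrix'', ``$Q''$--columns Cartier'', and similar statements are genuinely torsion--independent.
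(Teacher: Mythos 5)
Your proposal is correct and coincides with the paper's (implicit) argument: the paper states this theorem without a separate proof precisely because, as you do, one applies Theorem~\ref{thm:contraibile} to the universal $1$--covering PWS $Y(\widehat{\Si})$ with $CF$--matrix $\widehat{V}=\G(Q)$ and then uses the identifications $\g_{\Si}=\g_{\widehat{\Si}}$, $\pc^*=\widehat{\pc}^*$ and $\G(\widehat{V})=Q$ to read conditions (I), (II) and the cases $(1)$/$(2)$ back on $X$. The final identification you flag as "demanding care" is exactly the content of the involutivity of $\Z$--linear Gale duality on reduced $W$/$CF$--matrices together with Proposition~\ref{prop:maxbord vs primitive}, so no gap remains.
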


\begin{example}\label{ex:quotient} Let us consider the following $4\times 7$ matrix
\begin{equation*}
    V=\left(
        \begin{array}{ccccccc}
          9&11&13&-33&9&44&-97 \\
          10&12&14&-36&10&48&-106 \\
          54&63&75&-192&51&258&-567 \\
          310&365&430&-1105&295&1485&-3265 \\
        \end{array}
      \right)
\end{equation*}
First of all we need to understand if $V$ is a $F$--matrix: if this is the case then $V$ is a reduced $F$-matrix since the $\gcd$ of entries in every column is always 1.

\noindent A matrix $U_V\in\GL_7(\Z)$ such that $\HNF(V^T)=U_V\cdot V^T$ is given by
\begin{equation*}
    U_V=\left(\begin{array}{ccccccc}
          -4&0&1&-1&-1&0&0 \\
          9&2&5&7&7&0&0 \\
          3&-4&5&2&2&0&0 \\
          -1&1&-2&-1&-1&0&0 \\
          -1&-1&-1&-1&0&0&0 \\
          -1&-1&0&1&1&1&0 \\
          3&3&1&-1&-1&0&1
        \end{array}\right)
\end{equation*}
whose bottom three rows give the matrix
\begin{equation*}
    _3U_V=\left(\begin{array}{ccccccc}
          -1&-1&-1&-1&0&0&0 \\
          -1&-1&0&1&1&1&0 \\
          3&3&1&-1&-1&0&1
        \end{array}\right)\sim\left(
                                \begin{array}{ccccccc}
                                  1&1&1&1&0&0&0 \\
                                  0&0&1&2&1&1&0 \\
                                  0&0&0&0&1&2&1 \\
                                \end{array}
                              \right)=:Q
\end{equation*}
Notice that the equivalence $_3U_V\sim Q$ is realized by means of the matrix
$$M=\left(
                                                                    \begin{array}{ccc}
                                                                      -1&0&0 \\
                                                                      -1&1&0 \\
                                                                      1&2&1 \\
                                                                    \end{array}
                                                                  \right)\in\GL_3(\Z)\quad :\quad Q=M\cdot\ _3U_V
$$
Since $Q=\G(V)$ is a reduced $W$--matrix then $V$ is a reduced $F$--matrix by \cite[Prop.~3.12(2)]{RT-LA&GD}. In particular $Q$ is the same positive $\REF$ $W$--matrix of Example \ref{ex:WPTB(b)}, meaning that $\widehat{V}=\G(Q)$ is given by matrix $V$ in Ex.~\ref{ex:WPTB(b)}. Therefore we have a unique maxbord chamber given by
\begin{equation*}
    \g_8=\langle\q_1=\q_2,\q_3,\w\rangle =\left\langle\begin{array}{ccc}
                                                        1 & 1 & 1 \\
                                                        0 & 1 & 1 \\
                                                        0 & 0 & 1
                                                      \end{array}
    \right\rangle
\end{equation*}
which is also a recursively maxbord chamber: we are then in the situation described by Theorem \ref{thm:quot-recmaxbord}.
Calling $\Si\in\P\SF(V)$ and $\widehat{\Si}\in\P\SF(\widehat{V})$ the corresponding fans, the covering $Y(\widehat{\Si})$ is given by the PWS $X(\Si_8)$ described in Ex.~\ref{ex:WPTB(b)} i.e.
\begin{equation*}
    \xymatrix{Y\ar[r]^-{2:1}_-f&\P^{(1,2,1)}\left(\mathcal{O}_S(2h)\oplus\mathcal{O}_S(f+2h)\oplus\mathcal{O}_S(f)\right)\stackrel{\varphi_{\widetilde{\k}}}{\twoheadrightarrow}
    S\cong\P\left(\mathcal{O}_{\P^1}\oplus\mathcal{O}_{\P^1}(1)\right)\twoheadrightarrow\P^1}
\end{equation*}
where $f,h$ are the generators of $\Pic(S)$ given by the fibre and the pull--back of the Picard generator $\mathcal{O}_{\P^1}(1)$ of the base $\P^1$, respectively, and $\widetilde{\k}=f(\k)$ is the contractible class image of the pseudo--contractible class $\k$ which is the numerically effective primitive relation given by the bottom row of $Q$: recalling Theorem \ref{thm:quot-contraibile}, $\k$ is the universal lifting of the primitive relation $r_{\Z}(\pc)$, associated with the nef primitive collection $\pc=\{\v_5,\v_6,\v_7\}$ for $\Si$.

Let us here better describe the toric cover $f:Y\stackrel{2:1}{\twoheadrightarrow}\P^W(\mathcal{E})$, where $W=(1,2,1)$ and $\mathcal{E}=\mathcal{O}_S(2h)\oplus\mathcal{O}_S(f+2h)\oplus\mathcal{O}_S(f)$. Both $Y$ and $\P^W(\mathcal{E})$ are PWS, meaning that they can be completely described as Cox geometric quotients by means of $W$--matrices $Q$ and $\widetilde{Q}$ given in Ex.~\ref{ex:WPTB(b)}. Let us denote by $Z\subseteq\C^7$ the zero-locus of the irrelevant ideal associated with the fan $\Si$ (see \cite{Cox} for further details) then:
\begin{itemize}
\item $Y$ is the geometric quotient obtained by the following action of $(\C^*)^3$
\begin{equation*}
    g:(\C^*)^3\times(\C^7\setminus Z)\longrightarrow(\C^7\setminus Z)
\end{equation*}
defined by setting
\begin{equation*}
    g\left((t_1,t_2,t_3),(x_1,\ldots ,x_7)\right):=
    \left(t_1\ x_1,t_1\  x_2,t_1t_2\ x_3, t_1t_2^2\ x_4,t_2t_3\ x_5,t_2t_3^2\ x_6, t_3\ x_7 \right)
\end{equation*}
\item $\P^W(\mathcal{E})$ is the geometric quotient obtained by the following action of $(\C^*)^3$
\begin{equation*}
    l:(\C^*)^3\times(\C^7\setminus Z)\longrightarrow(\C^7\setminus Z)
\end{equation*}
defined by setting
\begin{equation*}
    l\left((s_1,s_2,s_3),(y_1,\ldots ,y_7)\right):=
    \left(s_1\ y_1,s_1\  y_2,s_1^2s_2\ y_3, s_1s_2^2\ y_4,s_2s_3\ y_5,s_2s_3^2\ y_6, s_3\ y_7 \right)
\end{equation*}
\item calling $[X_1:\ldots:X_7]$ and $[Y_1:\ldots:Y_7]$ the associated homogeneous coordinates on $Y$ and $\P^W(\mathcal{E})$, respectively, and recalling matrices $A^{-1}$ and $B$ in (\ref{A,B}), the toric cover $f$ is given by setting:
    \begin{equation*}
        Y_i=X_i\ \text{for $i=1,2,4$,}\quad Y_j=X_j^2\ \text{for $j=3,5,6,7$.}
    \end{equation*}
    One can easily check that the latter is consistent with the given actions $g$ and $l$.
\end{itemize}
Finally we need to describing the finite quotient $Y\twoheadrightarrow X$ to ending up the geometric description of the $\Q$--factorial projective variety $X(\Si)$ as given in Thm.~\ref{thm:quot-maxbord}. For this purpose we have to determine the torsion matrix $\Ga$ as in (\ref{Gamma}). Then
\begin{eqnarray*}
    H&=&\HNF\left(V\right)=\left(
                                               \begin{array}{ccccccc}
                                                 1&0&0&-1&10&-8&6 \\
                                                 0&1&0&-1&27&-25&23 \\
                                                 0&0&1&-1&24&-23&22 \\
                                                 0&0&0&0&30&-30&30 \\
                                               \end{array}
                                             \right)\\
                                             U&=& \left(
          \begin{array}{cccc}
            -13&36&7&-2\\
            -26&92&16&-5\\
            -22&83&17&-5\\
            -30&105&20&-6\\
          \end{array}
        \right)\in\GL_4(\Z): \quad  U\cdot V = H
\end{eqnarray*}
\begin{equation*}
    \widehat{H}=\HNF\left(\widehat{V}\right)=\left(
                                               \begin{array}{ccccccc}
                                                 1&0&0&-1&0&2&-4 \\
                                                 0&1&0&-1&0&2&-4 \\
                                                 0&0&1&-1&0&1&-2 \\
                                                 0&0&0&0&1&-1&1 \\
                                               \end{array}
                                             \right)=\widehat{V}\Longrightarrow\widehat{U}=I_4
\end{equation*}
By \cite[Prop.~3.1(3)]{RT-LA&GD} there exists $\beta,\beta_H\in \mathbf{M}(4,4;\Z)\cap\GL(4,\Q)$ such that $V=\beta \widehat{V}$ and $H=\beta_H\widehat{H}$. Namely
\begin{equation*}
    \beta_H=\left(
              \begin{array}{cccc}
                1&0&0&10 \\
                0&1&0&27 \\
                0&0&1&24 \\
                0&0&0&30 \\
              \end{array}
            \right)\Longrightarrow \b=U^{-1}\cdot\b_H\cdot\widehat{U}=\left(
                                                                        \begin{array}{cccc}
                                                                          9&11&13&9 \\
                                                                          10&12&14&10 \\
                                                                          54&63&75&51 \\
                                                                          310&365&430&295 \\
                                                                        \end{array}
                                                                      \right)
\end{equation*}
Therefore $\Delta=\SNF(\b)$ and $\mu,\nu\in\GL_4(\Z):\Delta=\mu\cdot\b\cdot\nu$ are given by
\begin{eqnarray*}
    \Delta&=&\diag(1,1,1,30)\ \Longrightarrow\ \Tors(\Cl(X))\cong\Z/30\,\Z\quad \text{and}\quad s=1\\
    \mu&=&\left(
          \begin{array}{cccc}
            -1&1&0&0\\
            14&-18&1&0\\
            8&-22&-3&1\\
            -30&105&20&-6\\
          \end{array}
        \right)\\
    \nu&=&\left(
          \begin{array}{cccc}
            1&-1&4&20\\
            0&1&-5&-27\\
            0&0&1&6\\
            0&0&0&1\\
          \end{array}
        \right)
\end{eqnarray*}
Define
\begin{eqnarray*}
    \widehat{V}'&=&\nu^{-1}\cdot\widehat{V}=\left(
                                            \begin{array}{ccccccc}
                                              1&1&1&-3&1&4&-9 \\
                                              0&1&5&-6&-3&10&-17 \\
                                              0&0&1&-1&-6&7&-8 \\
                                              0&0&0&0&1&-1&1 \\
                                            \end{array}
                                          \right)\\
    V'&=&\mu\cdot V =\left(
                       \begin{array}{ccccccc}
                         -1&1&0&0&0&0&0 \\
                         14&-18&1&3&0&-7&14 \\
                         8&-22&-3&17&1&-32&63 \\
                         -30&105&20&-95&-6&176&-346 \\
                       \end{array}
                     \right)
\end{eqnarray*}
Then $V'=\Delta \widehat{V}'$, as in item (1) of Remark~\ref{rem:Gamma}.
A matrix $U_Q\in\GL_7(\Z)$ such that $U_Q\cdot Q^T=\HNF(Q^T)$ is given by
\begin{equation*}
  U_Q=\left(\begin {array}{ccccccc}
  1&0&0&0&0&0&0\\
  -2&1&1&0&0&0&0\\
  3&-2&-1&0&1&0&0\\
  3&-2&-2&1&0&0&0\\
  -1&1&0&0&0&0&0\\
  -4&3&1&0&-2&1&0\\
  -3&2&1&0&-1&0&1\end {array}\right)
\end{equation*}
so giving
\begin{equation*}
  U=\left(\begin{array}{c}
            ^3U_Q \\
            \widehat{V}'
          \end{array}
  \right)=\left(\begin {array}{ccccccc}
  1&0&0&0&0&0&0\\
  -2&1&1&0&0&0&0\\
  3&-2&-1&0&1&0&0\\
  1&1&1&-3&1&4&-9\\
  0&1&5&-6&-3&10&-17\\
  0&0&1&-1&-6&7&-8\\
  0&0&0&0&1&-1&1\end {array}\right)\,.
\end{equation*}
The next step is finding $W\in\GL_7(\Z)$ such that $W\cdot(\,^6U)^T=\HNF((\,^6U)^T)$, which is given by
\begin{equation*}
    W=\left(
          \begin{array}{ccccccc}
            1&1&1&1&0&0&0\\
            0&0&1&2&1&1&0\\
            0&0&0&0&1&2&1\\
            0&46&-46&-99&46&153&106\\
            0&2&-2&-5&2&7&5\\
            0&38&-38&-82&38&127&88\\
            0&47&-47&-102&47&157&109
          \end{array}
        \right)\,.
\end{equation*}
Since $s=1$ the matrix $G$ turns out to be the product of the last rows of $\widehat{V}'$ and $W$, rspectively, so giving $G=U_G=(-1)\in\GL_1(\Z)$. Therefore the torsion matrix $\Ga$ is obtained by taking the reduction mod 30 of the opposite of the last row of $W$, that is
\begin{equation*}
  \Ga =   \left(
          \begin{array}{ccccccc}
          [0]_{30}&[13]_{30}&[17]_{30}&[12]_{30}&[13]_{30}&[23]_{30}&[11]_{30}
          \end{array}
        \right)\,.
\end{equation*}
Therefore the finite quotient giving $X$ is obtained by the following action of $\mu_{30}=\Hom(\Tors(\Cl(X)),\C^*)$ on $Y$
\begin{eqnarray*}
    k:&\mu_{30}\times Y&\longrightarrow\ \hskip 1.3cm Y\\
    &(\varepsilon,[X_1:\ldots:X_7])&\mapsto\
   [X_1:\varepsilon^{13}\  X_2:\varepsilon^{17}\ X_3 :\varepsilon^{12}\ X_4 : \varepsilon^{13}\ X_5:\varepsilon^{23}\ X_6 :\varepsilon^{11}\ X_7 ]
\end{eqnarray*}
Equivalently $X$ can be obtained as a Cox geometric quotient by the following action
\begin{equation*}
    k\circ g:((\C^*)^3\oplus\mu_{30})\times(\C^7\setminus Z)\longrightarrow(\C^7\setminus Z)
\end{equation*}
defined by setting
\begin{eqnarray*}
    &&k\circ g\left((t_1,t_2,t_3),(x_1,\ldots ,x_7)\right):=\\
    &&\left(t_1\ x_1,\varepsilon^{13} t_1\ x_2,\varepsilon^{17}t_1t_2\ x_3,\varepsilon^{12}t_1t_2^2\ x_4,\varepsilon^{13} t_2t_3\ x_5,\varepsilon^{23}t_2t_3^2\ x_6,\varepsilon^{11} t_3\ x_7 \right)
\end{eqnarray*}
and giving the following geometric picture
\begin{equation*}
    \xymatrix{&\C^7\backslash Z\ar[dl]_{\pi_k\circ\pi_g}\ar[d]^{\pi_g}\ar[r]^-\phi&\C^7\backslash Z\ar[d]^{\pi_l}&\\
    X&Y\ar@{>>}[l]_-{30:1}^-{\pi_k}\ar@{>>}[r]^-{2:1}_-f&\P^W(\mathcal{E})\ar@{>>}[r]^-{\varphi_{\widetilde{\k}}}&
    \P\left(\mathcal{O}_{\P^1}\oplus\mathcal{O}_{\P^1}(1)\right)\ar@{>>}[r]&\P^1}
\end{equation*}
where $\pi_g,\pi_k,\pi_l$ are the quotient maps associated with the actions $g,k,l$, respectively, and the map $\phi$ is given, recalling (\ref{A,B}), by the exponential action of matrices $B^T$ and $(A^{-1})^T$ on the coordinates of $\C^7$ and of the acting $(\C^*)^3$, respectively.

As already observed for the toric cover $Y$ in Ex.~\ref{ex:WPTB(b)}, also the $F$--matrix $V$ admits seven further projective and simplicial fans different from $\Si$, i.e. $|\P\SF(V)|=8$. By Theorem \ref{thm:quot-birWPTB}, for every $\Si'\in\P\SF(V)$ if $\Si'\neq\Si$ then $X'=X'(\Si')$ is a toric flip of $X(\Si)$. Moreover, since $\g'=\g_{\Si'}'$ is not a maxbord chamber, Theorem \ref{thm:quot-maxbord} guarantees that $X'$ cannot admit a universal 1--covering PWS which is either a WPTB or a toric cover of a WPTB.
\end{example}

\end{document}